\title[The cocycle category]{On a categorical framework for classifying \cstar-dynamics up to cocycle conjugacy}
\author{Gábor Szabó}
\address{KU Leuven, Department of Mathematics, Celestijnenlaan 200b box 2400, \phantom{-..-}B-3001 Leuven, Belgium}
\email{gabor.szabo@kuleuven.be}
\thanks{This work was supported by the start-up grant STG/18/019 of KU Leuven, the research project C14/19/088 funded by the research council of KU Leuven, and the research project G085020N funded by the Research Foundation Flanders (FWO)}
\subjclass[2010]{46L55}
\numberwithin{equation}{section}
\begin{document}

\renewcommand\matrix[1]{\left(\begin{array}{*{10}{c}} #1 \end{array}\right)}  
\newcommand\set[1]{\left\{#1\right\}}  

\newcommand{\IA}[0]{\mathbb{A}} \newcommand{\IB}[0]{\mathbb{B}}
\newcommand{\IC}[0]{\mathbb{C}} \newcommand{\ID}[0]{\mathbb{D}}
\newcommand{\IE}[0]{\mathbb{E}} \newcommand{\IF}[0]{\mathbb{F}}
\newcommand{\IG}[0]{\mathbb{G}} \newcommand{\IH}[0]{\mathbb{H}}
\newcommand{\II}[0]{\mathbb{I}} \renewcommand{\IJ}[0]{\mathbb{J}}
\newcommand{\IK}[0]{\mathbb{K}} \newcommand{\IL}[0]{\mathbb{L}}
\newcommand{\IM}[0]{\mathbb{M}} \newcommand{\IN}[0]{\mathbb{N}}
\newcommand{\IO}[0]{\mathbb{O}} \newcommand{\IP}[0]{\mathbb{P}}
\newcommand{\IQ}[0]{\mathbb{Q}} \newcommand{\IR}[0]{\mathbb{R}}
\newcommand{\IS}[0]{\mathbb{S}} \newcommand{\IT}[0]{\mathbb{T}}
\newcommand{\IU}[0]{\mathbb{U}} \newcommand{\IV}[0]{\mathbb{V}}
\newcommand{\IW}[0]{\mathbb{W}} \newcommand{\IX}[0]{\mathbb{X}}
\newcommand{\IY}[0]{\mathbb{Y}} \newcommand{\IZ}[0]{\mathbb{Z}}

\newcommand{\Ia}[0]{\mathbbmss{a}} \newcommand{\Ib}[0]{\mathbbmss{b}}
\newcommand{\Ic}[0]{\mathbbmss{c}} \newcommand{\Id}[0]{\mathbbmss{d}}
\newcommand{\Ie}[0]{\mathbbmss{e}} \newcommand{\If}[0]{\mathbbmss{f}}
\newcommand{\Ig}[0]{\mathbbmss{g}} \newcommand{\Ih}[0]{\mathbbmss{h}}
\newcommand{\Ii}[0]{\mathbbmss{i}} \newcommand{\Ij}[0]{\mathbbmss{j}}
\newcommand{\Ik}[0]{\mathbbmss{k}} \newcommand{\Il}[0]{\mathbbmss{l}}
\renewcommand{\Im}[0]{\mathbbmss{m}} \newcommand{\In}[0]{\mathbbmss{n}}
\newcommand{\Io}[0]{\mathbbmss{o}} \newcommand{\Ip}[0]{\mathbbmss{p}}
\newcommand{\Iq}[0]{\mathbbmss{q}} \newcommand{\Ir}[0]{\mathbbmss{r}}
\newcommand{\Is}[0]{\mathbbmss{s}} \newcommand{\It}[0]{\mathbbmss{t}}
\newcommand{\Iu}[0]{\mathbbmss{u}} \newcommand{\Iv}[0]{\mathbbmss{v}}
\newcommand{\Iw}[0]{\mathbbmss{w}} \newcommand{\Ix}[0]{\mathbbmss{x}}
\newcommand{\Iy}[0]{\mathbbmss{y}} \newcommand{\Iz}[0]{\mathbbmss{z}}

\newcommand{\CA}[0]{\mathcal{A}} \newcommand{\CB}[0]{\mathcal{B}}
\newcommand{\CC}[0]{\mathcal{C}} \newcommand{\CD}[0]{\mathcal{D}}
\newcommand{\CE}[0]{\mathcal{E}} \newcommand{\CF}[0]{\mathcal{F}}
\newcommand{\CG}[0]{\mathcal{G}} \newcommand{\CH}[0]{\mathcal{H}}
\newcommand{\CI}[0]{\mathcal{I}} \newcommand{\CJ}[0]{\mathcal{J}}
\newcommand{\CK}[0]{\mathcal{K}} \newcommand{\CL}[0]{\mathcal{L}}
\newcommand{\CM}[0]{\mathcal{M}} \newcommand{\CN}[0]{\mathcal{N}}
\newcommand{\CO}[0]{\mathcal{O}} \newcommand{\CP}[0]{\mathcal{P}}
\newcommand{\CQ}[0]{\mathcal{Q}} \newcommand{\CR}[0]{\mathcal{R}}
\newcommand{\CS}[0]{\mathcal{S}} \newcommand{\CT}[0]{\mathcal{T}}
\newcommand{\CU}[0]{\mathcal{U}} \newcommand{\CV}[0]{\mathcal{V}}
\newcommand{\CW}[0]{\mathcal{W}} \newcommand{\CX}[0]{\mathcal{X}}
\newcommand{\CY}[0]{\mathcal{Y}} \newcommand{\CZ}[0]{\mathcal{Z}}

\newcommand{\FA}[0]{\mathfrak{A}} \newcommand{\FB}[0]{\mathfrak{B}}
\newcommand{\FC}[0]{\mathfrak{C}} \newcommand{\FD}[0]{\mathfrak{D}}
\newcommand{\FE}[0]{\mathfrak{E}} \newcommand{\FF}[0]{\mathfrak{F}}
\newcommand{\FG}[0]{\mathfrak{G}} \newcommand{\FH}[0]{\mathfrak{H}}
\newcommand{\FI}[0]{\mathfrak{I}} \newcommand{\FJ}[0]{\mathfrak{J}}
\newcommand{\FK}[0]{\mathfrak{K}} \newcommand{\FL}[0]{\mathfrak{L}}
\newcommand{\FM}[0]{\mathfrak{M}} \newcommand{\FN}[0]{\mathfrak{N}}
\newcommand{\FO}[0]{\mathfrak{O}} \newcommand{\FP}[0]{\mathfrak{P}}
\newcommand{\FQ}[0]{\mathfrak{Q}} \newcommand{\FR}[0]{\mathfrak{R}}
\newcommand{\FS}[0]{\mathfrak{S}} \newcommand{\FT}[0]{\mathfrak{T}}
\newcommand{\FU}[0]{\mathfrak{U}} \newcommand{\FV}[0]{\mathfrak{V}}
\newcommand{\FW}[0]{\mathfrak{W}} \newcommand{\FX}[0]{\mathfrak{X}}
\newcommand{\FY}[0]{\mathfrak{Y}} \newcommand{\FZ}[0]{\mathfrak{Z}}

\newcommand{\Fa}[0]{\mathfrak{a}} \newcommand{\Fb}[0]{\mathfrak{b}}
\newcommand{\Fc}[0]{\mathfrak{c}} \newcommand{\Fd}[0]{\mathfrak{d}}
\newcommand{\Fe}[0]{\mathfrak{e}} \newcommand{\Ff}[0]{\mathfrak{f}}
\newcommand{\Fg}[0]{\mathfrak{g}} \newcommand{\Fh}[0]{\mathfrak{h}}
\newcommand{\Fi}[0]{\mathfrak{i}} \newcommand{\Fj}[0]{\mathfrak{j}}
\newcommand{\Fk}[0]{\mathfrak{k}} \newcommand{\Fl}[0]{\mathfrak{l}}
\newcommand{\Fm}[0]{\mathfrak{m}} \newcommand{\Fn}[0]{\mathfrak{n}}
\newcommand{\Fo}[0]{\mathfrak{o}} \newcommand{\Fp}[0]{\mathfrak{p}}
\newcommand{\Fq}[0]{\mathfrak{q}} \newcommand{\Fr}[0]{\mathfrak{r}}
\newcommand{\Fs}[0]{\mathfrak{s}} \newcommand{\Ft}[0]{\mathfrak{t}}
\newcommand{\Fu}[0]{\mathfrak{u}} \newcommand{\Fv}[0]{\mathfrak{v}}
\newcommand{\Fw}[0]{\mathfrak{w}} \newcommand{\Fx}[0]{\mathfrak{x}}
\newcommand{\Fy}[0]{\mathfrak{y}} \newcommand{\Fz}[0]{\mathfrak{z}}

\renewcommand{\phi}[0]{\varphi}
\newcommand{\eps}[0]{\varepsilon}

\newcommand{\id}[0]{\operatorname{id}}		
\renewcommand{\sp}[0]{\operatorname{Sp}}		
\newcommand{\eins}[0]{\mathbf{1}}			
\newcommand{\diag}[0]{\operatorname{diag}}
\newcommand{\ad}[0]{\operatorname{Ad}}
\newcommand{\ev}[0]{\operatorname{ev}}
\newcommand{\fin}[0]{{\subset\!\!\!\subset}}
\newcommand{\Aut}[0]{\operatorname{Aut}}
\newcommand{\dimrok}[0]{\dim_{\mathrm{Rok}}}
\newcommand{\dst}[0]{\displaystyle}
\newcommand{\cstar}[0]{\ensuremath{\mathrm{C}^*}}
\newcommand{\wstar}[0]{\ensuremath{\mathrm{W}^*}}
\newcommand{\dist}[0]{\operatorname{dist}}
\newcommand{\ann}[0]{\operatorname{Ann}}
\newcommand{\cc}[0]{\simeq_{\mathrm{cc}}}
\newcommand{\scc}[0]{\simeq_{\mathrm{scc}}}
\newcommand{\vscc}[0]{\simeq_{\mathrm{vscc}}}
\newcommand{\scd}[0]{\preceq_{\mathrm{scd}}}
\newcommand{\cel}[0]{\ensuremath{\mathrm{cel}}}
\newcommand{\acel}[0]{\ensuremath{\mathrm{acel}}}
\newcommand{\sacel}[0]{\ensuremath{\mathrm{sacel}}}
\newcommand{\prim}[0]{\ensuremath{\mathrm{Prim}}}
\newcommand{\co}[0]{\ensuremath{\mathrm{co}}}
\newcommand{\GL}[0]{\operatorname{GL}}
\newcommand{\Bott}[0]{\ensuremath{\mathrm{Bott}}}
\newcommand{\tK}[0]{\ensuremath{\underline{K}}}
\newcommand{\Hom}[0]{\operatorname{Hom}}
\newcommand{\sep}[0]{\ensuremath{\mathrm{sep}}}
\newcommand{\inv}[0]{\ensuremath{\operatorname{Inv}}}

\newcommand{\corep}[0]{\ensuremath{\mathrm{CoRep}}}
\newcommand{\cohom}[0]{\ensuremath{\mathrm{CoMor}}}
\newcommand{\cohomr}[0]{\ensuremath{\mathrm{CoMor}_\Fp}}

\newcommand{\uee}[0]{\hspace{0.5mm}\sim_{\makebox[0pt]{\footnotesize\hspace{1.5mm}$\mathrm{u}$}}\hspace{1mm}}

\newcommand{\puee}[0]{\hspace{1mm}\sim_{\makebox[0pt]{\footnotesize\hspace{2.8mm}$\mathrm{pu}$}}\hspace{2mm}}

\newcommand{\ue}[0]{\hspace{0.7mm}\approx_{\makebox[0pt]{\footnotesize\hspace{1.5mm}$\mathrm{u}$}}\hspace{1mm}}

\newcommand{\subue}[0]{\hspace{0.7mm}\precapprox_{\makebox[0pt]{\footnotesize\hspace{1.5mm}$\mathrm{u}$}}\hspace{1mm}}

\newcommand{\asue}[0]{\hspace{0.7mm}\approxeq_{\makebox[0pt]{\footnotesize\hspace{1.5mm}$\mathrm{u}$}}\hspace{1mm}}

\newcommand{\subasue}[0]{\hspace{0.7mm}\preceq_{\makebox[0pt]{\footnotesize\hspace{1.5mm}$\mathrm{u}$}}\hspace{1mm}}

\newcommand{\pue}[0]{\hspace{1mm}\approx_{\makebox[0pt]{\footnotesize\hspace{2.75mm}$\mathrm{pu}$}}\hspace{2mm}}

\newcommand{\pasue}[0]{\hspace{0.7mm}\approxeq_{\makebox[0pt]{\footnotesize\hspace{3mm}$\mathrm{pu}$}}\hspace{2.2mm}}

\newcommand{\greater}[0]{>}
\newcommand{\vslash}[0]{|}
\newcommand{\norm}[0]{\ensuremath{\|}}
\newcommand{\msout}[1]{\text{\sout{$#1$}}}

\newtheorem{satz}{Satz}[section]		

\newaliascnt{corCT}{satz}
\newtheorem{cor}[corCT]{Corollary}
\aliascntresetthe{corCT}
\providecommand*{\corCTautorefname}{Corollary}
\newaliascnt{lemmaCT}{satz}
\newtheorem{lemma}[lemmaCT]{Lemma}
\aliascntresetthe{lemmaCT}
\providecommand*{\lemmaCTautorefname}{Lemma}
\newaliascnt{propCT}{satz}
\newtheorem{prop}[propCT]{Proposition}
\aliascntresetthe{propCT}
\providecommand*{\propCTautorefname}{Proposition}
\newaliascnt{theoremCT}{satz}
\newtheorem{theorem}[theoremCT]{Theorem}
\aliascntresetthe{theoremCT}
\providecommand*{\theoremCTautorefname}{Theorem}
\newtheorem*{theoreme}{Theorem}

\theoremstyle{definition}

\newaliascnt{conjectureCT}{satz}
\newtheorem{conjecture}[conjectureCT]{Conjecture}
\aliascntresetthe{conjectureCT}
\providecommand*{\conjectureCTautorefname}{Conjecture}
\newaliascnt{defiCT}{satz}
\newtheorem{defi}[defiCT]{Definition}
\aliascntresetthe{defiCT}
\providecommand*{\defiCTautorefname}{Definition}
\newtheorem*{defie}{Definition}
\newaliascnt{notaCT}{satz}
\newtheorem{nota}[notaCT]{Notation}
\aliascntresetthe{notaCT}
\providecommand*{\notaCTautorefname}{Notation}
\newtheorem*{notae}{Notation}
\newaliascnt{remCT}{satz}
\newtheorem{rem}[remCT]{Remark}
\aliascntresetthe{remCT}
\providecommand*{\remCTautorefname}{Remark}
\newtheorem*{reme}{Remark}
\newaliascnt{exampleCT}{satz}
\newtheorem{example}[exampleCT]{Example}
\aliascntresetthe{exampleCT}
\providecommand*{\exampleCTautorefname}{Example}
\newaliascnt{questionCT}{satz}
\newtheorem{question}[questionCT]{Question}
\aliascntresetthe{questionCT}
\providecommand*{\questionCTautorefname}{Question}
\newtheorem*{questione}{Question}

\newcounter{theoremintro}
\renewcommand*{\thetheoremintro}{\Alph{theoremintro}}
\newaliascnt{theoremiCT}{theoremintro}
\newtheorem{theoremi}[theoremiCT]{Theorem}
\aliascntresetthe{theoremiCT}
\providecommand*{\theoremiCTautorefname}{Theorem}
\newaliascnt{coriCT}{theoremintro}
\newtheorem{cori}[coriCT]{Corollary}
\aliascntresetthe{coriCT}
\providecommand*{\coriCTautorefname}{Corollary}
\newaliascnt{conjectureiCT}{theoremintro}
\newtheorem{conjecturei}[conjectureiCT]{Conjecture}
\aliascntresetthe{conjectureiCT}
\providecommand*{\conjectureiCTautorefname}{Conjecture}
\newaliascnt{defiiCT}{theoremintro}
\newtheorem{defii}[defiiCT]{Definition}
\aliascntresetthe{defiiCT}
\providecommand*{\defiiCTautorefname}{Definition}


\begin{abstract} 
We provide the rigorous foundations for a categorical approach to the classification of \cstar-dynamics up to cocycle conjugacy.
Given a locally compact group $G$, we consider a category of (twisted) $G$-\cstar-algebras, where morphisms between two objects are allowed to be equivariant maps or exterior equivalences, which leads to the concept of so-called cocycle morphisms.
An isomorphism in this category is precisely a cocycle conjugacy in the known sense.
We show that this category allows sequential inductive limits, and that some known functors on the usual category of $G$-\cstar-algebras extend.
After observing that this setup allows a natural notion of (approximate) unitary equivalence, the main aim of the paper is to generalize the fundamental intertwining results commonly employed in the Elliott program for classifying \cstar-algebras.
This reduces a given classification problem for \cstar-dynamics to the prevalence of certain uniqueness and existence theorems, and may provide a useful alternative to the Evans--Kishimoto intertwining argument in future research.
\end{abstract}

\maketitle

\tableofcontents


\section*{Introduction}

The concept of noncommutative dynamics is deeply rooted and ubiquitous within the subject of operator algebras, arguably because of the interesting ways in which groups can act on noncommutative structures.
On the one hand, elementary models of quantum mechanics conceptualize the passage of time in a physical system via an action of the real numbers $\IR$ on a suitable operator algebra of observables.
On the other hand, every classical topological dynamical system, say consisting of a locally compact group $G$ acting on a locally compact space $X$, can be considered as a \cstar-dynamical system $G\curvearrowright\CC(X)$ and then studied with the aid of operator algebraic methods.
The overarching concepts of \cstar-dynamics and \wstar-dynamics unify these ideas in one neat package.

Noncommutative dynamics have been taking center stage on the \wstar-side ever since the development of the fundamental structure theory of factors, notably including Tomita--Takesaki theory and the work of Connes--Haagerup for injective \wstar-algebras.
The structure of actions of (discrete) amenable groups on injective factors has subsequently been unraveled by the combined work of many researchers.
Through groundbreaking research spear-headed by Popa, various superrigidity phenomena were discovered in the context of \wstar-dynamics for actions of nonamenable groups, which stand in great contrast to the amenable setting via results such as the Connes--Feldman--Weiss theorem.

In a similar fashion and in part motivated by the above, \cstar-algebraists have long been interested to either apply or explore the structure of \cstar-dynamical systems.
The focus has mostly been on two natural but opposite extremes.
One possibility is to consider certain topological dynamical systems $G\curvearrowright X$ and investigate the crossed products $\CC(X)\rtimes G$, either for their classifiability \cite{Putnam89, ElliottEvans93, LinPhillips10, GiolKerr10, TomsWinter13, HirshbergWinterZacharias15, Szabo15plms, SzaboWuZacharias19, HirshbergWu17, HirshbergSzaboWinterWu17, HirshbergWu18, Kerr19, KerrSzabo20} 
or for the information that their isomorphism classes retain about the original dynamics \cite{HermanPutnamSkau92, GiordanoPutnamSkau95}. 
Another possibility is to start with certain group actions $G\curvearrowright A$ on simple \cstar-algebras and ask for their classification up to cocycle conjugacy, preferably using computable invariants such as $K$-theory.
The present work should be considered as a piece in the latter line of research.

In direct comparison to the \wstar-side, our understanding of \cstar-dynamics on simple \cstar-algebras is still somewhat underdeveloped. 
Nevertheless, the past attempts to classify automorphisms or group actions on simple \cstar-algebras have been both numerous and highly inventive.
We shall give a brief (albeit incomplete) summary of the state of the art here.
Many of the attempts to classify \cstar-dynamics have so far been underpinned by some kind of Rokhlin-type property.
After ideas of this type have surfaced in work of Connes, a propotypical definition of sort made its way to \cstar-algebras via work of Herman--Jones and Herman--Ocneanu \cite{HermanJones82, HermanOcneanu84}.
After a few more applications of these ideas \cite{BratteliKishimotoRordamStormer93, Rordam93, BratteliEvansKishimoto95, Kishimoto96}, notably in the study of certain purely infinite \cstar-algebras, the correct version of the Rokhlin property for single automorphisms was pinned down and exploited to great success \cite{Kishimoto95, EvansKishimoto97, ElliottEvansKishimoto98, Kishimoto98, Kishimoto98II, BratteliKishimoto00, Kishimoto00}.
This line of research has been considerably extended and perfected to this day in subsequent work by many researchers to classify automorphisms on more general \cstar-algebras or actions of certain higher-rank groups \cite{Nakamura99, Nakamura00, Matui08, KatsuraMatui08, Sato10, IzumiMatui10, Matui10, Matui11, Nawata19, IzumiMatui18, IzumiMatui19}.
The Rokhlin property for actions of finite groups was introduced by Izumi \cite{Izumi04, Izumi04II}, who also showcased its rigid behavior by giving a satisfactory classification theory up to conjugcy.
Results of this type were subsequently found for not necessarily unital \cstar-algebras \cite{Nawata16, GardellaSantiago16, Nawata19_2}, actions of compact groups \cite{HirshbergWinter07, Gardella14_1, Gardella14_2}, and compact quantum groups \cite{BarlakSzaboVoigt17}.
The Rokhlin property for flows was introduced by Kishimoto in \cite{Kishimoto96_R}, who provided evidence why one should expect that these can be classified up to cocycle conjugacy \cite{Kishimoto02, BratteliKishimotoRobinson07}.
This was confirmed in my recent work \cite{Szabo20R}, which was in part inspired by \cite{MasudaTomatsu16}.

A related but not identical line of research relating to tensorial absorption of strongly self-absorbing dynamical systems was fleshed out and applied in \cite{Szabo18ssa, Szabo18ssa2, Szabo17ssa3, Szabo18kp, Szabo19rd, Szabo19ssa4, Szabo19si}.
This was in turn heavily based on pioneering work in \cite{Kishimoto01, IzumiMatui10, GoldsteinIzumi11, MatuiSato12, MatuiSato14} that exploited ideas of this nature long before.
Related recent results include \cite{Liao16, Liao17, Sato16, GardellaHirshberg18}.

The purpose of the present work can be seen as an attempt to reconsider and improve on the overarching framework and some of the elementary analytic methods involved in the classification of group actions up to cocycle conjugacy.
As motivation, let us recall a simple fact that has fundamental consequences for the Elliott classification program of \cstar-algebras:

\begin{theoreme}[cf.\ {\cite[Corollary 2.3.4]{Rordam} and \cite{Elliott10}}]
Let $A$ and $B$ be two separable \cstar-algebras.
Suppose that $\phi: A\to B$ and $\psi: B\to A$ are two (extendible)\footnote{To be precise, one is faced with a choice here for the original proof in \cite{Rordam} to be correct.
Either one has to assume that the involved $*$-homomorphisms are extendible (see \autoref{def:extendible}) or one has to use the notion of approximate innerness involving unitaries in the smallest unitizations of $A$ and $B$, respectively, instead of their multiplier algebras.
The latter is suggested on the Errata page for \cite{Rordam}; cf.\ \url{http://web.math.ku.dk/~rordam/Encyclopaedia.html}.}
$*$-homomorphisms such that the compositions $\psi\circ\phi$ and $\phi\circ\psi$ are approximately inner endomorphisms on $A$ and $B$, respectively.
Then $\phi$ and $\psi$ are approximately unitarily equivalent to mutually inverse isomorphisms between $A$ and $B$.
\end{theoreme}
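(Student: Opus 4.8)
The plan is to run Elliott's approximate intertwining argument on the one-sided \emph{zigzag} built out of $\phi$ and $\psi$, and then to read off the conclusion by comparing the canonical maps into the resulting inductive limit. Throughout I would understand ``approximately unitarily equivalent'' ($\ue$) via unitaries in the minimal unitizations of the algebras involved; granting the extendibility hypothesis one could instead work with unitaries in the multiplier algebras, at the cost of some care with strict continuity.

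First I would form the inductive system
\[
 \CS:\qquad A\xrightarrow{\ \phi\ }B\xrightarrow{\ \psi\ }A\xrightarrow{\ \phi\ }B\xrightarrow{\ \psi\ }\cdots,
\]
i.e.\ $C_{2k-1}=A$ and $C_{2k}=B$ with connecting maps alternately $\phi$ and $\psi$, and set $C=\varinjlim\CS$ with canonical maps $\kappa_n\colon C_n\to C$, so that $\kappa_2\circ\phi=\kappa_1$, $\kappa_3\circ\psi=\kappa_2$, and so on, and $C$ is again separable. The odd-indexed terms form a cofinal subsystem, so $C$ is canonically identified with $\varinjlim\bigl(A\xrightarrow{\psi\circ\phi}A\xrightarrow{\psi\circ\phi}\cdots\bigr)$ with canonical maps the $\kappa_{2k-1}$. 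Since $\psi\circ\phi\ue\id_A$ by hypothesis, I would fix finite subsets $F_1\subseteq F_2\subseteq\cdots\subseteq A$ with dense union and a summable sequence of tolerances, inductively produce unitaries in $\widetilde A$ correcting $\psi\circ\phi$ on ever larger finite sets with ever smaller error, and invoke the standard telescoping argument (the approximate intertwining lemma underlying \cite[Corollary~2.3.4]{Rordam}; cf.\ also \cite{Elliott10}) to obtain an isomorphism $\mu\colon A\xrightarrow{\ \cong\ }C$ which by construction satisfies $\mu\ue\kappa_1$. Running the same argument on the cofinal subsystem of even-indexed terms, now using $\phi\circ\psi\ue\id_B$, produces an isomorphism $\nu\colon B\xrightarrow{\ \cong\ }C$ with $\nu\ue\kappa_2$.

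I would then set $\Phi:=\nu^{-1}\circ\mu\colon A\to B$ and $\Psi:=\Phi^{-1}=\mu^{-1}\circ\nu\colon B\to A$, which are mutually inverse isomorphisms, and conclude by a formal computation with $\ue$. The elementary facts used are that $\ue$ is transitive, is preserved under precomposition with any $*$-homomorphism (keeping the same correcting unitaries), and is preserved under postcomposition with any $*$-homomorphism (transporting the correcting unitaries through its unital extension to the minimal unitizations). These give
\[
 \Phi=\nu^{-1}\circ\mu\ \ue\ \nu^{-1}\circ\kappa_1=\nu^{-1}\circ\kappa_2\circ\phi\ \ue\ \id_B\circ\phi=\phi,
\]
the last step because $\nu^{-1}\circ\kappa_2\ue\nu^{-1}\circ\nu=\id_B$; and, using $\kappa_1=\kappa_3\circ(\psi\circ\phi)$ together with $\psi\circ\phi\ue\id_A$ to obtain first $\kappa_1\ue\kappa_3$, then $\mu\ue\kappa_3$, then $\mu^{-1}\circ\kappa_3\ue\id_A$,
\[
 \Psi=\mu^{-1}\circ\nu\ \ue\ \mu^{-1}\circ\kappa_2=\mu^{-1}\circ\kappa_3\circ\psi\ \ue\ \id_A\circ\psi=\psi,
\]
which is the assertion.

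The hard part is the telescoping step: turning the two qualitative hypotheses ``$\psi\circ\phi$ and $\phi\circ\psi$ are approximately inner'' into an honest isomorphism of inductive limits requires choosing the exhausting finite sets and summable tolerances so that the infinitely many unitary corrections accumulate to a convergent point-norm limit, and---in the multiplier-algebra formulation---keeping track of extendibility so that the corrections $\ad(u_n)$ compose correctly with the possibly nonunital connecting maps and descend to the inductive limit. Everything after that is bookkeeping with the canonical maps $\kappa_n$ and the stability properties of $\ue$.
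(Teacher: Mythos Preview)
Your argument is correct and takes a genuinely different route from the paper's proof of its generalization (\autoref{cor:special-ue-intertwining}). There the paper sets up a \emph{two-row} diagram with constant rows $A\to A\to\cdots$ and $B\to B\to\cdots$ connected by diagonal maps $\phi,\psi$, verifies it is an approximate intertwining in the sense of \autoref{def:intertwining-diagram}, and extracts the mutually inverse isomorphisms directly from \autoref{prop:general-intertwining}. You instead form a single zigzag, pass to the cofinal odd and even stationary subsystems, and invoke the \emph{one-sided} fact that when each connecting map is approximately unitarily equivalent to an isomorphism, so is the inclusion of the first building block into the limit --- this is exactly the approximate analog of \autoref{lem:asue-limits} mentioned in \autoref{rem:ue-limits}. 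In fact the paper adopts precisely your zigzag-plus-cofinal-subsystems strategy when proving the \emph{asymptotic} version of this result (\autoref{thm:cont-intertwining}), so your route is not only valid but anticipated later in the paper. The two-row approach generalizes more directly to arbitrary inductive systems (\autoref{thm:general-ue-intertwining}); your approach is slicker for the stationary case and isolates the one-sided lemma as the only analytic input, after which everything is bookkeeping with $\ue$ and the canonical maps $\kappa_n$.
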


The utility of this statement comes from the fact that it implicitly gives us a roadmap on how to classify a given category $\FC$ of separable \cstar-algebras by a functorial invariant ``\inv''.
The first step is to establish a \emph{uniqueness theorem} for the invariant, which typically asserts that two (sufficiently non-trivial) $*$-homomorphisms $\phi,\psi: A\to B$ with $A,B\in\FC$ are approximately unitarily equivalent if $\inv(\phi)=\inv(\psi)$.
The second step is to establish an \emph{existence theorem} for the invariant, which typically asserts that for $A,B\in\FC$, every suitable arrow $\inv(A)\to\inv(B)$ lifts to a $*$-homomorphism $A\to B$.
The third step is to tie it all together with the above theorem for $A,B\in\FC$: Combing the first two steps, an invertible arrow $\inv(A)\to\inv(B)$ gives rise to two $*$-homomorphisms $A\to B\to A$ as required by the theorem above, which implies that $A$ and $B$ are isomorphic.
This framework for classification has been at the heart of the Elliott program since the early Bratteli--Elliott classification \cite{Bratteli72, Elliott76} of AF algebras (with the ordered $K_0$-group playing the role of ``\inv''), and has essentially remained unchanged notwithstanding the staggeringly increasing depth behind the modern versions of the correct invariant as well as the uniqueness and existence theorems.

Despite the importance of attempting to classify \cstar-dynamical systems up to cocycle conjugacy \cite{Izumi10}, there is still no clear-cut analog of this roadmap in general.
For compact acting groups, it is relatively straightforward to come up with some suitable generalization of the statement in the above theorem, and to come up with analogous uniqueness and existence theorems to classify actions up to genuine conjugacy.
In fact this viewpoint has been leveraged to classify actions with the Rokhlin property in \cite{GardellaSantiago16, BarlakSzaboVoigt17}.

For non-compact acting groups, on the other hand, it is much less clear what an analogous roadmap for classification would be from consulting the literature.
In light of some very elementary examples \cite[Section 2.2]{Izumi10}, it is clear that classification up to genuine conjugacy on sufficiently noncommutative \cstar-algebras is not feasible using a computable invariant, and has to be weakened to cocycle conjugacy instead.
The usual way to classify actions of groups like $\IZ$ or $\IR$, as pioneered by Kishimoto, follows a somewhat different path to the above:
Instead of the uniqueness theorem, one needs to bother with establishing a \emph{cohomology-vanishing} statement for the group actions under consideration.
Instead of the existence theorem, one needs to determine when two actions on the same \cstar-algebra are \emph{approximate cocycle perturbations} of each other.
Instead of the intertwining theorem above, one then combines these pieces with a fairly involved \emph{Evans--Kishimoto intertwining} argument (cf.\ \cite{EvansKishimoto97}), which directly gives one cocycle conjugacy between two dynamical systems.
One may certainly argue that this runs somewhat parallel to the \emph{three step recipe} sketched in the context of classifying \cstar-algebras, but the analogy only goes so far.
Despite being an undoubtedly ingenious method, the Evans--Kishimoto intertwining argument does not shed much light on the category of \cstar-dynamics being classified as a whole, but rather helps one to compare cocycle conjugacy classes directly.
In a way, the assumptions entering into the Evans--Kishimoto intertwining argument to relate two actions $\alpha,\beta: G\curvearrowright A$ should (in general) be seen as extremely strong by design, in order to ensure that some kind of \emph{existence proof} for both $\alpha\to\beta$ and $\beta\to\alpha$ and the \emph{uniqueness proof} for their compositions is taken care of in one fell swoop.
It is therefore not surprising that classification via this method gets increasingly difficult to implement with more complicated acting groups, as the necessary assumptions for performing the intertwining argument appear to be more and more difficult to handle; see \cite{IzumiMatui18, IzumiMatui19} for recent state of the art results that serve as a good example for this phenomenon.

The subject of this paper is to provide a categorical framework in which Elliott's approach to classification carries over directly to the context of \cstar-dynamics.
The obvious first obstacle is given by the fact that the category of $G$-\cstar-algebras, with $G$ being a locally compact group, is usually equipped with the genuine equivariant $*$-homomorphisms as morphisms.
The naive way to overcome it is to add the least amount of extra information to the category as to include the language necessary to describe cocycle conjugacy:

\begin{defii} \label{definition-A}
Let $G$ be a locally compact group and let $\alpha: G\curvearrowright A$ and $\beta: G\curvearrowright B$ be two actions on \cstar-algebras.
A \emph{cocycle representation} from $(A,\alpha)$ to $(\CM(B),\beta)$ is a pair $(\phi,\Iu)$ consisting of a $*$-homomorphism $\phi: A\to\CM(B)$ and a $\beta$-cocycle $\Iu: G\to\CU(\CM(B))$ satisfying $\ad(\Iu_g)\circ\beta_g\circ\phi=\phi\circ\alpha_g$ for all $g\in G$.
If $\phi(A)\subseteq B$, we call the pair $(\phi,\Iu)$ a \emph{cocycle morphism} from $(A,\alpha)$ to $(B,\beta)$.
\end{defii}

Notice that in the special case $\Iu=\eins$ above, we simply recover what it means for the $*$-homomorphism $\phi$ to be equivariant with respect to $\alpha$ and $\beta$.
On the other hand, if $(A,\alpha)=(B,\beta)$ and $\phi=\id_A$, we recover what it means for two $G$-actions on the same \cstar-algebra to be exterior equivalent via the cocycle $\Iu$.
In other words, the usual category of $G$-\cstar-algebras is enlarged by declaring that exterior equivalences are also morphisms.
It turns out that there is a natural way to define a composition between cocycle morphisms\footnote{Similarly to the previous footnote, one is again faced with two possibilities here, depending on whether one prefers to work with unitaries in unitizations or multiplier algebras.
Both perspectives merit investigation and will be pursed in the paper.}; 
see \autoref{prop:cocycle-composition}.
This enables one to define a larger category of $G$-\cstar-algebras, which we choose to call the \emph{cocycle category}.
Anybody who is familiar with the concept of cocycle conjugacy will certainly not be taken by surprise by the fact that in this category, an isomorphism between $G$-\cstar-algebras is precisely a cocycle conjugacy in the known sense.

The rigorous introduction to this framework along with some elementary examples and observations will occupy most of the first section.
The level of generality in the main body of the paper is higher than indicated up to this point, because this framework makes sense and will be developed in the context of twisted $G$-\cstar-dynamical systems à la Busby--Smith \cite{BusbySmith70, PackerRaeburn89}.
It is noteworthy that the categorical framework pursued here is not entirely new.
A similar framework was conceived in a higher category approach to (twisted) \cstar-dynamics due to Buss--Meyer--Zhu \cite{BussMeyerZhu13} and in the context of crossed product dualities by Kaliszewski--Omland--Quigg \cite{KaliszewskiOmlandQuigg15}, although ultimately used for different purposes than here.
In the terminology of \cite{BussMeyerZhu13}, a \emph{weakly equivariant map} or \emph{transformation} is precisely a cocycle morphism $(\phi,\Iu)$ with the extra assumption that the map $\phi$ is non-degenerate.
Looking back even further, one may argue for $G=\IR$ that traces of this idea can be found in Kishimoto's work \cite{Kishimoto04}, where in the context of a flow $\alpha: \IR\curvearrowright A$, the cocycle conjugacies $(A,\alpha)\to (A,\alpha)$ have been studied as \emph{core symmetries} and identified as an interesting invariant.

The second section deals with inductive limits.
We show that in the cocycle category, sequential inductive limits always exist, and can be constructed explicitly in the language of the ordinary \cstar-algebra inductive limit construction.
We furthermore discuss topologies on the Hom-sets, which for pairs $(\phi,\Iu)$ as in \autoref{definition-A} will usually boil down to the point-norm topology in the first variable, and the uniform strict topology over compact subsets of $G$ in the second variable.
This gives rise to a suitable notion of approximate unitary equivalence; cf.\ \autoref{def:cocycle-hom-equivalences} and \autoref{rem:ue-not-symmetric}:

\begin{defii}
Let $(\phi,\Iu), (\psi,\Iv): (A,\alpha)\to (B,\beta)$ be two non-degenerate cocycle morphisms.
We say $(\phi,\Iu)$ is approximately unitarily equivalent to $(\psi,\Iv)$, if there exists a net $u_\lambda\in\CU(\CM(B))$ such that 
\[
\lim_{\lambda\to\infty} ( \ad(u_\lambda)\circ\phi, u_\lambda\Iu_\bullet\beta_\bullet(u_\lambda)^* ) = (\psi,\Iv).
\]
In the special case where $(B,\beta)=(A,\alpha)$ and $(\phi,\Iu)=(\id_A,\eins)$, we say that $(\psi,\Iv)$ is approximately inner. 
\end{defii}

The third and fourth sections are concerned with providing detailed generalizations of the elementary intertwining results commonly employed in the classification program of \cstar-algebras, in particular utilizing approximate unitary equivalence.
This part of the paper can be seen as a variant of some early sections in \cite{Rordam} for \cstar-dynamics.
There are several layers of generality treated in this context, but we shall for now cherry-pick the following result from the third section to illustrate the parallels to the theorem above; the following is a special case of \autoref{cor:special-ue-intertwining}:

\begin{theoremi} \label{theorem-C}
Let $(A,\alpha)$ and $(B,\beta)$ be two separable $G$-\cstar-algebras.
Suppose that $(\phi,\Iu): (A,\alpha)\to (B,\beta)$ and $(\psi,\Iv): (B,\beta)\to (A,\alpha)$ are two non-degenerate cocycle morphisms such that the compositions $(\psi,\Iv)\circ(\phi,\Iu)$ and $(\phi,\Iu)\circ(\psi,\Iv)$ are approximately inner on $(A,\alpha)$ and $(B,\beta)$, respectively.
Then $(\phi,\Iu)$ and $(\psi,\Iv)$ are approximately unitarily equivalent to mutually inverse cocycle conjugacies between $(A,\alpha)$ and $(B,\beta)$.
\end{theoremi}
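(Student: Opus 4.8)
The plan is to mimic Elliott's classical two-sided intertwining argument, but carried out inside the cocycle category. The crucial observation is that all the categorical structure needed is already in place: $G$-\cstar-algebras with non-degenerate cocycle morphisms form a category admitting sequential inductive limits (this was the content of the second section), and the Hom-sets carry the point-norm/uniform-strict topology in which approximate unitary equivalence and approximate innerness are defined. So the strategy is: build an inductive system both of whose limits are identified with $(A,\alpha)$ and with $(B,\beta)$ respectively, by alternately inserting $(\phi,\Iu)$ and $(\psi,\Iv)$, and arrange the connecting maps so that the two resulting limit maps are mutually inverse cocycle conjugacies approximately unitarily equivalent to the originals.

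First I would set up the bookkeeping. Write $(\phi_0,\Iu^{(0)}) := (\phi,\Iu)$ and $(\psi_0,\Iv^{(0)}) := (\psi,\Iv)$. By hypothesis $(\psi_0,\Iv^{(0)})\circ(\phi_0,\Iu^{(0)})$ is approximately inner on $(A,\alpha)$, so there is a net — by separability, a sequence — of unitaries in $\CM(A)$ conjugating it arbitrarily close to $(\id_A,\eins)$; similarly for $(\phi_0,\Iu^{(0)})\circ(\psi_0,\Iv^{(0)})$ on $(B,\beta)$. Fix increasing finite subsets $F_n\fin A$ and $E_n\fin B$ with dense union, increasing compact subsets $K_n\subseteq G$ with union $G$, and a summable sequence of tolerances $\eps_n>0$. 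The heart of the construction is the inductive choice of unitaries $w_n\in\CU(\CM(A))$ and $z_n\in\CU(\CM(B))$ so that, after replacing $(\phi_n,\Iu^{(n)})$ by $\ad(z_n)\circ(\phi_{n-1},\Iu^{(n-1)})$ in the appropriate sense and $(\psi_n,\Iv^{(n)})$ analogously, the compositions become not merely approximately inner but genuinely close (within $\eps_n$ on $F_n$ and on $K_n$) to the identity cocycle morphism. One must be careful that conjugating a cocycle morphism $(\phi,\Iu)$ by a unitary $u$ produces $(\ad(u)\circ\phi,\,u\Iu_\bullet\beta_\bullet(u)^*)$, so the cocycle part is transformed correctly and the resulting pair is again a non-degenerate cocycle morphism; this is exactly the move appearing in the definition of approximate unitary equivalence. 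With the tolerances summable, the sequences of partial compositions $(\phi_n,\Iu^{(n)})\circ\cdots$ and $(\psi_n,\Iv^{(n)})\circ\cdots$ are Cauchy in the relevant topology on Hom-sets, so they converge; denote the limits $(\Phi,\IU): (A,\alpha)\to(B,\beta)$ and $(\Psi,\IV): (B,\beta)\to(A,\alpha)$.

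Next I would verify that $(\Psi,\IV)\circ(\Phi,\IU)=(\id_A,\eins)$ and $(\Phi,\IU)\circ(\Psi,\IV)=(\id_B,\eins)$, which is the standard telescoping: the composition of the two limit maps, restricted to any $F_n$ and any $K_n$, differs from the identity by the tail $\sum_{k\ge n}\eps_k$, hence equals it. This forces $(\Phi,\IU)$ and $(\Psi,\IV)$ to be mutually inverse isomorphisms in the cocycle category, i.e.\ mutually inverse cocycle conjugacies (using the earlier identification of isomorphisms in this category with cocycle conjugacies). Finally, one reads off from the construction that $(\Phi,\IU)$ is a point-norm/uniform-strict limit of unitary conjugates of $(\phi,\Iu)$ itself — because at each stage we only precompose with $(\psi_k,\Iv^{(k)})\circ(\phi_k,\Iu^{(k)})$, which was arranged close to the identity, and postcompose by a conjugation — so $(\Phi,\IU)$ is approximately unitarily equivalent to $(\phi,\Iu)$, and symmetrically $(\Psi,\IV)$ to $(\psi,\Iv)$.

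The main obstacle I anticipate is bookkeeping at the level of the cocycles rather than the $*$-homomorphisms: one must track two interacting families of unitaries on both algebras and check that each conjugation leaves the pair a legitimate cocycle morphism, that the cocycle parts genuinely converge in the uniform strict topology over compacta, and that the "approximately inner" hypothesis on the compositions is quantitatively strong enough at each stage to absorb both the $*$-homomorphism estimates and the cocycle estimates simultaneously. In the non-equivariant setting this is Elliott's argument verbatim; here the extra work is almost entirely in confirming that the cocycle data behaves well under composition and under passage to the limit, which is precisely where the groundwork laid in the second section — on composition of cocycle morphisms and on the Hom-set topologies — does the heavy lifting. There is also a minor point about whether one works with unitaries in multiplier algebras or in unitizations (the footnotes flag this); I would state and prove the theorem for the multiplier-algebra version, which is what \autoref{theorem-C} uses, and note that the argument is insensitive to this choice.
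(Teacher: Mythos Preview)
Your approach is correct and is essentially the same as the paper's: the paper proves Theorem~C as the special case of \autoref{cor:special-ue-intertwining} where both $G$-\cstar-algebras are viewed as trivial inductive limits over the identity, and then runs an explicit Elliott-style alternating intertwining (choosing unitaries $w_n\in\CU(\CM(B))$ and $v_n\in\CU(\CM(A))$ with summable tolerances) exactly as you describe, so that the resulting limit maps are limits of unitary conjugates of $(\phi,\Iu)$ and $(\psi,\Iv)$.

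One remark worth making explicit: you correctly flag the cocycle bookkeeping as the main obstacle, but you should be aware that this is not merely tedious---it is where the non-degeneracy hypothesis does real work. The paper emphasizes (see the remark following \autoref{thm:special-pue-intertwining}) that Elliott's abstract intertwining theorem from \cite{Elliott10} does \emph{not} apply in the multiplier-algebra picture, because composition with an inner automorphism need not be isometric for the topology of \autoref{def:cohom-topology}\ref{def:cohom-topology:1}; concretely, the relation $\subue$ is not symmetric in general (\autoref{rem:ue-not-symmetric}). Your sketch implicitly uses symmetry when you say the limit $(\Phi,\IU)$ is approximately unitarily equivalent to $(\phi,\Iu)$: from the construction one only gets $(\Phi,\IU)\subue(\phi,\Iu)$, and the reverse direction comes precisely from the non-degeneracy of $\phi$. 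The paper handles this by building the intertwining machinery from scratch (\autoref{def:intertwining-diagram}, \autoref{prop:general-intertwining}) with careful one-sided estimates on products of the form $b\cdot\Iu_g$, rather than invoking an abstract metric-category argument. Your proposal is compatible with this, but when you fill in the details you should track the strict-topology estimates on the cocycle side one-sidedly and invoke non-degeneracy only at the end.
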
 

In a nutshell, this means that the framework discussed here comes equipped with a clear-cut roadmap for classification of \cstar-dynamics up to cocycle conjugacy, involving \emph{uniqueness} and \emph{existence} theorems just like explained before in the context of the Elliott program.
In some cases, this may provide a useful alternative to classification via the Evans--Kishimoto intertwining method.
A good example of this is the homotopy rigidity result \cite[Theorem F]{Szabo18kp}\footnote{We note that the current published version of this result \cite[Corollary 6.5]{Szabo18kp} mistakenly claims in its conclusion the conjugacy of two actions instead of cocycle conjugacy.
The correct statement is proved for example via \autoref{thm:special-pue-intertwining}, whereas the published version tried to appeal to an erronous version of this intertwining theorem from \cite{Szabo18ssa}, which has since then been corrected in \cite{Szabo18ssaCOR}.} 
for outer actions of amenable groups on Kirchberg algebras, which to my knowledge nobody can currently prove via Evans--Kishimoto intertwining.
In fact, the framework developed here and the intertwining results will be a crucial ingredient in forthcoming work joint with James Gabe, in which we classify outer actions of discrete amenable groups on Kirchberg algebras via equivariant Kasparov theory.
The content of the last section can be seen as preliminary evidence why the framework appears to be useful for extracting information about \cstar-dynamics from $K$-theoretical data, which will be fleshed out further in our forthcoming work with James Gabe.
Our approach to classification of \cstar-dynamics will as a result substantially differ from previous work in this area, such as for example the recent articles \cite{IzumiMatui18, IzumiMatui19} handling the case of poly-$\IZ$ groups based on variants of the Evans--Kishimoto intertwining method.

We note that the main result of \cite{Elliott10} has been an apparent template for \autoref{theorem-C}, and in fact the former directly implies the latter under the assumpion that both $A$ and $B$ are unital.
However, the assumptions in the main result of \cite{Elliott10} no longer apply in any obvious way to the non-unital case.
Since we are also interested in comparing more general inductive limits via intertwining arguments, which is not handled in \cite{Elliott10}, we will build the intertwining theory in our context from scratch.

In the fourth section, we will also deal with some one-sided intertwining arguments, as well as prove a version of \autoref{theorem-C} involving asymptotic unitary equivalence; see \autoref{thm:cont-intertwining}.\footnote{To the best of my knowledge, the non-dynamical version of this fact is folklore but has so far remained unpublished.}
In the fifth section we briefly revisit the concept of strong self-absorption for \cstar-dynamical systems, which turns out to be nicely expressible in the framework of this paper.
The main result of that section is a stronger version of the existing equivariant McDuff-type theorems \cite{Szabo18ssa, Szabo17ssa3} for non-unital \cstar-algebras (see \autoref{thm:McDuff}), which holds under the assumption of equivariant Jiang--Su stability, and arises as a straightforward consequence of an intertwining argument in the preceding section.
In the sixth section, we take a look at $G$-equivariant Kasparov theory, and show that it naturally extends to a (bi-)functor on the cocycle category of $G$-\cstar-algebras.
This holds true both in Kasparov's original Fredholm picture \cite{Kasparov88}, as well as the \emph{Cuntz--Thomsen} picture, which was in fact conceptualized in Thomsen's original work \cite{Thomsen98} by using cocycle representations in the sense of \autoref{definition-A}.
We also take the opportunity to show that a certain continuity assumption in the definition of an equivariant Cuntz pair describing elements in a $KK^G$-group is redundant; cf.\ \autoref{prop:automatic-continuity}.\bigskip

\textbf{Acknowledgement.}
The key external impulse to pursue this line of research came to me while I was giving a lecture series at the NCGOA 2018 in Münster, the subject of which boiled down to the classification of group actions via the Evans--Kishimoto intertwining method.
During the lengthy discussion of its technical details, George Elliott made several suggestive and open-ended remarks about the framework as a whole, insisting (or perhaps speculating) that it ought to be possible to intertwine at the level of the morphisms connecting two \cstar-dynamics.
To make a long story short, this paper can in large part be understood as a rigorous positive answer to said suspicion, and I would like to thank George Elliott for voicing it.


\section{Introduction to the cocycle category}

\subsection{Preliminaries}

Throughout, $G$ will denote a second-countable, locally compact group.
The capital letters $A,B,C$ will usually denote \cstar-algebras. 
For elements $a$ and $b$ in some \cstar-algebra, we frequently write $a=_\eps b$ as short-hand for $\|a-b\|\leq\eps$.
The multiplier algebra of $A$ is denoted as $\CM(A)$, whereas its unitary group will be denoted by $\CU(A)$ if $A$ is unital.
We define $\CU(\eins+A)=(\eins+A)\cap\CU(A^\dagger)$, where $A^\dagger$ is the \cstar-algebra arising from adding a unit to $A$.
One has a canonical isomorphism of groups $\CU(A)\cong\CU(\eins+A)$ in case $A$ is unital, even though the two sets are not equal.
Normal alphabetical letters such as $u,v,U,V$ will be frequently used for unitary elements in \cstar-algebras.
For a given unitary $u\in\CU(\CM(A))$ or $u\in\CU(\eins+A)$, we denote by $\ad(u)\in\Aut(A)$ the inner automorphism given by $a\mapsto uau^*$.
Greek letters such as $\alpha,\beta,\gamma$ are used for point-norm continuous maps $G\to\Aut(A)$. 
Most of the time these are either group actions or part of a twisted action on $A$.
Fraktur letters such as $\Fu, \Fv, \Fw$ are used for strictly continuous maps $G\times G\to\CU(\CM(A))$, and usually denote the 2-cocycles belonging to a twisted action on $A$.
Double-struck letters such as $\Iu, \Iv, \IU, \IV$ are used for strictly continuous maps $G\to\CU(\CM(A))$.
In practice, these are often 1-cocycles with respect to a $G$-action or more generally are part of a \emph{cocycle morphism}, as defined in the main body of the article.

\begin{defi}
Let $G$ be a locally compact group.
A twisted action of $G$ on a \cstar-algebra $A$ is a pair $(\alpha,\Fu)$ for a point-norm continuous map $\alpha: G\to\Aut(A)$ and a strictly continuous map $\Fu: G\times G\to\CU(\CM(A))$ satisfying
\[
\alpha_1=\id_A,\quad \alpha_s\circ\alpha_t = \ad(\Fu_{s,t})\circ\alpha_{st}
\]
and
\[
\Fu_{s,1}=\Fu_{1,s}=\eins,\quad \alpha_r(\Fu_{s,t})\Fu_{r,st} = \Fu_{r,s}\Fu_{rs,t}
\]
for all $r,s,t\in G$.
We commonly write $(\alpha,\Fu): G\curvearrowright A$.
In this context, the map $\Fu$ is referred to as a 2-cocycle, and the triple $(A,\alpha,\Fu)$ is referred to as a twisted $G$-\cstar-algebra.

In the case that $\Fu$ is norm-continuous and takes values in $\CU(\eins+A)$, we say that $(\alpha,\Fu)$ is a gently twisted action and call the triple $(A,\alpha,\Fu)$ a gently twisted $G$-\cstar-algebra.
\end{defi}

\begin{rem} \label{rem:genuine-actions}
In the special case $\Fu=\eins$, we recover what it means for $\alpha$ to be a $G$-action, and the pair $(A,\alpha)$ is then called a $G$-\cstar-algebra, which we will identify with the triple $(A,\alpha,\eins)$ when we view it as a (gently) twisted $G$-\cstar-algebra in the canonical way.
\end{rem}

Let $(A,\alpha,\Fu)$ be a twisted $G$-\cstar-algebra as in the above definition.
Sometimes it will be important to keep track of the triple $(\CM(A),\alpha,\Fu)$, which in general is not a twisted $G$-\cstar-algebra in the above sense.
The canonical induced map $\alpha: G\to\Aut(\CM(A))$ is only point-strictly continuous instead of point-norm continuous, but all the other axioms of a twisted $G$-\cstar-algebra make sense for this triple.

In what follows, we need to recall a few basic concepts from the theory of twisted actions; see \cite{PackerRaeburn89}.

\begin{rem} \label{rem:twisted-crossed-product}
Let $(A,\alpha,\Fu)$ be a twisted $G$-\cstar-algebra.
Recall that the (maximal) twisted crossed product $A\rtimes_{\alpha,\Fu} G$ comes with a covariant representation $(\pi,\Iu)$ of $(A,\alpha,\Fu)$, i.e., a non-degenerate $*$-homom\-orphism $\pi: A\to\CM(A\rtimes_{\alpha,\Fu} G)$ and a strictly continuous map $\Iu: G\to\CU(\CM(A\rtimes_{\alpha,\Fu} G))$ such that
\[
\Iu_g\Iu_h = \pi(\Fu_{g,h})\Iu_{gh} \quad\text{and}\quad \ad(\Iu_g)\circ\pi = \pi\circ\alpha_g
\]
for all $g,h\in G$.
This covariant representation has the universal property that every other one factors through it.
To be more precise, whenever $B$ is another \cstar-algebra and the pair $\pi^B: A\to\CM(B)$ and $\Iu^B: G\to\CU(\CM(B))$ forms another (non-degenerate) covariant representation into $\CM(B)$, then there exists a unique non-degenerate $*$-homomorphism $\Phi: A\rtimes_{\alpha,\Fu} G\to \CM(B)$ such that $\pi^B=\Phi\circ\pi$ and $\Iu^B=\Phi\circ\Iu$.
\end{rem}

\begin{defi} \label{def:ext-equ}
Let $(\alpha,\Fu): G\curvearrowright A$ be a twisted action.
Given any strictly continuous map $\Is: G\to\CU(\CM(A))$ with $\Is_1=\eins$, we may define a new twisted action $(\alpha,\Fu)^\Is := (\alpha^\Is,\Fu^\Is)$ via $\alpha^\Is_g=\ad(\Is_g)\circ\alpha_g$ and $\Fu^\Is_{g,h}=\Is_g\alpha_g(\Is_h)\Fu_{g,h}\Is_{gh}^*$ for all $g,h\in G$.
If there exists a map $\Is$ such that $\Fu^\Is=\eins$, then the 2-cocycle $\Fu$ is called a coboundary, and it means that $(\alpha,\Fu)$ is equivalent to a genuine action in the sense below.
\end{defi}

\begin{defi}
Two twisted actions $(\alpha,\Fu), (\beta,\Fv): G\curvearrowright A$ on the same \cstar-algebra are called exterior equivalent, if there exists a strictly continuous map $\Is: G\to\CU(\CM(A))$ with $\Is_1=\eins$ such that $(\beta,\Fv)=(\alpha,\Fu)^\Is$.
\end{defi}

\begin{rem}
In light of \autoref{rem:genuine-actions}, if we restrict \autoref{def:ext-equ} to genuine actions $\alpha, \beta: G\curvearrowright A$, then we write $\alpha^\Is=\beta$ to mean $(\alpha^\Is,\eins)=(\beta,\eins)$, which automatically forces $\Is$ to be an $\alpha$-1-cocycle (or briefly $\alpha$-cocycle), and $\beta$ to be the associated cocycle perturbation. 
That is, we have $\Is_{gh}=\Is_g\alpha_g(\Is_h)$ for all $g,h\in G$.
\end{rem}

\begin{defi} \label{def:extendible}
A $*$-homomorphism $\phi: A\to \CM(B)$ is called extendible, if for any increasing approximate unit $e_\lambda\in A$, the net $\phi(e_\lambda)\in\CM(B)$ converges strictly to a projection $p$ in $\CM(B)$.\footnote{In particular, if $\phi$ is non-degenerate, then it is extendible.}
In this case $\phi$ factorizes through $\CM(pBp)\cong p\CM(B)p\subseteq \CM(B)$, where it becomes a non-degenerate $*$-homomorphism $\phi_p: A\to \CM(pBp)$, which uniquely extends to a $*$-homomorphism from the multiplier algebra $\CM(A)$ that becomes strictly continuous on the unit ball.

The map $\phi^+: \CU(\CM(A))\to\CU(\CM(B))$ given by 
\[
\phi^+(u)=\phi_p(u)+(\eins_{\CM(B)}-p)
\]
is then a well-defined, strictly continuous homomorphism between unitary groups.
It satisfies the formula $\phi^+(u)\phi(a)=\phi(ua)$ for all $a\in A$ and $u\in\CU(\CM(A))$, which we will frequently use without mention. 
If $\phi$ happens to be non-degenerate, then it extends to a unital $*$-homomorphism $\CM(A)\to\CM(B)$, in which case we can write $\phi(u)$ instead of $\phi^+(u)$.
\end{defi}

\begin{rem}
To avoid confusion, here is a word of caution for the reader.
Depending on context, we may use the similarly looking expression $\phi^\dagger$ to denote the unital $*$-homomorphism $A^\dagger\to B^\dagger$ induced by a $*$-homomorphism $\phi: A\to B$.
Under this notation, if $\phi$ is assumed to be extendible and $u$ is a unitary in $A^\dagger$, then the unitary multiplier in $\CM(B)$ arising from $\phi^\dagger(u)$ in $B^\dagger$ agrees with the multiplier $\phi^+(u)$ in $\CM(B)$ in the sense of the definition above.
In particular, these two pieces of notation, which could a priori mean different things, always yield the same elements in $\CM(B)$ whenever both are defined.
\end{rem}

\begin{rem}
If $\phi: A\to \CM(B)$ and $\psi: B\to \CM(C)$ are two extendible $*$-homomorphisms with corresponding projections $p\in\CM(B)$ and $q\in\CM(C)$, then one may compose them via
\[
A\stackrel{\phi_p}{\longrightarrow} \CM(pBp) \subseteq \CM(B) \stackrel{\psi_q}{\longrightarrow} \CM(qCq) \subseteq \CM(C)
\]
and thus write $\psi\circ\phi: A\to\CM(C)$.
This is clearly again an extendible $*$-homomorphism with respect to the corner spanned by $\psi_q(p)\leq q$ in $\CM(C)$.
It follows moreover that $(\psi\circ\phi)^+ = \psi^+\circ\phi^+$ as group homomorphisms $\CU(\CM(A))\to\CU(\CM(C))$.
We will use all this without further mention.
\end{rem}

\subsection{Cocycle representations and morphisms}

\begin{defi} \label{def:cocycle-hom}
Let $(\alpha,\Fu): G\curvearrowright A$ and $(\beta,\Fv): G\curvearrowright B$ be two twisted actions on \cstar-algebras.
\begin{enumerate}[label=\textup{(\roman*)},leftmargin=*]
\item \label{def:cocycle-hom:1}
A \emph{cocycle representation} from $(A,\alpha,\Fu)$ to $(\CM(B),\beta,\Fv)$ is a pair $(\psi,\Iv)$, where
$\psi: A\to \CM(B)$ is an extendible $*$-homomorphism and $\Iv: G\to\CU(\CM(B))$ is a strictly continuous map such that
\begin{equation} \label{eq:equivariance}
\ad(\Iv_g)\circ\beta_g\circ\psi = \psi\circ\alpha_g
\end{equation}
and
\begin{equation} \label{eq:cocycle-identity}
\psi^+(\Fu_{g,h}) = \Iv_g\beta_g(\Iv_h)\Fv_{g,h}\Iv_{gh}^*
\end{equation}
for all $g,h\in G$.
\item \label{def:cocycle-hom:2}
A \emph{cocycle morphism} from $(A,\alpha,\Fu)$ to $(B,\beta,\Fv)$ is a cocycle representation $(\psi,\Iv)$ as above, with the additional requirement that $\psi(A)\subseteq B$.
We will write $(\psi,\Iv): (A,\alpha,\Fu) \to (B,\beta,\Fv)$.
The set of all such pairs will be denoted by $\cohom((\alpha,\Fu),(\beta,\Fv))$.
\item \label{def:cocycle-hom:3}
Suppose that $(\alpha,\Fu)$ and $(\beta,\Fv)$ are both gently twisted actions.
A \emph{proper cocycle morphism} from $(A,\alpha)$ to $(B,\beta)$ is a pair $(\psi,\Iv): (A,\alpha,\Fu) \to (B,\beta,\Fv)$, where $\psi: A\to B$ is an arbitrary $*$-homomor\-phism, $\Iv$ is a norm-continuous map taking values in $\CU(\eins+B)$, and such that the conditions \eqref{eq:equivariance} and \eqref{eq:cocycle-identity} hold, with ``$\psi^+(\Fu_{g,h})$'' being replaced by ``$\psi^\dagger(\Fu_{g,h})$''.
The set of all such pairs will be denoted by $\cohomr((\alpha,\Fu),(\beta,\Fv))$.
\end{enumerate}
We will refer to \eqref{eq:equivariance} as the \emph{equivariance condition} and to \eqref{eq:cocycle-identity} as the \emph{cocycle identity} for the pair $(\psi,\Iv)$.
In the special case that $\Iv=\eins$ is the trivial map, we identify $\psi$ with the pair $(\psi,\eins)$ and say that
\[
\psi: (A,\alpha,\Fu) \to (B,\beta,\Fv) \quad\text{or}\quad \psi: (A,\alpha,\Fu) \to (\CM(B),\beta,\Fv)
\]
is an equivariant $*$-homomorphism.
\end{defi}

\begin{rem}
In the above definition, the equivariance condition implies $\Iv_1=\eins$ due to the fact that we have $\Fu_{1,1}=\eins_A$, $\Fv_{1,1}=\eins_B$ and $\beta_1=\id_B$.
\end{rem}

\begin{rem} \label{rem:general-coc-reps}
If we restrict \autoref{def:cocycle-hom} to the case of genuine $G$-\cstar-algebras $(A,\alpha)$ and $(B,\beta)$, then a cocycle representation
\[
(\psi,\Iv): (A,\alpha) \to (\CM(B),\beta)
\]
is a pair such that $\Iv: G\to\CU(\CM(B))$ is a $\beta$-cocycle in the ordinary sense by \eqref{eq:cocycle-identity}, and \eqref{eq:equivariance} means that the corresponding cocycle perturbation $\beta^\Iv: G\curvearrowright B$ turns $\psi$ into an $\alpha$-$\beta^\Iu$-equivariant $*$-homomorphism.
In this particular case, we may sometimes call such a pair $(\psi,\Iv)$ a cocycle representation even if $\psi$ is not assumed to be extendible.
Although this definition is too weak to define a category as we will do below, it is nevertheless relevant in the description of $G$-equivariant $KK$-theory as treated in the last section.
The analogous observation as above is true for cocycle morphisms to $(B,\beta)$, and being a proper cocycle morphism additionally means that $\Iv$ is a (norm-continuous) cocycle in $\CU(\eins+B)$.
\end{rem}

\begin{example}
Given a twisted action $(\alpha,\Fu): G\curvearrowright A$, a covariant representation into $\CM(B)$ in the sense of \autoref{rem:twisted-crossed-product} is just a non-degenerate cocycle representation $(\pi^B,\Iu^B): (A,\alpha,\Fu)\to (\CM(B),\id,\eins)$.
\end{example}

\begin{example} \label{ex:exterior-equivalence}
Let $(\alpha,\Fu): G\curvearrowright A$ be a twisted action.
For any strictly continuous map $\Is: G\to\CU(\CM(A))$ with $\Is_1=\eins$, we obtain (by definition) a cocycle morphism
\[
(\id_A,\Is): (A,\alpha^\Is,\Fu^\Is) \to (A,\alpha,\Fu).
\]
We refer to such an example as an \emph{exterior equivalence}.
If $\Is$ is norm-continuous and takes values in $\CU(\eins+A)$, then we call this a proper exterior equivalence.
In particular, if $\alpha$ is a genuine action and $\Iu$ is an $\alpha$-cocycle, then
\[
(\id_A,\Iu): (A,\alpha^\Iu)\to (A,\alpha)
\]
is an exterior equivalence, and it is moreover a proper exterior equivalence precisely when $\Iu$ is a norm-continuous map with values in $\CU(\eins+A)$.\footnote{We will see later in \autoref{prop:automatic-continuity} that norm-continuity of $\Iu$ is a redundant assumption that happens to hold automatically if it takes values in $\CU(\eins+A)$.}
\end{example}

\subsection{The cocycle category}

\begin{prop} \label{prop:cocycle-composition}
Let $(\alpha,\Fu): G\curvearrowright A$, $(\beta,\Fv): G\curvearrowright B$ and $(\gamma,\Fw): G\curvearrowright C$ be three twisted actions on \cstar-algebras.
\begin{enumerate}[label=\textup{(\roman*)},leftmargin=*]
\item \label{prop:cocycle-composition:1}
Suppose that
\[
(A,\alpha,\Fu) \stackrel{(\phi,\Iu)}{\longrightarrow} (\CM(B),\beta,\Fv) \quad\text{and}\quad (B,\beta,\Fv)
\stackrel{(\psi,\Iv)}{\longrightarrow} (\CM(C),\gamma,\Fw)
\]
are two cocycle representations. 
Then the composition, which we define as the pair
\[
(\psi,\Iv)\circ (\phi,\Iu) := (\psi\circ\phi, \psi^+(\Iu)\cdot\Iv)
\]
defines a cocycle representation from $(A,\alpha,\Fu)$ to $(\CM(C),\gamma,\Fw)$.
This binary operation is associative.
\item \label{prop:cocycle-composition:2}
In this sense, the composition of two cocycle morphisms is again a cocycle morphism.
\item \label{prop:cocycle-composition:3}
Suppose that all three of them are gently twisted actions.
If
\[
(A,\alpha) \stackrel{(\phi,\Iu)}{\longrightarrow} (B,\beta) \quad\text{and}\quad (B,\beta)
\stackrel{(\psi,\Iv)}{\longrightarrow} (C,\gamma)
\]
are two proper cocycle morphisms, then the composition formula above, with ``$\psi^+$'' replaced by ``$\psi^\dagger$'', defines a proper cocycle morphism.
\end{enumerate}
\end{prop}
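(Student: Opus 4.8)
The plan is to carry out part~\ref{prop:cocycle-composition:1} in full and to observe that the other two parts then follow at once. So fix two composable cocycle representations $(\phi,\Iu)\colon (A,\alpha,\Fu)\to (\CM(B),\beta,\Fv)$ and $(\psi,\Iv)\colon (B,\beta,\Fv)\to (\CM(C),\gamma,\Fw)$, and put $\Ir:=\psi^+(\Iu)\cdot\Iv$, so that the proposed composite is $(\psi\circ\phi,\Ir)$. Four things need to be checked: that $\psi\circ\phi$ is an extendible $*$-homomorphism $A\to\CM(C)$; that $\Ir$ is a strictly continuous map $G\to\CU(\CM(C))$; the equivariance condition~\eqref{eq:equivariance} for $(\psi\circ\phi,\Ir)$; and the cocycle identity~\eqref{eq:cocycle-identity} for $(\psi\circ\phi,\Ir)$. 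The first is exactly the content of the remark preceding \autoref{def:cocycle-hom} (a composition of extendible $*$-homomorphisms is extendible, with $(\psi\circ\phi)^+=\psi^+\circ\phi^+$ on $\CU(\CM(A))$), and the second follows from strict continuity of $\psi^+$ on the unit ball of $\CM(B)$ together with joint strict continuity of multiplication on bounded sets. Given part~\ref{prop:cocycle-composition:1}, part~\ref{prop:cocycle-composition:2} is immediate since $\phi(A)\subseteq B$ and $\psi(B)\subseteq C$ force $(\psi\circ\phi)(A)\subseteq C$; and part~\ref{prop:cocycle-composition:3} is obtained by rerunning the same computation with $\psi^+$ replaced everywhere by the unitization $\psi^\dagger\colon B^\dagger\to C^\dagger$, using that $\psi^\dagger$ restricts to a homomorphism $\CU(\eins+B)\to\CU(\eins+C)$ and that both norm-continuity and the property of taking values in $\CU(\eins+C)$ survive applying $\psi^\dagger$ and forming pointwise products.

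The identities behind equivariance and the cocycle identity rest on two elementary facts. First, $\psi\circ\ad(u)=\ad\big(\psi^+(u)\big)\circ\psi$ for every $u\in\CU(\CM(B))$, which is a direct consequence of the relation $\psi^+(u)\psi(a)=\psi(ua)$ from \autoref{def:extendible}. Second --- the only step genuinely requiring an argument --- the equivariance of $(\psi,\Iv)$ extends to multiplier unitaries, in the sense that $\psi^+\big(\beta_g(u)\big)=\Iv_g\,\gamma_g\big(\psi^+(u)\big)\,\Iv_g^{*}$ for all $g\in G$ and $u\in\CU(\CM(B))$. I would deduce this by noting that $\psi\circ\beta_g$ and $\ad(\Iv_g)\circ\gamma_g\circ\psi$ are two extendible $*$-homomorphisms $B\to\CM(C)$ that agree on $B$ by~\eqref{eq:equivariance}; hence they have the same support projection $q\in\CM(C)$ (so in particular $\Iv_g\gamma_g(q)\Iv_g^{*}=q$) and the same strictly-continuous-on-the-unit-ball extension $\CM(B)\to\CM(C)$, and adding $\eins_{\CM(C)}-q$ to this common extension and evaluating at $u$ produces $\psi^+\big(\beta_g(u)\big)$ on the one hand and $\Iv_g\gamma_g\big(\psi^+(u)\big)\Iv_g^{*}$ on the other.

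With these facts, \eqref{eq:equivariance} for $(\psi\circ\phi,\Ir)$ follows from the chain $(\psi\circ\phi)\circ\alpha_g=\psi\circ\ad(\Iu_g)\circ\beta_g\circ\phi=\ad\big(\psi^+(\Iu_g)\big)\circ\psi\circ\beta_g\circ\phi=\ad\big(\psi^+(\Iu_g)\big)\circ\ad(\Iv_g)\circ\gamma_g\circ(\psi\circ\phi)=\ad(\Ir_g)\circ\gamma_g\circ(\psi\circ\phi)$, using in turn the equivariance of $(\phi,\Iu)$, the first fact above, the equivariance of $(\psi,\Iv)$, and $\Ir_g=\psi^+(\Iu_g)\Iv_g$. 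For \eqref{eq:cocycle-identity} one begins with $(\psi\circ\phi)^+(\Fu_{g,h})=\psi^+\big(\phi^+(\Fu_{g,h})\big)$, rewrites $\phi^+(\Fu_{g,h})=\Iu_g\beta_g(\Iu_h)\Fv_{g,h}\Iu_{gh}^{*}$ by the cocycle identity for $(\phi,\Iu)$, distributes the group homomorphism $\psi^+$, and then substitutes $\psi^+\big(\beta_g(\Iu_h)\big)=\Iv_g\gamma_g\big(\psi^+(\Iu_h)\big)\Iv_g^{*}$ via the second fact and $\psi^+(\Fv_{g,h})=\Iv_g\gamma_g(\Iv_h)\Fw_{g,h}\Iv_{gh}^{*}$ via the cocycle identity for $(\psi,\Iv)$; regrouping the resulting product (and using multiplicativity of $\gamma_g$) collapses it to $\Ir_g\,\gamma_g(\Ir_h)\,\Fw_{g,h}\,\Ir_{gh}^{*}$, as required. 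Associativity is then a formal check: for a third composable cocycle representation $(\chi,\Iu')$ one verifies that both bracketings of $(\chi,\Iu')\circ(\psi,\Iv)\circ(\phi,\Iu)$ have first component $\chi\circ\psi\circ\phi$ and second component $\chi^+\big(\psi^+(\Iu)\big)\cdot\chi^+(\Iv)\cdot\Iu'$, invoking $(\chi\circ\psi)^+=\chi^+\circ\psi^+$ and multiplicativity of $\chi^+$. The one place calling for genuine care --- hence the expected main obstacle --- is the bookkeeping in the second elementary fact, i.e.\ pinning down precisely how $\psi^+$ interacts with the point-strictly continuous extension of $\beta$ to $\CM(B)$; everything else is routine.
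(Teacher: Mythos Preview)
Your proof is correct and follows essentially the same route as the paper: the same chain of equalities for equivariance and the cocycle identity, the same formal associativity check, and the same remark that parts~\ref{prop:cocycle-composition:2} and~\ref{prop:cocycle-composition:3} are immediate. The only difference is that you isolate and carefully justify the identity $\psi^+\big(\beta_g(u)\big)=\Iv_g\,\gamma_g\big(\psi^+(u)\big)\,\Iv_g^{*}$ as a separate lemma, whereas the paper simply uses it in the middle of the cocycle-identity computation without comment; your extra care there is warranted but does not change the overall argument.
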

\begin{proof}
\ref{prop:cocycle-composition:1}: Let us first verify the equivariance condition.
Using the equivariance condition for both pairs $(\phi,\Iu)$ and $(\psi,\Iv)$, one has for all $g\in G$ that
\[
\begin{array}{ccl}
\ad(\psi^+(\Iu_g)\Iv_g)\circ\gamma_g\circ\psi\circ\phi &=& \ad(\psi^+(\Iu_g))\circ\Big(\ad(\Iv_g)\circ\gamma_g\circ\psi\Big)\circ\phi \\
&=& \ad(\psi^+(\Iu_g))\circ\psi\circ\beta_g\circ\phi \\
&=& \psi\circ\Big( \ad(\Iu_g)\circ\beta_g\circ\phi\Big) \\
&=& \psi\circ\phi\circ\alpha_g.
\end{array}
\]
Let us now verify the cocycle identity.
For all $g,h\in G$, we use equivariance and cocycle identities for our given cocycle representations to see that
\[
\begin{array}{cl}
\multicolumn{2}{l}{ \psi^+(\Iu_g)\Iv_g\gamma_g\big(\psi^+(\Iu_h)\Iv_h\big)\Fw_{g,h}\Iv_{gh}^*\psi^+(\Iu_{gh})^* } \\
=& \psi^+(\Iu_g) \Big( \Iv_g\gamma_g\big(\psi^+(\Iu_h)\big)\Iv_g^* \Big) \Big( \Iv_g\gamma_g(\Iv_h)\Fw_{g,h}\Iv_{gh}^* \Big)\psi^+(\Iu_{gh}^*) \\
=& \psi^+(\Iu_g) \psi^+(\beta_g(\Iu_h)) \psi^+(\Fv_{g,h}) \psi^+(\Iu_{gh})^* \\
=& \psi^+\Big( \Iu_g\beta_g(\Iu_h)\Fv_{g,h}\Iu_{gh}^* \big) \ = \ (\psi\circ\phi)^+(\Fu_{g,h}).
\end{array}
\]
So the pair $(\psi\circ\phi,\psi^+(\Iu)\Iv)$ is indeed a cocycle representation from $(A,\alpha,\Fu)$ to $(\CM(C),\gamma,\Fw)$.

In order to obtain associativity, let $(\delta,\Fx): G\curvearrowright D$ be a twisted action and $(\theta,\Iw): (C,\gamma,\Fw) \to (\CM(D),\delta,\Fx)$ another cocycle representation.
Then evidently $\theta\circ (\psi\circ\phi) = (\theta\circ\psi)\circ\phi$ and 
\[
\theta^+\big( \psi^+(\Iu_g)\Iv_g \big)\cdot \Iw_g = (\theta\circ\psi)^+(\Iu_g)\cdot \psi^+(\Iv_g)\Iw_g
\]
for all $g\in G$, which by definition yields
\[
(\theta,\Iw)\circ\big( (\psi,\Iv)\circ(\phi,\Iu) \big) = \big( (\theta,\Iw)\circ (\psi,\Iv) \big)\circ(\phi,\Iu)
\]

Part \ref{prop:cocycle-composition:2} is trivial, and \ref{prop:cocycle-composition:3} follows from the same computations if we replace all instances of ``$\psi^+$'' with ``$\psi^\dagger$''.
\end{proof}

\begin{defi} \label{def:cocycle-category}
We define the following categories:
\begin{enumerate}[label=\textup{(\roman*)},leftmargin=*]
\item The \emph{twisted cocycle category} $\cstar_{G,t}$ is defined as the category with objects being the twisted $G$-\cstar-algebras, and the morphisms from $(A,\alpha,\Fu)$ to $(B,\beta,\Fv)$ being the cocycle morphisms
\[
(\phi,\Iu): (A,\alpha,\Fu) \to (B,\beta,\Fv).
\]
Composition of morphisms in $\cstar_{G,t}$ is defined via the operation in \autoref{prop:cocycle-composition}.
On any object $(A,\alpha,\Fu)$, the identity morphism in this category is given by $(\id_A,\eins)$, which we will sometimes abbreviate as $\id_A$ if there is no cause for confusion.
\item The \emph{cocycle category} $\cstar_G$ is the full subcategory of $\cstar_{G,t}$ with objects being all the genuine $G$-\cstar-algebras $(A,\alpha)$.
\item The \emph{twisted proper cocycle category} $\cstar_{G,t,\Fp}$ has the gently twisted $G$-\cstar-algebras as objects, and proper cocycle morphisms as morphisms.
The full category given by all genuine $G$-\cstar-algebras is called the \emph{proper cocycle category} and is denoted $\cstar_{G,\Fp}$.
\end{enumerate}
\end{defi}

\begin{rem}
Any cocycle morphism $(\phi,\Iu): (A,\alpha,\Fu)\to (B,\beta,\Fv)$ can always be expressed as the composition
\[
(A,\alpha,\Fu) \stackrel{(\phi,\eins)}{\longrightarrow} (B,\beta^\Iu,\Fv^\Iu) \stackrel{(\id_B,\Iu)}{\longrightarrow} (B,\beta,\Fv)
\]
of a genuine equivariant $*$-homomorphism and an exterior equivalence.
\end{rem}

\begin{rem}
A cocycle morphism $(\phi,\Iu): (A,\alpha,\Fu)\to (B,\beta,\Fv)$ is invertible in the category $\cstar_{G,t}$ if and only if $\phi$ is an isomorphism between \cstar-algebras.
The inverse is then given by the formula
\[
(\phi,\Iu)^{-1} = (\phi^{-1},\phi^{-1}(\Iu)^*): (B,\beta,\Fv) \to (A,\alpha,\Fu).
\]
We will refer to an invertible cocycle morphism as a \emph{cocycle conjugacy}.

As the terminology suggests, there exists a cocycle conjugacy between the twisted $G$-\cstar-algebras $(A,\alpha,\Fu)$ and $(B,\beta,\Fv)$ if and only if the twisted actions $(\alpha,\Fu): G\curvearrowright A$ and $(\beta,\Fv): G\curvearrowright B$ are cocycle conjugate in the ordinary sense.

In an analogous fashion, we define a \emph{proper cocycle conjugacy} between gently twisted $G$-\cstar-algebras as an isomorphism in the category $\cstar_{G,t,\Fp}$.
This is a stronger requirement than cocycle conjugacy in the sense that exterior equivalence has to be implemented by unitaries coming directly from the (unitized) underlying \cstar-algebra rather than its multiplier algebra.
\end{rem}

\begin{rem} \label{rem:from-genuine-to-twisted}
Let $\alpha: G\curvearrowright A$ be a genuine action, and $(\beta,\Fv): G\curvearrowright B$ a twisted action.
If there exists a cocycle representation $(\phi,\Iu): (A,\alpha)\to (\CM(B),\beta,\Fv)$, then the cocycle identity \eqref{eq:cocycle-identity} immediately implies that $\Fv$ is a coboundary, i.e., $(\beta,\Fv)$ is exterior equivalent to a genuine action.
Note that this is the case even when $A=\IC$.

The reverse statement is not true, on the other hand, as there generally exist many cocycle representations from $(B,\beta,\Fv)$ to $(\CM(\CK),\id,\eins)$, which are nothing but covariant representations, provided that $B$ is separable.
\end{rem}

\begin{example} \label{ex:inner-morphisms}
Let $(\alpha,\Fu): G\curvearrowright A$ be a twisted action.
Then any unitary element $v\in\CU(\CM(A))$ gives rise to the cocycle conjugacy
\[
\big( \ad(v),v\alpha_\bullet(v)^*\big): (A,\alpha,\Fu)\to (A,\alpha,\Fu).\footnote{Throughout the paper we will occasionally use this bullet-point notation where convenient. In this instance ``$v\alpha_\bullet(v)^*$'' denotes the map $G\to\CU(\CM(A))$ given by the assignment $g\mapsto v\alpha_g(v)^*$.}
\]
If it is clear from context that we are making statements within the category $\cstar_{G,t}$, then we simply write $\ad(v):=\big( \ad(v),v\alpha_\bullet(v)^*\big)$.
\end{example}
\begin{proof}
Equivariance follows from
\[
\ad(v\alpha_g(v)^*)\circ\alpha_g\circ\ad(v) = \ad(v)\circ\alpha_g
\]
and the cocycle identity follows from
\[
\begin{array}{ccl}
v\alpha_g(v)^*\alpha_g\big(v\alpha_h(v)^*\big)\Fu_{g,h}\alpha_{gh}(v)v^* &=& v(\alpha_g\circ\alpha_h)(v)^*\Fu_{g,h}\alpha_{gh}(v)v^* \\
&=& v\Fu_{g,h}v^* \ = \ \ad(v)(\Fu_{g,h}).
\end{array}
\]
So indeed $\ad(v)$ always defines a cocycle morphism.
\end{proof}

\begin{defi}
Let $(\alpha,\Fu): G\curvearrowright A$ be a twisted action.
We call a cocycle morphism $(\phi,\Iu): (A,\alpha,\Fu)\to (A,\alpha,\Fu)$ inner, if it is of the form $(\phi,\Iu)=\ad(v)$ in the sense of \autoref{ex:inner-morphisms} for some unitary $v\in\CU(\CM(A))$.
\end{defi}

\begin{example} \label{ex:inner-morphisms:2}
Let $(\alpha,\Fu): G\curvearrowright A$ be a gently twisted action.
Then for any unitary $v\in\CU(\eins+A)$, the cocycle conjugacy 
\[
\ad(v)=\big( \ad(v),v\alpha_\bullet(v)^*\big): (A,\alpha,\Fu)\to (A,\alpha,\Fu).
\]
as defined above becomes a proper cocycle conjugacy.
\end{example}

\begin{rem}
Let $(\alpha,\Fu): G\curvearrowright A$ and $(\beta,\Fv): G\curvearrowright B$ be two twisted actions.
If $(\phi,\Iu): (A,\alpha,\Fu)\to (\CM(B),\beta,\Fv)$ is a cocycle representation and $v\in\CU(\CM(B))$ is a unitary, then the composition with its inner cocycle automorphism is given as
\[
\begin{array}{ccl}
\ad(v)\circ(\phi,\Iu) &=& \big(\ad(v)\circ\phi,\ad(v)(\Iu_\bullet)\cdot (v\beta_\bullet(v)^*) \big)\\ 
&=& (\ad(v)\circ\phi, v\Iu_\bullet\beta_\bullet(v)^*).
\end{array}
\]
\end{rem}

\begin{prop} \label{prop:inner-normal-subgroup}
Let $(\alpha,\Fu): G\curvearrowright A$ be a twisted action.
Then the inner cocycle automorphisms of $(A,\alpha,\Fu)$ form a normal subgroup in the cocycle automorphism group of $(A,\alpha,\Fu)$.
\end{prop}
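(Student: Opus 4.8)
The plan is to first identify the inner cocycle automorphisms as the image of a group homomorphism (which immediately makes them a subgroup), and then to verify normality by a direct two-fold application of the composition formula from \autoref{prop:cocycle-composition}, where the equivariance condition of the conjugating automorphism does all the work.

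First I would record that the assignment $v\mapsto\ad(v)=\big(\ad(v),v\alpha_\bullet(v)^*\big)$ is a group homomorphism from $\CU(\CM(A))$ into the cocycle automorphism group of $(A,\alpha,\Fu)$. Each $\ad(v)$ is a cocycle conjugacy by \autoref{ex:inner-morphisms} since its first component $\ad(v)\in\Aut(A)$ is an isomorphism. Multiplicativity is the identity $\ad(v)\circ\ad(w)=\ad(vw)$: the first components compose to $\ad(vw)$, and for the cocycle components one computes, using that $\ad(v)$ is non-degenerate and $\alpha_g(v)\alpha_g(w)=\alpha_g(vw)$,
\[
\ad(v)^+\big(w\alpha_\bullet(w)^*\big)\cdot v\alpha_\bullet(v)^* = vw\alpha_\bullet(w)^*\alpha_\bullet(v)^* = (vw)\alpha_\bullet(vw)^*.
\]
Since $\ad(\eins)=(\id_A,\eins)$ is the identity morphism, the image of this homomorphism — which by definition is precisely the set of inner cocycle automorphisms — is a subgroup.

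Next I would establish normality. Let $(\phi,\Iu)$ be an arbitrary cocycle automorphism of $(A,\alpha,\Fu)$, so that $\phi$ is a $*$-automorphism extending canonically to $\CM(A)$, and let $v\in\CU(\CM(A))$. Using the inverse formula $(\phi,\Iu)^{-1}=(\phi^{-1},\phi^{-1}(\Iu)^*)$ together with the composition formula applied twice, the first component of $(\phi,\Iu)\circ\ad(v)\circ(\phi,\Iu)^{-1}$ simplifies to $\phi\circ\ad(v)\circ\phi^{-1}=\ad(\phi(v))$, while the cocycle component works out to
\[
\phi(v)\,\Iu_\bullet^*\,\phi\big(\alpha_\bullet(v)\big)^*\,\Iu_\bullet.
\]
Now I would invoke the equivariance condition \eqref{eq:equivariance} for $(\phi,\Iu)$, namely $\ad(\Iu_g)\circ\alpha_g\circ\phi=\phi\circ\alpha_g$ (which extends from $A$ to $\CM(A)$ by strict continuity on bounded sets); rearranged, it reads $\Iu_g^*\,\phi(\alpha_g(x))\,\Iu_g=\alpha_g(\phi(x))$ for all $x$ and $g$. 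Applying this with $x=v$ and passing to adjoints collapses the displayed cocycle component to $\phi(v)\,\alpha_\bullet(\phi(v))^*$, so that $(\phi,\Iu)\circ\ad(v)\circ(\phi,\Iu)^{-1}=\ad(\phi(v))$ is again inner, proving normality.

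I do not expect a genuine obstacle here; the only point demanding care is the bookkeeping in the cocycle (second) components of the two successive compositions, and in particular the observation that the twist by $\Iu$ produced by the conjugation is exactly cancelled by the equivariance relation for $(\phi,\Iu)$ — everything else is routine.
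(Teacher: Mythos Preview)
Your proof is correct and takes essentially the same approach as the paper: the normality computation you outline is line-for-line the same as the paper's, which arrives at $(\phi,\Iu)\circ\ad(v)\circ(\phi,\Iu)^{-1}=\ad(\phi(v))$ via exactly the composition formula and equivariance step you describe. The only difference is that you additionally spell out the subgroup property by exhibiting $v\mapsto\ad(v)$ as a group homomorphism, whereas the paper tacitly takes this for granted and presents only the normality calculation.
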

\begin{proof}
Let $(\phi,\Iu): (A,\alpha,\Fu)\to(A,\alpha,\Fu)$ be a cocycle conjugacy and $v\in\CU(\CM(A))$.
Then
\[
\begin{array}{ccl}
(\phi,\Iu)\circ\ad(v)\circ (\phi,\Iu)^{-1} &=& (\phi,\Iu)\circ\ad(v)\circ (\phi^{-1},\phi^{-1}(\Iu_\bullet)^*) \\
&=& (\phi,\Iu)\circ (\ad(v)\circ\phi^{-1}, v\phi^{-1}(\Iu_\bullet)^*\alpha_\bullet(v)^* ) \\
&=& (\ad(\phi(v)), \phi(v)\Iu_\bullet^*\phi(\alpha_\bullet(v))^*\cdot\Iu_\bullet) \\
&=& (\ad(\phi(v)), \phi(v)\alpha_\bullet(\phi(v))^*) \\
&=& \ad(\phi(v)).
\end{array}
\]
\end{proof}


\subsection{Functoriality of crossed products}

\begin{defi} \label{def:crossed-product-hom}
Let $(\alpha,\Fu): G\curvearrowright A$ and $(\beta,\Fv): G\curvearrowright B$ be two twisted actions on \cstar-algebras.
We denote by $(\pi^A,\Iu^\alpha)$ the universal covariant representation of $(A,\alpha,\Fu)$, and by $(\pi^B,\Iu^\beta)$ the universal covariant representation of $(B,\beta,\Fv)$, which appear in the definition of their respective twisted crossed products.
For a non-degenerate cocycle representation $(\theta,\Ix): (A,\alpha,\Fu)\to (\CM(B),\beta,\Fv)$, we define the non-degenerate $*$-homomorphism $\Theta=(\theta,\Ix)\rtimes G: A\rtimes_{\alpha,\Fu} G \to \CM(B\rtimes_{\beta,\Fv} G)$ between the (full) twisted crossed products via
\[
\Theta\circ\pi^A = \pi^B\circ\phi,\quad \Theta(\Iu^\alpha_g)=\pi^B(\Ix_g)\Iu^\beta_g,\ g\in G.
\]
Notice that there is indeed a unique such $*$-homomorphism by the universal property of $A\rtimes_{\alpha,\Fu}G$, as these formulas define a covariant representation into $\CM(B\rtimes_{\beta,\Fv} G)$.
If $(\phi,\Iu)$ is a cocycle morphism into $(B,\beta,\Fv)$, then the image of $\Phi$ is contained in $B\rtimes_{\beta,\Fv} G$.
\end{defi}

\begin{prop} \label{prop:crossed-product-functorial}
Let
\[
(A,\alpha,\Fu) \stackrel{(\phi,\Iu)}{\longrightarrow} (\CM(B),\beta,\Fv),\quad (B,\beta,\Fv) \stackrel{(\psi,\Iv)}{\longrightarrow} (\CM(C),\gamma,\Fw)
\]
be two non-degenerate cocycle representations between twisted $G$-\cstar-algebras.
Then the construction in \autoref{def:crossed-product-hom} is compatible with respect to compositions, i.e., one has
\[
\Big( (\psi,\Iv)\rtimes G\Big)\circ\Big( (\phi,\Iu)\rtimes G\Big) = \big( (\psi,\Iv)\circ (\phi,\Iu) \big)\rtimes G.
\]
In particular, if we restrict to non-degenerate maps, then the (twisted) crossed product construction is functorial on the (twisted) cocycle category $\cstar_{G,t}$.
\end{prop}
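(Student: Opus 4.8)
The plan is to reduce the displayed identity to the universal property of the full twisted crossed product. Write $\Phi = (\phi,\Iu)\rtimes G$, $\Psi = (\psi,\Iv)\rtimes G$ and $\Xi = \big((\psi,\Iv)\circ(\phi,\Iu)\big)\rtimes G$. First I would check that all three maps are genuinely defined: by \autoref{def:crossed-product-hom} the maps $\Phi$ and $\Psi$ are non-degenerate, so $\Psi$ extends canonically to a unital $*$-homomorphism on $\CM(B\rtimes_{\beta,\Fv} G)$, the composite $\Psi\circ\Phi\colon A\rtimes_{\alpha,\Fu} G\to\CM(C\rtimes_{\gamma,\Fw} G)$ makes sense and is again non-degenerate; moreover $(\psi,\Iv)\circ(\phi,\Iu)$ is a non-degenerate cocycle representation by \autoref{prop:cocycle-composition}, so $\Xi$ is defined as well. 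Since $\Psi\circ\Phi$ and $\Xi$ are non-degenerate $*$-homomorphisms out of $A\rtimes_{\alpha,\Fu} G$, by the universal property recalled in \autoref{rem:twisted-crossed-product} it suffices to verify that they agree on $\pi^A(A)$ and on each canonical unitary $\Iu^\alpha_g$, $g\in G$.

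The first verification is immediate: chaining $\Phi\circ\pi^A = \pi^B\circ\phi$ with $\Psi\circ\pi^B = \pi^C\circ\psi$ gives $\Psi\circ\Phi\circ\pi^A = \pi^C\circ(\psi\circ\phi)$, and $\psi\circ\phi$ is the first component of $(\psi,\Iv)\circ(\phi,\Iu)$, so this equals $\Xi\circ\pi^A$. For the unitaries I would first note that the equality $\Psi\circ\pi^B = \pi^C\circ\psi$ of non-degenerate $*$-homomorphisms $B\to\CM(C\rtimes_{\gamma,\Fw} G)$ passes to the unique strictly continuous extensions to $\CM(B)$, i.e.\ $\Psi(\pi^B(m)) = \pi^C(\psi(m))$ for all $m\in\CM(B)$; applying this to $m = \Iu_g$ (and using $\psi^+(\Iu_g) = \psi(\Iu_g)$ since $\psi$ is non-degenerate) together with $\Psi(\Iu^\beta_g) = \pi^C(\Iv_g)\Iu^\gamma_g$ yields
\[
(\Psi\circ\Phi)(\Iu^\alpha_g) = \Psi\big(\pi^B(\Iu_g)\,\Iu^\beta_g\big) = \pi^C\big(\psi^+(\Iu_g)\big)\,\pi^C(\Iv_g)\,\Iu^\gamma_g = \pi^C\big(\psi^+(\Iu_g)\,\Iv_g\big)\,\Iu^\gamma_g,
\]
which is exactly $\Xi(\Iu^\alpha_g)$, because the cocycle component of $(\psi,\Iv)\circ(\phi,\Iu)$ is $\psi^+(\Iu)\cdot\Iv$ by \autoref{prop:cocycle-composition}. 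Hence $\Psi\circ\Phi = \Xi$, which is the claimed compatibility.

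For the concluding functoriality statement I would additionally observe that $(\id_A,\eins)\rtimes G$ fixes $\pi^A(A)$ and every $\Iu^\alpha_g$, hence equals $\id_{A\rtimes_{\alpha,\Fu} G}$, and that the non-degenerate cocycle morphisms contain all identities and are closed under composition (the composite $\psi\circ\phi$ of non-degenerate $*$-homomorphisms being non-degenerate), so they form a wide subcategory of $\cstar_{G,t}$ on which $(-)\rtimes G$ is then a functor by the compatibility just proved. The one point that deserves genuine care --- as opposed to a mere unwinding of definitions --- is the passage of the intertwining relation $\Psi\circ\pi^B = \pi^C\circ\psi$ from $B$ to $\CM(B)$, since this is what legitimises manipulating $\Psi(\pi^B(\Iu_g))$; I do not anticipate any serious obstacle beyond keeping the various strict extensions straight.
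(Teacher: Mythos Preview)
Your proof is correct and follows essentially the same approach as the paper: both reduce the identity to the universal property of $A\rtimes_{\alpha,\Fu} G$ by checking agreement on $\pi^A(A)$ and on each $\Iu^\alpha_g$. You are in fact slightly more explicit than the paper in justifying the step $\Psi(\pi^B(\Iu_g)) = \pi^C(\psi(\Iu_g))$ via the strict extension to multipliers, and in spelling out the identity-morphism check for functoriality.
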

\begin{proof}
Let us denote 
\[
\Phi=(\phi,\Iu)\rtimes G,\ \Psi=(\psi,\Iv)\rtimes G,\ \Theta = (\psi\circ\phi, \psi(\Iu)\Iv)\rtimes G.
\]
Then we can see directly at the level of covariant representations that
\[
\Psi\circ\Phi\circ\pi^A = \Psi\circ\pi^B\circ\phi = \pi^C\circ\psi\circ\phi = \Theta\circ\pi^A
\]
and
\[
\Psi(\Phi(\Iu^\alpha_g)) = \Psi(\pi^B(\Iu_g)\Iu^\beta_g) = \pi^C(\psi(\Iu_g))\pi^C(\Iv_g)\Iu^\gamma_g = \Theta(\Iu^\alpha_g)
\]
for all $g\in G$.
By the universal property of $A\rtimes_{\alpha,\Fu} G$ with respect to covariant representations, this forces $\Theta=\Psi\circ\Phi$ and finishes the proof.
\end{proof}


\section{Inductive limits}


\begin{nota} \label{nota:limit-notation}
Let $A_n$ be a sequence of \cstar-algebras and $\phi_n: A_n\to A_{n+1}$ a sequence of $*$-homomorphisms.
If we view this as an inductive system so that we can define the inductive limit $\dst A=\lim_{\longrightarrow} \set{A_n,\phi_n}$, we will denote
\[
\phi_{n,m}=\phi_{m-1}\circ\dots\circ\phi_{n+1}\circ\phi_n: A_n\to A_m,\quad m>n\geq 1.
\]
For notational convenience we also set $\phi_{n,n}=\id_{A_n}$.
Furthermore the universal map from the $n$-th building block into the limit will be denoted as
\[
\phi_{n,\infty}: A_n\to A,\quad n\geq 1.
\]
If every connecting map $\phi_n$ is extendible, then so is every $*$-homomorphism of the form $\phi_{n,m}$ or $\phi_{n,\infty}$.
Analogously, if 
\[
(\phi_n,\Iu^{(n)}): (A_n,\alpha^{(n)},\Fu^{(n)}) \to (A_{n+1},\alpha^{(n+1)},\Fu^{(n+1)})
\]
is an inductive system of cocycle morphisms between twisted $G$-\cstar-algebras, we introduce the symbolic notation $(\phi,\Iu)_{n,m}$ for $m>n\geq 1$, and 
\[
(\phi,\Iu)_{n,\infty}: (A_n,\alpha^{(n)},\Fu^{(n)})\to (A,\alpha,\Fu),
\] 
provided that $(A,\alpha,\Fu)$ is the inductive limit in the category $\cstar_{G,t}$.
The fact that this always exists is the subject of the next proposition.
\end{nota}

\begin{prop} \label{prop:limits}
Sequential inductive limits exist in the category $\cstar_{G,t}$.

To be more precise, let
\[
(\phi_n,\Iu^{(n)}): (A_n,\alpha^{(n)},\Fu^{(n)}) \to (A_{n+1},\alpha^{(n+1)},\Fu^{(n+1)})
\]
be an inductive system of cocycle morphisms between twisted $G$-\cstar-algebras.
Then the inductive limit in $\cstar_{G,t}$ is of the form $(A,\alpha,\Fu)$, where $\dst A=\lim_{\longrightarrow} \set{ A_n,\phi_n}$ is the ordinary inductive limit \cstar-algebra. 
The universal cocycle morphisms
\[
(\phi,\Iu)_{n,\infty}: (A_n,\alpha^{(n)},\Fu^{(n)}) \to (A,\alpha,\Fu)
\]
are of the form
\[
(\phi,\Iu)_{n,\infty}= \big( \phi_{n,\infty},\phi_{n,\infty}^+(\IU^{(n-1)*}_\bullet) \big),
\]
where 
\[
\IU^{(n)}: G\to\CU(\CM(A_{n+1}))
\]
are strictly continuous maps defined inductively via $\IU^{(0)}=\eins$ and $\IU^{(n)}_g = \phi_n^+(\IU^{(n-1)}_g)\Iu^{(n)}_g$ for $n\geq 1$ and $g\in G$.
\end{prop}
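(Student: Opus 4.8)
The plan is to verify the proposed formula by a direct but careful bookkeeping argument, splitting the work into three parts: (1) checking that the maps $\IU^{(n)}$ are well-defined strictly continuous maps and that the stated pairs really are cocycle morphisms into the candidate limit; (2) checking that these pairs are compatible with the connecting maps of the system, i.e.\ $(\phi,\Iu)_{m,\infty}\circ(\phi,\Iu)_{n,m}=(\phi,\Iu)_{n,\infty}$; and (3) checking the universal property. First I would observe that, since each $\phi_n$ is extendible, the composite $\phi_{n,\infty}$ is extendible, so $\phi_{n,\infty}^+$ is a well-defined strictly continuous homomorphism of unitary groups, and hence $\phi_{n,\infty}^+(\IU^{(n-1)*}_\bullet)$ is a well-defined strictly continuous map $G\to\CU(\CM(A))$. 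The action $\alpha$ and $2$-cocycle $\Fu$ on $A$ are the ones induced so that every $\phi_{n,\infty}$ becomes an \emph{equivariant} $*$-homomorphism from $(A_n,\beta^{(n)},\Fv^{(n)})$, where $(\beta^{(n)},\Fv^{(n)})=(\alpha^{(n)},\Fu^{(n)})^{\IU^{(n-1)}}$ is the cocycle perturbation making $\phi_n$ equivariant after twisting; one first checks that these perturbed actions are genuinely compatible with the $\phi_n$ (this is exactly the content of the identity $\IU^{(n)}_g=\phi_n^+(\IU^{(n-1)}_g)\Iu^{(n)}_g$, which says that the exterior equivalence pieces telescope) and hence descend to a well-defined twisted action $(\alpha,\Fu)$ on the limit $A$.

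The cleanest way to organize part (1)–(2) is via the factorization remark following \autoref{def:cocycle-category}: each cocycle morphism $(\phi_n,\Iu^{(n)})$ factors as a genuine equivariant $*$-homomorphism followed by an exterior equivalence, and the inductively defined $\IU^{(n)}$ are precisely the accumulated exterior-equivalence cocycles pushed forward to $A_{n+1}$. Concretely, I would prove by induction on $m>n$ that
\[
(\phi,\Iu)_{n,m} = \big(\phi_{n,m},\ \phi_{n,m}^+(\IU^{(n-1)*}_\bullet)\IU^{(m-1)}_\bullet\big),
\]
using the composition formula from \autoref{prop:cocycle-composition}\ref{prop:cocycle-composition:1} together with the functoriality $(\psi\circ\phi)^+=\psi^+\circ\phi^+$ and the recursion for $\IU$; the inductive step is a short manipulation of strings of unitaries in $\CM(A_{m+1})$. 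Passing to the limit, applying $\phi_{m,\infty}^+$ and using $\phi_{m,\infty}^+\circ\phi_{n,m}^+=\phi_{n,\infty}^+$, this gives exactly the claimed formula for $(\phi,\Iu)_{n,\infty}$ and simultaneously the compatibility $(\phi,\Iu)_{m,\infty}\circ(\phi,\Iu)_{n,m}=(\phi,\Iu)_{n,\infty}$. Verifying that each $(\phi,\Iu)_{n,\infty}$ satisfies the equivariance condition \eqref{eq:equivariance} and cocycle identity \eqref{eq:cocycle-identity} with respect to $(\alpha,\Fu)$ is then a routine computation of the same flavour, using that $\phi_{n,\infty}$ is equivariant for the perturbed action and that $\IU^{(n-1)}$ implements that perturbation.

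For the universal property, suppose $(\psi_n,\Iv^{(n)}): (A_n,\alpha^{(n)},\Fu^{(n)})\to (D,\delta,\Fx)$ is a compatible family of cocycle morphisms, meaning $(\psi_{n+1},\Iv^{(n+1)})\circ(\phi_n,\Iu^{(n)})=(\psi_n,\Iv^{(n)})$. Unwinding the composition formula, this says $\psi_{n+1}\circ\phi_n=\psi_n$ (so the $\psi_n$ induce a unique $*$-homomorphism $\psi=\lim\psi_n: A\to D$ by the ordinary inductive limit universal property, and $\psi$ is extendible since the $\psi_n$ are) and $\psi_{n+1}^+(\Iu^{(n)}_\bullet)\Iv^{(n+1)}_\bullet=\Iv^{(n)}_\bullet$. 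The latter, combined with the recursion for $\IU^{(n)}$, forces $\psi_{n+1}^+(\IU^{(n)}_\bullet)\Iv^{(n+1)}_\bullet$ to be independent of $n$; call this common strictly continuous map $\Iw: G\to\CU(\CM(D))$. One checks the candidate factorization $(\psi,\Iv^{(n)}\text{-compatible})=(\psi,\Iw)\circ(\phi,\Iu)_{n,\infty}$ holds for all $n$ by applying the composition formula and the identities just derived, and that $(\psi,\Iw)$ is the unique cocycle morphism $(A,\alpha,\Fu)\to(D,\delta,\Fx)$ with this property (uniqueness of $\psi$ from the ordinary limit, uniqueness of $\Iw$ from $\Iw=\psi^+(\IU^{(n-1)}_\bullet)\Iv^{(n)}_\bullet$ for any fixed $n$, which is how it must be defined). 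I expect the main obstacle to be purely notational: keeping the indices on the telescoping cocycles $\IU^{(n)}$ straight through repeated applications of $\phi^+$ and the composition formula, and being careful that all strict-continuity and extendibility claims are inherited correctly along infinite composites — there is no deep analytic difficulty, but the calculation is error-prone and should be set up once, cleanly, via the equivariant-plus-exterior-equivalence factorization rather than attacked head-on.
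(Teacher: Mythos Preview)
Your proposal is correct and follows essentially the same approach as the paper's proof: both construct the limit twisted action by recognizing that the recursion $\IU^{(n)}_g=\phi_n^+(\IU^{(n-1)}_g)\Iu^{(n)}_g$ makes the $\phi_n$ genuinely equivariant between the perturbed actions $(\alpha^{(n)})^{\IU^{(n-1)}}$, then use the ordinary \cstar-limit universal property to define $(\alpha,\Fu)$ and to produce the universal factoring map, with the candidate cocycle $\Iw=\Iv^{(1)}$ arising from the telescoping identity $\psi_{n+1}^+(\IU^{(n)}_\bullet)\Iv^{(n+1)}_\bullet=\Iv^{(1)}_\bullet$. The only cosmetic difference is that you organize the bookkeeping around the general formula $(\phi,\Iu)_{n,m}=\big(\phi_{n,m},\phi_{n,m}^+(\IU^{(n-1)*}_\bullet)\IU^{(m-1)}_\bullet\big)$ and the equivariant-plus-exterior-equivalence factorization, whereas the paper verifies the compatibility and the twisted-action axioms step by step from first principles; both unwind to the same computations.
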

\begin{proof}
As all maps $\phi_n$ are assumed to be extendible, this will be the case for $\phi_{n,\infty}: A_n\to A$ as well.

For each $g\in G$, we have $\ad(\Iu^{(n)}_g)\circ\alpha^{(n+1)}_g\circ\phi_n = \phi_n\circ\alpha^{(n)}_g$.
By induction this translates to the condition
\[
\ad(\IU^{(n)}_g)\circ\alpha^{(n+1)}_g\circ\phi_n = \phi_n\circ\ad(\IU^{(n-1)}_g)\circ\alpha^{(n)}_g
\]
for all $g\in G$ and $n\geq 1$.
By the universal property of the limit $A$, there is a unique (point-norm continuous) map
\[
\alpha: G\to\Aut(A) \quad\text{with}\quad \alpha_g\circ\phi_{n,\infty} = \phi_{n,\infty}\circ\ad(\IU^{(n-1)}_g)\circ\alpha^{(n)}_g.
\]
For each $g,h\in G$, we have
\[
\phi_n^+(\Fu_{g,h}^{(n)}) = \Iu_g\alpha^{(n+1)}_g(\Iu_h)\Fu^{(n+1)}_{g,h}\Iu_{gh}^*.
\]
By induction, this translates to
\begin{equation} \label{eq:limits:1}
\phi_{1,n}^+(\Fu^{(1)}_{g,h}) = \IU^{(n-1)}_g \alpha^{(n)}_g (\IU^{(n-1)}_h ) \Fu^{(n)}_{g,h} \IU^{(n-1)*}_{gh}
\end{equation}
for all $n$ and $g,h\in G$.
Consider the strictly continuous map $\Fu: G\times G\to\CU(\CM(A))$ via $\Fu_{g,h}=\phi_{1,\infty}^+(\Fu^{(1)}_{g,h})$.

We claim that the pair $(\alpha,\Fu)$ defines a twisted action on $A$.
For every $n$ and $g,h\in G$, we have
\[
\begin{array}{ccl}
\alpha_g\circ\alpha_h\circ\phi_{n,\infty} &=& \phi_{n,\infty}\circ\ad(\IU^{(n-1)}_g)\circ\alpha_g^{(n)}\circ\ad(\IU^{(n-1)}_h)\circ\alpha_h^{(n)} \\
&=& \phi_{n,\infty}\circ\ad(\IU^{(n-1)}_g\alpha^{(n)}_g(\IU^{(n-1)}_h))\circ\alpha^{(n)}_g\circ\alpha^{(n)}_h \\
&=& \phi_{n,\infty}\circ\ad(\IU^{(n-1)}_g\alpha^{(n)}_g(\IU^{(n-1)}_h)\Fu^{(n)}_{g,h})\circ\alpha^{(n)}_{gh} \\
&\stackrel{\eqref{eq:limits:1}}{=}& \phi_{n,\infty}\circ\ad\big( \phi_{1,n}^+(\Fu^{(1)}_{gh})\IU^{(n-1)}_{gh} \big)\circ\alpha^{(n)}_{gh} \\
&=& \ad(\Fu_{g,h})\circ\phi_{n,\infty}\circ\ad(\IU^{(n-1)}_{gh})\circ\alpha^{(n)}_{gh} \\
&=& \ad(\Fu_{g,h})\circ\alpha_{gh}\circ\phi_{n,\infty}.
\end{array}
\]
Since $n\geq 1$ is arbitrary, this yields $\alpha_g\circ\alpha_h=\ad(\Fu_{g,h})\circ\alpha_{gh}$.
Moreover, as $(\alpha,\Fu)$ is defined so that $\phi_{1,\infty}: A_1\to A$ satisfies 
$\alpha_g\circ\phi_{1,\infty}=\phi_{1,\infty}\circ\alpha^{(1)}_g$ and $\Fu_{g,h}=\phi_{1,\infty}^+(\Fu^{(1)}_{g,h})$ for all $g,h\in G$, this immediately forces the map $\Fu$ to satisfy the 2-cocycle identity.
This verifies our claim that the triple $(A,\alpha,\Fu)$ becomes a twisted $G$-\cstar-algebra.

Let us now verify that $(\phi,\Iu)_{n,\infty}$ are well-defined cocycle morphisms.
We have arranged by construction that
\[
\alpha_g\circ\phi_{n,\infty} = \phi_{n,\infty}\circ\ad(\IU^{(n-1)}_g)\circ\alpha^{(n)}_g
\]
and
\[
\Fu_{g,h} = \phi_{n,\infty}^+\Big( \IU^{(n-1)}_g \alpha^{(n)}_g (\IU^{(n-1)}_h ) \Fu^{(n)}_{g,h} \IU^{(n-1)*}_{gh} \Big).
\]
In particular,
\[
\renewcommand\arraystretch{1.5}
\begin{array}{cl}
\multicolumn{2}{l}{ \phi_{n,\infty}^+( \IU^{(n-1)*}_g ) \alpha_g\big( \phi_{n,\infty}^+(\IU^{(n-1)*}_h) \big) \Fu_{g,h} \phi_{n,\infty}^+(\IU^{(n-1)}_{gh}) }\\
=& \phi_{n,\infty}^+\Big( \alpha_g^{(n)}(\IU^{(n-1)}_h)^* \IU_g^{(n-1)*} \Big) \Fu_{g,h} \phi_{n,\infty}^+(\IU^{(n-1)}_{gh}) \\
=& \phi_{n,\infty}^+(\Fu^{(n)}_{g,h}).
\end{array}
\]
So we see that the pair $\big( \phi_{n,\infty},\phi_{n,\infty}^+(\IU^{(n-1)*}_\bullet) \big)$ yields a well-defined cocycle morphism
\[
(\phi,\Iu)_{n,\infty} = \big( \phi_{n,\infty},\phi_{n,\infty}^+(\IU^{(n-1)*}_\bullet) \big): (A_n,\alpha^{(n)},\Fu^{(n)}) \to (A,\alpha,\Fu).
\]
Moreover it holds for all $n\geq 1$ that
\[
\begin{array}{cl}
\multicolumn{2}{l}{ (\phi,\Iu)_{n,\infty} \ = \ \big( \phi_{n,\infty},\phi_{n,\infty}^+(\IU^{(n-1)*}_\bullet) \big) } \\
=& \big( \phi_{n+1,\infty}\circ\phi_n, (\phi_{n+1,\infty}\circ\phi_n)^+(\IU^{(n-1)*}_\bullet) \big) \\
=& \big( \phi_{n+1,\infty}\circ\phi_n, \phi_{n+1,\infty}^+(\Iu^{(n)}_\bullet\cdot\IU^{(n)*}_\bullet) \big) \\
=& \big( \phi_{n+1,\infty},\phi_{n+1,\infty}^+(\IU^{(n)*}_\bullet) \big)\circ (\phi_n,\Iu^{(n)}) \ = \ (\phi,\Iu)_{n+1,\infty}\circ (\phi_n,\Iu^{(n)}).
\end{array}
\]
Finally, we claim that $(A,\alpha,\Fu)$ together with the family of morphisms $(\phi,\Iu)_{n,\infty}$ satisfies the universal property of the inductive limit in the category $\cstar_{G,t}$.
Suppose that $(B,\beta,\Fv)$ is a twisted $G$-\cstar-algebra and that for each $n$, we are given a cocycle morphism
\[
(\theta_n,\Iv^{(n)}): (A_n,\alpha^{(n)},\Fu^{(n)}) \to (B,\beta,\Fv)
\]
such that
\[
(\theta_n,\Iv^{(n)}) = (\theta_{n+1},\Iv^{(n+1)})\circ(\phi_n,\Iu^{(n)}) = (\theta_{n+1}\circ\phi_n,\theta_{n+1}^+(\Iu^{(n)})\Iv^{(n+1)} )
\]
holds for all $n$.
Then the universal property of the \cstar-algebraic limit gives us a unique $*$-homomorphism $\Theta: A\to B$ with $\Theta\circ\phi_{n,\infty}=\theta_n$ for all $n$.
From the composition formula above it follows inductively that
\[
\theta_{n+1}^+(\IU_g^{(n)})\Iv^{(n+1)}=\theta_{n+1}^+\big( \phi_n^+(\IU^{(n-1)}_g)\Iu^{(n)}_g \big)\Iv^{(n+1)}_g = \theta_n^+(\IU^{(n-1)}_g)\Iv^{(n)}_g =\Iv^{(1)}_g
\]
Setting $\IV=\Iv^{(1)}$, we claim that $(\Theta,\IV)$ yields a cocycle morphism from $(A,\alpha,\Fu)$ to $(B,\beta,\Fv)$.
Indeed, we get equivariance from
\[
\begin{array}{cl}
\multicolumn{2}{l}{ \ad(\IV_g)\circ\beta_g\circ\Theta\circ\phi_{n,\infty} } \\
=& \ad(\theta_n^+(\IU^{(n-1)}_g)\Iv^{(n)}_g)\circ\beta_g\circ\theta_n \\
=& \ad(\theta_n^+(\IU^{(n-1)}_g))\circ\theta_n\circ\alpha^{(n)}_g \\
=& \Theta\circ\phi_{n,\infty}\circ\ad(\IU^{(n-1)}_g)\circ\alpha_g^{(n)} \\
=& \Theta\circ\alpha_g\circ\phi_{n,\infty}.
\end{array}
\]
As $n$ is arbitrary, this implies $\ad(\IV_g)\circ\beta_g\circ\Theta=\Theta\circ\alpha_g$ for all $g\in G$.
The cocycle identity again follows immediately from the fact that $\phi_{1,\infty}$ is genuinely equivariant and $(\theta_1,\Iv^{(1)})$ was subject to the cocycle identity.

So $(\Theta,\IV)$ is indeed a cocycle morphism.
It follows from construction that $(\Theta,\IV)\circ (\phi,\Iu)_{n,\infty}=(\theta_n,\Iv^{(n)})$ for all $n\geq 1$, which finishes the proof.
\end{proof}

\begin{prop} \label{prop:proper-limits}
Sequential inductive limits exist in the category $\cstar_{G,t,\Fp}$.
More specifically, for any inductive system in the (twisted) proper cocycle category $\cstar_{G,t,\Fp}$, the same construction as in \autoref{prop:limits}, with all instances of ``$\bullet^+$'' replaced by ``$\bullet^\dagger$'', yields the limit in the category $\cstar_{G,t,\Fp}$.
\end{prop}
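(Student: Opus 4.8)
The plan is to transcribe the proof of \autoref{prop:limits} almost verbatim, substituting $\bullet^\dagger$ for $\bullet^+$ throughout, and to check at each step that the construction never leaves the realm of norm-continuous, unitization-valued data, so that the output genuinely lives in $\cstar_{G,t,\Fp}$. Concretely, with $\dst A=\lim_{\longrightarrow}\set{A_n,\phi_n}$ the ordinary \cstar-algebra limit, I would define $\IU^{(n)}: G\to\CU(\eins+A_{n+1})$ inductively by $\IU^{(0)}=\eins$ and $\IU^{(n)}_g=\phi_n^\dagger(\IU^{(n-1)}_g)\Iu^{(n)}_g$. Since each induced map $\phi_n^\dagger: A_n^\dagger\to A_{n+1}^\dagger$ is a unital (hence contractive, hence norm-continuous) $*$-homomorphism carrying $\CU(\eins+A_n)$ into $\CU(\eins+A_{n+1})$, and $\Iu^{(n)}$ is norm-continuous with values in $\CU(\eins+A_{n+1})$ by hypothesis, the map $\IU^{(n)}$ is again norm-continuous with values in $\CU(\eins+A_{n+1})$. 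Each $\alpha^{(n)}_g$ extends canonically to an automorphism of $A_n^\dagger$, so the inductive relation governing $\ad(\IU^{(n)}_g)\circ\alpha^{(n+1)}_g\circ\phi_n$ and the unitized analogue of the cocycle identity \eqref{eq:limits:1} are obtained by exactly the induction used in \autoref{prop:limits}.

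Next I would assemble the limit datum: the \cstar-algebraic universal property furnishes a point-norm continuous $\alpha: G\to\Aut(A)$ with $\alpha_g\circ\phi_{n,\infty}=\phi_{n,\infty}\circ\ad(\IU^{(n-1)}_g)\circ\alpha^{(n)}_g$, and I would put $\Fu_{g,h}=\phi_{1,\infty}^\dagger(\Fu^{(1)}_{g,h})$. As $\Fu^{(1)}$ is norm-continuous with values in $\CU(\eins+A_1)$ and $\phi_{1,\infty}^\dagger: A_1^\dagger\to A^\dagger$ is a norm-continuous unital $*$-homomorphism, $\Fu$ is norm-continuous with values in $\CU(\eins+A)$ --- precisely what is needed for $(A,\alpha,\Fu)$ to be a \emph{gently} twisted $G$-\cstar-algebra, i.e.\ an object of $\cstar_{G,t,\Fp}$. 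That $(\alpha,\Fu)$ satisfies the twisted-action axioms, that the pairs $(\phi,\Iu)_{n,\infty}=\big(\phi_{n,\infty},\phi_{n,\infty}^\dagger(\IU^{(n-1)*}_\bullet)\big)$ are \emph{proper} cocycle morphisms, and that $(\phi,\Iu)_{n,\infty}=(\phi,\Iu)_{n+1,\infty}\circ(\phi_n,\Iu^{(n)})$, then all follow line by line from \autoref{prop:limits}, since that argument uses only the algebraic identities $\phi^\dagger(u)\phi(a)=\phi(ua)$, multiplicativity of $\phi^\dagger$ on unitaries, and $(\psi\circ\phi)^\dagger=\psi^\dagger\circ\phi^\dagger$, each of which holds for $\bullet^\dagger$ just as for $\bullet^+$. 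For the universal property, given a gently twisted $(B,\beta,\Fv)$ together with a compatible family of proper cocycle morphisms $(\theta_n,\Iv^{(n)})$, I would take the \cstar-algebraic factorization $\Theta: A\to B$ with $\Theta\circ\phi_{n,\infty}=\theta_n$, observe via the same telescoping computation that $\theta_n^\dagger(\IU^{(n-1)}_g)\Iv^{(n)}_g=\Iv^{(1)}_g$ is independent of $n$, and set $\IV=\Iv^{(1)}$; this is norm-continuous with values in $\CU(\eins+B)$, so $(\Theta,\IV)$ is a \emph{proper} cocycle morphism, and the remaining checks of equivariance, the cocycle identity, and compatibility with the $(\phi,\Iu)_{n,\infty}$ coincide with those in \autoref{prop:limits}.

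The only genuine point of care --- which I would flag as the main obstacle, although it is bookkeeping rather than mathematics --- is to make sure that no hidden step of the transcribed argument smuggles in a unitary not already lying in the relevant unitization, or a continuity claim that is only strict rather than norm. Since every unitary fed into $\bullet^+$ in the proof of \autoref{prop:limits} is built from the $\Iu^{(n)}$, the $\Fu^{(n)}$, and values $\alpha^{(n)}_g(\cdot)$ of automorphisms of the $A_n$, all of which are norm-continuous and unitization-valued under the standing hypotheses defining $\cstar_{G,t,\Fp}$, the remark following \autoref{def:extendible} guarantees that $\phi^\dagger$ applies and agrees with $\phi^+$ at each such point. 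Hence the substitution $\bullet^+\mapsto\bullet^\dagger$ is globally consistent, and it suffices to record this and to refer to the computations carried out in the proof of \autoref{prop:limits}.
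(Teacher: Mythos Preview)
Your proposal is correct and follows essentially the same approach as the paper's proof, which simply notes that the construction of \autoref{prop:limits} can be read off verbatim once one observes that the inductively defined $\IU^{(n)}$ are norm-continuous with values in $\CU(\eins+A_{n+1})$. If anything, you are more thorough: you explicitly verify that the limit 2-cocycle $\Fu=\phi_{1,\infty}^\dagger(\Fu^{(1)})$ lands in $\CU(\eins+A)$ so that $(A,\alpha,\Fu)$ is genuinely an object of $\cstar_{G,t,\Fp}$, a point the paper leaves implicit.
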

\begin{proof}
We can read this off immediately from the proof of \autoref{prop:limits}.
If $(\phi_n,\Iu^{(n)})$ are proper cocycle morphisms, then evidently $\IU^{(n)}$ is a norm-continuous map with values in $\CU(\eins+A_{n+1})$ for all $n$, and hence the universal morphisms $(\phi,\Iu)_{n,\infty}= \big( \phi_{n,\infty}, \phi_{n,\infty}^\dagger(\IU^{(n-1)*}) \big)$ are also proper cocycle morphisms.
Every other part of the proof can be copied verbatim.
\end{proof}

\begin{rem}
From the proof of \autoref{prop:limits} we can also deduce that if $\Fu^{(n)}=\eins$ for all $n\geq 1$, then $\Fu=\eins$. In fact it suffices to assume $\Fu^{(1)}=\eins$; cf.\ \autoref{rem:from-genuine-to-twisted}.
In particular, the inductive limit construction respects the subcategory $\cstar_G$ of genuine $G$-\cstar-algebras.
For the same reason, the proper cocycle category $\cstar_{G,\Fp}$ is closed under inductive limits.
\end{rem}

\subsection{The topology on Hom-sets}

\begin{defi} \label{def:cohom-topology}
Let $(\alpha,\Fu): G\curvearrowright A$ and $(\beta,\Fv): G\curvearrowright B$ be two twisted actions on \cstar-algebras.
We equip the possible sets of arrows between them with the following uniform topologies:
\begin{enumerate}[label=\textup{(\roman*)},leftmargin=*]
\item \label{def:cohom-topology:1}
On $\cohom\big( (\alpha,\Fu), (\beta,\Fv) \big)$, we consider the topology generated by the family of pseudometrics defined via
\[
d_{\CF^A,\CF^B,K}\big( (\psi,\Iv), (\phi,\Iu) \big) = \max_{a\in\CF^A} \|\psi(a)-\phi(a)\|+\max_{g\in K} \max_{b\in\CF^B} \|b(\Iv_g-\Iu_g)\|
\]
for compact sets $\CF^A\subset A$, $\CF^B\subset B$, $1_G\in K\subseteq G$, and two elements $(\psi,\Iv),(\phi,\Iu)\in \cohom\big( (\alpha,\Fu), (\beta,\Fv) \big)$.
\item \label{def:cohom-topology:2}
Suppose that both $(\alpha,\Fu)$ and $(\beta,\Fv)$ are gently twisted actions.
On $\cohomr\big( (\alpha,\Fu), (\beta,\Fv) \big)$, we consider the topology generated by the family of pseudometrics defined via
\[
d_{\CF,K}^\Fp\big( (\psi,\Iv), (\phi,\Iu) \big) = \max_{a\in\CF} \|\psi(a)-\phi(a)\|+\max_{g\in K} \|\Iv_g-\Iu_g\|
\]
for compact sets $\CF\subset A$ and $1_G\in K\subseteq G$, and $(\psi,\Iv),(\phi,\Iu)\in \cohomr\big( (\alpha,\Fu), (\beta,\Fv) \big)$.
\end{enumerate}
\end{defi}

\begin{lemma} \label{lem:cauchy-nets}
Let $(\alpha,\Fu): G\curvearrowright A$ and $(\beta,\Fv): G\curvearrowright B$ be two twisted actions on \cstar-algebras.
Let $(\phi^{\lambda},\Iu^{\lambda})\in \cohom\big( (\alpha,\Fu), (\beta,\Fv) \big)$ be a net.
\begin{enumerate}[label=\textup{(\roman*)},leftmargin=*]
\item \label{lem:cauchy-nets:1}
$(\phi^\lambda,\Iu^\lambda)$ satisfies the Cauchy criterion with respect to every pseudometric in \autoref{def:cohom-topology}\ref{def:cohom-topology:1} if and only if the nets
\[
[\lambda\mapsto \phi^\lambda(a)],\quad [\lambda \mapsto \Iu^\lambda_g b],\quad\text{and}\quad  [\lambda \mapsto b\Iu^\lambda_g ]
\]
are Cauchy for all $a\in A$, $b\in B$, $g\in G$, the latter two uniformly over compact sets $K\subseteq G$.
\item \label{lem:cauchy-nets:2}
$(\phi^\lambda,\Iu^\lambda)$ converges to $(\phi,\Iu)$ if and only if $\phi^\lambda\to\phi$ in the point-norm topology, and $\Iu^\lambda_g\to\Iu_g$ in the strict topology, uniformly over compact sets $K\subseteq G$.
In particular, the topology from \autoref{def:cohom-topology}\ref{def:cohom-topology:1} is complete.
\end{enumerate}
\end{lemma}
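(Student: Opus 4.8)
My plan is to reduce the statement, in both parts, to a coordinatewise analysis: unwind the family of pseudometrics $d_{\CF^A,\CF^B,K}$ and treat the $*$-homomorphism coordinate $\phi^\lambda$ and the unitary coordinate $\Iu^\lambda$ separately. For \ref{lem:cauchy-nets:1}, the first summand of $d_{\CF^A,\CF^B,K}$ only involves $\max_{a\in\CF^A}\|\phi^\mu(a)-\phi^\nu(a)\|$, and since $*$-homomorphisms are contractive, a routine $\eps/3$-argument over a finite $\eps$-net of the compact set $\CF^A$ shows that this family of seminorms satisfies the Cauchy criterion if and only if $[\lambda\mapsto\phi^\lambda(a)]$ is Cauchy for every $a\in A$. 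The same compactness argument applied to the second summand shows it is Cauchy (for all $\CF^B$ and $K$) if and only if $[\lambda\mapsto b\Iu^\lambda_g]$ is Cauchy for every $b\in B$, uniformly over compact $K\subseteq G$. Together these already give the reverse implication of \ref{lem:cauchy-nets:1} (the condition on $[\lambda\mapsto\Iu^\lambda_g b]$ is not needed for it) and the forward implication for two of the three listed nets.

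The one nontrivial point is to extract, in the forward direction, that $[\lambda\mapsto\Iu^\lambda_g b]$ is Cauchy uniformly over compact $K$, i.e.\ that controlling $\Iu^\lambda_g$ from the left forces control from the right; this is false for arbitrary unitary-valued nets, so the cocycle-morphism structure must be invoked. I would argue: since $\Iu^\lambda_g$ is a unitary multiplier, $\|(\Iu^\mu_g-\Iu^\nu_g)b\|=\|b^*\big((\Iu^\mu_g)^*-(\Iu^\nu_g)^*\big)\|$, so it suffices to control the adjoints $(\Iu^\lambda_g)^*$ from the left. The cocycle identity \eqref{eq:cocycle-identity} at the pair $(g,g^{-1})$, together with $\Iu^\lambda_1=\eins$, gives $(\Iu^\lambda_g)^*=\beta_g(\Iu^\lambda_{g^{-1}})\,\Fv_{g,g^{-1}}\,\big((\phi^\lambda)^+(\Fu_{g,g^{-1}})\big)^*$; for genuine actions the last two factors disappear, and in the twisted case one carries the fixed $2$-cocycle data along. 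Combined with the identity $\|b^*\beta_g(x)\|=\|\beta_g^{-1}(b^*)\,x\|$ and the joint continuity of the action, which keeps $\{\beta_g^{-1}(b^*):g\in K\}$ inside a compact set $\CF'\subset B$, this bounds $\max_{g\in K}\|b^*\big((\Iu^\mu_g)^*-(\Iu^\nu_g)^*\big)\|$ by $\max_{h\in K\cup K^{-1}}\max_{c\in\CF'}\|c(\Iu^\mu_h-\Iu^\nu_h)\|$, which is part of the left-hand data for the symmetric compact set $K\cup K^{-1}$ and hence small. This yields the remaining direction of \ref{lem:cauchy-nets:1}.

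Part \ref{lem:cauchy-nets:2} runs along the same lines with limits in place of Cauchy nets: the coordinatewise unwinding gives $(\phi^\lambda,\Iu^\lambda)\to(\phi,\Iu)$ iff $\phi^\lambda\to\phi$ point-norm and $b(\Iu^\lambda_g-\Iu_g)\to0$ for all $b$, uniformly over compact $K$, and it remains to upgrade the latter to two-sided strict convergence of $\Iu^\lambda_g\to\Iu_g$ uniformly over $K$. Here, in contrast to \ref{lem:cauchy-nets:1}, the limit $\Iu_g$ is genuinely unitary since $(\phi,\Iu)$ is assumed to be a cocycle morphism; and for unitaries $w_\lambda\to w$ converging from the left to a unitary $w$ one has $w_\lambda^*-w^*=w^*(w-w_\lambda)w_\lambda^*$, whence $\|b(w_\lambda^*-w^*)\|\le\|(bw^*)(w-w_\lambda)\|\to0$, so $w_\lambda^*\to w^*$ from the left and therefore, by taking adjoints, $w_\lambda\to w$ from the right. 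For completeness, given a Cauchy net one obtains from \ref{lem:cauchy-nets:1} a $*$-homomorphism $\phi\colon A\to B$ as the pointwise norm-limit of the $\phi^\lambda$ and, from the left- and right-Cauchy properties, a strict limit $\Iu_g\in\CM(B)$ of each net $[\lambda\mapsto\Iu^\lambda_g]$; a short computation using joint strict continuity of multiplication on bounded sets shows $\Iu_g$ is unitary and $g\mapsto\Iu_g$ strictly continuous, and passing to the limit in \eqref{eq:equivariance} and \eqref{eq:cocycle-identity} shows $(\phi,\Iu)$ is a cocycle morphism to which $(\phi^\lambda,\Iu^\lambda)$ converges.

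The main obstacle is precisely the asymmetry between the left- and right-strict topologies: everything else is a formal unwinding of definitions, but the forward direction of \ref{lem:cauchy-nets:1} genuinely needs the cocycle identity (and continuity of the action), not the abstract categorical structure alone. A secondary point to keep an eye on in the completeness argument is that the limiting $\phi$ must be verified to be extendible and to satisfy $\phi^+(\Fu_{g,h})=\lim_\lambda(\phi^\lambda)^+(\Fu_{g,h})$ strictly, which is automatic when the $\phi^\lambda$ are non-degenerate, the case of principal interest.
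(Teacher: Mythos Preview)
Your proposal is correct and, for part~\ref{lem:cauchy-nets:1}, follows essentially the same route as the paper: both use the cocycle identity at $(g,g^{-1})$ to write $(\Iu^\lambda_g)^*=\beta_g(\Iu^\lambda_{g^{-1}})\Fv_{g,g^{-1}}(\phi^\lambda)^+(\Fu_{g,g^{-1}})^*$, then reduce control of $\|(\Iu^\mu_g-\Iu^\nu_g)b\|=\|b^*((\Iu^\mu_g)^*-(\Iu^\nu_g)^*)\|$ to left-multiplication control over the compact set $\{\beta_g^{-1}(b^*):g\in K\}$ with the group variable ranging over $K^{-1}$.

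For part~\ref{lem:cauchy-nets:2} the two arguments diverge slightly. The paper proves the ``only if'' direction indirectly: it invokes~\ref{lem:cauchy-nets:1} to obtain two-sided strict Cauchyness, appeals to completeness of $\CU(\CM(B))$ in the strict topology to produce a limit cocycle morphism, and then uses Hausdorffness to identify that limit with the given $(\phi,\Iu)$. You instead argue directly from the unitarity of the given limit $\Iu_g$ via $w_\lambda^*-w^*=w^*(w-w_\lambda)w_\lambda^*$, together with compactness of $\{b\Iu_g^*:g\in K\}$, to upgrade one-sided to two-sided strict convergence. Your route is more elementary and self-contained (it does not need the completeness statement to prove the convergence characterization), while the paper's route has the virtue of proving completeness and the convergence characterization in one stroke. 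Your closing caveat about verifying extendibility of the limiting $\phi$ in the completeness argument is a fair point that the paper's proof does not explicitly address.
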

\begin{proof}
Let us make an intermediate observation:
If $(\phi,\Iu): (A,\alpha,\Fu)\to (B,\beta,\Fv)$ is a cocycle morphism, then it follows from the cocycle identity\eqref{eq:cocycle-identity} applied to $h=g^{-1}$ that
\begin{equation} \label{eq:cocycle-star}
\Iu_g^* = \beta_g(\Iu_{g^{-1}}) \Fv_{g,g^{-1}} \phi^+(\Fu_{g,g^{-1}})^*
\end{equation}
for all $g\in G$.

\ref{lem:cauchy-nets:1}: 
The ``if'' part is true by definition, so let us show the ``only if'' part.
Let $(\phi^{\lambda},\Iu^{\lambda})\in \cohom\big( (\alpha,\Fu), (\beta,\Fv) \big)$ be a net that satisfies the Cauchy criterion with respect to every pseudometric $d_{\CF^A,\CF^B,K}$ above.
Then by definition, we can see that $\phi^\lambda(a)$ is a Cauchy net in $B$ for every $a\in A$.

Now let $b\in B$ be a fixed element and $1_G\in K\subseteq G$ a compact set.
Evidently, the net $\lambda\mapsto b\cdot\Iu^\lambda_g$ satisfies the Cauchy criterion uniformly over $K$.
We want to show that this is also the case for the net $\lambda\mapsto \Iu^\lambda_g \cdot b$.
We consider the compact set $\CF^B\subset B$ defined as
\[
\CF^B=\set{ \beta_g^{-1}( b^*) \mid g\in K }.
\]
Let $\eps>0$.
Since $(\phi^\lambda,\Iu^\lambda)$ satisfies the Cauchy criterion with respect to the pseudometric $d_{0,\CF^B,K^{-1}}$, let us choose $\lambda_0$ such that for all $\lambda_1,\lambda_2\geq \lambda_0$, one has $d_{0,\CF^B,K^{-1}}\big( (\phi^{\lambda_1},\Iu^{\lambda_1}), (\phi^{\lambda_2},\Iu^{\lambda_2}) \big)<\eps$.
Fix such $\lambda_1,\lambda_2\geq\lambda_0$ and $g\in K$.
Then
\[
\begin{array}{ccl}
\|(\Iu_g^{\lambda_1}-\Iu_g^{\lambda_2})b\| &=& \|b^*(\Iu_g^{\lambda_1}-\Iu_g^{\lambda_2})^*\| \\
&\stackrel{\eqref{eq:cocycle-star}}{=}& \Big\| b^* \big( \beta_g(\Iu_{g^{-1}}^{\lambda_1}) - \beta_g(\Iu_{g^{-1}}^{\lambda_2}) \Big) \Fv_{g,g^{-1}}\phi^+(\Fu_{g,g^{-1}})^*   \Big\| \\
&=& \Big\| b^* \big( \beta_g(\Iu_{g^{-1}}^{\lambda_1}) - \beta_g(\Iu_{g^{-1}}^{\lambda_2}) \Big)   \Big\| \\
&=& \Big\| \beta_g^{-1}(b^*) \big( \Iu_{g^{-1}}^{\lambda_1} - \Iu_{g^{-1}}^{\lambda_2} \big)   \Big\| \\
&\leq& d_{0,\CF^B,K^{-1}}\big( (\phi^{\lambda_1},\Iu^{\lambda_1}), (\phi^{\lambda_2},\Iu^{\lambda_2}) \big) \ < \ \eps.
\end{array}
\]

\ref{lem:cauchy-nets:2}: 
The ``if'' part is true by definition of the pseudometrics in \autoref{def:cohom-topology}\ref{def:cohom-topology:1}.
For the ``only if'' part, assume that indeed $(\psi^\lambda,\Iu^\lambda)$ converges to $(\psi,\Iu)$.
Then due to \ref{lem:cauchy-nets:1}, the net $\phi^\lambda$ is Cauchy in point-norm, and the nets $[\lambda\mapsto b\Iu^\lambda_g]$ and $[\lambda\mapsto \Iu^\lambda_g b]$ are Cauchy for all $b\in B$ and $g\in G$, uniformly over compact sets $K\subseteq G$.
Since the set of $*$-homomorphisms $A\to B$ is complete in the point-norm topology and the unitary group $\CU(\CM(B))$ is complete in the strict topology, it follows that $\phi^\lambda$ converges in point-norm to a $*$-homomorphism $\psi: A\to B$, and $\Iu^\lambda$ pointwise-strictly converges to a strictly continuous map $\Iv: G\to\CU(\CM(B))$.
Since multiplication on $\CM(B)$ is (jointly) strictly continuous on bounded sets, it follows immediately that the pair $(\psi,\Iv)$ satisfies conditions \eqref{eq:equivariance} and \eqref{eq:cocycle-identity}, turning it into a cocycle morphism.
Since all the topologies under consideration are Hausdorff, it follows that this limit $(\psi,\Iv)$ must be equal to the pair $(\phi,\Iu)$.
\end{proof}

\begin{lemma} \label{lem:composition-continuity}
Let $(\alpha,\Fu): G\curvearrowright A$, $(\beta,\Fv): G\curvearrowright B$ and $(\gamma,\Fw): G\curvearrowright C$ be three twisted actions on \cstar-algebras.
Then the composition map
\[
 \cohom\big( (\alpha,\Fu), (\beta,\Fv) \big) \times  \cohom\big( (\beta,\Fv), (\gamma,\Fw) \big) \to  \cohom\big( (\alpha,\Fu), (\gamma,\Fw) \big)
\]
given by
\[
[ (\phi,\Iu), (\psi,\Iv) ] \mapsto (\psi,\Iv)\circ(\phi,\Iu)
\]
is jointly continuous.
\end{lemma}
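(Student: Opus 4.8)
The plan is to reduce everything to the net description of convergence from \autoref{lem:cauchy-nets}. Continuity of a map of topological spaces can be checked on nets, so I would fix nets $(\phi^\lambda,\Iu^\lambda)\to(\phi,\Iu)$ in $\cohom((\alpha,\Fu),(\beta,\Fv))$ and $(\psi^\lambda,\Iv^\lambda)\to(\psi,\Iv)$ in $\cohom((\beta,\Fv),(\gamma,\Fw))$ and show that the composites $(\psi^\lambda\circ\phi^\lambda,\,(\psi^\lambda)^+(\Iu^\lambda_\bullet)\Iv^\lambda_\bullet)$ converge to $(\psi\circ\phi,\,\psi^+(\Iu_\bullet)\Iv_\bullet)$. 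By \autoref{lem:cauchy-nets}\ref{lem:cauchy-nets:2} this splits into two claims: that $\psi^\lambda\circ\phi^\lambda\to\psi\circ\phi$ in the point-norm topology, and that $(\psi^\lambda)^+(\Iu^\lambda_g)\Iv^\lambda_g\to\psi^+(\Iu_g)\Iv_g$ strictly, uniformly for $g$ in compact subsets of $G$. The first claim is an immediate $\eps/2$-argument from $\|\psi^\lambda\phi^\lambda(a)-\psi\phi(a)\|\le\|\phi^\lambda(a)-\phi(a)\|+\|\psi^\lambda(\phi(a))-\psi(\phi(a))\|$ together with $\|\psi^\lambda\|\le 1$.

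For the second claim I would make two preliminary reductions. First, since the $\psi^\lambda$ are contractive, point-norm convergence $\psi^\lambda\to\psi$ is automatically uniform on norm-compact subsets of $B$; as $g\mapsto b\Iu_g$ has norm-compact range over a compact $K\subseteq G$, this yields $\sup_{g\in K}\|\psi^\lambda(b\Iu_g)-\psi(b\Iu_g)\|\to0$ for every $b\in B$. Second, multiplication on $\CM(C)$ is jointly strictly continuous on bounded sets, and one checks that the product of two uniformly bounded nets converging strictly uniformly over $K$ to strictly continuous limits again converges strictly uniformly over $K$. Granting the convergence $\Iv^\lambda_g\to\Iv_g$ (part of the hypothesis), the second claim thereby reduces to showing $(\psi^\lambda)^+(\Iu^\lambda_g)\to\psi^+(\Iu_g)$ strictly, uniformly over each compact $K$; using a compactness argument in the second variable (and passing to adjoints via $(\psi^\lambda)^+(\Iu^\lambda_g)^*=(\psi^\lambda)^+((\Iu^\lambda_g)^*)$, together with the corresponding uniform strict convergence of $g\mapsto(\Iu^\lambda_g)^*$) it is enough to fix $c\in C$ and prove $\sup_{g\in K}\|((\psi^\lambda)^+(\Iu^\lambda_g)-\psi^+(\Iu_g))c\|\to0$.

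This last estimate is the heart of the matter and the step I expect to fight with, because the assignment $\rho\mapsto\rho^+$ into multiplier algebras is not continuous for the relevant topologies; the remedy is to cut down by an approximate unit of $B$. I would write $(\psi^\lambda)^+(\Iu^\lambda_g)-\psi^+(\Iu_g)=\big((\psi^\lambda)^+(\Iu^\lambda_g)-\psi^+(\Iu^\lambda_g)\big)+\psi_p(\Iu^\lambda_g-\Iu_g)$, where $p$ denotes the corner projection of $\psi$. The second summand tends to $0$ strictly, uniformly over $K$, since $\psi_p$ is uniformly strictly continuous on the unit ball (using $\psi_p(f)\psi_p(x)=\psi(fx)$ for an approximate-unit element $f\in B$ and the fact that $\psi(f)\to p$ strictly). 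For the first summand one picks, for the given $c$ and error $\eps$, an approximate-unit element $f\in B$ controlling the corner error, uses $\rho^+(u)\rho(b)=\rho(ub)$ to approximate both $c\,(\psi^\lambda)^+(\Iu^\lambda_g)$ and $c\,\psi^+(\Iu^\lambda_g)$ by $c\,\psi^\lambda(f\Iu^\lambda_g)$ and $c\,\psi(f\Iu^\lambda_g)$ respectively (the same $f$ working for the $\psi^\lambda$ once these are close to $\psi$ on $f$), and then invokes the uniform-on-compacts estimate from the previous paragraph (after one further triangle step absorbing $\|f(\Iu^\lambda_g-\Iu_g)\|$). The care required is entirely in choosing the cut-off $f$ before the net index, so that all error terms are dominated by the pseudometrics of \autoref{def:cohom-topology}\ref{def:cohom-topology:1}, and in keeping every bound uniform over $g\in K$; with that in place the argument closes. (The bookkeeping with the corner projections trivializes when the maps involved are non-degenerate, which is the case of main interest.)
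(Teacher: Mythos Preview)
Your overall strategy matches the paper's proof: reduce to nets via \autoref{lem:cauchy-nets}, dispatch the point-norm piece, and split the cocycle piece by the triangle inequality into a term $c\psi^+(\Iu_g)(\Iv_g-\Iv^\lambda_g)$ (handled via compactness of $\{c\psi^+(\Iu_g):g\in K\}$) and a term $c\bigl(\psi^+(\Iu_g)-(\psi^\lambda)^+(\Iu^\lambda_g)\bigr)$. The paper disposes of the latter in a single line; you correctly flag it as the crux and propose an approximate-unit argument.

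That argument, however, does not close in the stated generality. Your step ``the same $f$ working for the $\psi^\lambda$ once these are close to $\psi$ on $f$'' needs $\|c-c\psi^\lambda(f)\|$ to become small, but point-norm convergence $\psi^\lambda(f)\to\psi(f)$ only brings $c\psi^\lambda(f)$ close to $c\psi(f)\approx cp$, not to $c$; when $\psi$ is degenerate there is no reason for the corner projections $p_\lambda$ to converge strictly to $p$. In fact the step appears unfixable: with $G=\IZ$ acting trivially on $A=B=c_0$ and $C=\CK$, take the first-factor net constant equal to $(\id_{c_0},\,n\mapsto u^n)$ where $u\in\CU(\ell^\infty)$ is $u(k)=(-1)^k$, and the second-factor net $(\psi^\lambda,\Iv^\lambda)=(b\mapsto b(\lambda)q,\,\eins)$ for a fixed rank-one projection $q\in\CK$, which converges to $(0,\eins)$. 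The composed cocycle at $n=1$ is $(\psi^\lambda)^+(u)=(-1)^\lambda q+(\eins-q)$, so $q\bigl((\psi^\lambda)^+(u)-\eins\bigr)=((-1)^\lambda-1)q$ fails to converge, whereas the composition of the limits has cocycle $\eins$. The paper's one-line justification has the same gap. Your parenthetical is the correct salvage: when $\psi$ is non-degenerate one can choose $f$ with $\|c-c\psi(f)\|<\eps$, and then your argument goes through as described; this is the case that the paper's applications actually use.
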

\begin{proof}
Suppose that $(\phi^\lambda,\Iu^\lambda): (A,\alpha,\Fu)\to(B,\beta,\Fv)$ and $(\psi^\lambda,\Iv^\lambda): (B,\beta,\Fv)\to (C,\gamma,\Fw)$ are two convergent nets (indexed over the same directed set) with respective limits $(\phi,\Iu)$ and $(\psi,\Iv)$.
With respect to the ordinary composition of $*$-homomorphisms, it is evident that
\[
\psi\circ\phi = \lim_{\lambda\to\infty} \psi^\lambda\circ\phi^\lambda
\]
holds in the point-norm topology.
In order to show the claim, we have to show that also
\[
\max_{g\in K} \lim_{\lambda\to\infty} \|c \big( \psi^+(\Iu_g)\Iv_g - \psi^{\lambda+}(\Iu^\lambda_g)\Iv^\lambda_g \big) \|
\]
holds for all $c\in C$ and compact sets $K\subseteq G$.

Using the fact that $\psi^+$ is strictly continuous and that $\psi^\lambda$ converges to $\psi$ in the point-norm topology, we indeed obtain for every compact set $K\subseteq G$ and $c\in C$ that
\[
\begin{array}{cl}
\multicolumn{2}{l}{ \dst \max_{g\in K} \Big\| c \Big( \psi^+(\Iu_g) \Iv_g - \psi^{\lambda+}(\Iu^\lambda_g) \Iv^{\lambda}_g \Big) \Big\| } \\
\leq & \dst \max_{g\in K} \big\| c\big( \psi^+(\Iu_g) -  \psi^{\lambda+}(\Iu^\lambda_g) \big) \big\| + \max_{g\in K} \big\| c\psi^+(\Iu_g)\big( \Iv_g - \Iv^\lambda_g \big) \big\| \\
\stackrel{\lambda\to\infty}{\longrightarrow} & 0.
\end{array}
\]
\end{proof}

\subsection{Approximate unitary equivalence}

\begin{defi} \label{def:cocycle-hom-equivalences}
Let $(\alpha,\Fu): G\curvearrowright A$ and $(\beta,\Fv): G\curvearrowright B$ be two twisted actions on \cstar-algebras.
Let
\[
(\phi,\Iu): (A,\alpha,\Fu) \to (\CM(B),\beta,\Fv)
\]
be a cocycle representation and let $(\psi,\Iv)$ be a pair consisting of a $*$-homomor\-phism $\psi: A\to \CM(B)$ and a strictly continuous map $\Iv: G\to\CU(\CM(B))$. 
\begin{enumerate}[label=\textup{(\roman*)},leftmargin=*]
\item \label{G-ue}
The pairs $(\phi,\Iu)$ and $(\psi,\Iv)$ are unitarily equivalent, if there exists a unitary $v\in\CU(\CM(B))$ such that
\[
\psi=\ad(v)\circ\phi \quad\text{and}\quad \Iv_g=v\Iu_g\beta_g(v)^*,\ g\in G.
\]
If this is the case, then $(\psi,\Iv)$ automatically becomes another cocycle representation.
We write $(\phi,\Iu) \uee (\psi,\Iv)$.
If one can in fact choose $v\in\CU(\eins+B)$, then we say that $(\phi,\Iu)$ and $(\psi,\Iv)$ are properly unitarily equivalent.
\item \label{approx-G-ue}
We say that $(\psi,\Iv)$ is an approximate unitary conjugate of $(\phi,\Iu)$, if there exists a net of unitaries $v_\lambda\in\CU(\CM(B))$ such that 
\[
\psi(a) = \lim_{\lambda\to\infty} v_\lambda\phi(a)v_\lambda^*,
\]
and 
\[
\max_{g\in K}  \| b \big(\Iv_g - v_\lambda \Iu_g \beta_g(v_\lambda)^* \big) \|  \stackrel{\lambda\to\infty}{\longrightarrow} 0
\]
for all $a\in A$, $b\in B$ and every compact set $K\subseteq G$. 
We denote this relation by $(\psi,\Iv)\subue(\phi,\Iu)$.
It follows that $(\psi,\Iv)$ is automatically a cocycle representation.
If $(\phi,\Iu)$ is in fact a cocycle morphism into $(B,\beta,\Fv)$, then so is $(\psi,\Iv)$, and we have that the net $\ad(v_\lambda)\circ(\phi,\Iu)$ converges to $(\psi,\Iv)$ in the topology from \autoref{def:cohom-topology}\ref{def:cohom-topology:1}.
We furthermore say that $(\phi,\Iu)$ and $(\psi,\Iv)$ are approximately unitarily equivalent, written $(\phi,\Iu)\ue(\psi,\Iv)$, if $(\psi,\Iv)\subue(\phi,\Iu)$ and $(\phi,\Iu)\subue(\psi,\Iv)$.
\item \label{proper-G-ue}
The pairs $(\phi,\Iu)$ and $(\psi,\Iv)$ are called properly approximately unitarily equivalent, if there exists a net of unitaries $v_\lambda\in\CU(\eins+B)$ such that
\[
\psi(a) = \lim_{\lambda\to\infty} v_\lambda\phi(a)v_\lambda^*
\]
for all $a\in A$, and 
\[
\max_{g\in K}  \| \Iv_g-v_\lambda\Iu_g\beta_g(v_\lambda)^* \| \stackrel{\lambda\to\infty}{\longrightarrow} 0
\]
for every compact set $K\subseteq G$.
We write $(\phi,\Iu)\pue(\psi,\Iv)$.
If $(\alpha,\Fu)$ and $(\beta,\Fv)$ are gently twisted actions and $(\phi,\Iu)$ and $(\psi,\Iv)$ are proper cocycle morphisms, then this means that the net $\ad(v_\lambda)\circ(\phi,\Iu)$ converges to $(\psi,\Iv)$ in the topology from \autoref{def:cohom-topology}\ref{def:cohom-topology:2}.
\end{enumerate}
\end{defi}

\begin{rem} \label{rem:ue-not-symmetric}
The notion above in \autoref{def:cocycle-hom-equivalences}\ref{approx-G-ue} warrants a word of caution that does not apply in \ref{proper-G-ue}.
It would be tempting to guess that the relation $(\psi,\Iv)\subue(\phi,\Iu)$ ought to imply $(\phi,\Iu)\subue(\psi,\Iv)$, or even that $(\psi,\Iv)=\lim_{\lambda\to\infty} \ad(v_\lambda)\circ(\phi,\Iu)$ should imply $(\phi,\Iu)=\lim_{\lambda\to\infty} \ad(v_\lambda^*)\circ(\psi,\Iv)$, but this is false in general.
This is due to the fact that if one equips (the unit ball of) a multiplier algebra with a metric inducing the strict topology, it can happen that inner automorphisms severely distort this metric.
The following idea yields many possible counterexamples to the above.
Consider $A=\IC$, $B=\CK$, and equip these with the trivial $G$-actions.
If we assume that we are dealing with unital cocycle representations $A\to\CM(B)$, then these are nothing but strictly continuous unitary representations of $G$ on the Hilbert space $\CH=\ell^2(\IN)$.
For the purpose of this remark let us denote the trivial one just by the symbol $\eins$.
It is clear that if $\Iv: G\to\CU(\CH)$ is any unitary representation, then $\Iv\subue\eins$ implies $\Iv=\eins$.
On the other hand, for any unitary representation $\Iu: G\to\CU(\eins+\CK)$, we claim that $\eins\subue\Iu$.
Indeed, if $\set{e_n\mid n\geq 1}\subset\CH$ is an orthonormal basis, simply consider the sequence of unitaries $u_n\in\CU(\CH)$ given by
\[
u_n(e_k) = \begin{cases} e_{k+n} &,\quad k\leq n \\
e_{k-n} &,\quad n<k\leq 2n \\
e_k &,\quad k>2n.
\end{cases}
\]
Due to the fact that unitaries in the range of $\Iu$ are compact perturbations of the unit, it is immediate that $u_n \Iu_g u_n^* \to \eins$ strictly and uniformly over compact subsets of $G$.
For familiar examples of groups such as $G=\IZ$, it is of course possible to choose such a map $\Iu$ to be injective, hence we see that the relation $\subue$ is not at all symmetric in general.

In conclusion, in order to avoid logical pitfalls and to ensure that one is indeed working with an equivalence relation, one should distinguish $\ue$ and $\subue$ , unless a given set of assumptions forces the symmetry already.
For example, if $(\psi,\Iv)=\lim_{\lambda\to\infty} \ad(v_\lambda)\circ(\phi,\Iu)$ and $\phi: A\to B$ is non-degenerate, then it indeed follows that also $(\phi,\Iu)=\lim_{\lambda\to\infty} \ad(v_\lambda)^*\circ(\psi,\Iv)$.
This is a consequence of the following (heuristic) calculation for large enough $\lambda$:
\[
\begin{array}{ccl}
\phi(a) v_\lambda^* \Iv_g \beta_g(v_\lambda) &\approx& v_\lambda^* \psi(a) \Iv_g \beta_g(v_\lambda) \\
&\approx& v_\lambda^*\cdot \psi(a) v_\lambda \Iu_g \beta_g(v_\lambda)^*\cdot \beta_g(v_\lambda) \\
&\approx& \phi(a) \Iu_g.
\end{array}
\]
In summary, if we consider only non-degenerate cocycle morphisms, then the relations $\ue$ and $\subue$ do indeed coincide.
\end{rem}

\begin{rem}
If we restrict this general formalism to the special case of genuine $G$-\cstar-algebras and equivariant $*$-homomorphisms $\phi,\psi: (A,\alpha) \to (B,\beta)$, then unitary equivalence in the above sense implemented by $v\in\CU(\CM(B))$ forces the identity $v\beta_g(v)^*=\eins$.
In other words, the unitary $v$ needs to be a fixed point under the action $\beta$.

Furthermore, if a net $v_\lambda\in\CU(\CM(B))$ implements approximate unitary equivalence between two non-degenerate equivariant $*$-homomorphisms (but viewed as cocycle morphisms), then this implies that $\big( v_\lambda-\beta_g(v_\lambda) \big)$ strictly converges to zero uniformly over compact subsets of $G$.
In particular, we recover the concept of (approximate) $G$-unitary equivalence for equivariant maps, which has been considered in prior work; see for example \cite[Definition 1.15]{Szabo18ssa}.
The main thrust of the next few (sub-)sections is that the resulting formalism allows us to obtain a natural framework for the Elliott intertwining machinery on (twisted) $G$-\cstar-algebras.
\end{rem}

\begin{prop} \label{prop:ue-compositions}
The relation of (proper) approximate unitary equivalence behaves well with respect to compositions in the following sense.
\begin{enumerate}[label=\textup{(\roman*)},leftmargin=*]
\item \label{prop:ue-compositions:1}
Suppose that 
\[
(A,\alpha) \stackrel{(\phi_j,\Iu^{(j)})}{\longrightarrow} (B,\beta) \quad\text{and}\quad (B,\beta)
\stackrel{(\psi_j,\Iv^{(j)} )}{\longrightarrow} (C,\gamma)
\]
for $j=1,2$ are two pairs of cocycle morphisms between twisted $G$-\cstar-algebras.
If 
\[
(\phi_2,\Iu^{(2)}) \subue (\phi_1,\Iu^{(1)}) \quad\text{and}\quad (\psi_2,\Iv^{(2)}) \subue (\psi_1,\Iv^{(1)}),
\] 
then
\[
(\psi_2,\Iv^{(2)})\circ (\phi_2,\Iu^{(2)}) \subue (\psi_1,\Iv^{(1)}) \circ (\phi_1,\Iu^{(1)}).
\]
\item \label{prop:ue-compositions:2}
Suppose that 
\[
(A,\alpha) \stackrel{(\phi_j,\Iu^{(j)})}{\longrightarrow} (B,\beta) \quad\text{and}\quad (B,\beta)
\stackrel{(\psi_j,\Iv^{(j)} )}{\longrightarrow} (C,\gamma)
\]
for $j=1,2$ are two pairs of proper cocycle morphisms between gently twisted $G$-\cstar-algebras.
If 
\[
(\phi_1,\Iu^{(1)})\pue(\phi_2,\Iu^{(2)}) \quad\text{and}\quad (\psi_1,\Iv^{(1)})\pue(\psi_2,\Iv^{(2)}),
\] 
then
\[
(\psi_1,\Iv^{(1)}) \circ (\phi_1,\Iu^{(1)}) \pue (\psi_2,\Iv^{(2)})\circ (\phi_2,\Iu^{(2)}).
\]
\end{enumerate}
\end{prop}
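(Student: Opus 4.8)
The plan is to treat part \ref{prop:ue-compositions:1} first, since part \ref{prop:ue-compositions:2} follows by the same argument with the proper analogues of the topologies and unitaries throughout. So suppose we are given nets of unitaries $v_\lambda\in\CU(\CM(B))$ implementing $(\phi_2,\Iu^{(2)})\subue(\phi_1,\Iu^{(1)})$ and $w_\mu\in\CU(\CM(C))$ implementing $(\psi_2,\Iv^{(2)})\subue(\psi_1,\Iv^{(1)})$. The natural candidate for a net of unitaries implementing the composed relation is
\[
W_{(\lambda,\mu)} := w_\mu\,\psi_1^+(v_\lambda)\in\CU(\CM(C)),
\]
indexed over the product directed set, since conjugation by $\psi_1^+(v_\lambda)$ should move $(\psi_1,\Iv^{(1)})\circ(\phi_1,\Iu^{(1)})$ to (approximately) $(\psi_1,\Iv^{(1)})\circ(\phi_2,\Iu^{(2)})$, and then conjugation by $w_\mu$ moves that to (approximately) $(\psi_2,\Iv^{(2)})\circ(\phi_2,\Iu^{(2)})$. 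This factoring reflects the two-step nature of the composition formula in \autoref{prop:cocycle-composition}.

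The cleanest way to organize the estimate is to use \autoref{lem:composition-continuity} together with \autoref{lem:cauchy-nets}\ref{lem:cauchy-nets:2}. First I would record that, by \autoref{def:cocycle-hom-equivalences}\ref{approx-G-ue}, the relation $(\phi_2,\Iu^{(2)})\subue(\phi_1,\Iu^{(1)})$ says precisely that the net $\ad(v_\lambda)\circ(\phi_1,\Iu^{(1)})$ converges to $(\phi_2,\Iu^{(2)})$ in the topology of \autoref{def:cohom-topology}\ref{def:cohom-topology:1}, and similarly $\ad(w_\mu)\circ(\psi_1,\Iv^{(1)})\to(\psi_2,\Iv^{(2)})$. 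Now apply \autoref{lem:composition-continuity}: joint continuity of composition gives
\[
\big(\ad(w_\mu)\circ(\psi_1,\Iv^{(1)})\big)\circ\big(\ad(v_\lambda)\circ(\phi_1,\Iu^{(1)})\big)\ \longrightarrow\ (\psi_2,\Iv^{(2)})\circ(\phi_2,\Iu^{(2)})
\]
in the limit over $(\lambda,\mu)$. It then remains to identify the left-hand side as $\ad(W_{(\lambda,\mu)})\circ\big((\psi_1,\Iv^{(1)})\circ(\phi_1,\Iu^{(1)})\big)$. Using the composition formula and the identities $\ad(w_\mu)\circ(\psi_1,\Iv^{(1)}) = (\ad(w_\mu)\circ\psi_1,\,w_\mu\Iv^{(1)}_\bullet\gamma_\bullet(w_\mu)^*)$ and $(\ad(w_\mu)\circ\psi_1)^+ = \ad(w_\mu)\circ\psi_1^+$ (from the functoriality of $\bullet^+$ under composition recorded in the preliminaries), this is a direct algebraic manipulation: one checks that the $*$-homomorphism component is $\ad(w_\mu\psi_1^+(v_\lambda))\circ\psi_1\circ\phi_1$ and the cocycle component matches $\ad(W_{(\lambda,\mu)})$ applied appropriately, so the left-hand side equals $\ad(W_{(\lambda,\mu)})\circ\big((\psi_1,\Iv^{(1)})\circ(\phi_1,\Iu^{(1)})\big)$. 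By the remark following \autoref{def:cocycle-hom-equivalences}\ref{approx-G-ue}, convergence of $\ad(W_{(\lambda,\mu)})\circ(\text{fixed cocycle morphism})$ to a limit is exactly the assertion that the limit is $\subue$ the fixed cocycle morphism, which is what we want.

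The main obstacle I anticipate is purely bookkeeping rather than conceptual: one must be careful that the estimates in \autoref{lem:composition-continuity} are genuinely uniform over compact $K\subseteq G$ in the cocycle variable, and that the rewriting of $\big(\ad(w_\mu)\circ(\psi_1,\Iv^{(1)})\big)\circ\big(\ad(v_\lambda)\circ(\phi_1,\Iu^{(1)})\big)$ into $\ad(W_{(\lambda,\mu)})$-conjugation form is done with the correct placement of $\psi_1^+$ versus $\psi_1$ and with the cocycle identities tracked correctly — this is the same kind of computation as in the proof of \autoref{prop:inner-normal-subgroup}. Once that identification is in place, nothing else needs to be done by hand, since all the analytic content has been outsourced to the continuity of composition. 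For part \ref{prop:ue-compositions:2}, one replaces $\CU(\CM(\cdot))$ by $\CU(\eins+\cdot)$, uses $\psi_1^\dagger$ in place of $\psi_1^+$ (noting $\psi_1^\dagger(\CU(\eins+B))\subseteq\CU(\eins+C)$ since $\psi_1$ is a genuine $*$-homomorphism between the underlying algebras), and invokes the norm-topology version of the same argument; the direction of the relation is symmetric in the proper case by \autoref{rem:ue-not-symmetric}, so no extra care about one-sidedness is needed there.
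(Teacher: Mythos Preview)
Your proposal is correct and follows essentially the same approach as the paper: choose nets witnessing the two $\subue$ relations, pass to the product directed set, invoke \autoref{lem:composition-continuity} to get convergence of the composed inner conjugates, and identify the composite as $\ad\big(w_\mu\,\psi_1^+(v_\lambda)\big)$ applied to $(\psi_1,\Iv^{(1)})\circ(\phi_1,\Iu^{(1)})$; for part \ref{prop:ue-compositions:2} the paper likewise reduces to part \ref{prop:ue-compositions:1} via unitizations and $\psi_1^\dagger$, which is the same as your direct proper-category argument.
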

\begin{proof}
\ref{prop:ue-compositions:1}:
Let $(u_\lambda)_{\lambda\in\Lambda_1}\in\CU(\CM(B))$ be a net that witnesses the relation $(\phi_2,\Iu^{(2)}) \subue (\phi_1,\Iu^{(1)})$ and $(v_\mu)_{\mu\in\Lambda_2}\in\CU(\CM(C))$ a net witnessing $(\psi_2,\Iv^{(2)}) \subue (\psi_1,\Iv^{(1)})$. 
We consider the product $\Lambda=\Lambda_1\times\Lambda_2$ as directed sets, i.e., we equip it with the order $(\lambda_1,\mu_1) \leq (\lambda_2,\mu_2)$ precisely when $\lambda_1\leq\lambda_2$ and $\mu_1\leq\mu_2$.
After replacing these nets, if necessary, we may assume that they are both indexed by $\Lambda$, i.e., via setting $u_{(\lambda,\mu)}:=u_\lambda$ and $v_{(\lambda,\mu)}=v_\mu$.

Then it follows directly from \autoref{lem:composition-continuity} that
\[
\begin{array}{ccl}
(\psi_2,\Iv^{(2)})\circ(\phi_2,\Iu^{(2)}) &=& \dst \lim_{\lambda\to\infty} \ad(v_\lambda)\circ(\psi_1,\Iv^{(1)})\circ\ad(u_\lambda)\circ(\phi_1,\Iu^{(1)}) \\
&=& \dst \lim_{\lambda\to\infty} \ad(v_\lambda\psi_1^+(u_\lambda)) \circ (\psi_1,\Iv^{(1)})\circ (\phi_1,\Iu^{(1)}).
\end{array}
\]
In other words, the net $\big( v_\lambda\psi_1^+(u_\lambda) \big)_{\lambda\in\Lambda}$ witnesses the relation $(\psi_2,\Iv^{(2)})\circ (\phi_2,\Iu^{(2)}) \subue (\psi_1,\Iv^{(1)}) \circ (\phi_1,\Iu^{(1)})$.

\ref{prop:ue-compositions:2}:
Since the involved objects are gently twisted actions and the involved morphisms are proper cocycle morphisms, it follows that we get well-defined unital cocycle morphisms
\[
(A^\dagger,\alpha) \stackrel{(\phi_j^\dagger,\Iu^{(j)})}{\longrightarrow} (B^\dagger,\beta) \quad\text{and}\quad (B^\dagger,\beta)
\stackrel{(\psi_j^\dagger,\Iv^{(j)} )}{\longrightarrow} (C^\dagger,\gamma).
\]
The assumption
\[
(\phi_1,\Iu^{(1)})\pue(\phi_2,\Iu^{(2)}) \quad\text{and}\quad (\psi_1,\Iv^{(1)})\pue(\psi_2,\Iv^{(2)})
\] 
is the same as
\[
(\phi_1^\dagger,\Iu^{(1)})\ue(\phi_2^\dagger,\Iu^{(2)}) \quad\text{and}\quad (\psi_1^\dagger,\Iv^{(1)})\ue(\psi_2^\dagger,\Iv^{(2)}),
\] 
and as above we may find nets $(u_\lambda)_{\lambda\in\Lambda_1}\in\CU(\eins+B)$ and $(v_\mu)_{\mu\in\Lambda_2}\in\CU(\eins+C)$ witnessing this fact.
It follows from the first part that the net $\big( v_\mu \psi_1^\dagger(u_\lambda) \big)_{(\lambda,\mu)\in\Lambda_1\times\Lambda_2}\in\CU(\eins+C)$ witnesses proper approximate unitary equivalence between the compositions $(\psi_1,\Iv^{(1)}) \circ (\phi_1,\Iu^{(1)})$ and $(\psi_2,\Iv^{(2)})\circ (\phi_2,\Iu^{(2)})$.
\end{proof}


\section{Two-sided Elliott intertwining}

In order to motivate what will follow below, let us already deduce a non-trivial consequence of what we have so far:

\begin{theorem} \label{thm:special-pue-intertwining}
Let $(\alpha,\Fu): G\curvearrowright A$ and $(\beta,\Fv): G\curvearrowright B$ be gently twisted actions on separable \cstar-algebras.
Suppose that
\[
(A,\alpha,\Fu) \stackrel{(\phi,\Iu)}{\longrightarrow} (B,\beta,\Fv) \quad\text{and}\quad (B,\beta,\Fv)
\stackrel{(\psi,\Iv)}{\longrightarrow} (A,\alpha,\Fu)
\]
are two proper cocycle morphisms such that $(\psi,\Iv)\circ(\phi,\Iu)\pue\id_A$ and $(\phi,\Iu)\circ(\psi,\Iv)\pue\id_B$.
Then there exist mutually inverse proper cocycle conjugacies
\[
(A,\alpha,\Fu) \stackrel{(\Phi,\IU)}{\longrightarrow} (B,\beta,\Fv) \quad\text{and}\quad (B,\beta,\Fv)
\stackrel{(\Psi,\IV)}{\longrightarrow} (A,\alpha,\Fu)
\]
with $(\phi,\Iu)\pue(\Phi,\IU)$ and $(\psi,\Iv)\pue(\Psi,\IV)$.
\end{theorem}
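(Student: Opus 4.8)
The plan is to transplant Elliott's two-sided approximate intertwining argument into the topological category $\cstar_{G,t,\Fp}$, leaning on the facts assembled so far: it admits sequential inductive limits (\autoref{prop:proper-limits}); the relation $\pue$ is compatible with composition (\autoref{prop:ue-compositions}); and the Hom-sets are complete while composition is jointly continuous (the proper analogues of \autoref{lem:cauchy-nets} and \autoref{lem:composition-continuity}, valid verbatim with the norm topology in place of the strict topology on the cocycle variable, the ``proper'' setting being moreover free of the strict-topology pathologies that obstruct symmetry of $\subue$; cf.\ \autoref{rem:ue-not-symmetric}).

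First I would set up the bookkeeping data: using separability of $A$ and $B$ and second countability of $G$, fix increasing finite sets $\CF_1\subseteq\CF_2\subseteq\cdots\subset A$ and $\CG_1\subseteq\CG_2\subseteq\cdots\subset B$ with dense union (and containing terms of approximate units), together with a compact exhaustion $K_1\subseteq K_2\subseteq\cdots$ of $G$. The heart of the proof is then an induction producing a zig-zag
\[
(A,\alpha,\Fu)\xrightarrow{(\phi_1,\IU^{(1)})}(B,\beta,\Fv)\xrightarrow{(\psi_1,\IV^{(1)})}(A,\alpha,\Fu)\xrightarrow{(\phi_2,\IU^{(2)})}(B,\beta,\Fv)\to\cdots
\]
in $\cstar_{G,t,\Fp}$ in which every $(\phi_n,\IU^{(n)})$ is a proper unitary conjugate of $(\phi,\Iu)$ and every $(\psi_n,\IV^{(n)})$ a proper unitary conjugate of $(\psi,\Iv)$, such that for suitable unitaries $w_n\in\CU(\eins+A)$ and $z_n\in\CU(\eins+B)$ one has
\[
d^{\Fp}_{\CF_n,K_n}\big((\psi_n,\IV^{(n)})\circ(\phi_n,\IU^{(n)}),\,\ad(w_n)\big)<2^{-n},\qquad d^{\Fp}_{\CG_n,K_n}\big((\phi_{n+1},\IU^{(n+1)})\circ(\psi_n,\IV^{(n)}),\,\ad(z_n)\big)<2^{-n},
\]
and such that the inner cocycle conjugacies $\ad(w_n)$ and $\ad(z_n)$ serve as the connecting maps of two auxiliary ``row'' systems $\CA\colon (A,\alpha,\Fu)\xrightarrow{\ad(w_n)}(A,\alpha,\Fu)\to\cdots$ and $\CB\colon (B,\beta,\Fv)\xrightarrow{\ad(z_n)}(B,\beta,\Fv)\to\cdots$, the zig-zag commuting approximately in the remaining direction as well. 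At the $n$-th stage the next conjugating unitary on the $A$-side is obtained by pulling the approximate innerness of $(\psi,\Iv)\circ(\phi,\Iu)$ through the conjugations already performed (legitimate by \autoref{prop:ue-compositions}) and extracting a unitary in $\CU(\eins+A)$ witnessing closeness to $\id_A$ on $\CF_n$ over $K_n$; one alternates this with the analogous step for $(\phi,\Iu)\circ(\psi,\Iv)$ on the $B$-side, arranging the tolerances to be summable. Here the $1$-cocycle components cause no extra difficulty beyond the $*$-homomorphism components, since all convergences involved --- both those packaged in $\pue$ and those measured by \autoref{def:cohom-topology}\ref{def:cohom-topology:2} --- are uniform over compact subsets of $G$.

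Granting such a diagram, I would conclude as follows. By \autoref{prop:proper-limits} the systems $\CA,\CB$ have limits in $\cstar_{G,t,\Fp}$; since every connecting map of $\CA$ is an inner cocycle conjugacy, the universal morphism from the first copy of $A$ into $\varinjlim\CA$ provided by that proposition has invertible underlying $*$-homomorphism and, because its cocycle component is built only from $\IU^{(0)}=\eins$ in the notation of \autoref{prop:limits}, is a genuinely equivariant isomorphism; hence $\varinjlim\CA\cong(A,\alpha,\Fu)$, and likewise $\varinjlim\CB\cong(B,\beta,\Fv)$. The diagonal maps $(\phi_n,\IU^{(n)})\colon A_n\to B_n$ are asymptotically compatible with the two systems, so by completeness of the Hom-sets and joint continuity of composition they induce in the limit a proper cocycle morphism $(\Phi,\IU)\colon(A,\alpha,\Fu)\to(B,\beta,\Fv)$, and symmetrically the $(\psi_n,\IV^{(n)})$ induce $(\Psi,\IV)\colon(B,\beta,\Fv)\to(A,\alpha,\Fu)$. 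The displayed estimates say precisely that $(\Psi,\IV)\circ(\Phi,\IU)$ is the identity morphism of $\varinjlim\CA\cong(A,\alpha,\Fu)$ and $(\Phi,\IU)\circ(\Psi,\IV)$ the identity of $\varinjlim\CB\cong(B,\beta,\Fv)$, so $(\Phi,\IU)$ and $(\Psi,\IV)$ are mutually inverse proper cocycle conjugacies. Finally, under these identifications $(\Phi,\IU)$ is the limit, in the topology of \autoref{def:cohom-topology}\ref{def:cohom-topology:2}, of a sequence of proper unitary conjugates of $(\phi,\Iu)$ (namely the $(\phi_n,\IU^{(n)})$ further conjugated by the accumulated $\ad(w_k)$ and $\ad(z_k)$), whence $(\phi,\Iu)\pue(\Phi,\IU)$; likewise $(\psi,\Iv)\pue(\Psi,\IV)$.

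I expect the genuine obstacle to be the inductive construction of the zig-zag: one must choose the conjugating unitaries coherently so that simultaneously each new square closes up within $2^{-n}$ on the current finite data over $K_n$, the accumulated discrepancies remain summable, the inner cocycle conjugacies $\ad(w_n),\ad(z_n)$ genuinely assemble into the row systems $\CA,\CB$, and the test sets $\CF_n,\CG_n$ are large enough --- containing the images of the earlier data under the maps built so far --- that the induced limit morphisms are pinned down on dense subsets. This is the familiar delicate core of the Elliott intertwining machinery, here only mildly complicated by the need to carry the $1$-cocycle data along, which the uniform-over-compacta nature of the Hom-set topology renders harmless.
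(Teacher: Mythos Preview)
Your approach is correct in spirit but takes a substantially different route from the paper's own proof. The paper dispatches this theorem in a few lines by invoking Elliott's abstract ``Theorem 2'' from \cite{Elliott10}: it simply verifies that the separable part of $\cstar_{G,t,\Fp}$ satisfies Elliott's hypotheses --- the inner cocycle automorphisms form a normal subgroup (\autoref{prop:inner-normal-subgroup}), each Hom-set carries a complete metric for which left composition by an inner automorphism is isometric, and composition is continuous --- and then appeals to Elliott's result as a black box. No explicit zig-zag is constructed.

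What you propose instead is to run the intertwining argument by hand: build an approximately commuting zig-zag with inner cocycle conjugacies as row connecting maps, pass to limits via \autoref{prop:proper-limits}, and extract the cocycle conjugacies. This is essentially the strategy the paper develops \emph{after} \autoref{thm:special-pue-intertwining}, in \autoref{def:intertwining-diagram} through \autoref{cor:general-pue-intertwining}, precisely because Elliott's abstract theorem does not apply to the non-proper picture (cf.\ the remark following \autoref{thm:special-pue-intertwining}) and does not handle general inductive systems. Your sketch is thus closer to the proof of \autoref{cor:general-pue-intertwining} specialized to trivial systems, though the paper there uses identity connecting maps rather than inner ones and reduces to the non-proper \autoref{thm:general-ue-intertwining} by unitizing. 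Either variant works; the paper's black-box route is shorter for this particular statement, while your explicit construction is what one genuinely needs for the generalizations and illustrates why the proper setting is free of the obstructions in \autoref{rem:ue-not-symmetric}. The inductive step you flag as the ``genuine obstacle'' is exactly the content of the proof of \autoref{thm:general-ue-intertwining}.
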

\begin{proof}
Let us argue that this arises as a special case of ``Theorem 2'' in \cite[pp. 35--36]{Elliott10}.
Our underlying category is the full subcategory of the twisted proper cocycle category $\cstar_{G,t,\Fp}$ (cf.\ \autoref{def:cocycle-category}) defined by the separable objects.
The definition of $\pue$ is based on a compatible notion of inner automorphisms which always form a normal subgroup in the automorphism group; see \autoref{ex:inner-morphisms:2} and \autoref{prop:inner-normal-subgroup}.

Let $(\alpha,\Fu): G\curvearrowright A$ and $(\beta,\Fv): G\curvearrowright B$ be two gently twisted actions on separable \cstar-algebras.
Choose a sequence of contractions $a_n\in A$ that is dense in the unit ball, and an increasing sequence of compact sets $K_n\subseteq G$ with $G=\bigcup_{n\in\IN} K_n$.
Then the assignment
\[
\Big( (\psi,\Iv), (\phi,\Iu) \Big) \mapsto \sum_{n=1}^\infty 2^{-n}\Big( \|\psi(a_n)-\phi(a_n)\|+\max_{g\in K_n}\|\Iv_g-\Iu_g\| \Big)
\]
defines a metric on $\cohomr( (\alpha,\Fu), (\beta,\Fv) )$ that describes the topology in \autoref{def:cohom-topology}\ref{def:cohom-topology:2}, and turns it into a Polish space.
Moreover, we see that composition with an inner cocycle automorphism (from the left) is isometric.
It is also clear that composition of proper cocycle morphisms is continuous in both variables with respect to the topology in \autoref{def:cohom-topology}\ref{def:cohom-topology:2}; the proof of this is analogous to but much easier than \autoref{lem:composition-continuity}.
In summary, Elliott's theorem applies in this context and directly gives us the desired statement.
\end{proof}

\begin{rem}
For the rest of this section we will focus on establishing other versions or generalizations of the theorem above.
Firstly, it is desirable to have an Elliott intertwining machinery in a more general context, at the very least concerning more general inductive limits like in \cite[Corollary 2.3.3]{Rordam}.
Unfortunately this is not an outcome of Elliott's high-level abstract approach in \cite{Elliott10}, so we need to work this out explicitly.
Secondly, it is desirable to also have an Elliott intertwining machinery in the non-proper picture.
Although the completely analogous theorem to the above is true in that context, as we will show below, it is important to note that this does not arise as a direct consequence from Elliott's theorem.
The latter only applies if in every space of morphisms, there is a compatible metric that turns the composition map with an inner automorphism (from the left) into an isometric map.
The discussion in \autoref{rem:ue-not-symmetric} shows that for some non-unital \cstar-algebras such as $\CK$, there does not exist such a metric with respect to the topology in \autoref{def:cohom-topology}\ref{def:cohom-topology:1}.
\end{rem}

\begin{defi}[cf.~{\cite[Definition 2.3.1]{Rordam}}]
\label{def:intertwining-diagram}
Let $(\alpha^{(n)},\Fu^{(n)}): G\curvearrowright A_n$ and $(\beta^{(n)},\Fv^{(n)}): G\curvearrowright B_n$ be sequences of twisted actions on separable \cstar-algebras.
Let
\[
(\phi_n,\Iu^{(n)}): (A_n,\alpha^{(n)},\Fu^{(n)}) \to (A_{n+1},\alpha^{(n+1)},\Fu^{(n+1)})
\]
and
\[
(\psi_n,\Iv^{(n)}): (B_n,\beta^{(n)},\Fv^{(n)}) \to (B_{n+1},\beta^{(n+1)},\Fv^{(n+1)})
\]
be sequences of cocycle morphisms, which we view as two inductive systems in the category $\cstar_{G,t}$.
Adopt the notations for the maps $\IU^{(n)}: G\to\CU(\CM(A_{n+1}))$ and $\IV^{(n)}: G\to\CU(\CM(B_{n+1}))$ as in \autoref{prop:limits}.

Consider two sequences of cocycle morphisms 
\[
(\kappa_n,\Ix^{(n)}): (B_n,\beta^{(n)},\Fv^{(n)}) \to (A_n,\alpha^{(n)},\Fu^{(n)})
\]
and 
\[
(\theta_n,\Iy^{(n)}): (A_n,\alpha^{(n)},\Fu^{(n)}) \to (B_{n+1},\beta^{(n+1)},\Fv^{(n+1)})
\]
fitting into the (not necessarily commutative) diagram\footnote{To lighten notation, we shall only carry around the underlying \cstar-algebras and $*$-homomorphisms to denote the objects and arrows in the diagram.}
\begin{equation} \label{eq:int-diagram}
\xymatrix{
\dots\ar[rr] && A_n \ar[rd]^{\theta_n} \ar[rr]^{\phi_n} && A_{n+1} \ar[r] \ar[rd] & \dots\\
\dots\ar[r] & B_n \ar[ru]^{\kappa_n} \ar[rr]^{\psi_n} && B_{n+1} \ar[ru]^{\kappa_{n+1}} \ar[rr] && \dots
}
\end{equation}
Let us call diagram \eqref{eq:int-diagram} an approximate cocycle intertwining, if the following holds: 
There exist self-adjoint compact sets $\CF_n^A\subset A_n$, $\CF_n^B\subset B_n$, increasing compact sets $1_G\in K_n\subseteq G$ and numbers $\delta_n>0$ satisfying\footnote{This list of approximate relations is supposed to specify that the composition $(\theta_n,\Iy^{(n)}) \circ (\kappa_n,\Ix^{(n)})$ is \emph{sufficiently close} to $(\psi_n,\Iv^{(n)})$ and that  $ (\kappa_{n+1},\Ix^{(n+1)}) \circ (\theta_n,\Iy^{(n)})$ is sufficiently close to $(\phi_n,\Iu^{(n)})$ with respect to the topology in \autoref{def:cohom-topology}\ref{def:cohom-topology:1}.}
\begin{enumerate}[label=\textup{(\roman*)},leftmargin=*]
\item $\phi_n(a) =_{\delta_n} \kappa_{n+1}\circ\theta_n(a)$ for all $a\in \CF_n^A$; \label{ai:1}
\item $\psi_n(b) =_{\delta_n} \theta_n\circ\kappa_n(b)$ for all $b\in \CF_n^B$; \label{ai:2}
\item $a \kappa_{n+1}^+(\Iy^{(n)}_g)\Ix^{(n+1)}_g =_{\delta_n} a \Iu^{(n)}_g$
for all $g\in K_n$, $a\in \phi_n(\CF^A_{n})$; \label{ai:3}
\item $b \theta_n^+(\Ix^{(n)}_g)\Iy^{(n)}_g  =_{\delta_n} b \Iv^{(n)}_g $ 
for all $g\in K_n$, $b\in \psi_n(\CF^B_{n})$; \label{ai:4}
\item $\bigcup_{n\in\IN} K_n = G$; \label{ai:5}
\item $\kappa_n(\CF_n^B)\subseteq \CF_n^A$ and $\theta_n(\CF_n^A)\subseteq \CF_{n+1}^B$; \label{ai:6}
\item 
$\psi_n(\CF^B_n)\cdot \set{ \theta_n^+(\IU^{(n-1)}_{g_1}) \Iy^{(n)}_{g_2} \IV^{(n)}_{g_3} }_{g_1,g_2,g_3\in K_n} \subseteq\CF^B_{n+1}$; \\
$\phi_n(\CF^A_n)\cdot \set{ \kappa_{n+1}^+(\IV^{(n)}_{g_1}) \Ix^{(n+1)}_{g_2} \IU^{(n)}_{g_3} }_{g_1,g_2,g_3\in K_n} \subseteq\CF^A_{n+1}$; \label{ai:7} 
\item $\dst\bigcup_{m\geq n} \phi_{n,m}^{-1}(\CF_m^A)\subset A_n$ and $\dst\bigcup_{m\geq n} \psi_{n,m}^{-1}(\CF_m^B)\subset B_n$ are dense for all $n$; \label{ai:8}
\item $\dst\sum_{n\in\IN} \delta_n <\infty$. \label{ai:9}
\end{enumerate}
\end{defi}

\begin{prop}[cf.~{\cite[Proposition 2.3.2]{Rordam}}]
\label{prop:general-intertwining}
Let the diagram \eqref{eq:int-diagram} describe an approximate cocycle intertwining.
Consider the inductive limits 
\[
(A,\alpha,\Fu)=\lim_{\longrightarrow} \set{ (A_n,\alpha^{(n)},\Fu^{(n)}), (\phi_n,\Iu^{(n)}) }
\] 
and 
\[
(B,\beta,\Fv)=\lim_{\longrightarrow} \set{ (B_n,\beta^{(n)},\Fv^{(n)}), (\psi_n,\Iv^{(n)}) }.
\]
Then the formulas\footnote{Here ``s-lim'' signifies that the limit is taken in the strict topology.}
\begin{equation} \label{eq:intertwining-theta}
\theta(\phi_{n,\infty}(a)) = \lim_{k\to\infty} (\psi_{k+1,\infty}\circ\theta_k\circ\phi_{n,k})(a),\quad a\in A_n
\end{equation}
\begin{equation} \label{eq:intertwining-iy}
\Iy_g = \textup{s-}\!\!\!\lim_{k\to\infty} \psi_{k+1,\infty}^+\big( \theta_k^+(\IU^{(k-1)}_g) \Iy^{(k)}_g  \IV^{(k)*}_g \big),\quad g\in G
\end{equation}
and
\begin{equation} \label{eq:intertwining-kappa}
\kappa(\psi_{n,\infty}(b)) = \lim_{k\to\infty} (\phi_{k,\infty}\circ\kappa_k\circ\psi_{n,k})(b),\quad b\in B_n,
\end{equation}
\begin{equation} \label{eq:intertwining-ix}
\Ix_g = \textup{s-}\!\!\!\lim_{k\to\infty} \phi_{k,\infty}^+\big( \kappa_k^+(\IV^{(k-1)}_g)\Ix^{(k)}_g\IU^{(k-1)*}_g \big),\quad g\in G
\end{equation}
define mutually inverse cocycle conjugacies $(\theta,\Iy): (A,\alpha,\Fu)\to (B,\beta,\Fv)$ and $(\kappa,\Ix): (B,\beta,\Fv)\to (A,\alpha,\Fu)$.
\end{prop}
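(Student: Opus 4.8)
The plan is to run the two-sided Elliott intertwining argument of \cite[Proposition 2.3.2]{Rordam} for the underlying $*$-homomorphisms, and then to construct the cocycle data on top of it by tracking everything in the strict topology.

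\emph{The $*$-homomorphism layer.} Fix $a$ in the dense subalgebra $\bigcup_{m\geq n}\phi_{n,m}^{-1}(\CF^A_m)$ of $A_n$, so that $\phi_{n,k}(a)\in\CF^A_k$ for all large $k$. Combining \ref{ai:1}, \ref{ai:2} and \ref{ai:6} shows that consecutive terms of the net $(\psi_{k+1,\infty}\circ\theta_k\circ\phi_{n,k})(a)$ appearing in \eqref{eq:intertwining-theta} differ by at most $\delta_k+\delta_{k+1}$, so by \ref{ai:9} the limit exists; since these maps are contractive and $\bigcup_m\phi_{n,m}^{-1}(\CF^A_m)$ is dense by \ref{ai:8}, a standard $\eps/3$-argument shows that \eqref{eq:intertwining-theta} consistently defines a $*$-homomorphism $\theta:A\to B$. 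Symmetrically \eqref{eq:intertwining-kappa} defines $\kappa:B\to A$, and inserting \ref{ai:1} (resp.\ \ref{ai:2}) once more into these formulas gives $\kappa\circ\theta=\id_A$ and $\theta\circ\kappa=\id_B$, so $\theta$ and $\kappa$ are mutually inverse $*$-isomorphisms (in particular both are non-degenerate, so $\theta^+$ and $\kappa^+$ are defined). This part is essentially \cite[Proposition 2.3.2]{Rordam} verbatim.

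\emph{The cocycle layer.} For fixed $g\in G$ put $\Iw^{(k)}_g=\theta_k^+(\IU^{(k-1)}_g)\Iy^{(k)}_g\IV^{(k)*}_g\in\CU(\CM(B_{k+1}))$, so that the right-hand side of \eqref{eq:intertwining-iy} is the net $\psi_{k+1,\infty}^+(\Iw^{(k)}_g)=\psi_{k+2,\infty}^+\big(\psi_{k+1}^+(\Iw^{(k)}_g)\big)$ in $\CU(\CM(B))$. First I would show that this net is strictly Cauchy, uniformly for $g$ in each $K_n$: after multiplying on the right by an element of $\psi_{m,\infty}(B_m)$ and writing $\psi_{m,\infty}=\psi_{k+2,\infty}\circ\psi_{m,k+2}$, the difference of consecutive terms is dominated by $\|\psi_{m,k+2}(b_0)\big(\Iw^{(k+1)}_g-\psi_{k+1}^+(\Iw^{(k)}_g)\big)\|$, which one bounds by a multiple of $\delta_k$ using \ref{ai:3} and \ref{ai:4} together with the relations $\IU^{(k)}_g=\phi_k^+(\IU^{(k-1)}_g)\Iu^{(k)}_g$ and $\IV^{(k)}_g=\psi_k^+(\IV^{(k-1)}_g)\Iv^{(k)}_g$ from \autoref{prop:limits}; here the point of the containments in \ref{ai:7} is precisely that all relevant products stay inside the $\CF^B_{k+1}$, so \ref{ai:3} and \ref{ai:4} can be applied with constants independent of $g\in K_n$ and the errors sum by \ref{ai:9}. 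Left multiplications require the additional step of replacing $g$ by $g^{-1}$, exactly as in the proof of \autoref{lem:cauchy-nets}. Since $\CU(\CM(B))$ is complete in the strict topology and the convergence is uniform over compacta (via \ref{ai:5}), \eqref{eq:intertwining-iy} defines a strictly continuous map $\Iy:G\to\CU(\CM(B))$; symmetrically \eqref{eq:intertwining-ix} defines $\Ix:G\to\CU(\CM(A))$.

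\emph{Cocycle morphisms and mutual inverseness.} It remains to verify the equivariance condition \eqref{eq:equivariance} and the cocycle identity \eqref{eq:cocycle-identity} for $(\theta,\Iy)$ and $(\kappa,\Ix)$, and that the two pairs are mutually inverse in $\cstar_{G,t}$. Both parts I would settle by evaluating on $\phi_{n,\infty}(A_n)$, respectively $\psi_{n,\infty}(B_n)$, and letting $n\to\infty$: the identities $\alpha_g\circ\phi_{n,\infty}=\phi_{n,\infty}\circ\ad(\IU^{(n-1)}_g)\circ\alpha^{(n)}_g$ and $\Fu_{g,h}=\phi_{n,\infty}^+\big(\IU^{(n-1)}_g\alpha^{(n)}_g(\IU^{(n-1)}_h)\Fu^{(n)}_{g,h}\IU^{(n-1)*}_{gh}\big)$ recorded in \autoref{prop:limits} and their $(\beta,\Fv)$-analogues, combined with the equivariance and cocycle identities for the $(\theta_k,\Iy^{(k)})$ and $(\kappa_k,\Ix^{(k)})$ and with the approximations \ref{ai:1}--\ref{ai:4}, collapse in the limit to \eqref{eq:equivariance} and \eqref{eq:cocycle-identity}; as $\theta$ and $\kappa$ are isomorphisms, $(\theta,\Iy)$ and $(\kappa,\Ix)$ are then cocycle conjugacies. (Alternatively, one may verify that $(\theta,\Iy)\circ(\phi,\Iu)_{n,\infty}$ equals the limit in \autoref{def:cohom-topology}\ref{def:cohom-topology:1} of the cocycle morphisms $(\psi,\Iv)_{k+1,\infty}\circ(\theta_k,\Iy^{(k)})\circ(\phi,\Iu)_{n,k}$ and invoke \autoref{lem:cauchy-nets}\ref{lem:cauchy-nets:2}, using that the $\phi_{n,\infty}(A_n)$ are dense in $A$.) Finally, since $\kappa\circ\theta=\id_A$ and $\theta\circ\kappa=\id_B$, the composition formula of \autoref{prop:cocycle-composition} reduces mutual inverseness to the identities $\kappa^+(\Iy_g)\Ix_g=\eins$ and $\theta^+(\Ix_g)\Iy_g=\eins$ for all $g\in G$; equivalently $\Ix_g=\kappa^+(\Iy_g)^*$, and this one extracts by comparing the explicit formulas \eqref{eq:intertwining-iy} and \eqref{eq:intertwining-ix} on $\phi_{n,\infty}(A_n)$ and $\psi_{n,\infty}(B_n)$ and telescoping once more with \ref{ai:3}, \ref{ai:4} (and \eqref{eq:cocycle-star} to pass between left and right multiplications).

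\emph{The main obstacle.} The conceptual content is just the Elliott intertwining, already available for the $*$-homomorphism layer from \cite{Rordam}; the real work is the bookkeeping of the unitary cocycles in the strict topology in the last two steps. Since strict-Cauchy estimates force one to multiply by approximate-unit-type elements, and since inner perturbations can badly distort a strict metric (cf.\ \autoref{rem:ue-not-symmetric}), one has to use the containment conditions in \ref{ai:7} with real care, constantly convert between left and right multiplications, and keep all the indices, the maps $\IU^{(k)},\IV^{(k)}$, and the constants $\delta_k$ aligned; this alignment is the delicate part, and it is exactly what the long list \ref{ai:1}--\ref{ai:9} in \autoref{def:intertwining-diagram} is engineered to make possible.
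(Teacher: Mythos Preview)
Your proposal is correct and follows essentially the same route as the paper: dispose of the $*$-homomorphism layer via \cite[Proposition 2.3.2]{Rordam}, establish strict-Cauchyness of the cocycle net by the long telescoping estimate driven by \ref{ai:3}, \ref{ai:4} and the containments \ref{ai:7}, invoke \autoref{lem:cauchy-nets} to pass to the limit, verify equivariance by evaluating on $\phi_{n,\infty}(A_n)$, and finish with a direct computation showing $\theta(\Ix_g)=\Iy_g^*$. Two minor remarks: your parenthetical ``alternatively, invoke \autoref{lem:cauchy-nets}\ref{lem:cauchy-nets:2}'' is in fact exactly how the paper obtains the cocycle identity (the limit of a Cauchy net of cocycle morphisms is automatically a cocycle morphism, so only equivariance for $(\theta,\Iy)$ needs a separate check); and the appeal to \eqref{eq:cocycle-star} in your last step is not needed --- the paper's computation of $\theta(\Ix_g)=\Iy_g^*$ proceeds directly from the limit formulas using only \ref{ai:1}--\ref{ai:3}.
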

\begin{proof}
The fact that the formulas \eqref{eq:intertwining-theta}+\eqref{eq:intertwining-kappa} define mutually inverse isomorphisms between the \cstar-algebras $A$ and $B$ is already proved in \cite[Proposition 2.3.2]{Rordam}. 

We shall prove that $(\theta,\Iy)$ is indeed a well-defined cocycle morphism.
First we need to show that the sequence of maps described in \eqref{eq:intertwining-iy} converges strictly and uniformly over compact sets in $G$.
We note that by the definition of composition, we have for all $k\geq 1$ that
\[
\begin{array}{ll}
\multicolumn{2}{l}{ \Big( \psi_{k+1,\infty}\circ\theta_k\circ\phi_{1,k}, \psi^+_{k+1,\infty}\big( \theta_k^+(\IU_\bullet^{(k-1)}) \Iy_\bullet^{(k)} \IV_\bullet^{(k)*} \big) \Big) } \\
=& \big( \psi_{k+1,\infty}, \psi_{k+1,\infty}^+(\IV^{(k)*}) \big) \circ (\theta_k,\Iy^{(k)}) \circ (\phi_{1,k},\IU^{(k-1)}).
\end{array}
\]
All three pairs in the second line of the equation are cocycle morphisms, and hence so is the pair on the first line.
Fix one of the compact sets $K_n\subseteq G$ and an element $b\in \CF^B_n$.
In light of \ref{ai:5}, \ref{ai:8} and \autoref{lem:cauchy-nets}, it is enough to show that maps of the form
\begin{equation} \label{eq:left-limit-goal}
g\mapsto \psi_{n,\infty}(b)\cdot \psi_{k+1,\infty}^+\big( \theta_k^+(\IU^{(k-1)}_g) \Iy^{(k)}_g  \IV^{(k)*}_g \big)
\end{equation}
satisfy the Cauchy criterion (over $k$) uniformly in $g\in K_n$.

For this we compute for all $k>n$ that
\begin{longtable}{cl}
\multicolumn{2}{l}{ $\dst \psi_{n,k+2}(b) \cdot  \theta_{k+1}^+(\IU^{(k)}_g) $ }\\
$=$ & $\dst \psi_{n,k+2}(b) \theta_{k+1}^+(\IU^{(k)}_g)  $ \\
$\stackrel{\ref{ai:2}}{=}_{\makebox[0pt]{\footnotesize\hspace{5mm}$\delta_{k+1}$}}$ & $\dst  \theta_{k+1}\Big(  \kappa_{k+1}(\psi_{n,k+1}(b))\IU^{(k)}_g  \Big)  $ \\
$\stackrel{\ref{ai:1},\ref{ai:2}}{=}_{\makebox[0pt]{\footnotesize$2\delta_k$}}$ & $\dst \theta_{k+1}\Big( \phi_{k}(\kappa_{k}(\psi_{n,k}(b))) \IU^{(k)}_g  \Big)$ \\
$=$ & $\dst \theta_{k+1}\Big( \phi_{k}\big(  \kappa_{k}(\psi_{n,k}(b)) \IU^{(k-1)}_g   \big) \Iu^{(k)}_g \Big)$ \\
$\hspace{2mm}\stackrel{\ref{ai:1},\ref{ai:2}}{=}_{\makebox[0pt]{\footnotesize\hspace{3mm}$2\delta_{k-1}$}}\hspace{2mm}$ & $\dst  \theta_{k+1}\Big( \phi_{k}\Big(  \underbrace{ \phi_{k-1}(\kappa_{k-1}(\psi_{n,k-1}(b))) \IU^{(k-1)}_g }_{\in \CF^A_k \ \ref{ai:6}, \ref{ai:7} }   \Big) \Iu^{(k)}_g \Big) $ \\
$\stackrel{\ref{ai:3}}{=}_{\makebox[0pt]{\footnotesize\hspace{1mm}$\delta_k$}}$ & $\dst  \theta_{k+1}\Big( \phi_{k}\Big( \underbrace{ \phi_{k-1}(\kappa_{k-1}(\psi_{n,k-1}(b))) \IU^{(k-1)}_g }_{\in \CF^A_k } \Big) \kappa_{k+1}^+(\Iy^{(k)}_g) \Ix_g^{(k+1)} \Big) $ \\
$\stackrel{\ref{ai:1}}{=}_{\makebox[0pt]{\footnotesize\hspace{3mm}$\delta_k$}}$ & $\dst  \theta_{k+1}\Big( \kappa_{k+1}\Big( (\theta_k\circ\phi_{k-1}\circ\kappa_{k-1}\circ\psi_{n,k-1})(b) \theta_k^+( \IU^{(k-1)}_g) \Iy^{(k)}_g  \Big) \Ix_g^{(k+1)} \Big)$ \\
$\stackrel{\ref{ai:1},\ref{ai:2}}{=}_{\makebox[0pt]{\footnotesize\hspace{3mm}$3\delta_{k-1}$}}$ & $\dst  \theta_{k+1}\Big( \kappa_{k+1}\Big( \underbrace{ \psi_{n,k+1}(b) \theta_k^+( \IU^{(k-1)}_g) \Iy^{(k)}_g }_{\in\CF_{k+1}^B \ \ref{ai:7}}  \Big) \Ix_g^{(k+1)}  \Big) $ \\
$\stackrel{\ref{ai:2}}{=}_{\makebox[0pt]{\footnotesize\hspace{6mm}$\delta_{k+1}$}}$ & $\dst  \psi_{k+1}\Big( \underbrace{ \psi_{n,k+1}(b) \theta_k^+( \IU^{(k-1)}_g) \Iy^{(k)}_g }_{\in\CF_{k+1}^B}  \Big) \theta_{k+1}^+(\Ix_g^{(k+1)}) $ \\
$\stackrel{\ref{ai:4}}{=}_{\makebox[0pt]{\footnotesize\hspace{2mm}$\delta_k$}}$ & $\dst  \psi_{k+1}\Big(  \psi_{n,k+1}(b) \theta_k^+( \IU^{(k-1)}_g) \Iy^{(k)}_g  \Big) 
\Iv^{(k+1)}_g \Iy^{(k+1)*}_g$ \\
$=$ & $\dst \psi_{n,k+2}(b)\cdot  \psi_{k+1}^+ \Big( \theta_k^+( \IU^{(k-1)}_g) \Iy^{(k)}_g   \Big) \Iv^{(k+1)}_g \Iy^{(k+1)*}_g $
\end{longtable}
For convenience, let us define $\eps_k=\max\set{\delta_{k-1},\delta_k,\delta_{k+1}}$ for $k\geq 2$, which defines another summable sequence of positive numbers as this was the case for $\delta_k$ by \ref{ai:9}.
If we apply the $*$-homomorphism $\psi_{k+2,\infty}$ to all of the above and multiply from the right with the unitary element $\psi_{k+2,\infty}^+\big( \Iy^{(k+1)}_g \IV^{(k+1)*}_g \big)$, we obtain
\[
\begin{array}{ll}
\multicolumn{2}{l}{ \dst \psi_{n,\infty}(b) \cdot \psi_{k+2,\infty}^+\big( \theta_{k+1}^+(\IU^{(k)}_g) \Iy^{(k+1)}_g \IV^{(k+1)*}_g \big) }\\
=_{12\eps_k} & \dst \psi_{n,\infty}(b)\cdot \psi_{k+2,\infty}^+\Big( \psi_{k+1}^+ \Big( \theta_k^+( \IU^{(k-1)}_g) \Iy^{(k)}_g   \Big) \Iv^{(k+1)}_g \IV^{(k+1)*}_g \Big) \\
=& \dst \dst \psi_{n,\infty}(b)\cdot \psi_{k+2,\infty}^+\Big( \psi_{k+1}^+ \Big( \theta_k^+( \IU^{(k-1)}_g) \Iy^{(k)}_g   \Big) \psi_{k+1}^+(\IV^{(k)*}_g) \Big) \\
=& \dst \dst \psi_{n,\infty}(b)\cdot \psi_{k+1,\infty}^+\Big( \theta_k^+( \IU^{(k-1)}_g) \Iy^{(k)}_g  \IV^{(k)*}_g  \Big) \\
\end{array}
\]
It now follows from the summability of the sequence $\eps_k$ that the sequence of maps in \eqref{eq:left-limit-goal} satisfies the Cauchy criterion uniformly over compact subsets of $G$.
Applying \autoref{lem:cauchy-nets} as indicated above, it follows that the pointwise strict limit in \eqref{eq:intertwining-iy} exists, and it follows that $(\theta\circ\phi_{1,\infty},\Iy)$ is a cocycle morphism from $(A_1,\alpha^{(1)},\Fu^{(1)})$ to $(B,\beta,\Fv)$.
Since by definition of the limit twisted action $(\alpha,\Fu)$ on $A$, we have $\Fu_{g,h}=\phi_{1,\infty}^+(\Fu^{(1)}_{g,h})$ for all $g,h\in G$, we immediately get the cocycle identity \eqref{eq:cocycle-identity} for the pair $(\theta,\Iy)$.

In order for $(\theta,\Iy)$ to be a cocycle morphism, we still need to verify the equivariance condition \eqref{eq:equivariance}.
This follows from the following calculation for all $n$ and $g\in G$, where we make use of the inductive limit construction from \autoref{prop:limits}:\footnote{The limits in this calculation refer to the point-norm limit of maps.}
\begin{longtable}{cl}
\multicolumn{2}{l}{ $ \ad(\Iy_g)\circ\beta_g\circ\theta\circ\phi_{n,\infty} $}\\
$\stackrel{\eqref{eq:intertwining-theta}}{=}$ & $\dst \lim_{k\to\infty} \ad(\Iy_g)\circ\beta_g\circ\psi_{k+1,\infty}\circ\theta_k\circ\phi_{n,k}$ \\
$=$ & $\dst \lim_{k\to\infty} \ad(\Iy_g)\circ\psi_{k+1,\infty}\circ\ad(\IV^{(k)}_g)\circ\beta^{(k+1)}_g\circ\theta_k\circ\phi_{n,k}$ \\
$\stackrel{\eqref{eq:intertwining-iy}}{=}$ & $\dst \lim_{k\to\infty} \psi_{k+1,\infty}\circ\ad\big( \theta_k^+(\IU^{(k-1)}_g) \Iy^{(k)}_g \big)\circ\beta^{(k+1)}_g\circ\theta_k\circ\phi_{n,k}$ \\
$=$ & $\dst \lim_{k\to\infty} \psi_{k+1,\infty}\circ\ad(\theta_k^+(\IU^{(k-1)}_g))\circ\theta_k\circ\alpha^{(k)}_g\circ\phi_{n,k}$ \\
$=$ & $\dst \lim_{k\to\infty} \psi_{k+1,\infty}\circ\theta_k\circ\ad(\IU^{(k-1)}_g)\circ\alpha^{(k)}_g\circ\phi_{n,k}$ \\
$=$ & $\dst \lim_{k\to\infty} \psi_{k+1,\infty}\circ\theta_k\circ\phi_{n,k}\circ\ad(\IU^{(n-1)}_g)\circ\alpha^{(n)}_g$ \\
$=$ & $\theta\circ\phi_{n,\infty}\circ\ad(\IU^{(n-1)}_g)\circ\alpha^{(n)}_g$ \\
$=$ & $\theta\circ\alpha_g\circ\phi_{n,\infty}$.
\end{longtable}
Since $n$ is arbitrary, we obtain $\ad(\Iy_g)\circ\beta_g\circ\theta=\theta\circ\alpha_g$, which concludes the proof that $(\theta,\Iy)$ is a cocycle conjugacy.

The analogous justification is valid for why the pair $(\kappa,\Ix)$ is a well-defined cocycle conjugacy arising from the limit of maps in \eqref{eq:intertwining-kappa} and \eqref{eq:intertwining-ix}.
We will omit the detailed argument as it follows by simply repeating the arguments above but exchanging the roles of $A_n$ and $B_n$.

Lastly, we need show that $\theta(\Ix)=\Iy^*$, which will imply that $(\kappa,\Ix)$ and $(\theta,\Iy)$ are mutually inverse.

We compute from \eqref{eq:intertwining-theta}+\eqref{eq:intertwining-ix} and \ref{ai:1}+\ref{ai:2}+\ref{ai:3}+\ref{ai:8} that
\[
\begin{array}{ccl}
\theta(\Ix_g) &=& \dst \lim_{k\to\infty} (\psi_{k+2,\infty}\circ\theta_{k+1}\circ\phi_k)^+\big( \kappa_k^+(\IV^{(k-1)}_g)\Ix^{(k)}_g\IU^{(k-1)*}_g \big) \\

&=& \dst \lim_{k\to\infty} (\psi_{k+2,\infty}\circ\theta_{k+1}\circ\kappa_{k+1}\circ\theta_k)^+\big( \kappa_k^+(\IV^{(k-1)}_g)\Ix^{(k)}_g\IU^{(k-1)*}_g \big) \\
&=& \dst \lim_{k\to\infty} (\psi_{k+1,\infty}\circ\theta_k)^+\big( \kappa_k^+(\IV^{(k-1)}_g)\Ix^{(k)}_g\IU^{(k-1)*}_g \big) \\
&=& \dst \lim_{k\to\infty} (\psi_{k+1,\infty}\circ\theta_k)^+\big( \kappa_k^+(\IV^{(k-1)}_g) \Ix^{(k)}_g\Iu^{(k-1)*}_g \phi_k^+(\IU^{(k-2)*}_g) \big) \\
&=& \dst \lim_{k\to\infty} (\psi_{k+1,\infty}\circ\theta_k)^+\big( \kappa_k^+(\IV^{(k-1)}_g\Iy^{(k-1)*}_g) \phi_k^+(\IU^{(k-2)*}_g) \big) \\
&=& \dst \lim_{k\to\infty} (\psi_{k,\infty})^+\big( \IV^{(k-1)}_g\Iy^{(k-1)*}_g \theta_{k-1}^+(\IU^{(k-2)*}_g) \big) \\
&=& \Iy_g^*.
\end{array}
\]
Here all limits are with respect to the strict topology.
This finishes the proof.
\end{proof}

\begin{rem} \label{rem:general-intertwining}
Let us once more reflect on what the statement of \autoref{prop:general-intertwining} means.
Keeping in mind \autoref{nota:limit-notation}, we recall from \autoref{prop:limits} that the universal morphism $(\phi,\Iu)_{n,\infty}: (A_n,\alpha^{(n)},\Fu^{(n)})\to (A,\alpha,\Fu)$ is given by the pair $(\phi_{n,\infty},\phi_{n,\infty}^+(\IU^{(n-1)})^*)$ and similarly $(\psi,\Iv)_{n+1,\infty}$ is given by the pair $(\psi_{n+1,\infty},\psi_{n+1,\infty}^+(\IV^{(n)})^*)$.
Moreover we have for all $k>n$ that
\[
(\phi,\Iu)_{n,k}=(\phi_{k-1},\Iu^{(k-1)})\circ\dots\circ(\phi_n,\Iu^{(n)})=(\phi_{n,k},\phi_{n,k}^+(\IU^{(n-1)}_\bullet)^*\IU^{(k-1)}_\bullet)
\]
and
\[
(\psi,\Iv)_{n,k}=(\psi_{k-1},\Iv^{(k-1)})\circ\dots\circ(\psi_n,\Iv^{(n)})=(\psi_{n,k},\psi_{n,k}^+(\IV^{(n-1)}_\bullet)^*\IV^{(k-1)}_\bullet).
\]
Then we have 
\[
(\theta,\Ix)\circ (\phi,\Iu)_{n,\infty}=(\theta\circ\phi_{n,\infty}, (\theta\circ\phi_{n,\infty})^+(\IU^{(n-1)}_\bullet)^*\Ix_\bullet).
\]
The second argument in this pair is given as the pointwise strict limit
\[
\begin{array}{cl}
\multicolumn{2}{l}{ (\theta\circ\phi_{n,\infty})^+(\IU^{(n-1)}_g)^*\Ix_g } \\
=& \dst \lim_{k\to\infty} (\psi_{k+1,\infty}\circ\theta_k\circ\phi_{n,k})^+(\IU^{(n-1)}_g)^* \cdot \psi_{k+1,\infty}^+( \theta_k^+(\IU^{(k-1)}_g)\Iy^{(k)}_g\IV^{(k)*}_g )
\end{array}
\]
Using the topology on $\cohom( (\alpha^{(n)},\Fu^{(n)}), (\beta,\Fv) )$ as in \autoref{def:cohom-topology}, this leads to the limit formula
\[
(\theta,\Ix)\circ (\phi,\Iu)_{n,\infty} = \lim_{k\to\infty} (\psi,\Iv)_{k+1,\infty}\circ (\theta_k,\Iy^{(k)})\circ (\phi,\Iu)_{n,k}.
\]
Arguing in the other direction we can similarly express
\[
(\kappa,\Ix)\circ (\psi,\Iv)_{n,\infty} = \lim_{k\to\infty} (\phi,\Iu)_{k,\infty}\circ (\kappa_k,\Ix^{(k)})\circ (\psi,\Iv)_{n,k}.
\]
This better highlights the conceptual parallel between \autoref{prop:general-intertwining} and \cite[Proposition 2.3.2]{Rordam}.
\end{rem}

We can now use \autoref{prop:general-intertwining} and apply it to get an refined analog of \cite[Corollary 2.3.3]{Rordam}. 

\begin{theorem}
\label{thm:general-ue-intertwining}
Let $G$ be a second-countable, locally compact group.
Let $(\alpha^{(n)},\Fu^{(n)}): G\curvearrowright A_n$ and $(\beta^{(n)},\Fv^{(n)}): G\curvearrowright B_n$ be sequences of twisted actions on separable \cstar-algebras.
Let
\[
(\phi_n,\Iu^{(n)}): (A_n,\alpha^{(n)},\Fu^{(n)}) \to (A_{n+1},\alpha^{(n+1)},\Fu^{(n+1)})
\]
and
\[
(\psi_n,\Iv^{(n)}): (B_n,\beta^{(n)},\Fv^{(n)}) \to (B_{n+1},\beta^{(n+1)},\Fv^{(n+1)})
\]
be sequences of cocycle morphisms, which we view as two inductive systems in the category $\cstar_{G,t}$.
Denote by $(\phi,\Iu)_{n,\infty}: (A_n,\alpha^{(n)},\Fu^{(n)}) \to (A,\alpha,\Fu)$ and $(\psi,\Iv)_{n,\infty}: (B_n,\beta^{(n)},\Fv^{(n)}) \to (B,\beta,\Fv)$ the universal cocycle morphisms into the respective limit.

Consider two sequences of cocycle morphisms 
\[
(\kappa_n,\Ix^{(n)}): (B_n,\beta^{(n)},\Fv^{(n)}) \to (A_n,\alpha^{(n)},\Fu^{(n)})
\]
and 
\[
(\theta_n,\Iy^{(n)}): (A_n,\alpha^{(n)},\Fu^{(n)}) \to (B_{n+1},\beta^{(n+1)},\Fv^{(n+1)})
\]
fitting into the (not necessarily commutative) diagram
\[
\xymatrix{
\dots\ar[rr] && A_n \ar[rd]^{\theta_n} \ar[rr]^{\phi_n} && A_{n+1} \ar[r] \ar[rd] & \dots\\
\dots\ar[r] & B_n \ar[ru]^{\kappa_n} \ar[rr]^{\psi_n} && B_{n+1} \ar[ru]^{\kappa_{n+1}} \ar[rr] && \dots
}
\]
Suppose that 
\[
 (\psi_n,\Iv^{(n)}) \subue (\theta_n,\Iy^{(n)})\circ(\kappa_n,\Ix^{(n)}) \ \text{ and } \ (\phi_n,\Iu^{(n)}) \subue (\kappa_{n+1},\Ix^{(n+1)})\circ(\theta_n,\Iy^{(n)}) 
\]
holds for all $n\in\IN$. 
Then there exist mutually inverse cocycle conjugacies $(\theta,\Iy): (A,\alpha,\Fu)\to (B,\beta,\Fv)$ and $(\kappa,\Ix): (B,\beta,\Fv)\to (A,\alpha,\Fu)$.

If we may in fact assume that
\[
 (\psi_n,\Iv^{(n)}) \ue (\theta_n,\Iy^{(n)})\circ(\kappa_n,\Ix^{(n)})
\]
and 
\[
(\phi_n,\Iu^{(n)}) \ue (\kappa_{n+1},\Ix^{(n+1)})\circ(\theta_n,\Iy^{(n)}) 
\] 
holds for all $n\in\IN$, then we can arrange the cocycle conjugacies to satisfy
\[
(\theta,\Iy)\circ (\phi,\Iu)_{n,\infty} \subue (\psi,\Iv)_{n+1,\infty}\circ(\theta_n,\Iy^{(n)})  
\]
and
\[
(\kappa,\Ix)\circ (\psi,\Iv)_{n,\infty} \subue (\phi,\Iu)_{n,\infty}\circ(\kappa_n,\Ix^{(n)})
\]
for all $n\in\IN$.
\end{theorem}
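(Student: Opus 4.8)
The plan is to deduce both assertions from \autoref{prop:general-intertwining}. The underlying inductive systems $\set{A_n,\phi_n}$ and $\set{B_n,\psi_n}$ — hence the limits $(A,\alpha,\Fu)$ and $(B,\beta,\Fv)$ — are fixed, and the diagonal maps $(\theta_n,\Iy^{(n)})$, $(\kappa_n,\Ix^{(n)})$ are merely auxiliary; conjugating them by inner cocycle automorphisms changes neither the systems nor their limits. So it suffices to replace them by unitarily equivalent cocycle morphisms $\ad(w_{n+1})\circ(\theta_n,\Iy^{(n)})$ and $\ad(z_n)\circ(\kappa_n,\Ix^{(n)})$ for suitable unitaries $w_{n+1}\in\CU(\CM(B_{n+1}))$ and $z_n\in\CU(\CM(A_n))$ (with $w_1=z_1=\eins$), together with auxiliary data $\CF_n^A\subset A_n$, $\CF_n^B\subset B_n$, $1_G\in K_n\subseteq G$, $\delta_n>0$, such that the modified diagram satisfies conditions \ref{ai:1}--\ref{ai:9} of \autoref{def:intertwining-diagram}. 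Then \autoref{prop:general-intertwining} produces mutually inverse cocycle conjugacies $(\theta,\Iy)$ and $(\kappa,\Ix)$ between $(A,\alpha,\Fu)$ and $(B,\beta,\Fv)$, which is the first claim.

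The data are produced by a recursion in which, at each round, the perturbing unitary is chosen first from an already fixed finite amount of data, and the finite sets are enlarged only afterwards. Fix $\delta_n=2^{-n}$, an exhaustion of $G$ by compacta, and countable dense sequences in every $A_n$ and $B_n$. At round $n$ one takes $K_n\supseteq K_{n-1}$ containing the $n$-th compactum, and chooses $z_n$ as follows: using $(\phi_{n-1},\Iu^{(n-1)})\subue(\kappa_n,\Ix^{(n)})\circ(\theta_{n-1},\Iy^{(n-1)})$ one picks from the witnessing net a unitary $u$ for which $\ad(u)$ brings that composition $\delta_{n-1}$-close to $(\phi_{n-1},\Iu^{(n-1)})$ on the relevant compacta, and sets $z_n:=u\,\kappa_n^+(w_n)^*$; using the composition rule of \autoref{prop:cocycle-composition}, a short computation identifies $\big(\ad(z_n)\circ(\kappa_n,\Ix^{(n)})\big)\circ\big(\ad(w_n)\circ(\theta_{n-1},\Iy^{(n-1)})\big)$ with $\ad(u)\circ\big[(\kappa_n,\Ix^{(n)})\circ(\theta_{n-1},\Iy^{(n-1)})\big]$, so that \ref{ai:1} and \ref{ai:3} hold at level $n-1$ for the perturbed maps. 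One then enlarges first $\CF_n^B$ and then $\CF_n^A$ so as to absorb the images and products required by \ref{ai:6} and \ref{ai:7} (all of which now involve only known, already-perturbed maps) and the preimages required by \ref{ai:8}, and finally chooses $w_{n+1}$ symmetrically from the net for $(\psi_n,\Iv^{(n)})\subue(\theta_n,\Iy^{(n)})\circ(\kappa_n,\Ix^{(n)})$ by the analogous formula $w_{n+1}:=v\,\theta_n^+(z_n)^*$, forcing \ref{ai:2} and \ref{ai:4} at level $n$; conditions \ref{ai:5} and \ref{ai:9} are built in. The one delicate point here is the ordering of choices: since \ref{ai:6}--\ref{ai:7} refer to the perturbed maps, the finite sets of round $n$ must be fixed after $w_n,z_n$ but before $w_{n+1}$, which is the content of the bookkeeping.

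For the second assertion, assume the two-sided relations $\ue$. Then in particular $(\theta_k,\Iy^{(k)})\circ(\kappa_k,\Ix^{(k)})\subue(\psi_k,\Iv^{(k)})$ for all $k$, and combining this with $(\phi_{k-1},\Iu^{(k-1)})\subue(\kappa_k,\Ix^{(k)})\circ(\theta_{k-1},\Iy^{(k-1)})$ via \autoref{prop:ue-compositions}, together with transitivity of $\subue$ (a diagonal argument on the witnessing nets, using \autoref{lem:composition-continuity}), one telescopes to $(\theta_k,\Iy^{(k)})\circ(\phi,\Iu)_{n,k}\subue(\psi,\Iv)_{n+1,k+1}\circ(\theta_n,\Iy^{(n)})$ for all $k>n$. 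On the other hand, as recorded in \autoref{rem:general-intertwining}, the conjugacy constructed above satisfies $(\theta,\Iy)\circ(\phi,\Iu)_{n,\infty}=\lim_k (\psi,\Iv)_{k+1,\infty}\circ\big(\ad(w_{k+1})\circ(\theta_k,\Iy^{(k)})\big)\circ(\phi,\Iu)_{n,k}$, and using $(\psi,\Iv)_{k+1,\infty}\circ(\psi,\Iv)_{n+1,k+1}=(\psi,\Iv)_{n+1,\infty}$ one may, by a diagonal argument choosing for each $k$ a unitary $s^{(k)}$ from the net witnessing the telescoped $\subue$-relation which is accurate to within $2^{-k}$ on the $k$-th members of the exhausting families, rewrite this limit as $\lim_k \ad\big(\psi_{k+1,\infty}^+(w_{k+1}s^{(k)})\big)\circ\big[(\psi,\Iv)_{n+1,\infty}\circ(\theta_n,\Iy^{(n)})\big]$. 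Hence the net $\big(\psi_{k+1,\infty}^+(w_{k+1}s^{(k)})\big)_k$ witnesses $(\theta,\Iy)\circ(\phi,\Iu)_{n,\infty}\subue(\psi,\Iv)_{n+1,\infty}\circ(\theta_n,\Iy^{(n)})$, and the relation for $(\kappa,\Ix)$ follows by exchanging the roles of the two systems. I expect this last upgrade to be the hard part: the first assertion requires only finitely many approximate constraints to be met at each stage, which the one-sided $\subue$ supplies directly, whereas the "moreover" statement genuinely uses the symmetric hypothesis and a careful interleaving of the intertwining limit with the telescoped $\subue$-nets so as to control all errors simultaneously.
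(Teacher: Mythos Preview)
Your proposal is correct and follows essentially the same approach as the paper: perturb the diagonal cocycle morphisms by inner automorphisms to manufacture an approximate cocycle intertwining in the sense of \autoref{def:intertwining-diagram}, apply \autoref{prop:general-intertwining}, and for the ``moreover'' statement use a telescoping argument combined with the limit formula of \autoref{rem:general-intertwining}.

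The only cosmetic difference is in the bookkeeping of the perturbing unitaries. You write the new unitary explicitly as $z_n = u\,\kappa_n^+(w_n)^*$, relying on the identity $(\kappa_n,\Ix^{(n)})\circ\ad(w_n) = \ad(\kappa_n^+(w_n))\circ(\kappa_n,\Ix^{(n)})$; the paper instead first observes that $(\kappa_n,\Ix^{(n)})\circ(\theta_{n-1},\Iy^{(n-1)}) \uee (\kappa_n,\Ix^{(n)})\circ(\zeta_{n-1},\IY^{(n-1)})$ and then picks $v_n$ directly from the net witnessing the resulting $\subue$ relation. These amount to the same thing. For the second assertion, the paper presents the telescoping with the symmetric $\ue$ throughout and leaves the final diagonal argument implicit, whereas you explicitly unpack the limit as $\ad\big(\psi_{k+1,\infty}^+(w_{k+1}s^{(k)})\big)$ applied to the fixed target. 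Your identification of precisely which $\subue$ directions are needed matches the paper's own footnote on this point.
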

\begin{proof}
We show that this arises as an application of \autoref{prop:general-intertwining}.
For this purpose we need to arrange for an approximate cocycle intertwining diagram in the sense of \autoref{def:intertwining-diagram}.
Choose any (decreasing) summable null sequence $\delta_n>0$, for example $\delta_n=2^{-n}$.
Let $1_G\in K_n\subseteq G$ be any increasing sequence of compact sets with $\bigcup_{n\in\IN} K_n=G$.
Inductively, choose finite sets of elements $\{a^{(n)}_m\}_{1\leq m\leq \ell_n}\subset A_n$ and $\{b^{(n)}_m\}_{1\leq m\leq \ell_n}\subset B_n$ such that the inclusions
\begin{equation} \label{eq:ensure-denseness}
\bigcup_{k>n} \phi_{n,k}^{-1}(\{a^{(k)}_m\}_{1\leq m\leq \ell_k})\subset A_n \quad\text{and}\quad \bigcup_{k>n}\psi_{n,k}^{-1}( \{b^{(k)}_m\}_{1\leq m\leq \ell_k}) \subset B_n
\end{equation}
are dense for all $n$.

Set $(\eta_1,\IX^{(1)})=(\kappa^{(1)},\Ix^{(1)})$, $\CF_1^B=\{b_{m}^{(1)}\}_{m\leq\ell_1}\subset B_1$ and
\[
\CF_1^A = \{ a_{m}^{(1)} \}_{m\leq\ell_1}\cup\kappa_{1}(\CF^B_1).
\]
Using the assumption that
\[
(\psi_1,\Iv^{(1)}) \subue (\theta_1,\Iy^{(1)})\circ(\kappa_1,\Ix^{(1)}) = (\theta_1,\Iy^{(1)})\circ(\eta_1,\IX^{(1)}),
\]
we may find a unitary $w_1\in\CU(\CM(B_2))$ such that if we set $(\zeta_1,\IY^{(1)})=\ad(w_1)\circ(\theta_1,\Iy^{(1)})$, we have
\[
\max_{b\in\CF_1^B} \| \psi_1(b) - \zeta_1(\eta_1(b)) \| \leq \delta_1
\]
and
\[
\max_{g\in K_1} \| b\big( \Iv^{(1)}_g - \zeta_1^+(\IX^{(1)}_g) \IY^{(1)}_g ) \| \leq \delta_1
\]
for all $b\in\psi_1(\CF^B_1)$.
We set
\[
\CF_2^B = \{b^{(2)}_m\}_{m\leq\ell_2}\cup\zeta_1(\CF^A_1)\cup\psi_1(\CF^B_1)\cdot\set{ \zeta_1^+(\IU^{(0)}_{g_1}) \IY^{(1)}_{g_2} \IV^{(1)}_{g_3} }_{g_1,g_2,g_3\in K_1}.
\]
Here we note that $\IU^{(0)}_\bullet=\eins$ by definition, so this factor is redundant in this particular step.
Using the assumption that
\[
(\phi_1,\Iu^{(1)}) \subue (\kappa_2,\Ix^{(2)})\circ(\theta_1,\Iy^{(1)}) \uee (\kappa_2,\Ix^{(2)})\circ(\zeta_1,\IY^{(1)}),
\]
we may find a unitary $v_2\in\CU(\CM(A_2))$ such that if we set $(\eta_2,\IX^{(2)})=\ad(v_2)\circ(\kappa_2,\Ix^{(2)})$, we have
\[
\max_{a\in\CF_1^A} \| \phi_1(a) - \eta_2(\zeta_1(b)) \| \leq \delta_1
\]
and
\[
\max_{g\in K_1} \| a\big( \Iu^{(1)}_g - \eta_2^+(\IY^{(1)}_g) \IX^{(2)}_g ) \| \leq \delta_1
\]
for all $a\in\phi_1(\CF^A_1)$.
We set
\[
\CF^A_2 = \{ a^{(2)}_m \}_{m\leq\ell_2}\cup\eta_2(\CF^B_2)\cup\phi_1(\CF_1^A)\cdot\set{ \eta_2^+(\IV^{(1)}_{g_1}) \IX^{(2)}_{g_2} \IU^{(1)}_{g_3} }_{g_1,g_2,g_3\in K_1}.
\]
Using the assumption that
\[
(\psi_2,\Iv^{(2)}) \subue  (\theta_2,\Iy^{(2)})\circ(\kappa_2,\Ix^{(2)}) \uee (\theta_2,\Iy^{(2)})\circ(\eta_2,\IX^{(2)}),
\]
we may find a unitary $w_2\in\CU(\CM(B_3))$ such that if we set $(\zeta_2,\IY^{(2)})=\ad(w_2)\circ(\theta_2,\Iy^{(2)})$, we have
\[
\max_{b\in\CF_2^B} \| \psi_2(b) - \zeta_2(\eta_2(b)) \| \leq \delta_2
\]
and
\[
\max_{g\in K_2} \| b\big( \Iv^{(2)}_g - \zeta_2^+(\IX^{(2)}_g) \IY^{(2)}_g ) \| \leq \delta_2
\]
for all $b\in\psi_2(\CF^B_2)$.
We set
\[
\CF_3^B = \{b^{(3)}_m\}_{m\leq\ell_3}\cup\zeta_2(\CF^A_2)\cup\psi_2(\CF^B_2)\cdot\set{ \zeta_2^+(\IU^{(1)}_{g_1}) \IY^{(2)}_{g_2} \IV^{(2)}_{g_3} }_{g_1,g_2,g_3\in K_2}.
\]
We continue like this by induction.
This allows us to find compact subsets $\CF^A_n\subset A_n$ and $\CF^B_n\subset B_n$, unitaries $v_n\in\CU(\CM(A_n))$ and $w_n\in\CU(\CM(B_{n+1}))$ such that if we define $(\eta_n,\IX^{(n)})=\ad(v_n)\circ(\kappa_n,\Ix^{(n)})$ and $(\zeta_n,\IY^{(n)})=\ad(w_n)\circ(\theta_n,\Iy^{(n)})$, then the diagram
\[
\xymatrix{
\dots\ar[rr] && A_n \ar[rd]^{\zeta_n} \ar[rr]^{\phi_n} && A_{n+1} \ar[r] \ar[rd] & \dots\\
\dots\ar[r] & B_n \ar[ru]^{\eta_n} \ar[rr]^{\psi_n} && B_{n+1} \ar[ru]^{\eta_{n+1}} \ar[rr] && \dots
}
\]
becomes an approximate cocycle intertwining in the sense of \autoref{def:intertwining-diagram}.
More precisely, every condition in \autoref{def:intertwining-diagram} except for \ref{ai:8} is automatically ensured by the inductive choice of this data with respect to $\delta_n=2^{-n}$.
As is clear from the above, we can make our choices so that
\[
\{a^{(n)}_m\}_{m\leq\ell_n}\subseteq\CF^A_n \quad\text{and}\quad \{b^{(n)}_m\}_{m\leq\ell_n}\subseteq\CF^B_n
\]
holds for every $n$.
By the initial choice of the elements $a^{(n)}_m\in A_n$ and $b^{(n)}_m\in B_n$ as to satisfy \eqref{eq:ensure-denseness}, this implies for every $n$ that the subsets
\[
\bigcup_{k\geq n} \phi^{-1}_{n,k}(\CF^A_k)\subset A_n  \quad\text{and}\quad \bigcup_{k\geq n} \psi^{-1}_{n,k}(\CF^B_k)\subset B_n
\]
are indeed dense, which verifies condition \ref{ai:8}.

By applying \autoref{prop:general-intertwining}, we obtain mutually inverse cocycle conjugacies $(\theta,\Iy): (A,\alpha,\Fu)\to (B,\beta,\Fv)$ and $(\kappa,\Ix): (B,\beta,\Fv)\to (A,\alpha,\Fu)$.
As noted in \autoref{rem:general-intertwining}, we have for every $n\geq 1$ that
\[
(\theta,\Ix)\circ (\phi,\Iu)_{n,\infty} = \lim_{k\to\infty} (\psi,\Iv)_{k+1,\infty}\circ (\zeta_k,\IY^{(k)})\circ (\phi,\Iu)_{n,k}.
\]

Now let us pass to the stronger assumption in the statement of the theorem, namely that $(\theta_n,\Iy^{(n)})\circ(\kappa_n,\Ix^{(n)}) \ue (\psi_n,\Iv^{(n)})$ and $(\kappa_{n+1},\Ix^{(n+1)})\circ(\theta_n,\Iy^{(n)}) \ue (\phi_n,\Iu^{(n)})$ holds for all $n\geq 1$.
For each $k>n$, we have that $(\zeta_k,\IY^{(k)})$ is unitarily equivalent to $(\theta_k,\Iy^{(k)})$ via $w_k\in\CU(\CM(B_{k+1}))$.
Hence for all $n\geq 1$, we obtain\footnote{This is the only place where it is important that we assume the symmetric version of approximate unitary equivalence.
The following step towards $(\theta,\Iy)\circ (\phi,\Iu)_{n,\infty} \subue (\psi,\Iv)_{n+1,\infty}\circ(\theta_n,\Iy^{(n)})$ uses at the very least the relations $(\kappa_{n+1},\Ix^{(n+1)})\circ(\theta_n,\Iy^{(n)}) \subue (\phi_n,\Iu^{(n)})$ and $(\psi_n,\Iv^{(n)}) \subue (\kappa_{n+1},\Ix^{(n+1)})\circ(\theta_n,\Iy^{(n)})$.
The proof for $(\kappa,\Ix)\circ (\psi,\Iv)_{n,\infty} \subue (\phi,\Iu)_{n,\infty}\circ(\kappa_n,\Ix^{(n)})$ need in turn the same relations in the reverse direction.}
\[
\begin{array}{cl}
\multicolumn{2}{l}{ (\psi,\Iv)_{n+1,k+1}\circ (\theta_n,\Iy^{(n)}) } \\
\ue& (\theta_k,\Iy^{(k)})\circ(\kappa_k,\Ix^{(k)})\circ(\theta_{k-1},\Iy^{(k-1)})\circ\cdots\circ(\kappa_{n+1},\Ix^{(n+1)})\circ (\theta_n,\Iy^{(n)}) \\
\ue& (\theta_k,\Iy^{(k)})\circ (\phi,\Iu)_{n,k} \\
\uee& (\zeta_k,\IY^{(k)})\circ (\phi,\Iu)_{n,k}.
\end{array}
\]
If we compose these morphisms from the left with $(\psi,\Iv)_{k+1,\infty}$ and let $k\to\infty$, it follows that
\[
\begin{array}{ccl}
(\theta,\Ix)\circ(\phi,\Iu)_{n,\infty} &=& \dst \lim_{k\to\infty} (\psi,\Iv)_{k+1,\infty}\circ (\zeta_k,\IY^{(k)})\circ (\phi,\Iu)_{n,k} \\
&\subue& (\psi,\Iv)_{n+1,\infty}\circ (\theta_n,\Iy^{(n)}).
\end{array}
\]
Since $n\geq 1$ was arbitrary, this proves the first half of the statement.
The other half, namely that
\[
(\kappa,\Ix)\circ (\psi,\Iv)_{n,\infty} \subue (\phi,\Iu)_{n,\infty}\circ(\kappa_n,\Ix^{(n)})
\]
holds for all $n\geq 1$, follows in a completely analogous fashion, by exchanging the roles of $A_n$ and $B_n$.
\end{proof}

The following further special case yields a natural setting which showcases what type of existence and uniqueness theorems need to be proved in order to classify (twisted) $G$-\cstar-algebras up to cocycle conjugacy.

\begin{cor}
\label{cor:special-ue-intertwining}
Let $G$ be a second-countable, locally compact group.
Let $(\alpha,\Fu): G\curvearrowright A$ and $(\beta,\Fv): G\curvearrowright B$ be two twisted actions on separable \cstar-algebras.
Suppose that
\[
(\phi,\Iu): (A,\alpha,\Fu) \to (B,\beta,\Fv) \quad\text{and}\quad  (\psi,\Iv): (B,\beta,\Fv)\to (A,\alpha,\Fu)
\]
are two cocycle morphisms such that
\[
\id_A \subue (\psi,\Iv)\circ(\phi,\Iu) \quad\text{and}\quad \id_B \subue (\phi,\Iu)\circ(\psi,\Iv).
\]
Then there exist mutually inverse cocycle conjugacies
\[
(\Phi,\IU): (A,\alpha,\Fu) \to (B,\beta,\Fv) \quad\text{and}\quad  (\Psi,\IV): (B,\beta,\Fv)\to (A,\alpha,\Fu)
\]
such that
\[
(\Phi,\IU) \subue (\phi,\Iu) \quad\text{and}\quad (\Psi,\IV) \subue (\psi,\Iv).
\]
\end{cor}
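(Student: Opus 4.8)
The plan is to realise the corollary as the special case of \autoref{thm:general-ue-intertwining} attached to two \emph{constant} inductive systems. Put $(A_n,\alpha^{(n)},\Fu^{(n)})=(A,\alpha,\Fu)$ for all $n$, with connecting morphisms $(\phi_n,\Iu^{(n)})=(\id_A,\eins)$, and $(B_n,\beta^{(n)},\Fv^{(n)})=(B,\beta,\Fv)$ for all $n$, with connecting morphisms $(\psi_n,\Iv^{(n)})=(\id_B,\eins)$; for the remaining data take $(\kappa_n,\Ix^{(n)}):=(\psi,\Iv)$ and $(\theta_n,\Iy^{(n)}):=(\phi,\Iu)$ for every $n$. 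Because all connecting morphisms are trivial, the construction in \autoref{prop:limits} gives $\IU^{(n)}=\IV^{(n)}=\eins$, identifies the inductive limits with $(A,\alpha,\Fu)$ and $(B,\beta,\Fv)$, and renders every universal morphism $(\phi,\Iu)_{n,\infty}$ and $(\psi,\Iv)_{n,\infty}$ equal to the identity. Under these identifications $(\theta_n,\Iy^{(n)})\circ(\kappa_n,\Ix^{(n)})=(\phi,\Iu)\circ(\psi,\Iv)$ and $(\kappa_{n+1},\Ix^{(n+1)})\circ(\theta_n,\Iy^{(n)})=(\psi,\Iv)\circ(\phi,\Iu)$, so the two standing assumptions of the corollary, $\id_A\subue(\psi,\Iv)\circ(\phi,\Iu)$ and $\id_B\subue(\phi,\Iu)\circ(\psi,\Iv)$, coincide verbatim with the hypotheses $(\phi_n,\Iu^{(n)})\subue(\kappa_{n+1},\Ix^{(n+1)})\circ(\theta_n,\Iy^{(n)})$ and $(\psi_n,\Iv^{(n)})\subue(\theta_n,\Iy^{(n)})\circ(\kappa_n,\Ix^{(n)})$ needed in the first half of \autoref{thm:general-ue-intertwining}. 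That theorem then produces mutually inverse cocycle conjugacies, which we denote $(\Phi,\IU):=(\theta,\Iy):(A,\alpha,\Fu)\to(B,\beta,\Fv)$ and $(\Psi,\IV):=(\kappa,\Ix):(B,\beta,\Fv)\to(A,\alpha,\Fu)$.

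It remains to check the relations $(\Phi,\IU)\subue(\phi,\Iu)$ and $(\Psi,\IV)\subue(\psi,\Iv)$. These do \emph{not} come for free from the second half of \autoref{thm:general-ue-intertwining}, because that part demands the symmetric relation $\ue$ rather than $\subue$, and $\id_A\subue(\psi,\Iv)\circ(\phi,\Iu)$ is in general strictly weaker than $\id_A\ue(\psi,\Iv)\circ(\phi,\Iu)$ (cf.\ \autoref{rem:ue-not-symmetric}). Instead I would look inside the construction in the proof of \autoref{thm:general-ue-intertwining}: there one selects unitaries $w_k\in\CU(\CM(B_{k+1}))=\CU(\CM(B))$ and $v_k\in\CU(\CM(A_k))=\CU(\CM(A))$ so that the diagram with legs $(\zeta_k,\IY^{(k)}):=\ad(w_k)\circ(\theta_k,\Iy^{(k)})=\ad(w_k)\circ(\phi,\Iu)$ and $(\eta_k,\IX^{(k)}):=\ad(v_k)\circ(\kappa_k,\Ix^{(k)})=\ad(v_k)\circ(\psi,\Iv)$ forms an approximate cocycle intertwining, to which \autoref{prop:general-intertwining} is applied. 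Specialising its formulas \eqref{eq:intertwining-theta} and \eqref{eq:intertwining-iy} to the present constant systems (so $\phi_{n,k}=\id_A$, $\psi_{k+1,\infty}=\id_B$ and $\IU^{(k-1)}=\IV^{(k)}=\eins$) collapses them to $\theta(a)=\lim_k w_k\phi(a)w_k^*$ and $\Iy_g=\textup{s-}\lim_k w_k\Iu_g\beta_g(w_k)^*$; by \autoref{lem:cauchy-nets} these limits are attained in the strict, locally uniform sense defining the topology on $\cohom((\alpha,\Fu),(\beta,\Fv))$, so $\ad(w_k)\circ(\phi,\Iu)\to(\theta,\Iy)$ there, which is exactly the assertion $(\Phi,\IU)\subue(\phi,\Iu)$. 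The relation $(\Psi,\IV)\subue(\psi,\Iv)$ is obtained symmetrically by interchanging the roles of the two systems.

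The only real obstacle is this last step: the high-level statement of \autoref{thm:general-ue-intertwining} gives back just the existence of the conjugacies, and the asymmetry of $\subue$ recorded in \autoref{rem:ue-not-symmetric} prevents upgrading the first-half conclusion to the second-half conclusion purely formally. One genuinely has to unwind the construction and observe that, because both systems are constant with trivial connecting morphisms, the conjugacy produced is literally a limit of inner perturbations of $(\phi,\Iu)$; the rest is bookkeeping to confirm the specialised limit formulas live in the correct topology.
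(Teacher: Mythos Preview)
Your proposal is correct and follows essentially the same approach as the paper: both set up constant inductive systems, note that the symmetric hypothesis $\ue$ in the second half of \autoref{thm:general-ue-intertwining} is not available, and instead unwind the construction from its proof to see that the resulting conjugacies are limits of inner perturbations $\ad(w_k)\circ(\phi,\Iu)$ and $\ad(v_k)\circ(\psi,\Iv)$, whence the desired $\subue$ relations. The paper phrases this slightly more tersely (``simply repeat the same argument as in its proof'') and remarks in a footnote that the step requiring symmetry becomes vacuous for trivial inductive systems, but the content is the same as what you wrote.
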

\begin{proof}
We consider both twisted $G$-\cstar-algebras in the assumption as trivial inductive limits
\[
(A,\alpha,\Fu)=\lim_{\longrightarrow} \set{ (A,\alpha,\Fu), (\id_A,\eins) }
\]
and
\[
(B,\beta,\Fv)=\lim_{\longrightarrow} \set{ (B,\beta,\Fv), (\id_B,\eins) }.
\]
Then the (not necessarily commuting) diagram of cocycle morphisms
\[
\xymatrix{
\dots\ar[rr] && A \ar[rd]^{\phi} \ar[rr]^{\id_A} && A \ar[r] \ar[rd]^{\phi} & \dots\\
\dots\ar[r] & B \ar[ru]^{\psi} \ar[rr]^{\id_B} && B \ar[ru]^{\psi} \ar[rr]^{\id_B} && \dots
}
\]
fits the assumptions in \autoref{thm:general-ue-intertwining}, so we can simply repeat the same argument as in its proof.
Then the resulting mutually inverse cocycle conjugacies $(\Phi,\IU)$ and $(\Psi,\IV)$ arise as certain limits
\[
(\Phi,\IU) = \lim_{k\to\infty} (\zeta_k, \IY^{(k)}),\quad (\Psi,\IV) = \lim_{k\to\infty} (\eta_k,\IX^{(k)}),
\]
where $(\zeta_k,\IY^{(k)})\uee (\phi,\Iu)$ and $(\eta_k,\IX^{(k)})\uee (\psi,\Iv)$ for all $k\geq 1$. 
Hence $(\Phi,\IU)\subue(\phi,\Iu)$ and $(\Psi,\IV)\subue(\psi,\Iv)$.\footnote{Due to the trivial form of the inductive systems, the need for the symmetric assumption in \autoref{thm:general-ue-intertwining} disappears because the relevant step in the proof becomes vacuous.}
This finishes the proof.
\end{proof}

For completeness, we shall also record the analogous result to the above in the proper picture, which one can obtain as a direct consequence and which generalizes \autoref{thm:special-pue-intertwining}.

\begin{cor}
\label{cor:general-pue-intertwining}
Let $G$ be a second-countable, locally compact group.
Let $(\alpha^{(n)},\Fu^{(n)}): G\curvearrowright A_n$ and $(\beta^{(n)},\Fv^{(n)}): G\curvearrowright B_n$ be sequences of gently twisted actions on separable \cstar-algebras.
Let
\[
(\phi_n,\Iu^{(n)}): (A_n,\alpha^{(n)},\Fu^{(n)}) \to (A_{n+1},\alpha^{(n+1)},\Fu^{(n+1)})
\]
and
\[
(\psi_n,\Iv^{(n)}): (B_n,\beta^{(n)},\Fv^{(n)}) \to (B_{n+1},\beta^{(n+1)},\Fv^{(n+1)})
\]
be sequences of proper cocycle morphisms, which we view as two inductive systems in the category $\cstar_{G,t,\Fp}$.
Denote by $(\phi,\Iu)_{n,\infty}: (A_n,\alpha^{(n)},\Fu^{(n)}) \to (A,\alpha,\Fu)$ and $(\psi,\Iv)_{n,\infty}: (B_n,\beta^{(n)},\Fv^{(n)}) \to (B,\beta,\Fv)$ the universal cocycle morphisms into the respective limit.

Consider two sequences of proper cocycle morphisms 
\[
(\kappa_n,\Ix^{(n)}): (B_n,\beta^{(n)},\Fv^{(n)}) \to (A_n,\alpha^{(n)},\Fu^{(n)})
\]
and 
\[
(\theta_n,\Iy^{(n)}): (A_n,\alpha^{(n)},\Fu^{(n)}) \to (B_{n+1},\beta^{(n+1)},\Fv^{(n+1)})
\]
fitting into the (not necessarily commutative) diagram
\[
\xymatrix{
\dots\ar[rr] && A_n \ar[rd]^{\theta_n} \ar[rr]^{\phi_n} && A_{n+1} \ar[r] \ar[rd] & \dots\\
\dots\ar[r] & B_n \ar[ru]^{\kappa_n} \ar[rr]^{\psi_n} && B_{n+1} \ar[ru]^{\kappa_{n+1}} \ar[rr] && \dots
}
\]
Suppose that 
\[
(\theta_n,\Iy^{(n)})\circ(\kappa_n,\Ix^{(n)}) \pue (\psi_n,\Iv^{(n)})
\]
and 
\[
(\kappa_{n+1},\Ix^{(n+1)})\circ(\theta_n,\Iy^{(n)}) \pue (\phi_n,\Iu^{(n)})
\] 
holds for all $n\in\IN$. 
Then there exist mutually inverse and proper cocycle conjugacies $(\theta,\Iy): (A,\alpha,\Fu)\to (B,\beta,\Fv)$ and $(\kappa,\Ix): (B,\beta,\Fv)\to (A,\alpha,\Fu)$ satisfying
\[
(\psi,\Iv)_{n+1,\infty}\circ(\theta_n,\Iy^{(n)}) \pue (\theta,\Iy)\circ (\phi,\Iu)_{n,\infty}
\]
and
\[
(\phi,\Iu)_{n,\infty}\circ(\kappa_n,\Ix^{(n)}) \pue (\kappa,\Ix)\circ (\psi,\Iv)_{n,\infty}
\]
for all $n\in\IN$.
\end{cor}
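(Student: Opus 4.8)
The plan is to reduce this to the non-proper \autoref{thm:general-ue-intertwining} by forcing units throughout, and then to read off properness of the resulting data from the explicit formulas in \autoref{prop:general-intertwining}. First I would unitize the entire picture: replace each $A_n$ by $A_n^\dagger$, equipped with the (again gently) twisted action obtained by extending $\alpha^{(n)}$ unitally and keeping the $2$-cocycle $\Fu^{(n)}\in\CU(\eins+A_n)\subseteq A_n^\dagger$, and similarly for the $B_n$. Each of the proper cocycle morphisms $(\phi_n,\Iu^{(n)})$, $(\psi_n,\Iv^{(n)})$, $(\kappa_n,\Ix^{(n)})$, $(\theta_n,\Iy^{(n)})$ then induces a \emph{unital} cocycle morphism between the unitized objects in $\cstar_{G,t}$, with the same cocycle. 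Since unitization commutes with sequential inductive limits of $*$-homomorphisms and with the limit construction of \autoref{prop:limits}, the two unitized systems have limits $(A^\dagger,\alpha^\dagger,\Fu)$ and $(B^\dagger,\beta^\dagger,\Fv)$, and these are again separable. As in the proof of \autoref{prop:ue-compositions}\ref{prop:ue-compositions:2}, each relation $\pue$ among proper cocycle morphisms is the same as the relation $\ue$ among the unitized ones; here one uses that replacing a unitary in $\CU(\CM(B^\dagger))=\CU(B^\dagger)$ by a suitable scalar multiple lying in $\CU(\eins+B)$ changes neither the associated inner automorphism nor the transformed cocycle, and that the unitized morphisms are non-degenerate, so $\ue$ and $\subue$ agree for them (cf.\ \autoref{rem:ue-not-symmetric}).

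With this dictionary, \autoref{thm:general-ue-intertwining} applied to the unitized systems produces mutually inverse cocycle conjugacies $(\hat\theta,\hat\Iy)\colon (A^\dagger,\alpha^\dagger,\Fu)\to(B^\dagger,\beta^\dagger,\Fv)$ and $(\hat\kappa,\hat\Ix)$ in the opposite direction, together with the relations $(\hat\theta,\hat\Iy)\circ(\phi^\dagger,\Iu)_{n,\infty}\subue(\psi^\dagger,\Iv)_{n+1,\infty}\circ(\theta_n^\dagger,\Iy^{(n)})$ and its $B$-side counterpart. The next step is to upgrade this to a \emph{proper} statement. Tracing the proof of \autoref{thm:general-ue-intertwining}, the conjugacies arise from \autoref{prop:general-intertwining} applied to a diagram whose diagonal arrows $(\zeta_n,\IY^{(n)})$ and $(\eta_n,\IX^{(n)})$ are inner perturbations of $(\theta_n^\dagger,\Iy^{(n)})$ and $(\kappa_n^\dagger,\Ix^{(n)})$ by unitaries which, by the normalisation above, may be assumed to lie in $\CU(\eins+B_{n+1})$ and $\CU(\eins+A_{n+1})$. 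Since each $A_n$ is an ideal of $A_n^\dagger$ preserved by the connecting maps and by all these conjugations (likewise for $B_n$), formula \eqref{eq:intertwining-theta} shows that $\hat\theta$ carries the dense set $\bigcup_n\phi_{n,\infty}(A_n)$ into $B$, so $\hat\theta(A)\subseteq B$; symmetrically $\hat\kappa(B)\subseteq A$, whence $\theta:=\hat\theta|_A$ and $\kappa:=\hat\kappa|_B$ are mutually inverse isomorphisms between $A$ and $B$. By \autoref{prop:proper-limits} the auxiliary maps $\IU^{(n)}$ and $\IV^{(n)}$ are norm-continuous with values in $\CU(\eins+A_{n+1})$ and $\CU(\eins+B_{n+1})$; feeding this into \eqref{eq:intertwining-iy} writes $\hat\Iy_g$ as a limit --- strict, hence norm in the unital algebra $B^\dagger$, uniformly over compact subsets of $G$ --- of norm-continuous maps with values in the norm-closed subgroup $\CU(\eins+B)$, so $\hat\Iy$ is a norm-continuous map $G\to\CU(\eins+B)$; the same applies to $\hat\Ix$. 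Thus $(\theta,\Iy):=(\hat\theta|_A,\hat\Iy)$ and $(\kappa,\Ix):=(\hat\kappa|_B,\hat\Ix)$ are proper cocycle morphisms, and $\hat\theta(\hat\Ix)=\hat\Iy^*$ (from \autoref{prop:general-intertwining}) shows that they are mutually inverse, i.e.\ mutually inverse proper cocycle conjugacies.

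Finally, I would restrict the relation $(\hat\theta,\hat\Iy)\circ(\phi^\dagger,\Iu)_{n,\infty}\subue(\psi^\dagger,\Iv)_{n+1,\infty}\circ(\theta_n^\dagger,\Iy^{(n)})$ to $A$, noting that the unitized universal and connecting morphisms restrict to the original ones (cf.\ \autoref{prop:limits} and \autoref{prop:proper-limits}); since all morphisms involved are non-degenerate, this $\subue$ is genuine $\ue$, witnessed by a sequence of unitaries that may be normalised into $\CU(\eins+B)$, and evaluating the cocycle comparison at the unit of $B^\dagger$ turns the relation into $(\theta,\Iy)\circ(\phi,\Iu)_{n,\infty}\pue(\psi,\Iv)_{n+1,\infty}\circ(\theta_n,\Iy^{(n)})$. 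As $\pue$ is symmetric this is the asserted relation, and the $B$-side is handled identically. The only genuinely delicate point is that no proper analog of \autoref{prop:general-intertwining} is available off the shelf, so properness of the output must be extracted by hand from the explicit limit formulas as above; the rest --- compatibility of unitization with sequential inductive limits, with the limit twisted action, and with the auxiliary cocycles $\IU^{(n)},\IV^{(n)}$ --- is routine bookkeeping.
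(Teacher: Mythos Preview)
Your proposal is correct and follows the same overall strategy as the paper: unitize the entire picture, apply \autoref{thm:general-ue-intertwining} to the unitized systems, and then verify that the resulting conjugacies descend to proper ones on the original algebras, with the $\ue$ relations becoming $\pue$ after rescaling the implementing unitaries into $\CU(\eins+B)$.

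The one meaningful difference is in how you extract properness of the cocycles $\hat\Iy$ and $\hat\Ix$. You re-open the proof of \autoref{thm:general-ue-intertwining}, normalize the perturbing unitaries $w_n,v_n$ into $\CU(\eins+B_{n+1})$ and $\CU(\eins+A_{n+1})$, and then read off from the explicit limit formula \eqref{eq:intertwining-iy} that $\hat\Iy_g$ is a norm limit of elements of $\CU(\eins+B)$. The paper instead treats \autoref{thm:general-ue-intertwining} as a black box and extracts properness purely from its \emph{output}: once one knows $(\hat\theta,\hat\Iy)\circ(\phi^\dagger,\Iu)_{n,\infty}\ue(\psi^\dagger,\Iv)_{n+1,\infty}\circ(\theta_n^\dagger,\Iy^{(n)})$, the right-hand side has cocycle in $\CU(\eins+B)$, and the algebraic fact that $vw\beta_g(v)^*\in\CU(\eins+B)$ whenever $v\in\CU(B^\dagger)$ and $w\in\CU(\eins+B)$ forces the left-hand side's cocycle into $\CU(\eins+B)$ too; unpacking gives $\hat\Iy_g\in\CU(\eins+B)$. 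The paper's route is slightly slicker in that it does not require tracing through the intertwining construction, but your approach is equally valid and arguably more constructive.
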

\begin{proof}
By definition of being a gently twisted action, we have canonical extensions to twisted actions $(\alpha^{(n)},\Fu^{(n)}): G\curvearrowright A_n^\dagger$ and $(\beta^{(n)},\Fv^{(n)}): G\curvearrowright B_n^\dagger$ to the proper unitizations of the involved \cstar-algebras.
Since all of the involved cocycle morphisms in the assumption are proper, they uniquely extend to unital cocycle morphisms
\[
\xymatrix{
&& (B_n^\dagger,\beta^{(n)},\Fv^{(n)}) \ar[dll]_{(\kappa_n^\dagger,\Ix^{(n)})} \ar[dd]^{(\psi_n^\dagger,\Iv^{(n)})} \\
(A_n^\dagger,\alpha^{(n)},\Fu^{(n)}) \ar[dd]_{(\phi_n^\dagger,\Iu^{(n)})} \ar[rrd]^{(\theta_n^\dagger,\Iy^{(n)})} && \\
&& (B_{n+1}^\dagger,\beta^{(n+1)},\Fv^{(n+1)}) \\
(A_{n+1}^\dagger,\alpha^{(n+1)},\Fu^{(n+1)}).
}
\]
Correspondingly, we may identify $(A^\dagger,\alpha,\Fu)$ with the limit with respect to the inductive system given by the cocycle morphisms $(\phi_n^\dagger,\Iu^{(n)})$, and analogously $(B^\dagger,\beta,\Fv)$ is the limit with respect to $(\psi_n^\dagger,\Iv^{(n)})$.
By definition of proper approximate unitary equivalence, our assumptions imply
\[
(\theta_n^\dagger,\Iy^{(n)})\circ(\kappa_n^\dagger,\Ix^{(n)}) \ue (\psi_n^\dagger,\Iv^{(n)})
\]
and 
\[
(\kappa_{n+1}^\dagger,\Ix^{(n+1)})\circ(\theta_n^\dagger,\Iy^{(n)}) \ue (\phi_n^\dagger,\Iu^{(n)})
\] 
for all $n\in\IN$.
Therefore we are in the situation to apply \autoref{thm:general-ue-intertwining} to the (possibly non-commuting) diagram of unitized cocycle morphisms above.
It follows that there exist mutually inverse cocycle conjugacies
$(\theta',\Iy): (A^\dagger,\alpha,\Fu)\to (B^\dagger,\beta,\Fv)$ and $(\kappa',\Ix): (B^\dagger,\beta,\Fv)\to (A^\dagger,\alpha,\Fu)$ satisfying
\[
(\psi^\dagger,\Iv)_{n+1,\infty} \circ(\theta_n^\dagger,\Iy^{(n)}) \ue (\theta',\Iy)\circ (\phi^\dagger,\Iu)_{n,\infty} 
\]
and
\[
(\phi^\dagger,\Iu)_{n,\infty}\circ(\kappa_n^\dagger,\Ix^{(n)}) \ue (\kappa',\Ix)\circ (\psi^\dagger,\Iv)_{n,\infty}
\]
for all $n\in\IN$. (Keep in mind \autoref{rem:ue-not-symmetric} here.)

We first note that the $*$-homomorphism $\psi_{n+1,\infty}^\dagger\circ\theta_n^\dagger$ sends $A_n$ to $B$.
Since $B$ is obviously closed under unitary equivalence inside $B^\dagger$, it follows that the $*$-homomorphism $\theta'\circ\phi_{n,\infty}$ also sends $A_n$ to $B$.
Since this is true for every $n$, we conclude that $\theta=\theta'|_A$ has image in $B$.
Hence $\theta'=\theta^\dagger$ is the unique unital extension of the isomorphism $\theta: A\to B$.
By exchanging the roles of $A_n$ and $B_n$ and repeating this argument, we likewise conclude that $\kappa'=\kappa^\dagger$ is the unique unital extension of the isomorphism $\kappa=\kappa'|_B: B\to A$.

Lastly, we note that for any pair of unitaries $v\in\CU(B^\dagger)$ and $w\in\CU(\eins+B)$, one has $vw\beta_g(v)^*\in\CU(\eins+B)$ for any $g\in G$, since the automorphism $\beta_g$ fixes the scalar part of $v$.
By assumption, the map $G\to\CU(B^\dagger)$ belonging to the composition $(\psi^\dagger,\Iv)_{n+1,\infty}\circ(\theta_n^\dagger,\Iy^{(n)})$ takes values in $\CU(\eins+B)$, and so the same will be true for the composition $(\theta^\dagger,\Iy)\circ(\phi^\dagger,\Iu)_{n,\infty}$.
Since the map $G\to\CU(B^\dagger)$ belonging to this composition is given by $g\mapsto (\theta\circ\phi_{n,\infty})^\dagger(\IU^{(n-1)*}_g)\Iy_g$ and the first factor is in $\CU(\eins+B)$, we hence conclude that $\Iy_g\in\CU(\eins+B)$ for all $g\in G$.
By exchanging the roles of $A_n$ and $B_n$ once again and repeating this argument, we likelywise conclude that $\Ix$ takes values in $\CU(\eins+A)$.

In summary, we have that $(\theta,\Iy): (A,\alpha,\Fu)\to (B,\beta,\Fv)$ and $(\kappa,\Ix): (B,\beta,\Fv)\to (A,\alpha,\Fu)$ are mutually inverse and proper cocycle conjugacies.
The statement that
\[
(\psi,\Iv)_{n+1,\infty} \circ(\theta_n,\Iy^{(n)}) \pue (\theta,\Iy)\circ (\phi,\Iu)_{n,\infty}
\]
and
\[
(\phi,\Iu)_{n,\infty}\circ(\kappa_n,\Ix^{(n)}) \pue (\kappa,\Ix)\circ (\psi,\Iv)_{n,\infty}
\]
holds for all $n\in\IN$, follows from the above.
A priori, the approximate unitary equivalence between these compositions is implemented by sequences of unitaries in $\CU(B^\dagger)$ and $\CU(A^\dagger)$, respectively.
By rescaling with their scalar parts, we may assume that the unitaries are in $\CU(\eins+B)$ and $\CU(\eins+A)$, respectively, so we indeed get proper approximate unitary equivalence as desired.
\end{proof}


\section{One-sided intertwining}

\subsection{Asymptotic unitary equivalence}

\begin{defi} \label{defi:cocycle-hom-equivalences-2}
Let $(\alpha,\Fu): G\curvearrowright A$ and $(\beta,\Fv): G\curvearrowright B$ be two twisted actions on \cstar-algebras.
Suppose that $A$ is separable and $B$ is $\sigma$-unital.
Let
\[
(\phi,\Iu): (A,\alpha,\Fu) \to (\CM(B),\beta,\Fv)
\]
be a cocycle representation and let $(\psi,\Iv)$ be a pair consisting of a $*$-homomor\-phism $\psi: A\to \CM(B)$ and a strictly continuous map $\Iv: G\to\CU(\CM(B))$. 
\begin{enumerate}[label=\textup{(\roman*)},leftmargin=*]
\item \label{as-G-ue}
We say that $(\psi,\Iv)$ is an asymptotic unitary conjugate of $(\phi,\Iu)$, if there exists a strictly continuous map $v: [0,\infty)\to\CU(\CM(B))$ such that 
\[
\psi(a) = \lim_{t\to\infty} v_t\phi(a)v_t^*,
\]
and 
\[
\max_{g\in K} \| b \big(\Iv_g-v_t\Iu_g\beta_g(v_t)^*\big) \| \stackrel{t\to\infty}{\longrightarrow} 0
\]
for all $a\in A$, $b\in B$ and every compact set $K\subseteq G$.
As before, it follows that $(\psi,\Iv)$ is automatically a cocycle representation.
If $(\phi,\Iu)$ is in fact a cocycle morphism into $(B,\beta,\Fv)$, then so is $(\psi,\Iv)$, and we have that the net $\ad(v_t)\circ(\phi,\Iu)$ converges to $(\psi,\Iv)$ in the topology from \autoref{def:cohom-topology}\ref{def:cohom-topology:1}.
Similar to before, we say that $(\psi,\Iv)$ and $(\phi,\Iu)$ are asymptotically unitarily equivalent, written $(\phi,\Iu)\asue(\psi,\Iv)$, if they are asymptotic unitary conjugates of each other.
\item \label{proper-as-G-ue}
The pairs $(\phi,\Iu)$ and $(\psi,\Iv)$ are properly asymptotically unitarily equivalent, if there exists a norm-continuous map $v: [0,\infty)\to\CU(\eins+B)$ such that
\[
\psi(a) = \lim_{t\to\infty} v_t\phi(a)v_t^*
\]
for all $a\in A$, and 
\[
\max_{g\in K}  \| \Iv_g-v_t\Iu_g\beta_g(v_t)^* \| \stackrel{t\to\infty}{\longrightarrow} 0
\]
for every compact set $K\subseteq G$.
We write $(\phi,\Iu)\pasue(\psi,\Iv)$.
If $(\alpha,\Fu)$ and $(\beta,\Fv)$ are gently twisted actions and $(\phi,\Iu)$ and $(\psi,\Iv)$ are proper cocycle morphisms, then this means that the net $\ad(v_\lambda)\circ(\phi,\Iu)$ converges to $(\psi,\Iv)$ in the topology from \autoref{def:cohom-topology}\ref{def:cohom-topology:2}.
\end{enumerate}
\end{defi}

In this subsection we will characterize when a given (non-degenerate) cocycle morphism is asymptotically unitarily equivalent to a cocycle conjugacy, which will culminate in a continuous version of \autoref{cor:special-ue-intertwining}.

\begin{prop} \label{prop:asue-composition}
Let
\[
(A,\alpha,\Fu) \stackrel{(\phi,\Iu)}{\longrightarrow} (B,\beta,\Fv),\quad (B,\beta,\Fv) \stackrel{(\psi,\Iv)}{\longrightarrow} (C,\gamma,\Fw)
\]
be two cocycle morphisms between separable twisted $G$-\cstar-algebras.
If both $(\phi,\Iu)$ and $(\psi,\Iv)$ are asymptotically unitarily equivalent to cocycle conjugacies, then the composition $(\psi,\Iv)\circ(\phi,\Iu)$ is asymptotically unitarily equivalent to a cocycle conjugacy.
\end{prop}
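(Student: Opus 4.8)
The plan is to prove, as the asymptotic analogue of \autoref{prop:ue-compositions}, that $\asue$ is compatible with composition: if
\[
(A,\alpha,\Fu) \stackrel{(\phi_j,\Iu^{(j)})}{\longrightarrow} (B,\beta,\Fv) \stackrel{(\psi_j,\Iv^{(j)})}{\longrightarrow} (C,\gamma,\Fw)
\]
for $j=1,2$ are cocycle morphisms between separable twisted $G$-\cstar-algebras with $(\phi_1,\Iu^{(1)})\asue(\phi_2,\Iu^{(2)})$ and $(\psi_1,\Iv^{(1)})\asue(\psi_2,\Iv^{(2)})$, then $(\psi_1,\Iv^{(1)})\circ(\phi_1,\Iu^{(1)})\asue(\psi_2,\Iv^{(2)})\circ(\phi_2,\Iu^{(2)})$. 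Granting this, the proposition is immediate: choose cocycle conjugacies $(\Phi,\IU)$ and $(\Psi,\IV)$ with $(\phi,\Iu)\asue(\Phi,\IU)$ and $(\psi,\Iv)\asue(\Psi,\IV)$; then $(\Psi,\IV)\circ(\Phi,\IU)$ is again a cocycle conjugacy (it is invertible in $\cstar_{G,t}$ because its underlying $*$-homomorphism $\Psi\circ\Phi$ is an isomorphism, cf.\ the discussion following \autoref{def:cocycle-category}), and the compatibility yields $(\psi,\Iv)\circ(\phi,\Iu)\asue(\Psi,\IV)\circ(\Phi,\IU)$.

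For the compatibility, the first ingredient I would record is the naturality of inner cocycle automorphisms: for any (extendible) cocycle morphism $(\psi,\Iv): (B,\beta,\Fv)\to(C,\gamma,\Fw)$ and any $v\in\CU(\CM(B))$ one has
\[
\ad(\psi^+(v))\circ(\psi,\Iv) = (\psi,\Iv)\circ\ad(v)
\]
in $\cstar_{G,t}$, where $\ad(v)$ and $\ad(\psi^+(v))$ are the inner cocycle conjugacies from \autoref{ex:inner-morphisms}. This is a short computation with the composition formula: equality of the underlying $*$-homomorphisms is the identity $\psi^+(v)\psi(a)\psi^+(v)^*=\psi(vav^*)$, while equality of the cocycle parts reduces to the statement that the equivariance condition \eqref{eq:equivariance} extends to unitary multipliers, i.e.\ $\ad(\Iv_g)\circ\gamma_g\circ\psi^+ = \psi^+\circ\beta_g$ on $\CU(\CM(B))$; the latter follows by testing against $\psi(B)$ via the relation $\psi^+(u)\psi(a)=\psi(ua)$, so that no non-degeneracy of $\psi$ is needed.

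Now fix strictly continuous paths $v: [0,\infty)\to\CU(\CM(B))$ and $w: [0,\infty)\to\CU(\CM(C))$ witnessing $\ad(v_t)\circ(\phi_1,\Iu^{(1)})\to(\phi_2,\Iu^{(2)})$ and $\ad(w_t)\circ(\psi_1,\Iv^{(1)})\to(\psi_2,\Iv^{(2)})$ in the topology of \autoref{def:cohom-topology}\ref{def:cohom-topology:1}; these exist by the definition of $\asue$. Set $c_t := w_t\,\psi_1^+(v_t)$, which is a strictly continuous path in $\CU(\CM(C))$ since $\psi_1^+$ is strictly continuous on the unit ball and multiplication is jointly strictly continuous on bounded sets. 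Using the naturality identity (together with $\ad(xy)=\ad(x)\circ\ad(y)$) and associativity of composition,
\[
\ad(c_t)\circ\big((\psi_1,\Iv^{(1)})\circ(\phi_1,\Iu^{(1)})\big) = \big(\ad(w_t)\circ(\psi_1,\Iv^{(1)})\big)\circ\big(\ad(v_t)\circ(\phi_1,\Iu^{(1)})\big),
\]
and since both factors on the right are convergent nets over the common directed set $[0,\infty)$, the joint continuity of composition (\autoref{lem:composition-continuity}) shows the right-hand side converges to $(\psi_2,\Iv^{(2)})\circ(\phi_2,\Iu^{(2)})$. Hence $(\psi_2,\Iv^{(2)})\circ(\phi_2,\Iu^{(2)})$ is an asymptotic unitary conjugate of $(\psi_1,\Iv^{(1)})\circ(\phi_1,\Iu^{(1)})$ via the path $c_t$. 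Running the identical argument with the witnessing paths in the opposite direction (which $\asue$ also supplies) gives the reverse relation, so the two compositions are asymptotically unitarily equivalent, completing the argument.

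The only genuinely new point compared with \autoref{prop:ue-compositions} is the use of a single parameter $t$ for both paths rather than a product of directed sets; this is exactly what \autoref{lem:composition-continuity} permits, by providing \emph{joint} continuity of composition for nets over one directed set. The remaining subtleties are routine: the naturality identity above, which needs the mild care of passing \eqref{eq:equivariance} to multiplier unitaries when $\psi$ is not assumed non-degenerate, and the fact --- stressed in \autoref{rem:ue-not-symmetric} --- that $\asue$ is a symmetric relation, so witnessing paths must be produced in both directions; here the latter costs nothing since the hypotheses are themselves symmetric.
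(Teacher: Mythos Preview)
Your proof is correct and follows essentially the same approach as the paper. The paper's proof simply declares the result a ``trivial consequence from the proof of \autoref{prop:ue-compositions}'' and says the conclusion ``easily follows from \autoref{lem:composition-continuity}''; you have spelled out precisely what that means, including the naturality identity $\ad(\psi^+(v))\circ(\psi,\Iv)=(\psi,\Iv)\circ\ad(v)$, the explicit witnessing path $c_t=w_t\,\psi_1^+(v_t)$, and the observation that joint continuity over a single directed set $[0,\infty)$ is exactly what is needed in the asymptotic (as opposed to approximate) setting.
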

\begin{proof}
This is a trivial consequence from the proof of \autoref{prop:ue-compositions}.
If there exist cocycle conjugacies
\[
(A,\alpha,\Fu) \stackrel{(\Phi,\IU)}{\longrightarrow} (B,\beta,\Fv),\quad (B,\beta,\Fv) \stackrel{(\Psi,\IV)}{\longrightarrow} (C,\gamma,\Fw)
\]
with
\[
(\Phi,\IU) \asue (\phi,\Iu) \quad\text{and}\quad (\Psi,\IV) \asue (\psi,\Iv),
\]
then it easily follows from \autoref{lem:composition-continuity} that also
\[
(\Psi,\IV)\circ(\Phi,\IU) \asue (\psi,\Iv)\circ(\phi,\Iu).
\]
\end{proof}

The following is a variant of \cite[Proposition 2.1]{Szabo17ssa3}.
Although half of the proof is very similar, we shall give the full argument for completeness.

\begin{prop} \label{prop:asue-to-an-iso}
Let $G$ be a second-countable, locally compact group.
Suppose that $(\alpha,\Fu): G\curvearrowright A$ and $(\beta,\Fv): G\curvearrowright B$ are twisted actions on separable \cstar-algebras.
Let $(\phi,\Iu): (A,\alpha,\Fu)\to (B,\beta,\Fv)$ be a cocycle morphism such that $\phi$ is injective and non-degenerate.
Then $(\phi,\Iu)$ is asymptotically unitarily equivalent to a cocycle conjugacy if and only if the following is true:

For all finite sets $\CF^A\subset A$, $\CF^B\subset B$, compact sets $K\subseteq G$, and $\eps>0$, there exists a (strictly continuous) unitary path $z: [0,1]\to\CU(\CM(B))$ with $z_0=\eins$ and satisfying
	\begin{itemize}
	\item $\dst \max_{0\leq t\leq 1} \|[z_t,\phi(a)]\|\leq\eps$ for all $a\in\CF^A$;
	\item $\dist\big( z_1^*bz_1, \phi(A) \big) \leq \eps$ for all $b\in \CF^B$;
	\item $\dst \max_{g\in K} \max_{0\leq t\leq 1} \|b(\Iu_g-z_t\Iu_g\beta_g(z_t)^*)\| \leq \eps$ for all $b\in\CF^B$.
	\end{itemize}
\end{prop}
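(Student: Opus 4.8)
\emph{Overview and the ``only if'' direction.} The plan is to treat the forward implication as a one-sided Elliott intertwining that manufactures a $*$-isomorphism together with a compatible cocycle (in the spirit of \cite[Corollary~2.3.3]{Rordam} and \cite[Proposition~2.1]{Szabo17ssa3}), and to obtain the reverse implication by extracting the required paths from a witnessing asymptotic unitary equivalence. For the ``only if'' direction: suppose $(\phi,\Iu)\asue(\Phi,\IU)$ for a cocycle conjugacy $(\Phi,\IU)$, witnessed by a strictly continuous path $w\colon[0,\infty)\to\CU(\CM(B))$ with $\Phi(a)=\lim_t w_t\phi(a)w_t^*$ and $\max_{g\in K}\|b(\IU_g-w_t\Iu_g\beta_g(w_t)^*)\|\to 0$; since $\Phi$ is onto, also $\dist(w_t^*bw_t,\phi(A))\to 0$. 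Given $\CF^A,\CF^B,K,\eps$, I would fix $T_0$ so large that $\|w_s\phi(a)w_s^*-\Phi(a)\|$ is small for all $s\geq T_0$ and $a\in\CF^A$, then a larger $T_1$ (depending on the now-fixed elements $w_{T_0}bw_{T_0}^*$) so that $\dist(w_{T_1}^*(w_{T_0}bw_{T_0}^*)w_{T_1},\phi(A))$ is small, and set $z_t:=w_{T_0}^*w_{T_0+t(T_1-T_0)}$, which is strictly continuous with $z_0=\eins$. The commutator estimate then reduces to $\|[z_t,\phi(a)]\|=\|w_s\phi(a)w_s^*-w_{T_0}\phi(a)w_{T_0}^*\|$, the distance estimate is immediate from $z_1^*bz_1=w_{T_1}^*(w_{T_0}bw_{T_0}^*)w_{T_1}$, and the cocycle estimate rewrites, via $\Iu_g=w_{T_0}^*(w_{T_0}\Iu_g\beta_g(w_{T_0})^*)\beta_g(w_{T_0})$, as $\|b\,w_{T_0}^*(w_{T_0}\Iu_g\beta_g(w_{T_0})^*-w_s\Iu_g\beta_g(w_s)^*)\|$.

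\emph{The ``if'' direction: construction.} Fix dense sequences $(a_j)_j$ in $A$ and $(b_j)_j$ in $B$, an increasing exhaustion $(K_n)_n$ of $G$ by compact sets, and put $\eps_n=2^{-n}$. I would build inductively strictly continuous paths $z^{(n)}\colon[0,1]\to\CU(\CM(B))$ with $z^{(n)}_0=\eins$, unitaries $V_n:=z^{(1)}_1\cdots z^{(n)}_1$ (so $V_0=\eins$), and finite sets $\CL_n\subset A$: at stage $n$, apply the hypothesis to $(\phi,\Iu)$ with $\CF^A_n:=\CL_{n-1}\cup\{a_n\}$, $\CF^B_n:=\{V_{n-1}^*b_jV_{n-1}:j\leq n\}$, $K=K_n$, $\eps=\eps_n$, obtaining $z^{(n)}$; since $V_n=V_{n-1}z^{(n)}_1$, the second bullet reads $\dist(V_n^*b_jV_n,\phi(A))\leq\eps_n$, so pick $c_{n,j}\in A$ with $\|V_n^*b_jV_n-\phi(c_{n,j})\|\leq\eps_n$ ($j\leq n$) and set $\CL_n:=\CF^A_n\cup\{c_{n,j}:j\leq n\}$. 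Glue the pieces into a strictly continuous $v\colon[0,\infty)\to\CU(\CM(B))$ by $v_t:=V_{n-1}z^{(n)}_{t-(n-1)}$ on $[n-1,n]$, so $v_n=V_n$.

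\emph{The ``if'' direction: verification.} The first bullet gives $\|v_t\phi(a)v_t^*-V_{n-1}\phi(a)V_{n-1}^*\|=\|[z^{(n)}_{t-(n-1)},\phi(a)]\|\leq\eps_n$ on $[n-1,n]$ for $a\in\CF^A_n$, so $t\mapsto v_t\phi(a)v_t^*$ is Cauchy for each $a$ in the dense set and extends to a $*$-homomorphism $\Phi\colon A\to B$ with $\|\Phi(a)\|=\|\phi(a)\|$; as $\phi$ is injective, $\Phi$ is isometric and has closed range. The third bullet (using $V_{n-1}\cdot V_{n-1}^*b_jV_{n-1}=b_jV_{n-1}$) likewise makes $t\mapsto b\,v_t\Iu_g\beta_g(v_t)^*$ Cauchy uniformly over compact sets for all $b\in B$, and via \eqref{eq:cocycle-star} applied to the cocycle morphisms $\ad(v_t)\circ(\phi,\Iu)$ (as in the proof of \autoref{lem:cauchy-nets}\ref{lem:cauchy-nets:1}) so is $t\mapsto v_t\Iu_g\beta_g(v_t)^*\,b$; hence by \autoref{lem:cauchy-nets}\ref{lem:cauchy-nets:2} the net $\ad(v_t)\circ(\phi,\Iu)$ converges in the topology of \autoref{def:cohom-topology}\ref{def:cohom-topology:1} to a cocycle morphism $(\Phi,\IU)$. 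Surjectivity of $\Phi$ uses the bookkeeping: since $c_{n,j}\in\CL_n\subseteq\CF^A_m$ for all $m>n$, the estimate above yields $\|v_n\phi(c_{n,j})v_n^*-\Phi(c_{n,j})\|\leq 2^{-n}$, so $\|b_j-\Phi(c_{n,j})\|\leq 2^{-n+1}\to 0$ as $n\to\infty$; as $(b_j)_j$ is dense and $\Phi(A)$ is closed, $\Phi(A)=B$. Thus $(\Phi,\IU)$ is a cocycle conjugacy, and since $\phi$ and $\Phi$ are non-degenerate, the argument of \autoref{rem:ue-not-symmetric}, run with the continuous parameter $t$, shows $(\phi,\Iu)$ is in turn an asymptotic unitary conjugate of $(\Phi,\IU)$, i.e.\ $(\phi,\Iu)\asue(\Phi,\IU)$.

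\emph{Main obstacle.} In the ``if'' direction, the delicate point is the surjectivity bookkeeping: one must re-insert every preimage $c_{n,j}$ extracted at stage $n$ into all later tracked sets $\CF^A_m$, so that $v_m\phi(c_{n,j})v_m^*$ stays controlled and one may pass to $\Phi(c_{n,j})$; the remaining estimates are routine concatenation. In the ``only if'' direction, the obstacle is the uniform verification of the cocycle estimate $\|b\,w_{T_0}^*(w_{T_0}\Iu_g\beta_g(w_{T_0})^*-w_s\Iu_g\beta_g(w_s)^*)\|$ over $s\in[T_0,T_1]$ and $g\in K$: I would split it through $\IU_g$ and exploit that, once $T_0$ is fixed, the elements $b\,w_{T_0}^*$ form a fixed finite set, so that enlarging $T_1$ controls the $s$-dependent part, while the $s$-independent part is handled by choosing $T_0$ itself large; keeping this argument honest when $B$ is merely $\sigma$-unital (so that the cocycle path converges only strictly, not in norm) is the part I expect to require the most care.
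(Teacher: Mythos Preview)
Your ``if'' direction is correct and essentially the same argument as the paper's: both perform a one-sided intertwining, choosing $z^{(n)}$ so that the commutators control the $\phi$-images of both the dense sequence $(a_j)$ and the previously extracted preimages $c_{m,j}$, then glue into a path $v$ and appeal to \autoref{lem:cauchy-nets} for the cocycle limit. The surjectivity bookkeeping you flag is exactly what the paper does.

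The gap is in your ``only if'' direction. You try to verify the third bullet for arbitrary $b\in\CF^B$ by writing
\[
b\big(\Iu_g-z_t\Iu_g\beta_g(z_t)^*\big)=bw_{T_0}^*\big(w_{T_0}\Iu_g\beta_g(w_{T_0})^*-w_s\Iu_g\beta_g(w_s)^*\big)\beta_g(w_{T_0})
\]
and splitting through $\IU_g$. The $s$-independent half $\|bw_{T_0}^*(\IU_g-w_{T_0}\Iu_g\beta_g(w_{T_0})^*)\|=\|b(w_{T_0}^*\IU_g\beta_g(w_{T_0})-\Iu_g)\|$ is fine: it tends to zero by the symmetric convergence coming from non-degeneracy (\autoref{rem:ue-not-symmetric}). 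But the $s$-dependent half $\|bw_{T_0}^*(\IU_g-w_s\Iu_g\beta_g(w_s)^*)\|$ must be small for \emph{all} $s\in[T_0,T_1]$, in particular for $s=T_0$. Your claim that ``enlarging $T_1$'' controls this part is backwards: enlarging $T_1$ only enlarges the range of $s$. What you would actually need is that the threshold $S$ (depending on the element $bw_{T_0}^*$) beyond which the estimate holds satisfies $S\le T_0$; but $bw_{T_0}^*$ depends on $T_0$ itself, so this is circular. There is no fixed element of $B$ sitting next to the $s$-dependent expression on which the strict convergence can bite uniformly in $s\ge T_0$.

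The paper breaks this circularity precisely via non-degeneracy of $\phi$: since $\phi(A)B$ is dense in $B$, it suffices to verify the third bullet for $b=\phi(a)$ with $a\in\CF^A$. Then the commutator estimate gives $\phi(a)w_{T_0}^*\approx w_{T_0}^*\Phi(a)$, which inserts the \emph{fixed} element $\Phi(a)\in B$ (independent of $T_0,s$) adjacent to $w_s\Iu_g\beta_g(w_s)^*$. One can therefore choose a single $n_1=T_0$ such that $\|\Phi(a)(\IU_g-w_s\Iu_g\beta_g(w_s)^*)\|\le\eps/4$ for \emph{all} $s\ge n_1$, and the computation closes. This substitution $b\rightsquigarrow\phi(a)$ is the missing idea in your argument; the paper even remarks beforehand that non-degeneracy is used exactly here, to get the third bullet for all $t\in[0,1]$ rather than just $t=1$.
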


\begin{reme}
Before we resume with the proof, let us contemplate the above non-degeneracy assumption on $\phi$.
I do not know if it is a necessary assumption, but it appears to be somewhat relevant for the ``only if'' part, which is important for the application that follows afterwards.
The difficulty essentially lies in arranging the statement in the last bullet point for all $t\in [0,1]$ (and not just for $t=1$), which under the non-degeneracy assumption is possible below because it suffices to consider only $b\in\phi(A)$.
In particular, when there are no group actions involved, the assumption can be relaxed to saying that $\phi$ is extendible.
\end{reme}

\begin{proof}
Let us first show the ``only if'' part.
Suppose that $(\phi,\Iu)$ is asymptotically unitarily equivalent to a cocycle conjugacy $(\Phi,\IU): (A,\alpha,\Fu)\to (B,\beta,\Fv)$.
Since $\phi$ is non-degenerate, this means (cf.\ \autoref{rem:ue-not-symmetric}) that there exists a strictly continuous path of unitaries $w: [0,\infty)\to\CU(\CM(B))$ such that
\[
\Phi(a)=\lim_{t\to\infty} w_t\phi(a)w_t^*, \quad a\in A,
\] 
and
\[
\lim_{t\to\infty} \max_{g\in K}  \|b(\IU_g-w_t\Iu_g\beta_g(w_t)^*)\|  = 0
\]
for all compact sets $K\subseteq G$ and $b\in B$.

Now let $\CF^A, \CF^B, K, \eps$ be given as in the statement.
As pointed out in the remark above, we assume $\phi$ to be non-degenerate, and hence we may substitute the element $b\in\CF^B$ in the last bullet point for $\phi(a)$ for $a\in\CF^A$.
By the above, we may then choose a number $n_1\geq 1$ such that
\begin{equation} \label{eq:asue-iso:n1-1}
\sup_{t\geq n_1} \|\Phi(a)-w_{t}\phi(a)w_{t}^*\| \leq \eps/4,\quad a\in\CF^A,
\end{equation}
and
\begin{equation} \label{eq:asue-iso:n1-2}
\sup_{t\geq n_1} \max_{g\in K} \max_{a\in\CF^A}  \|\Phi(a)(\IU_g-w_{t}\Iu_g\beta_g(w_{t})^*)\| \leq \eps/4.
\end{equation}
Then, choose a bigger number $n_2>n_1$ such that
\begin{equation} \label{eq:asue-iso:n2-1}
\sup_{t\geq n_2} \|\Phi(a)-w_{t}\phi(a)w_{t}^*\| \leq \eps,\quad a\in \Phi^{-1}(w_{n_1}\CF^Bw_{n_1}^*).
\end{equation}
We set $z_t=w_{n_1}^*w_{n_1+t(n_2-n_1)}$ for $t\in [0,1]$.
We claim that this unitary path satisfies the desired properties.
We first observe for all $g\in K$, $a\in\CF^A$ and $t\in [0,1]$ that
\[
\begin{array}{cl}
\multicolumn{2}{l}{ \phi(a)z_t\Iu_g\beta_g(z_t)^* } \\
=& \phi(a) w_{n_1}^*w_{n_1+t(n_2-n_1)} \Iu_g \beta_g(w_{n_1+t(n_2-n_1)}^*w_{n_1}) \\
\stackrel{\eqref{eq:asue-iso:n1-1}}{=}_{\makebox[0pt]{\footnotesize$\eps/4$}}& w_{n_1}^* \Phi(a) w_{n_1+t(n_2-n_1)} \Iu_g \beta_g(w_{n_1+t(n_2-n_1)}^*w_{n_1}) \\
\stackrel{\eqref{eq:asue-iso:n1-2}}{=}_{\makebox[0pt]{\footnotesize$\eps/4$}} & w_{n_1}^* \Phi(a) \IU_g \beta_g(w_{n_1}) \\
\stackrel{\eqref{eq:asue-iso:n1-2}}{=}_{\makebox[0pt]{\footnotesize$\eps/4$}} & w_{n_1}^* \Phi(a) w_{n_1} \Iu_g \\
\stackrel{\eqref{eq:asue-iso:n1-1}}{=}_{\makebox[0pt]{\footnotesize$\eps/4$}} & \phi(a)\Iu_g.
\end{array}
\]
In summary, we obtain
\[
\max_{g\in K} \max_{a\in\CF^A} \max_{0\leq t\leq 1} \|\phi(a)(\Iu_g-z_t\Iu_g\beta_g(z_t^*))\| \leq \eps.
\]
Applying \eqref{eq:asue-iso:n1-1} twice, we clearly have
\[
\max_{a\in\CF^A} \max_{0\leq t\leq 1} \|[z_t,\phi(a)]\| \leq \eps/2.
\]
Furthermore, we have for all $b\in\CF^B$ that
\[
\begin{array}{ccl}
z_1^*bz_1 &=& w_{n_2}^*w_{n_1} b w_{n_1}^* w_{n_2} \\
&=& w_{n_2}^*\Phi(\Phi^{-1}(w_{n_1}bw_{n_1}^*))w_{n_2} \\
&\stackrel{\eqref{eq:asue-iso:n2-1}}{=}_{\makebox[0pt]{\footnotesize\hspace{-2mm}$\eps$}}& \phi( \Phi^{-1}(w_{n_1} b w_{n_1}^*) ) \ \in \ \phi(A).
\end{array}
\]
This shows that $z$ is indeed a unitary path with the desired properties.

Now let us show the ``if'' part.
Let $\set{a_n}_{n\in\IN}\subset A$ and $\set{b_n}_{n\in\IN}\subset B$ be dense sequences. 
Write $G=\bigcup_{n\in\IN} K_n$ for an increasing union of compact subsets $1_G\in K_n$. 
We are going to perturb $(\phi,\Iu)$ with paths of unitaries step by step:

In the first step, choose some $a_{1,1}\in A$ and $z^{(1)}: [0,1]\to\CU(\CM(B))$ with $z^{(1)}_0=\eins$ such that for all $0\leq t\leq 1$, we have
\begin{itemize}
\item $z^{(1)*}_1b_1z^{(1)}_1 =_{1/2} \phi(a_{1,1})$;
\item $\|[z^{(1)}_t,\phi(a_1)]\|\leq 1/2$;
\item $b_1 \Iu_g =_{1/2} b_1 z^{(1)}_t\Iu_g\beta_g(z^{(1)}_t)^*$ for all $g\in K_1$.
\end{itemize}
In the second step, choose $a_{2,1},a_{2,2}\in A$ and $z^{(2)}: [0,1]\to\CU(\CM(B))$ with $z^{(2)}_0=\eins$ such that for every $0\leq t\leq 1$ we have
\begin{itemize}
\item $z^{(2)*}_1 (z^{(1)*}_1 b_j z^{(1)}_1) z^{(2)}_1 =_{1/4} \phi(a_{2,j})$ for $j=1,2$;
\item $\|[z^{(2)}_t,\phi(a_j)]\|\leq 1/4$ for $j=1,2$;
\item $\|[z^{(2)}_t,\phi(a_{1,1})]\|\leq 1/4$;
\item $(b_jz^{(1)}_1)\Iu_g =_{1/4} (b_jz^{(1)}_1)z^{(2)}_t\Iu_g\beta_{g}(z^{(2)}_t)^*$ for all $g\in K_2$ and $j=1,2$.
\end{itemize}
Now assume that for some $n\in\IN$, we have found $z^{(1)},\dots,z^{(n)}: [0,1]\to\CU(\CM(B))$ and $\set{a_{m,j}}_{j\leq m\leq n}\subset A$ satisfying for every $0\leq t\leq 1$ that
\begin{equation} \label{eq1:c1}
z^{(n)*}_1(z^{(n-1)*}_1\cdots z^{(1)*}_1b_jz^{(1)}_1\cdots z^{(n-1)}_1)z^{(n)}_1 =_{2^{-n}} \phi(a_{n,j})~ \text{for}~ j\leq n;
\end{equation}
\begin{equation} \label{eq1:c2}
\|[z^{(n)}_t,\phi(a_j)]\|\leq 2^{-n}~ \text{for}~ j\leq n;
\end{equation}
\begin{equation} \label{eq1:c3}
\|[z^{(n)}_t,\phi(a_{m,j})]\|\leq 2^{-n}~ \text{for}~ m<n~ \text{and}~ j\leq m ;
\end{equation}
\begin{equation} \label{eq1:c4}
(b_jz^{(1)}_1\cdots z^{(n-1)}_1)\Iu_g =_{2^{-n}} (b_jz^{(1)}_1\cdots z^{(n-1)}_1)z^{(n)}_t\Iu_g\beta_{g}(z^{(n)}_t)^*
\end{equation}
for all $g\in K_n$ and $j\leq n$.

Then we can again apply our assumptions to find $z^{(n+1)}: [0,1]\to\CU(\CM(B))$ with $z^{(n+1)}_0=\eins$ and $\set{a_{n+1,j}}_{j\leq n+1}\subset A$ so that for every $0\leq t\leq 1$ we have
\begin{enumerate}[label=$\bullet$,leftmargin=*]
\item $z^{(n+1)*}_1(z^{(n)*}_1\cdots z^{(1)*}_1 b_j z^{(1)}_1\cdots z^{(n)}_1)z^{(n+1)}_1 =_{2^{-(n+1)}} \phi(a_{n+1,j})$ for $j\leq n+1$;
\item $\|[z^{(n+1)}_t,\phi(a_j)]\|\leq 2^{-(n+1)}$ for $j\leq n+1$;
\item $\|[z^{(n+1)}_t,\phi(a_{m,j})]\|\leq 2^{-(n+1)}$ for $m<n+1$ and $j\leq m$;
\item $(b_j z^{(1)}_1\cdots z^{(n)}_1)\Iu_g =_{2^{-(n+1)}} (b_j z^{(1)}_1\cdots z^{(n)}_1)z^{(n+1)}_t\Iu_g\beta_{g}(z^{(n+1)}_t)^*$ for all $g\in K_{n+1}$ and $j\leq n+1$.
\end{enumerate}
Carry on inductively. We define a norm-continuous path of unitaries $v: [0,\infty)\to\CU(\CM(B))$ via $v_t = z^{(1)}_1\cdots z^{(n)}_1 z^{(n+1)}_{t-n}$ for $n\geq 0$ with $n\leq t\leq n+1$.
This map is indeed continuous since every path $z^{(n)}$ starts at the unit. 
We obtain a continuous path of cocycle morphisms $(\psi_t,\Iu^{(t)}): (A,\alpha)\to (B,\beta)$ for $t\geq 0$ via $\psi_t = \ad(v_t)\circ\phi$ and $\Iu^{(t)}_g=v_t\Iu_g\beta_g(v_t)^*$. 
We claim that this converges to a cocycle conjugacy when we let $t\to\infty$.

By condition \eqref{eq1:c2}, we can immediately observe that the net $(\psi_t(a_j))_{t\geq 0}$ is Cauchy for all $j\in\IN$. 
Since the set $\set{a_j}_{j\in\IN}\subset A$ was assumed to be dense, this implies that the net $(\psi_t)_{t\geq 0}$ converges to some $*$-homomorphism $\psi: A\to B$ in the point-norm topology. 
Since $\phi$ was assumed to be injective, so is $\psi$.
In order to show that $\psi$ is an isomorphism, it suffices to show that it has dense image.
Let $j\geq 1$ be given.
Then for $n\geq j$, condition \eqref{eq1:c1} tells us that $\|\phi_n(a_{n,j})-b_j\|\leq 2^{-n}$.
At the same time, it follows from condition \eqref{eq1:c3} that
\[
\begin{array}{ccl}
\|\psi(a_{n,j})-\phi_n(a_{n,j})\| &\leq& \dst \sum_{k=n}^\infty \| \phi_{k+1}(a_{n,j})-\phi_k(a_{n,j}) \| \\
&=& \dst \sum_{k=n}^\infty \|z^{(k+1)}_1 \phi(a_{n,j}) z^{(k+1)*}_1 - \phi(a_{n,j}) \| \\
&\leq& \dst \sum_{k=n}^\infty 2^{-k} \ = \ 2^{1-n}.
\end{array}
\]
This culminates in the estimate $\|b_j-\psi(a_{n,j})\|\leq 2^{2-n}$ for all $j\geq 1$ and $n\geq j$.
Since the set $\set{b_j}_{j\in\IN}\subset B$ was assumed to be dense, it follows that $\psi$ has dense image and is therefore an isomorphism.

By condition \eqref{eq1:c4}, we have that the assignment 
\[
t\mapsto b_j\cdot v_t \Iu_g \beta_g(v_t^*) = b \Iu^{(t)}_g
\]
yields a Cauchy net for every $j\in\IN$ and $g\in G$, with uniformity on compact subsets of $G$.
Since $\set{b_j}_{j\in\IN}\subset B$ was assumed to be dense, it follows that this is true for any element $b\in B$ in place of $b_j$.
By \autoref{lem:cauchy-nets}, it follows that the strict limit 
\[
\Iv_g = \lim_{t\to\infty} \Iu_g^{(t)} \in \CU(\CM(B)) ,\quad g\in G
\]
exists, and the convergence is uniform over compact sets in $G$.
Hence $(\psi,\Iv)=\lim_{t\to\infty} (\phi_t,\Iu^{(t)})=\lim_{t\to\infty} \ad(v_t)\circ(\phi,\Iu)$ is the desired cocycle conjugacy.
Since $\phi$ is non-degenerate, it follows from \autoref{rem:ue-not-symmetric} that $(\psi,\Iv) \asue (\phi,\Iu)$.
\end{proof}

\begin{lemma} \label{lem:asue-limits}
Let $G$ be a second-countable, locally compact group.
Let $(\alpha^{(n)},\Fu^{(n)}): G\curvearrowright A_n$ be a sequence of twisted actions on separable \cstar-algebras.
Let
\[
(\phi_n,\Iu^{(n)}): (A_n,\alpha^{(n)},\Fu^{(n)}) \to (A_{n+1},\alpha^{(n+1)},\Fu^{(n+1)})
\]
be a sequence of injective and non-degenerate cocycle morphisms with inductive limit $\dst (A,\alpha,\Fu)=\lim_{\longrightarrow} (A_n,\alpha^{(n)},\Fu^{(n)})$.
Suppose that for every $n\geq 1$, $(\phi_n,\Iu^{(n)})$ is asymptotically unitarily equivalent to a cocycle conjugacy.
Then it follows that
\[
\phi_{1,\infty}: (A_1,\alpha^{(1)},\Fu^{(1)}) \to (A,\alpha,\Fu)
\]
is asymptotically unitarily equivalent to a cocycle conjugacy.\footnote{In this context remember from the construction of the limit that the $*$-homomorphism $\phi_{1,\infty}$ is genuinely equivariant here, so we identify it with the cocycle morphism $(\phi_{1,\infty},\eins)$.}
\end{lemma}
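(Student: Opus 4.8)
The strategy is to reduce to \autoref{prop:asue-to-an-iso} applied to the map $\phi_{1,\infty}$, which is injective (being a composition/limit of injective maps) and non-degenerate (being a composition/limit of non-degenerate maps). So I need to verify the approximate local criterion in \autoref{prop:asue-to-an-iso}: given finite sets $\CF^{A_1}\subset A_1$, $\CF^A\subset A$, a compact $K\subseteq G$, and $\eps>0$, I must produce a strictly continuous unitary path $z:[0,1]\to\CU(\CM(A))$ with $z_0=\eins$, approximately commuting with $\phi_{1,\infty}(\CF^{A_1})$, approximately conjugating $\CF^A$ into $\phi_{1,\infty}(A_1)$, and such that $\Iu_\bullet$ (which here equals $\eins$, by the footnote, since $\phi_{1,\infty}$ is genuinely equivariant) is approximately preserved under the twisted conjugation by $z_t$ — i.e.\ $\max_{g\in K}\max_t \|b(\eins - z_t\beta_g(z_t)^*)\|\le\eps$ for $b$ in a prescribed finite set.

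The key maneuver is a two-stage approximation. First, since $\bigcup_n \phi_{n,\infty}(A_n)$ is dense in $A$ and the $\phi_{n,\infty}$ are non-degenerate, I can choose $m$ large enough and replace the elements of $\CF^A$ by elements in $\phi_{m,\infty}(A_m)$ up to $\eps/3$; similarly I can arrange the prescribed finite set of $b$'s to lie within $\eps/3$ of $\phi_{m,\infty}(A_m)$. It therefore suffices to solve the problem for $\phi_{1,m}: (A_1,\alpha^{(1)},\Fu^{(1)})\to (A_m,\alpha^{(m)},\Fu^{(m)})$ and then push forward along the equivariant map $\phi_{m,\infty}$ (which turns a unitary path in $\CM(A_m)$ into one in $\CM(A)$ via $\phi_{m,\infty}^+$, preserving all the relevant estimates because $\phi_{m,\infty}$ is equivariant and contractive). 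Second, by the hypothesis each $(\phi_j,\Iu^{(j)})$ is asymptotically unitarily equivalent to a cocycle conjugacy, so by the ``only if'' direction of \autoref{prop:asue-to-an-iso} each $(\phi_j,\Iu^{(j)})$ satisfies the local criterion; I would then concatenate the resulting unitary paths $z^{(1)},z^{(2)},\dots,z^{(m-1)}$ — first a path in $\CM(A_2)$ witnessing the criterion for $\phi_1$, conjugated forward into $\CM(A_m)$, then a path in $\CM(A_3)$ for $\phi_2$, and so on — choosing the tolerances at stage $j$ small enough (and the finite test sets at stage $j$ large enough to include the data generated by the previous stages, exactly as in the proof of \autoref{prop:asue-to-an-iso}) that the composite path $z$ satisfies the three bullet points of \autoref{prop:asue-to-an-iso} for $\phi_{1,m}$ up to $\eps$. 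Here one uses that ``$\dist(z_1^*bz_1,\phi_{1,m}(A_1))\le\eps$'' is obtained stepwise: $\phi_{m-1,m}$-conjugation brings $b$ near $\phi_{m-1,m}(A_{m-1})$, then the next brings it near $\phi_{m-2,m}(A_{m-1})$, etc., telescoping down to $\phi_{1,m}(A_1)$.

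The main obstacle is the third bullet point — controlling $\Iu_\bullet$ (equivalently the cocycles) uniformly in $t\in[0,1]$, not just at the endpoint, while concatenating paths through different algebras. Because conjugating forward by $\phi_{m,\infty}^+$ and by the intermediate $\phi_{j,m}^+$ interacts with the cocycles $\IU^{(j)}$ appearing in the limit construction (\autoref{prop:limits}), one must track how a path $z^{(j)}$ witnessing the criterion for $\phi_j$ transforms: after conjugation it will twist the cocycle by factors like $\phi_{j+1,m}^+(\IU^{(j-1)}_g)$, so the finite test sets $\CF^{A_k}$ at each stage must be enlarged to absorb products of the running cocycles with the already-chosen unitaries — precisely the bookkeeping done via condition \ref{ai:7} in \autoref{def:intertwining-diagram}. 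Once the test sets and tolerances are chosen in the right (inductive) order, all three conditions follow by routine triangle-inequality estimates, and $\phi_{1,\infty}=\phi_{m,\infty}\circ\phi_{1,m}$ satisfies the criterion, so \autoref{prop:asue-to-an-iso} yields that $\phi_{1,\infty}$ is asymptotically unitarily equivalent to a cocycle conjugacy.
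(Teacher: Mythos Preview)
Your plan is correct in spirit and would work with enough care, but it takes a substantially more laborious route than the paper.

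The key simplification you are missing is \autoref{prop:asue-composition}: since each $(\phi_j,\Iu^{(j)})$ is asymptotically unitarily equivalent to a cocycle conjugacy, the finite composition
\[
(\phi,\Iu)_{1,n}=(\phi_{1,n},\IU^{(n-1)}): (A_1,\alpha^{(1)},\Fu^{(1)})\to (A_n,\alpha^{(n)},\Fu^{(n)})
\]
is itself asymptotically unitarily equivalent to a cocycle conjugacy. Hence the ``only if'' direction of \autoref{prop:asue-to-an-iso} applies \emph{directly} to $(\phi_{1,n},\IU^{(n-1)})$ and hands you a single unitary path $y:[0,1]\to\CU(\CM(A_n))$ satisfying all three bullet points in one shot. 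Then $z=\phi_{n,\infty}^+\circ y$ does the job for $\phi_{1,\infty}$: the first two bullet points are immediate, and the third follows from the relation $\alpha_g\circ\phi_{n,\infty}=\phi_{n,\infty}\circ\ad(\IU^{(n-1)}_g)\circ\alpha^{(n)}_g$ (from \autoref{prop:limits}), which converts the cocycle condition for $\IU^{(n-1)}$ at level $n$ into the cocycle condition for $\eins$ at the limit.

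Your proposed step-by-step concatenation of paths $z^{(1)},\dots,z^{(m-1)}$, with the inductive enlargement of test sets to absorb the running cocycles $\IU^{(j)}$, essentially reproves \autoref{prop:asue-composition} together with the ``only if'' half of \autoref{prop:asue-to-an-iso} by hand. That is doable, but all of the ``main obstacle'' bookkeeping you describe evaporates once you invoke \autoref{prop:asue-composition} first.
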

\begin{proof}
We adopt the notation from \autoref{prop:limits}.

Let $\CF_1\subset A_1$ and $\CF_\infty\subset A$ be finite sets.
Let $K\subseteq G$ be compact and $\eps>0$.
In order to obtain the claim, we are going to verify the condition in \autoref{prop:asue-to-an-iso} for $\phi_{1,\infty}$.
After making a small perturbation to $\CF_\infty$, we may assume without loss of generality that there is some large enough $n\geq 1$ and finite set $\CF_n\subset A_n$ such that $\CF_\infty=\phi_{n,\infty}(\CF_n)$.
By \autoref{prop:asue-composition}, the cocycle morphism
\[
(\phi,\Iu)_{1,n} = (\phi_{1,n},\IU^{(n-1)}): (A_1,\alpha^{(1)},\Fu^{(1)}) \to (A_n,\alpha^{(n)},\Fu^{(n)})
\]
is asymptotically unitarily equivalent to a cocycle conjugacy.
By \autoref{prop:asue-to-an-iso}, we hence find a unitary path $y: [0,1]\to\CU(\CM(A_n))$ satisfying
\begin{enumerate}[label=$\bullet$,leftmargin=*]
\item $\dist( y_1^* b y_1, \phi_{1,n}(A_1) )\leq\eps$ for all $b\in\CF_n$;
\item $\dst \max_{0\leq t\leq 1} \|[y_t,\phi_{1,n}(a)]\|\leq\eps$ for all $a\in\CF_1$;
\item $\dst \max_{g\in K} \max_{0\leq t\leq 1} \|b(\IU^{(n-1)}_g-y_t\IU^{(n-1)}_g\alpha^{(n)}_g(y_t)^*)\|\leq\eps$ for all $b\in\CF_n$.
\end{enumerate}
Set $z=\phi_{n,\infty}^+\circ y : [0,1]\to\CU(\CM(A))$.
Then we evidently have the two conditions
\[
\dist( z_1^* b z_1, \phi_{1,\infty}(A_1) )\leq\eps \quad\text{for all } b\in\CF_\infty
\]
and
\[
\max_{0\leq t\leq 1} \|[z_t,\phi_{1,\infty}(a)]\|\leq\eps \quad\text{for all }a\in\CF_1.
\]
Furthermore, we have for all $g\in K$, $t\in [0,1]$ and $b\in\CF_n$ that
\[
\begin{array}{ccl}
\psi_{n,\infty}(b)z_t\alpha_g(z_t)^* &=& \psi_{n,\infty}(by_t)\cdot(\alpha_g\circ\psi_{n,\infty})(y_t)^* \\
&=& \psi_{n,\infty}\big( by_t\IU^{(n-1)}_g\alpha^{(n)}_g(y_t)^*\IU^{(n-1)*}_g \big) \\
&=_{\makebox[0pt]{\footnotesize\hspace{1mm}$\eps$}} & \psi_{n,\infty}(b).
\end{array}
\]
In summary, we have verified the condition in \autoref{prop:asue-to-an-iso} for $(\phi_{1,\infty},\eins)$, which implies that it is indeed asymptotically unitarily equivalent to a cocycle conjugacy.
\end{proof}

The following should be seen as a continuous version of \autoref{cor:special-ue-intertwining}.
Although some version of this statement appears to be folklore in the case $G=\set{1}$ due to Kirchberg's famous unpublished preprint \cite{KirchbergC}, there does to my knowledge not exist a published proof even of that statement anywhere.
The proof presented below is in turn inspired by Kirchberg's original approach.
As mentioned further above, when there are no group actions involved, one can relax the non-degeneracy assumption on the involved $*$-homomorphisms to extendability.

\begin{theorem}
\label{thm:cont-intertwining}
Let $G$ be a second-countable, locally compact group.
Let $(\alpha,\Fu): G\curvearrowright A$ and $(\beta,\Fv): G\curvearrowright B$ be two twisted actions on separable \cstar-algebras.
Suppose that
\[
(\phi,\Iu): (A,\alpha,\Fu) \to (B,\beta,\Fv) \quad\text{and}\quad  (\psi,\Iv): (B,\beta,\Fv)\to (A,\alpha,\Fu)
\]
are two non-degenerate cocycle morphisms such that
\[
(\psi,\Iv)\circ(\phi,\Iu) \asue \id_A \quad\text{and}\quad (\phi,\Iu)\circ(\psi,\Iv) \asue \id_B.
\]
Then there exist mutually inverse cocycle conjugacies
\[
(\phi_0,\Iu^{(0)}): (A,\alpha,\Fu) \to (B,\beta,\Fv) \quad\text{and}\quad  (\psi_0,\Iv^{(0)}): (B,\beta,\Fv)\to (A,\alpha,\Fu)
\]
such that
\[
(\phi,\Iu) \asue (\phi_0,\Iu^{(0)}) \quad\text{and}\quad (\psi,\Iv) \asue (\psi_0,\Iv^{(0)}).
\]
\end{theorem}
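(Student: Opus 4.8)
The plan is to realise $(\phi,\Iu)$ and $(\psi,\Iv)$ as the first two connecting arrows of a single inductive system (a ``mapping telescope'') and to transport the structure of its inductive limit back to the building blocks, in the same spirit as \autoref{cor:special-ue-intertwining} but using \autoref{lem:asue-limits} in place of the two‑sided intertwining.

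\emph{Preliminary observations.} A cocycle morphism that is asymptotically unitarily equivalent to a cocycle conjugacy is automatically injective, since it is a point--norm limit of unitarily conjugated $*$-isomorphisms and hence isometric. Applying this to $(\psi,\Iv)\circ(\phi,\Iu)\asue\id_A$ and $(\phi,\Iu)\circ(\psi,\Iv)\asue\id_B$, it follows that $\phi$ and $\psi$ are injective; they are non-degenerate by assumption, and so are the two compositions. I also record that, when restricted to non-degenerate cocycle morphisms, $\asue$ is an equivalence relation that is compatible with composition from either side; this follows from the joint continuity of composition (\autoref{lem:composition-continuity}) together with the non-degeneracy discussion in \autoref{rem:ue-not-symmetric}, exactly as in the proofs of \autoref{prop:ue-compositions} and \autoref{prop:asue-composition} — to compose two unitary paths parametrised by $[0,\infty)$ one restricts to the cofinal diagonal of $[0,\infty)\times[0,\infty)$.

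\emph{The telescope.} Form the inductive system in $\cstar_{G,t}$
\[
(A,\alpha,\Fu)\stackrel{(\phi,\Iu)}{\longrightarrow}(B,\beta,\Fv)\stackrel{(\psi,\Iv)}{\longrightarrow}(A,\alpha,\Fu)\stackrel{(\phi,\Iu)}{\longrightarrow}(B,\beta,\Fv)\to\cdots,
\]
whose inductive limit $(D,\delta,\Fd)$ exists by \autoref{prop:limits}. Write $\iota_n$ for the universal cocycle morphism out of the $n$-th building block, so that $\iota_{n+1}\circ(\text{connecting arrow})=\iota_n$; in particular $\iota_1=\iota_2\circ(\phi,\Iu)$, $\iota_2=\iota_3\circ(\psi,\Iv)$ and $\iota_1=\iota_3\circ\bigl((\psi,\Iv)\circ(\phi,\Iu)\bigr)$. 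The odd-indexed terms, with connecting arrows the composite $(\psi,\Iv)\circ(\phi,\Iu)$, form a cofinal subsystem, so $(D,\delta,\Fd)$ together with $\iota_1,\iota_3,\iota_5,\dots$ is its inductive limit in $\cstar_{G,t}$. Each of these connecting arrows is injective, non-degenerate and, by hypothesis, asymptotically unitarily equivalent to $\id_A$, a cocycle conjugacy; hence \autoref{lem:asue-limits} applies and shows that $\iota_1$ is asymptotically unitarily equivalent to a cocycle conjugacy $\Theta_A\colon(A,\alpha,\Fu)\to(D,\delta,\Fd)$. Running the same argument with the even-indexed terms (connecting arrows $(\phi,\Iu)\circ(\psi,\Iv)\asue\id_B$) produces a cocycle conjugacy $\Theta_B\colon(B,\beta,\Fv)\to(D,\delta,\Fd)$ with $\iota_2\asue\Theta_B$. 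Finally, right-composing the relation $(\psi,\Iv)\circ(\phi,\Iu)\asue\id_A$ by $\iota_3$ and using $\iota_1=\iota_3\circ\bigl((\psi,\Iv)\circ(\phi,\Iu)\bigr)$ yields $\iota_1\asue\iota_3$, whence also $\iota_3\asue\Theta_A$.

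\emph{Conclusion.} Set $(\phi_0,\Iu^{(0)}):=\Theta_B^{-1}\circ\Theta_A$ and $(\psi_0,\Iv^{(0)}):=(\phi_0,\Iu^{(0)})^{-1}=\Theta_A^{-1}\circ\Theta_B$, which are mutually inverse cocycle conjugacies between $(A,\alpha,\Fu)$ and $(B,\beta,\Fv)$. Using the compatibility of $\asue$ with composition together with the identities above,
\[
(\phi_0,\Iu^{(0)})\asue\Theta_B^{-1}\circ\iota_1=\Theta_B^{-1}\circ\iota_2\circ(\phi,\Iu)\asue\id_B\circ(\phi,\Iu)=(\phi,\Iu),
\]
and, using $\iota_2=\iota_3\circ(\psi,\Iv)$ and $\iota_3\asue\Theta_A$,
\[
(\psi_0,\Iv^{(0)})\asue\Theta_A^{-1}\circ\iota_2=\Theta_A^{-1}\circ\iota_3\circ(\psi,\Iv)\asue\id_A\circ(\psi,\Iv)=(\psi,\Iv),
\]
which is exactly the assertion. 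The only substantive input here is \autoref{lem:asue-limits} (itself resting on \autoref{prop:asue-to-an-iso}); the main point requiring care is the correct treatment of asymptotic unitary equivalence in the absence of symmetry — circumvented by the non-degeneracy of every arrow in sight — and the routine verification that passing to a cofinal subsystem preserves the inductive limit in $\cstar_{G,t}$ along with the relevant universal arrows.
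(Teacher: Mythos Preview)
Your proof is correct and follows essentially the same approach as the paper. The only difference is packaging: the paper forms two separate stationary inductive limits $A^{(\infty)}=\varinjlim\{(A,\alpha,\Fu),(\psi,\Iv)\circ(\phi,\Iu)\}$ and $B^{(\infty)}=\varinjlim\{(B,\beta,\Fv),(\phi,\Iu)\circ(\psi,\Iv)\}$, obtains a cocycle conjugacy $(\Phi,\IU):A^{(\infty)}\to B^{(\infty)}$ from the evident commutative zig-zag diagram, and then applies \autoref{lem:asue-limits} to the first universal maps $\xi_\infty$ and $\theta_\infty$ separately before composing --- whereas you build a single telescope and recover the two stationary systems as its odd- and even-indexed cofinal subsystems, which absorbs the role of $(\Phi,\IU)$ into the identification of both sublimits with the common limit $D$. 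Your version is marginally more economical, but the substantive input (\autoref{lem:asue-limits} and hence \autoref{prop:asue-to-an-iso}, together with the non-degeneracy discussion from \autoref{rem:ue-not-symmetric}) is identical.
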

\begin{proof}
We consider the two compositions
\[
(\xi,\Ix)=(\psi,\Iv)\circ(\phi,\Iu): (A,\alpha,\Fu)\to(A,\alpha,\Fu)
\]
and
\[
(\theta,\Iy)=(\phi,\Iu)\circ(\psi,\Iv): (B,\beta,\Fv)\to(B,\beta,\Fv).
\]
Then evidently the following represents a commutative diagram of cocycle morphisms:
\[
\xymatrix{
\dots\ar[rr]^\xi && A \ar[rd]^{\phi} \ar[rr]^{\xi} && A \ar[r]^\xi \ar[rd]^\phi & \dots\\
\dots\ar[r]^\theta & B \ar[ru]^{\psi} \ar[rr]^{\theta} && B \ar[ru]^{\psi} \ar[rr]^\theta && \dots
}
\]
Let us we form the stationary inductive limits
\[
(A^{(\infty)},\alpha^{(\infty)},\Fu^{(\infty)})=\lim_{\longrightarrow} \set{ (A,\alpha,\Fu), (\xi,\Ix) }
\]
and
\[
(B^{(\infty)},\beta^{(\infty)},\Fv^{(\infty)})=\lim_{\longrightarrow} \set{ (A,\beta,\Fv), (\theta,\Iy) }.
\]
We consider the universal (equivariant) embeddings
\[
\xi_\infty: (A,\alpha,\Fu)\to (A^{(\infty)},\alpha^{(\infty)},\Fu^{(\infty)}),\quad \theta_\infty: (B,\beta,\Fv)\to (B^{(\infty)},\beta^{(\infty)},\Fv^{(\infty)})
\]
from each respective first building block into the limit.
By the universal properties of both inductive limits, the commutative diagram induces mutually inverse cocycle conjugacies
\[
(\Phi,\IU): (A^{(\infty)},\alpha^{(\infty)},\Fu^{(\infty)}) \to (B^{(\infty)},\beta^{(\infty)},\Fv^{(\infty)}) 
\]
and
\[
(\Psi,\IV): (B^{(\infty)},\beta^{(\infty)},\Fv^{(\infty)}) \to (A^{(\infty)},\alpha^{(\infty)},\Fu^{(\infty)}) 
\]
such that $(\Phi,\IU)\circ \xi_\infty = \theta_\infty\circ(\phi,\Iu)$ and $(\Psi,\IV)\circ\theta_{\infty}=\xi_\infty\circ(\psi,\Iv)$.
By assumption, the cocycle morphism $(\xi,\Ix)$ is asymptotically unitarily equivalent to an isomorphism, namely the identity map on $A$.
By \autoref{lem:asue-limits}, it follows that $\xi_\infty$ is asymptotically unitarily equivalent to a cocycle conjugacy $(\Xi,\IX): (A,\alpha,\Fu)\to (A^{(\infty)},\alpha^{(\infty)},\Fu^{(\infty)})$.
Analogously it follows that $\theta_\infty$  is asymptotically unitarily equivalent to a cocycle conjugacy $(\Theta,\IY): (B,\beta,\Fv)\to (B^{(\infty)},\beta^{(\infty)},\Fv^{(\infty)})$.
Hence we conclude
\[
\begin{array}{ccl}
(\Theta,\IY)^{-1}\circ(\Phi,\IU)\circ(\Xi,\IX) &\asue& (\Theta,\IY)^{-1}\circ(\Phi,\IU)\circ \xi_\infty \\
&=&  (\Theta,\IY)^{-1}\circ\theta_\infty\circ(\phi,\Iu)  \\
&\asue& (\phi,\Iu).
\end{array}
\]
So $(\phi_0,\Iu^{(0)})=(\Theta,\IY)^{-1}\circ(\Phi,\IU)\circ(\Xi,\IX)$ is one of the desired cocycle conjugacies.
For its inverse 
\[
(\psi_0,\Iv^{(0)}) := (\phi_0,\Iu^{(0)})^{-1} = (\Xi,\IX)^{-1}\circ(\Psi,\IV)\circ(\Theta,\IY),
\]
we similarly observe that
\[
\begin{array}{ccl}
(\Xi,\IX)^{-1}\circ(\Psi,\IV)\circ(\Theta,\IY) &\asue& (\Xi,\IX)^{-1}\circ(\Psi,\IV)\circ \theta_\infty \\
&=&  (\Xi,\IX)^{-1}\circ\xi_\infty\circ(\psi,\Iv)  \\
&\asue& (\psi,\Iv).
\end{array}
\]
This finishes the proof.
\end{proof}

\begin{cor}
\label{cor:red-cont-intertwining}
Let $G$ be a second-countable, locally compact group.
Let $(\alpha,\Fu): G\curvearrowright A$ and $(\beta,\Fv): G\curvearrowright B$ be two gently twisted actions on separable \cstar-algebras.
Suppose that
\[
(\phi,\Iu): (A,\alpha,\Fu) \to (B,\beta,\Fv) \quad\text{and}\quad  (\psi,\Iv): (B,\beta,\Fv)\to (A,\alpha,\Fu)
\]
are two proper cocycle morphisms such that
\[
(\psi,\Iv)\circ(\phi,\Iu) \pasue \id_A \quad\text{and}\quad (\phi,\Iu)\circ(\psi,\Iv) \pasue \id_B.
\]
Then $(\phi,\Iu)$ is properly asymptotically unitarily equivalent to a proper cocycle conjugacy.
\end{cor}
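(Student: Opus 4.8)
The plan is to deduce this from \autoref{thm:cont-intertwining} by passing to unitizations, in exactly the way \autoref{cor:general-pue-intertwining} was deduced from \autoref{thm:general-ue-intertwining}. Since $(\alpha,\Fu)$ and $(\beta,\Fv)$ are gently twisted, they extend canonically to twisted actions on the separable \emph{unital} \cstar-algebras $A^\dagger$ and $B^\dagger$, and the proper cocycle morphisms $(\phi,\Iu)$, $(\psi,\Iv)$ extend to \emph{unital}---hence non-degenerate---cocycle morphisms $(\phi^\dagger,\Iu): (A^\dagger,\alpha,\Fu)\to (B^\dagger,\beta,\Fv)$ and $(\psi^\dagger,\Iv): (B^\dagger,\beta,\Fv)\to (A^\dagger,\alpha,\Fu)$. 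A norm-continuous path in $\CU(\eins+B)$ is strictly continuous as a path in $\CU(\CM(B^\dagger))=\CU(B^\dagger)$, and the single cocycle estimate in \autoref{defi:cocycle-hom-equivalences-2}\ref{proper-as-G-ue} dominates the family of estimates in \ref{as-G-ue}; hence the hypothesis $(\psi,\Iv)\circ(\phi,\Iu)\pasue\id_A$ and $(\phi,\Iu)\circ(\psi,\Iv)\pasue\id_B$ translates into $(\psi^\dagger,\Iv)\circ(\phi^\dagger,\Iu)\asue\id_{A^\dagger}$ and $(\phi^\dagger,\Iu)\circ(\psi^\dagger,\Iv)\asue\id_{B^\dagger}$. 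Applying \autoref{thm:cont-intertwining} on the unitized level yields mutually inverse cocycle conjugacies $(\Phi,\IU): (A^\dagger,\alpha,\Fu)\to (B^\dagger,\beta,\Fv)$ and $(\Psi,\IV): (B^\dagger,\beta,\Fv)\to (A^\dagger,\alpha,\Fu)$ with $(\phi^\dagger,\Iu)\asue(\Phi,\IU)$ and $(\psi^\dagger,\Iv)\asue(\Psi,\IV)$.

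The next step is to descend back to $A$ and $B$. Fix a strictly (equivalently norm) continuous path $v: [0,\infty)\to\CU(B^\dagger)$ implementing $(\phi^\dagger,\Iu)\asue(\Phi,\IU)$. Since $\phi^\dagger(A)\subseteq B$ and $B$ is a norm-closed ideal of $B^\dagger$, each $v_t\phi^\dagger(a)v_t^*$ with $a\in A$ lies in $B$, whence $\Phi(a)=\lim_t v_t\phi^\dagger(a)v_t^*\in B$; symmetrically $\Psi(B)\subseteq A$. As $\Phi$ and $\Psi$ are mutually inverse isomorphisms of $A^\dagger$ and $B^\dagger$, this forces $\Phi(A)=B$, so $\phi_0:=\Phi|_A: A\to B$ is an isomorphism with $\Phi=\phi_0^\dagger$. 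For the cocycle: for each $t\in[0,\infty)$ and $g\in G$, the element $v_t\Iu_g\beta_g(v_t)^*$ lies in $\CU(\eins+B)$, because $\Iu_g\in\CU(\eins+B)$ and the (unitization of the) automorphism $\beta_g$ fixes scalar parts; since $\eins+B$ is norm-closed and $v_t\Iu_g\beta_g(v_t)^*\to\IU_g$ in norm uniformly on compact subsets of $G$, we get $\IU_g\in\CU(\eins+B)$ for all $g$. Therefore $(\phi_0,\IU): (A,\alpha,\Fu)\to (B,\beta,\Fv)$ is a proper cocycle morphism, and since its inverse $(\phi_0^{-1},\phi_0^{-1}(\IU)^*)$ has cocycle in $\CU(\eins+A)$, it is a proper cocycle conjugacy.

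Finally, one normalizes $v$ to witness proper asymptotic unitary equivalence: writing $\mu_t\in\IT$ for the image of $v_t$ under the quotient character $B^\dagger\to\IC$ with kernel $B$, the path $t\mapsto\mu_t^{-1}v_t$ is norm-continuous, takes values in $\CU(\eins+B)$, and implements the same asymptotic conjugations as $v$ (the scalars cancel). Restricting $\Phi(a)=\lim_t v_t\phi^\dagger(a)v_t^*$ to $a\in A$ and specializing the cocycle estimate for $(\phi^\dagger,\Iu)\asue(\Phi,\IU)$ to the unit of $B^\dagger$ shows that $t\mapsto\mu_t^{-1}v_t$ witnesses $(\phi,\Iu)\pasue(\phi_0,\IU)$, as required. (The same argument applied to $(\Psi,\IV)$ shows that the inverse $(\psi_0,\IV):=(\phi_0,\IU)^{-1}$ satisfies $(\psi,\Iv)\pasue(\psi_0,\IV)$, though only the statement about $(\phi,\Iu)$ is claimed.)

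I expect no essential new difficulty beyond \autoref{thm:cont-intertwining}; the content lies entirely in the passage between $\bullet$ and $\bullet^\dagger$, following the template of \autoref{cor:general-pue-intertwining}. The one point requiring care is verifying that the cocycle conjugacy $(\Phi,\IU)$ produced on the unitized level genuinely restricts: that $\Phi$ maps $A$ onto $B$ (which needs the ideal argument for both $\Phi$ and its inverse $\Psi$) and that its cocycle can be renormalized into $\CU(\eins+B)$ without destroying norm-continuity.
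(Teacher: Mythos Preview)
Your proof is correct and follows essentially the same approach as the paper's own proof: pass to unitizations, apply \autoref{thm:cont-intertwining}, then descend back by using that $B$ is a closed ideal in $B^\dagger$ and renormalizing the unitary path by its scalar part. The paper is somewhat more terse (it does not spell out the argument that $\IU_g\in\CU(\eins+B)$, instead rescaling $v_t$ first and letting this fall out), but the content is the same.
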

\begin{proof}
Since the assumption involves gently twisted actions and proper cocycle morphisms, we may pass to the unitized \cstar-algebras and consider the unital cocycle morphisms
\[
(\phi^\dagger,\Iu): (A^\dagger,\alpha,\Fu)\to (B^\dagger,\beta,\Fv)
\]
and
\[
(\psi^\dagger,\Iv): (B^\dagger,\beta,\Fv)\to (A^\dagger,\alpha,\Fu).
\]
Since $(\psi,\Iv)\circ(\phi,\Iu)$ is properly asymptotically unitarily equivalent to $\id_A$, it follows that $(\psi^\dagger,\Iv)\circ(\phi^\dagger,\Iu)$ is asymptotically unitarily equivalent to $\id_{A^\dagger}$.
Analogously, the composition $(\phi^\dagger,\Iu)\circ(\psi^\dagger,\Iv)$ is asymptotically unitarily equivalent to $\id_{B^\dagger}$.
\autoref{thm:cont-intertwining} implies that $(\phi^\dagger,\Iu)$ is asymptotically unitarily equivalent to a cocycle conjugacy $(\Phi',\IU)$.
Since $\phi^\dagger$ sends $\IC\eins\subset A^\dagger$ to $\IC\eins\subset B^\dagger$ and $A$ to $B$, the same follows for $\Phi'$.
Hence $\Phi'=\Phi^\dagger$ for a unique isomorphism $\Phi: A\to B$.
If $t\mapsto v_t\in\CU(B^\dagger)$ is the unitary path witnessing the asymptotic equivalence between $\phi^\dagger$ and $\Phi^\dagger$, we may divide them by their scalar parts to ensure that $v_t\in\CU(\eins+B)$.
Hence $(\phi,\Iu)$ is indeed properly asymptotically unitarily equivalent to the proper cocycle conjugacy $(\Phi,\IU)$.
\end{proof}

\begin{rem} \label{rem:ue-limits}
The results in this subsection so far are also true if ``asymptotic unitary equivalence'' is replaced with ``approximate unitary equivalence''.
The only difference is that in \autoref{prop:asue-to-an-iso}, the unitary path $z: [0,1]\to\CU(\CM(B))$ needs to be replaced by a single unitary, and the assumption $z_0=\eins$ is dropped; see also \cite[Proposition 2.1]{Szabo18ssa}.
\end{rem}


\subsection{From approximate morphisms to genuine morphisms}

This subsection should be viewed as a dynamical generalization of \cite[Section 4]{Gabe20}; see also \cite[Proposition 1.3.7]{Phillips00} for a related statement.

\begin{nota} \label{nota:sequence-algebra}
In this subsection we will use the sequence algebra construction.
That is, if $A$ is a \cstar-algebra, then its sequence algebra is denoted by
\[
A_\infty = \ell^\infty(\IN,A)/\set{ (a_n)_n \mid \lim_{n\to\infty} \|a_n\|=0 }.
\]
If $(\alpha,\Fu): G\curvearrowright A$ is a gently twisted action of a locally compact group, then we denote by $\alpha_\infty: G\to\Aut(A_\infty)$ the induced map into the automorphism group arising from applying $\alpha$ componentwise.
In general, $\alpha_\infty$ will not be point-norm continuous.
However, we may define the \cstar-subalgebra
\[
A_{\infty,\alpha} = \set{ x\in A_\infty \mid [g\mapsto \alpha_{\infty,g}(x)] \text{ is continuous} }.
\]
We therefore obtain a gently twisted action $(\alpha_\infty,\Fu): G\curvearrowright A_{\infty,\alpha}$ in the ordinary sense.
The canonical inclusion $A\subseteq A_{\infty,\alpha}$ as constant sequences is then equivariant.
Given a gently twisted $G$-\cstar-algebra $(C,\gamma,\Fw)$, we call a proper cocycle morphism $(\phi,\Iu): (C,\gamma,\Fw)\to (A_{\infty,\alpha},\alpha_\infty,\Fu)$ constant, if it factors through $(A,\alpha,\Fu)$ via the constant inclusion.

If $\kappa: \IN\to\IN$ is any map with $\lim_{n\to\infty} \kappa(n)=\infty$, then it induces an endomorphism $\kappa^*$ on $A_\infty$ via $\kappa^*[(a_n)_n] = [(a_{\kappa(n)})_n]$.
Indeed, we see that $\alpha_{\infty,g}\circ\kappa^*=\kappa^*\circ\alpha_{\infty,g}$ for all $g\in G$, and hence $\kappa^*$ induces an equivariant endomorphism on the gently twisted $G$-\cstar-algebra $(A_{\infty,\alpha},\alpha_\infty,\Fu)$.
\end{nota}

\begin{rem} \label{rem:cocycle-lifting}
Suppose that in the above situation, we are given a continuous map $\Iu: G\to\CU(\eins+A_{\infty})$.
Given any compact set $K\subseteq G$, the restriction $\Iu|_K$ can be viewed as a single unitary
\[
\Iu|_K \in \CU(\eins+\CC(K, A_{\infty})) \subseteq \CU\big( \eins+ \CC(K,A)_\infty \big).
\]
It hence lifts to a sequence of unitaries $\Iu^{(n)}\in \CU\big( \eins+ \CC(K,A) \big)$.
In other words, there is a sequence of continuous maps $\Iu^{(n)}: K\to\CU(\eins+A)$ such that for all $g\in K$, the sequence $\Iu^{(n)}_g$ represents $\Iu_g$.

Now let $K_n\subseteq G$ be an increasing sequence of compact sets with $G=\bigcup_{n\in\IN} K_n$.
For every $n\geq 1$, we can repeat the argument above and find a sequence of continuous maps $\Iu^{(n,k)}: K_n\to\CU(\eins+B)$ such that it uniformly represents $\Iu|_{K_n}\in\CU\big( \eins+ \CC(K_n,B_\infty) \big)$.
Then we have for all $m>n$ that
\[
0= \| (\Iu|_{K_{m}})|_{K_n}-\Iu|_{K_n} \| = \limsup_{k\to\infty} \max_{g\in K_n} \| \Iu^{(m,k)}_g-\Iu^{(n,k)}_g \|
\]
holds for all $n$.
By induction, we can hence choose an increasing sequence $k_n\in\IN$ such that
\[
\sup_{k\geq k_n} \max_{\ell<n} \max_{g\in K_\ell} \| \Iu^{(n,k)}_g-\Iu^{(\ell,k)}_g \| \leq 2^{-n}.
\]
So we see that the sequence of (partially defined) continuous maps on $G$ given by $\IU^{(k)}=\Iu^{(n,k)}$ for $k_n\leq k<k_{n+1}$ is eventually defined on every compact subset of $G$ as $k\to\infty$, and represents the original map $\Iu$ uniformly over compact subsets of $G$.
\end{rem}

\begin{theorem} \label{thm:existence}
Let $G$ be a second-countable, locally compact group.
Let $(\alpha,\Fu) : G\curvearrowright A$ and $(\beta,\Fv) : G\curvearrowright B$ be two gently twisted actions on \cstar-algebras.
Suppose that $A$ is separable.
Let 
\[
(\phi,\Iu): (A,\alpha,\Fu) \to (B_{\infty,\beta},\beta_\infty,\Fv)
\]
be a proper cocycle morphism.
Then the following are equivalent:
\begin{enumerate}[label=\textup{(\roman*)},leftmargin=*]
\item \label{thm:existence:1}
$(\phi,\Iu)$ is properly unitarily equivalent to a constant proper cocycle morphism $(\psi,\Iv): (A,\alpha,\Fu)\to (B,\beta,\Fv)$;
\item \label{thm:existence:2}
For every map $\kappa: \IN\to\IN$ with $\lim_{n\to\infty} \kappa(n)=\infty$, the two proper cocycle morphisms $(\phi,\Iu)$ and $\kappa^*\circ(\phi,\Iu)$ are properly unitarily equivalent;
\item \label{thm:existence:3}
Suppose that $\phi_n: A\to B$ is a sequence of $*$-linear maps lifting $\phi$, and that $\Iu^{(n)}$ is a sequence of partially defined, but eventually everywhere defined, maps from $G$ to $\CU(\eins+B)$ lifting $\Iu$ as in \autoref{rem:cocycle-lifting}.

Then for every $\eps>0$, finite set $\CF\subset A$, compact set $K\subseteq G$ and $m\in\IN$, there exists $k\geq m$ such that for every $n\geq k$, there is a unitary $v\in\CU(\eins+B)$ such that
\[
\max_{a\in\CF} \|v^*\phi_n(a)v-\phi_k(a)\|\leq\eps,\quad \max_{g\in K} \| v^*\Iu^{(n)}_g\beta_g(v)-\Iu^{(k)}_g \|\leq \eps.
\]
\end{enumerate}
\end{theorem}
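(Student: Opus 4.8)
The plan is to prove the cycle \ref{thm:existence:1}$\Rightarrow$\ref{thm:existence:2}$\Rightarrow$\ref{thm:existence:3}$\Rightarrow$\ref{thm:existence:1}. Throughout, fix lifts as in \ref{thm:existence:3}: a $*$-linear lift $(\phi_n)_n$ of $\phi$ exists because $A$ is separable (one may even take a completely positive contractive lift by Choi--Effros), and the maps $\Iu^{(n)}$ are produced in \autoref{rem:cocycle-lifting}; since any two lifts of the same cocycle morphism agree asymptotically, the validity of \ref{thm:existence:3} is independent of the chosen lifts. For \ref{thm:existence:1}$\Rightarrow$\ref{thm:existence:2}: suppose $(\phi,\Iu)$ is properly unitarily equivalent, via $w\in\CU(\eins+B_{\infty,\beta})$, to a constant proper cocycle morphism $(\psi,\Iv)$, i.e.\ one factoring through the equivariant inclusion $(B,\beta,\Fv)\hookrightarrow(B_{\infty,\beta},\beta_\infty,\Fv)$. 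Since $\kappa^*$ fixes constant sequences, $\kappa^*\circ(\psi,\Iv)=(\psi,\Iv)$, and since $\kappa^*$ commutes with each $\beta_{\infty,g}$, applying $\kappa^*$ to the identities $\psi=\ad(w)\circ\phi$ and $\Iv_\bullet=w\Iu_\bullet\beta_{\infty,\bullet}(w)^*$ gives $(\psi,\Iv)=\ad(\kappa^*(w))\circ\big(\kappa^*\circ(\phi,\Iu)\big)$. As proper unitary equivalence is an equivalence relation on proper cocycle morphisms, $(\phi,\Iu)$ and $\kappa^*\circ(\phi,\Iu)$ are properly unitarily equivalent.

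For \ref{thm:existence:2}$\Rightarrow$\ref{thm:existence:3} I argue by contraposition. If \ref{thm:existence:3} fails, there are $\eps>0$, a finite set $\CF\subset A$, a compact set $K\subseteq G$ and $m\in\IN$ such that for every $k\geq m$ there is some $n(k)\geq k$ admitting no single $v\in\CU(\eins+B)$ satisfying both displayed estimates; taking $v=\eins$ shows $n(k)>k$. Put $k_0=m$, $k_{i+1}=n(k_i)$, and define $\kappa\colon\IN\to\IN$ by $\kappa(n)=k_{i+1}$ for $k_i\leq n<k_{i+1}$ (and $\kappa(n)=k_0$ for $n<k_0$), so $\kappa(n)\to\infty$ and $\kappa(k_i)=n(k_i)$. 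If $(\phi,\Iu)$ were properly unitarily equivalent to $\kappa^*\circ(\phi,\Iu)$ via some $w$, lift $w$ to unitaries $w_n\in\CU(\eins+B)$; reading the defining identities componentwise yields $\|\phi_{\kappa(n)}(a)-w_n\phi_n(a)w_n^*\|\to 0$ for $a\in\CF$ and $\max_{g\in K}\|\Iu^{(\kappa(n))}_g-w_n\Iu^{(n)}_g\beta_g(w_n)^*\|\to 0$. Evaluating at $n=k_i$ for large $i$ and taking $v=w_{k_i}$ contradicts the defining property of $n(k_i)=k_{i+1}$. Hence \ref{thm:existence:2} fails.

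For \ref{thm:existence:3}$\Rightarrow$\ref{thm:existence:1}, fix a dense sequence $(a_i)_i$ in the unit ball of $A$, an increasing exhaustion $(K_j)_j$ of $G$ by compact sets, and set $\CF_j=\{a_1,\dots,a_j\}$, $\eps_j=2^{-j}$. Iterating \ref{thm:existence:3} with parameters $(\eps_j,\CF_j,K_j)$ produces a strictly increasing sequence $k_1<k_2<\cdots$; applying the defining property of $k_j$ at $n=k_{j+1}$ gives $v_j'\in\CU(\eins+B)$ with $v_j'^*\phi_{k_{j+1}}(a)v_j'=_{\eps_j}\phi_{k_j}(a)$ for $a\in\CF_j$ and $v_j'^*\Iu^{(k_{j+1})}_g\beta_g(v_j')=_{\eps_j}\Iu^{(k_j)}_g$ for $g\in K_j$. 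With $W_1=\eins$, $W_{j+1}=v_j'W_j$, the sequences $W_j^*\phi_{k_j}(a)W_j$ and $W_j^*\Iu^{(k_j)}_g\beta_g(W_j)$ become Cauchy (for $a$ in the dense set, resp.\ $g$ in a fixed compact set, with uniformity there), defining a $*$-homomorphism $\psi_0\colon A\to B$ and a norm-continuous map $\Iv^{(0)}\colon G\to\CU(\eins+B)$. For each $n$ let $j(n)$ be maximal with $k_{j(n)}\leq n$, use the property of $k_{j(n)}$ at this $n$ to pick $\tilde v_n\in\CU(\eins+B)$ implementing the corresponding estimates, and set $v_n=\tilde v_nW_{j(n)}$ (and $v_n=\eins$ for $n<k_1$). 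A telescoping estimate yields $\|v_n^*\phi_n(a)v_n-\psi_0(a)\|\to 0$ for $a$ in the dense set and $\max_{g\in K}\|v_n^*\Iu^{(n)}_g\beta_g(v_n)-\Iv^{(0)}_g\|\to 0$ for every compact $K$, so the unitary $w:=v^*$, with $v=[(v_n)_n]\in\CU(\eins+B_\infty)$, satisfies $\psi_0=\ad(w)\circ\phi$ and $\Iv^{(0)}_g=w\Iu_g\beta_{\infty,g}(w)^*$ in $B_\infty$. From the latter, $\beta_{\infty,g}(v)=\Iu_g^*\,v\,\Iv^{(0)}_g$ is norm-continuous in $g$, so in fact $w\in\CU(\eins+B_{\infty,\beta})$; since $(\phi,\Iu)$ is a proper cocycle morphism and $\psi_0(A)\subseteq B$, \autoref{def:cocycle-hom-equivalences} shows $(\psi_0,\Iv^{(0)})$ is a constant proper cocycle morphism to which $(\phi,\Iu)$ is properly unitarily equivalent, which is \ref{thm:existence:1}.

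The main obstacle is this last implication. Besides the bookkeeping needed to run the two-sided telescoping over increasing finite subsets of $A$ and compact subsets of $G$ simultaneously, the delicate point is that the construction naturally exhibits the implementing unitary only in $\CU(\eins+B_\infty)$ and must be promoted to $\CU(\eins+B_{\infty,\beta})$; the trick is that this continuity comes for free from the cocycle identity once $\Iv^{(0)}$ has been shown to be norm-continuous. Some care is also needed with the partially-but-eventually-defined lifts $\Iu^{(n)}$ from \autoref{rem:cocycle-lifting}, which is handled by enlarging the $k_j$ whenever necessary.
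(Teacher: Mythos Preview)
Your proof is correct and follows essentially the same strategy as the paper. The paper organizes the implications slightly differently: it observes that \ref{thm:existence:1}$\Rightarrow$\ref{thm:existence:2} and \ref{thm:existence:3}$\Rightarrow$\ref{thm:existence:2} are trivial, proves \ref{thm:existence:2}$\Rightarrow$\ref{thm:existence:3} by contradiction (with a simpler $\kappa$, namely $\kappa(k)=n_k$ for all $k\geq m$), and then proves \ref{thm:existence:2}$+$\ref{thm:existence:3}$\Rightarrow$\ref{thm:existence:1}. In that last step the paper only constructs the implementing unitary along the subsequence $(k_n)_n$, obtaining that $\kappa^*\circ(\phi,\Iu)$ is properly unitarily equivalent to a constant morphism, and then invokes \ref{thm:existence:2} once more to pass from $\kappa^*\circ(\phi,\Iu)$ back to $(\phi,\Iu)$. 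Your argument instead proves \ref{thm:existence:3}$\Rightarrow$\ref{thm:existence:1} directly, by using the defining property of $k_{j(n)}$ at each index $n$ to manufacture a unitary $v_n$ for the full sequence; this is a bit more bookkeeping but yields a clean cycle and avoids the final appeal to \ref{thm:existence:2}. The continuity trick you use to promote the unitary from $\CU(\eins+B_\infty)$ to $\CU(\eins+B_{\infty,\beta})$ is exactly the same as in the paper.
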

\begin{proof}
The implications \ref{thm:existence:1}$\Rightarrow$\ref{thm:existence:2} and \ref{thm:existence:3}$\Rightarrow$\ref{thm:existence:2} are trivial.

\ref{thm:existence:2}$\Rightarrow$\ref{thm:existence:3}:
We prove this by contradiction.
Suppose that the claim does not hold.
Then there exists $\eps>0$, finite set $\CF\subset A$, compact set $K\subseteq G$ and $m\in\IN$, such that for every $k\geq m$ there is $n_k\geq k$ such that no unitary $v\in\CU(\eins+B)$ satisfies the above property with $n_k$ in place of $n$.
Since the sequence of partial maps $\Iu^{(n)}$ is eventually defined everywhere, we may assume without loss of generality (by choosing $k$ larger if necessary) that they are always defined on $K$.

If we define $\kappa: \IN\to\IN$ via $\kappa(k)=1$ if $k<m$ and $\kappa(k)=n_k$ if $k\geq m$, then evidently $\lim_{k\to\infty} \kappa(k)=\infty$.
By assumption $(\phi,\Iu)$ is properly unitarily equivalent to $\kappa^*\circ(\phi,\Iu)$.
Find a unitary $v\in\CU(\eins+B_{\infty,\beta})$ with $\kappa^*\circ(\phi,\Iu)=\ad(v)\circ(\phi,\Iu)$, and represent it by a sequence of unitaries $v_k\in\CU(\eins+B)$.
We immediately get for all $a\in A$ that
\[
0=\|\kappa^*\circ\phi(a)-\ad(v)(\phi(a))\| = \limsup_{k\to\infty} \| \phi_{n_k}(a) - v_k\phi_k(a)v_k^* \|.
\]
Furthermore, we have
\[
0 \stackrel{\ref{rem:cocycle-lifting}}{=}\max_{g\in K} \| \kappa^*(\Iu_g)-v\Iu_g\beta_{\infty,g}(v)^* \| = \limsup_{k\to\infty} \max_{g\in K} \| \Iu^{(n_k)}_g - v_k\Iu^{(k)}_g\beta_g(v_k)^* \|.
\]
But now we see that this yields a contradiction to our assumption.

\ref{thm:existence:2}+\ref{thm:existence:3}$\Rightarrow$\ref{thm:existence:1}
Let $\CF_n\subset A$ be an increasing sequence of finite sets with dense union.
Let $K_n\subseteq G$ be an increasing sequence of compact sets with $G=\bigcup_{n\in\IN} K_n$.
By inductively applying the statement in \ref{thm:existence:3}, we may choose an increasing sequence of natural numbers $k_n\in\IN$ with $k_0=1$ such that for every $n\geq 1$, the number $k_n$ satisfies the conclusion of \ref{thm:existence:3} in place of $k$, with respect to $\CF_n$ in place of $\CF$, $K_n$ in place of $K$, $2^{-n}$ in place of $\eps$, and $k_{n-1}+1$ in place of $m$.
This allows us to find a sequence of unitaries $v_n\in\CU(\eins+B)$ such that
\[
\max_{a\in\CF_n} \|v_n^*\phi_{k_n}(a)v-\phi_{k_{n-1}}(a)\|\leq 2^{-n},\quad \max_{g\in K_n} \| v_n^*\Iu^{(k_n)}_g\beta_g(v)-\Iu^{(k_{n-1})}_g \|\leq 2^{-n}.
\]
Define the unitaries $V_n=v_nv_{n-1}\cdots v_1$ in $\CU(\eins+B)$ for $n\geq 1$, and set $V=[(V_n)_{n\geq 1}]\in\CU(\eins+B_\infty)$.

Then evidently it follows for every $a\in\bigcup_{n\in\IN} \CF_n$ that the sequence $n\mapsto V_n^*\phi_{k_n}(a)V_n$ is Cauchy, and therefore has a limit in $B$.
Define $\kappa: \IN\to\IN$ via $\kappa(n)=k_n$.
Since $k_n\to\infty$ as $n\to\infty$, it follows that $\kappa^*\circ\phi=[(\phi_{k_n})_{n\geq 1}]: A\to B_{\infty,\beta}$ is a $*$-homomorphism.
We have justified that the $*$-homomorphism $\ad(V^*)\circ\kappa^*\circ\phi$ maps a dense subset into $B$, which immediately implies that its entire range is in $B$.
We may therefore obtain a well-defined $*$-homomorphism $\psi: A\to B$ via $\psi(a)=\lim_{n\to\infty} V_n^*\phi_{k_n}(a)V_n$ for all $a\in A$.

For every compact set $K\subseteq G$, the sequence of (partially defined) continuous maps $n\mapsto \big[ g\mapsto V_n^*\Iu^{(k_n)}_g\beta_g(V_n) \big]$ is eventually defined on $K$ and uniformly satisfies the Cauchy criterion in norm over $K$.
So we may define $\Iv_g=\lim_{n\to\infty} V_n^*\Iu^{(k_n)}_g\beta_g(V_n)$ for all $g\in G$, which yields a continuous map into $\CU(\eins+B)$.
For the unitary $V\in\CU(\eins+B_\infty)$, this implies the equation $\beta_{\infty,g}(V)=(\kappa^*)^\dagger(\Iu_g^*)V\Iv_g$ for all $g\in G$.
As both maps $(\kappa^*)^\dagger\circ\Iu$ and $\Iv$ are norm-continuous, we may conclude $V\in\CU(\eins+B_{\infty,\beta})$.

In summary, we get that $(\psi,\Iv): (A,\alpha,\Fu) \to (B,\beta,\Fv)$ is a proper cocycle morphism.
By construction, it is properly unitarily equivalent to $\kappa^*\circ(\phi,\Iu): (A,\alpha,\Fu)\to (B_{\infty,\beta},\beta_\infty,\Fv)$, which in turn is properly unitarily equivalent to $(\phi,\Iu)$ by assumption.
This finishes the proof.
\end{proof}


\section{Strong self-absorption revisited}

In this section we revisit the concept of strongly self-absorbing \cstar-dynamical systems \cite{Szabo18ssa,Szabo18ssa2,Szabo17ssa3} and make some observation based on the intertwining results obtained so far.

\begin{defi}
Let $G$ be a second-countable, locally compact group.
Let $D$ be a separable unital \cstar-algebra and $\gamma: G\curvearrowright D$ an action.
Let $A$ be a separable \cstar-algebra and $(\alpha,\Fu): G\curvearrowright A$ a twisted action.
We say that $(\alpha,\Fu)$ strongly absorbs $\gamma$, if the equivariant embedding
\[
\id_A\otimes\eins_D: (A,\alpha,\Fu)\to (A\otimes D, \alpha\otimes\gamma, \Fu\otimes\eins)
\]
is approximately unitarily equivalent to a cocycle conjugacy.
\end{defi}

\begin{prop} \label{prop:strong-absorption-infty}
Let $G$ be a second-countable, locally compact group.
Let $D$ be a separable unital \cstar-algebra and $\gamma: G\curvearrowright D$ an action.
Let $A$ be a separable \cstar-algebra and $(\alpha,\Fu): G\curvearrowright A$ a twisted action.
Then $(\alpha,\Fu)$ strongly absorbs $\gamma$ if and only if it strongly absorbs $\gamma^{\otimes\infty}: G\curvearrowright D^{\otimes\infty}$. 
\end{prop}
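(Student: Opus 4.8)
The plan is to prove the two implications separately, in each case reducing the claim to the intertwining results of Sections~3 and~4, together with the (routine) functoriality of the minimal tensor product on the cocycle category, in the spirit of \autoref{prop:crossed-product-functorial}. Write $E=D^{\otimes\infty}$ and $\delta=\gamma^{\otimes\infty}: G\curvearrowright E$. I will use the canonical equivariant isomorphisms $(E,\delta)\cong(D\otimes E,\gamma\otimes\delta)$ (the one-sided shift) and $E=D^{\otimes\infty}=\lim_{\longrightarrow} D^{\otimes n}$, as well as the fact that tensoring a cocycle morphism, resp.\ a cocycle conjugacy, with $(\id_D,\eins)$ is well defined and preserves cocycle conjugacy and (approximate) unitary equivalence. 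Combining the latter with a coordinate-flip conjugacy (permuting tensor factors that carry copies of $\gamma$, which is legitimate since $\gamma^{\otimes n}$ is permutation-invariant) yields a \emph{permanence statement}: if $(\alpha,\Fu)$ strongly absorbs an action $\gamma'$ on a separable unital \cstar-algebra, then so does $(\alpha\otimes\gamma,\Fu\otimes\eins): G\curvearrowright A\otimes D$.

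\emph{The ``only if'' direction.} Assume $(\alpha,\Fu)$ strongly absorbs $\gamma$. Iterating the permanence statement, each embedding
\[
\iota_n: A\otimes D^{\otimes n}\longrightarrow A\otimes D^{\otimes(n+1)},\quad x\mapsto x\otimes\eins_D,
\]
viewed as a genuine equivariant cocycle morphism between $(A\otimes D^{\otimes n},\alpha\otimes\gamma^{\otimes n},\Fu\otimes\eins)$ and $(A\otimes D^{\otimes(n+1)},\alpha\otimes\gamma^{\otimes(n+1)},\Fu\otimes\eins)$, is approximately unitarily equivalent to a cocycle conjugacy (after composing with a flip conjugacy, as above). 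The $\iota_n$ are injective and non-degenerate, and by \autoref{prop:limits} their inductive limit in $\cstar_{G,t}$ is $(A\otimes E,\alpha\otimes\delta,\Fu\otimes\eins)$, with $\id_A\otimes\eins_E$ as the universal morphism from the first building block. Hence \autoref{lem:asue-limits}, in the approximate form granted by \autoref{rem:ue-limits}, shows that $\id_A\otimes\eins_E$ is approximately unitarily equivalent to a cocycle conjugacy, i.e.\ $(\alpha,\Fu)$ strongly absorbs $\delta=\gamma^{\otimes\infty}$.

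\emph{The ``if'' direction.} Assume $(\alpha,\Fu)$ strongly absorbs $\delta$. Put $P=(A,\alpha,\Fu)$, $Q=(A\otimes D,\alpha\otimes\gamma,\Fu\otimes\eins)$, $R=(A\otimes E,\alpha\otimes\delta,\Fu\otimes\eins)$, and consider the equivariant embeddings $\phi=\id_A\otimes\eins_D: P\to Q$, $\rho=\id_A\otimes\eins_E: P\to R$, and $\psi=\id_{A\otimes D}\otimes\eins_E: Q\to Q\otimes E$, which via the shift identification $E\cong D\otimes E$ I view as a map $Q\to R$, arranged so that $\psi\circ\phi=\rho$. By hypothesis $\rho$ is approximately unitarily equivalent to a cocycle conjugacy $\Theta: P\to R$; by the permanence statement applied to $Q=A\otimes D$ (which inherits strong absorption of $\delta$ from $P$), $\psi$ is approximately unitarily equivalent to a cocycle conjugacy $\Lambda: Q\to R$. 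Set $\mu=\Theta^{-1}\circ\Lambda: Q\to P$, a cocycle conjugacy, and $\eta=\Theta^{-1}\circ\psi: Q\to P$. All maps involved are non-degenerate, so by \autoref{rem:ue-not-symmetric}, \autoref{prop:ue-compositions} and \autoref{lem:composition-continuity} approximate unitary equivalence is here an equivalence relation compatible with composition, and we obtain $\eta\ue\mu$ and $\eta\circ\phi=\Theta^{-1}\circ\rho\ue\Theta^{-1}\circ\Theta=\id_P$. Therefore $\mu\circ\phi\ue\eta\circ\phi\ue\id_P$; composing with the cocycle conjugacy $\mu^{-1}$ on the left gives $\phi\ue\mu^{-1}$. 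Since $\mu^{-1}: P\to Q$ is a cocycle conjugacy, this is precisely the statement that $\id_A\otimes\eins_D$ is approximately unitarily equivalent to a cocycle conjugacy, i.e.\ $(\alpha,\Fu)$ strongly absorbs $\gamma$. (Equivalently, one could feed $\eta$ and $\phi$ into \autoref{cor:special-ue-intertwining}.)

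\emph{Main obstacle.} No individual step is deep: the genuine content is outsourced to \autoref{lem:asue-limits}/\autoref{rem:ue-limits} and the composition properties of $\ue$. The part that requires care is purely bookkeeping — making the tensor-coordinate identifications ($D\otimes D^{\otimes\infty}\cong D^{\otimes\infty}$ and the flip conjugacies) precise, keeping track of the twisting $2$-cocycle $\Fu\otimes\eins$ under these identifications, and ensuring that the ``routine'' tensor-product functoriality on $\cstar_{G,t}$ is established before it is invoked here.
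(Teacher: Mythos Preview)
Your proof is correct. The ``only if'' direction is identical to the paper's: express $(A\otimes D^{\otimes\infty},\alpha\otimes\gamma^{\otimes\infty},\Fu\otimes\eins)$ as an inductive limit with connecting maps $\id\otimes\eins_D$, observe that each connecting map is approximately unitarily equivalent to a cocycle conjugacy, and invoke \autoref{rem:ue-limits}.

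For the ``if'' direction there is a small difference in packaging. The paper works with a single cocycle conjugacy $(\phi,\Iu): P\to R$ satisfying $(\phi,\Iu)\ue\id_A\otimes\eins_{D^{\otimes\infty}}$, writes down the candidate $(\phi\otimes\id_D,\Iu\otimes\eins_D)^{-1}\circ(\phi,\Iu): P\to Q$ (after the shift identification $D^{\otimes\infty}\otimes D\cong D^{\otimes\infty}$), and verifies $\ue\id_A\otimes\eins_D$ by a direct five-line chain of equalities. You instead isolate a permanence step first (that $Q=(A\otimes D,\alpha\otimes\gamma,\Fu\otimes\eins)$ also strongly absorbs $\delta$), producing a second cocycle conjugacy $\Lambda: Q\to R$, and then compare $\Theta$ and $\Lambda$ to conclude $\phi\ue\mu^{-1}$. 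The ingredients are the same --- non-degeneracy, \autoref{prop:ue-compositions}, and a shift/flip identification --- and both arguments are short; the paper's version avoids the auxiliary permanence lemma at the price of one extra implicit identification, while yours makes the roles of $P$ and $Q$ more symmetric and arguably more transparent. Your parenthetical appeal to permutation-invariance of $\gamma^{\otimes n}$ for the flip is harmless but stronger than needed: the flip $D'\otimes D\cong D\otimes D'$ intertwining $\gamma'\otimes\gamma$ and $\gamma\otimes\gamma'$ is always equivariant.
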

\begin{proof}
Let us address the ``only if'' part.
We have that
\[
(A\otimes D^{\otimes\infty}, \alpha\otimes\gamma^{\otimes\infty}, \Fu\otimes\eins_D^{\otimes\infty} ) = \lim_{\longrightarrow} \set{ (A\otimes D^{\otimes n}, \alpha\otimes\gamma^{\otimes n}, \Fu\otimes\eins_D^{\otimes n} ), \id\otimes\eins_D } 
\]
By assumption, $(\alpha,\Fu)$ strongly absorbs $\gamma$, which means that $\id_A\otimes\eins_D$ is approximately unitarily equivalent to a cocycle conjugacy.
Hence the same is true for the connecting maps $\id_A\otimes\id_D^{\otimes n}\otimes\eins_D$ above.
By \autoref{rem:ue-limits}, it follows that the canonical embedding
\[
\id_A\otimes\eins_{D^{\otimes\infty}}: (A,\alpha,\Fu)\to (A\otimes D^{\otimes\infty}, \alpha\otimes\gamma^{\otimes\infty}, \Fu\otimes\eins_D^{\otimes\infty} )
\]
is also approximately unitarily equivalent to a cocycle conjugacy, which confirms the claim.

For the ``if'' part, assume that $(\alpha,\Fu)$ strongly absorbs $\gamma^{\otimes\infty}$.
Assume that $\id_A\otimes\eins_{D^{\otimes\infty}}$ is approximately unitarily equivalent to the cocycle conjugacy $(\phi,\Iu): (A,\alpha,\Fu)\to(A\otimes D^{\otimes\infty},\alpha\otimes\gamma^{\otimes\infty},\Fu\otimes\eins)$.
Then 
\[
(\phi\otimes\id_D,\Iu\otimes\eins_D)^{-1}\circ (\phi,\Iu): (A,\alpha,\Fu)\to (A\otimes D,\alpha\otimes\gamma,\Fu\otimes\eins_D)
\]
is also a cocycle conjugacy.
We moreover have
\[
\begin{array}{cl}
\multicolumn{2}{l}{ (\phi\otimes\id_D,\Iu\otimes\eins_D)^{-1}\circ (\phi,\Iu) } \\
\ue& (\phi\otimes\id_D,\Iu\otimes\eins_D)^{-1}\circ (\id_A\otimes\eins_{D^{\otimes\infty}}) \\
=& (\phi^{-1}\otimes\id_D,\phi^{-1}(\Iu)^*\otimes\eins_D)\circ (\id_A\otimes\eins_{D^{\otimes\infty}}) \\
=& (\phi^{-1}\otimes\eins_D,\phi^{-1}(\Iu)^*\otimes\eins_D) \circ (\id_A\otimes\eins_{D^{\otimes\infty}}) \\
=& (\id_A\otimes\eins_D)\circ (\phi,\Iu)^{-1}\circ (\id_A\otimes\eins_{D^{\otimes\infty}}) \\
\ue& (\id_A\otimes\eins_D)\circ\id_A \ = \ \id_A\otimes\eins_D.
\end{array}
\]
Hence $(\alpha,\Fu)$ strongly absorbs $\gamma$.
\end{proof}

The following should be seen as a revised definition from \cite{Szabo18ssa}. 

\begin{defi}[replacing {\cite[Definition 3.1]{Szabo18ssa}}] \label{def:ssa}
Let $\CD$ be a separable unital \cstar-algebra.
Let $G$ be a second-countable, locally compact group, and let $\gamma: G\curvearrowright\CD$ be an action.
We say that $\gamma$ is strongly self-absorbing, if $\gamma$ strongly absorbs $\gamma$ in the above sense.
\end{defi}

Comparing the next proposition below with \cite[Theorem 4.6]{Szabo18ssa} yields that the above definition turns out to coincide with the notion called ``semi-strongly self-absorbing'' in previous work.
In hindsight this is hence the more natural concept compared to the one in \cite[Definition 3.1]{Szabo18ssa}, which was hinted at in its introduction but lacked hard evidence at the time.

\begin{nota}
For the rest of this section, we will use standard terminology involving central sequence algebras inspired by \cite{Kirchberg04}.
Let $(\alpha,\Fu): G\curvearrowright A$ be a gently twisted action on a \cstar-algebra $A$, and recall the terminology from \autoref{nota:sequence-algebra}.
Suppose that $B\subset A_{\infty,\alpha}$ is an $\alpha_\infty$-invariant \cstar-subalgebra containing the set $\set{\Fu_{g,h}-\eins}_{g,h\in G}$.

Then $\alpha_\infty$ restrict to a genuine continuous action $\alpha_\infty: G\curvearrowright A_{\infty,\alpha}\cap B'$, which in turn induces a continuous action
\[
\tilde{\alpha}_\infty: G\curvearrowright F(B,A_{\infty,\alpha}) := (A_{\infty,\alpha}\cap B')/(A_{\infty,\alpha}\cap B^\perp),
\]
where $A_{\infty,\alpha}\cap B^\perp=\set{ x\in A_{\infty,\alpha} \mid xB=\set{0}=Bx}$.
If $B$ is $\sigma$-unital, then $F(B,A_{\infty,\alpha})$ always is a unital \cstar-algebra.
If $B=A$, then we abbreviate $F(A,A_{\infty,\alpha})=F_{\infty,\alpha}(A)$.
\end{nota}

Recall that an action $\gamma: G\curvearrowright D$ on a separable unital \cstar-algebra is said to have approximately $G$-inner half-flip, if the two equivariant unital embeddings
\[
\id_D\otimes\eins_D, \eins_D\otimes\id_D : (D,\gamma)\to (D\otimes D,\gamma\otimes\gamma)
\]
are approximately unitarily equivalent (as cocycle morphisms).

\begin{prop} \label{prop:ssa-old-sssa}
Let $\gamma: G\curvearrowright\CD$ be an action on a separable, unital \cstar-algebra.
The following are equivalent:
\begin{enumerate}[label=\textup{(\roman*)},leftmargin=*]
\item \label{prop:ssa-old-sssa:1}
$\gamma$ is strongly self-absorbing in the sense of \autoref{def:ssa}.
\item \label{prop:ssa-old-sssa:2}
$\gamma$ has approximately $G$-inner half-flip and there exists a unital equivariant $*$-homomorphism from $(\CD,\gamma)$ to $(\CD_{\infty,\gamma}\cap\CD',\gamma_\infty)$.
\end{enumerate}
\end{prop}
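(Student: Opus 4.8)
The plan is to prove the two implications separately, using the intertwining machinery developed in Sections 3 and 4 together with the tensorial absorption characterization from Proposition \ref{prop:strong-absorption-infty}. For the direction \ref{prop:ssa-old-sssa:1}$\Rightarrow$\ref{prop:ssa-old-sssa:2}, assume $\gamma$ strongly absorbs $\gamma$, so by definition $\id_\CD\otimes\eins_\CD\colon(\CD,\gamma)\to(\CD\otimes\CD,\gamma\otimes\gamma)$ is approximately unitarily equivalent to a cocycle conjugacy $(\phi,\Iu)$. Flipping the two tensor factors, $\eins_\CD\otimes\id_\CD$ is approximately unitarily equivalent to $\sigma\circ(\phi,\Iu)$ where $\sigma$ denotes the flip automorphism of $(\CD\otimes\CD,\gamma\otimes\gamma)$; since $\gamma\otimes\gamma$ is visibly invariant under $\sigma$, this too is a cocycle conjugacy. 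To see $\gamma$ has approximately $G$-inner half-flip I would compose: $\id_\CD\otimes\eins_\CD \ue (\phi,\Iu)$ and $\eins_\CD\otimes\id_\CD \ue \sigma\circ(\phi,\Iu)$, and then it suffices to observe that $(\phi,\Iu)$ and $\sigma\circ(\phi,\Iu)$ are approximately unitarily equivalent — but this follows because both are cocycle conjugacies with the same ``coordinates'', or more robustly by feeding the situation through \autoref{cor:special-ue-intertwining} / the fact that $\sigma$ composed with a half-flip-type map is inner. For the existence of a unital equivariant $*$-homomorphism $(\CD,\gamma)\to(\CD_{\infty,\gamma}\cap\CD',\gamma_\infty)$, I would use $\gamma\cong\gamma^{\otimes\infty}$ absorption from \autoref{prop:strong-absorption-infty}: the standard telescoping of the embeddings $\eins_\CD^{\otimes n}\otimes\id_\CD\otimes\eins_\CD^{\otimes\infty}$ into $(\CD^{\otimes\infty})_{\infty,\gamma^{\otimes\infty}}$ produces a sequence of commuting copies of $(\CD,\gamma)$, and pulling this back along the cocycle conjugacy $(\CD,\gamma)\cong(\CD\otimes\CD^{\otimes\infty},\gamma\otimes\gamma^{\otimes\infty})$ gives the required central embedding; the key point that the image lands in the continuous part $\CD_{\infty,\gamma}$ rather than just $\CD_\infty$ is taken care of because all the building-block maps are genuinely equivariant.

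For the converse \ref{prop:ssa-old-sssa:2}$\Rightarrow$\ref{prop:ssa-old-sssa:1}, suppose $\gamma$ has approximately $G$-inner half-flip and admits a unital equivariant $\iota\colon(\CD,\gamma)\to(\CD_{\infty,\gamma}\cap\CD',\gamma_\infty)$. The goal is to show the first-factor embedding $\id_\CD\otimes\eins_\CD\colon(\CD,\gamma)\to(\CD\otimes\CD,\gamma\otimes\gamma)$ is approximately unitarily equivalent to a cocycle conjugacy, and the natural route is \autoref{cor:special-ue-intertwining}: I would produce a cocycle morphism back, $(\CD\otimes\CD,\gamma\otimes\gamma)\to(\CD,\gamma)$, such that both compositions are approximately inner. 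The backward map is built from $\iota$: roughly $a\otimes b\mapsto a\cdot\iota(b)$, which makes sense into $\CD_{\infty,\gamma}$ and, after a reindexing/diagonal argument as in \autoref{thm:existence}, can be realized (up to proper unitary equivalence) by honest morphisms into $\CD$ — here one uses that $\CD$ is unital so no properness subtleties with multipliers arise, and the cocycles involved are all trivial ($\Iu=\eins$) since we only deal with genuine actions. One composition, $(a\otimes b)\mapsto a\,\iota(b)$ followed by $\id\otimes\eins$, is approximately inner on $(\CD\otimes\CD,\gamma\otimes\gamma)$ precisely because of the approximately $G$-inner half-flip hypothesis (it swaps the roles of the two factors up to approximately inner unitaries and up to absorbing $\iota$). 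The other composition, $\id_\CD\otimes\eins_\CD$ followed by $a\otimes b\mapsto a\iota(b)$, is $a\mapsto a\cdot\iota(\eins)=a$, hence literally the identity. Then \autoref{cor:special-ue-intertwining} upgrades this to mutually inverse cocycle conjugacies approximately unitarily equivalent to the given maps, which is exactly strong self-absorption.

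The main obstacle I anticipate is making the informal ``$a\otimes b\mapsto a\,\iota(b)$'' construction precise as an honest \emph{cocycle morphism} into $\CD$ (not merely into the sequence algebra), and checking that the relevant composition is approximately \emph{inner} — i.e. approximately unitarily equivalent to $\id$ via unitaries in $\CD$ itself — rather than only approximately unitarily equivalent to the identity through the sequence algebra. This is where the approximately $G$-inner half-flip is consumed, and it requires a careful $\eps$-$\CF$-$K$ bookkeeping argument of the kind formalized in \autoref{thm:existence}\ref{thm:existence:3}: one reindexes the central sequence realizing $\iota$ against the approximately $G$-inner unitaries witnessing the half-flip, uniformly over compact subsets of $G$, so that everything can be diagonalized into a single genuine map. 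The remaining steps — flip-invariance of $\gamma\otimes\gamma$, telescoping for the $\gamma^{\otimes\infty}$-absorption, and the bare application of \autoref{cor:special-ue-intertwining} — are routine given the results already established.
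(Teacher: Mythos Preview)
Your argument for the half-flip in the direction \ref{prop:ssa-old-sssa:1}$\Rightarrow$\ref{prop:ssa-old-sssa:2} has a genuine gap. You reduce to showing $(\phi,\Iu)\ue\sigma\circ(\phi,\Iu)$, and justify this by saying ``both are cocycle conjugacies with the same coordinates'' or by invoking that ``$\sigma$ composed with a half-flip-type map is inner''. Neither works: composing on the right with $(\phi,\Iu)^{-1}$, the claim $(\phi,\Iu)\ue\sigma\circ(\phi,\Iu)$ is equivalent to the \emph{full flip} $\sigma$ being approximately $G$-inner on $(\CD\otimes\CD,\gamma\otimes\gamma)$, which is at least as strong as the half-flip statement you are trying to prove. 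Two cocycle conjugacies between the same objects are not automatically approximately unitarily equivalent, and \autoref{cor:special-ue-intertwining} does not produce such a statement without already knowing the compositions are approximately inner --- which is again the point at issue.

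The paper avoids this circularity by the Toms--Winter maneuver: set $(\psi,\Iv)=(\phi,\Iu)^{-1}\circ(\eins_\CD\otimes\id_\CD)$ and run the chain
\[
\eins_\CD\otimes\id_\CD=(\phi,\Iu)\circ(\psi,\Iv)\ue(\id_\CD\otimes\eins_\CD)\circ(\psi,\Iv)=(\psi,\Iv)\otimes\eins_\CD,
\]
then rewrite $(\psi,\Iv)\otimes\eins_\CD$ via $(\phi\otimes\id_\CD,\Iu\otimes\eins)^{-1}$ and apply the \emph{already established} equivalence $\eins_\CD\otimes\id_\CD\ue(\psi,\Iv)\otimes\eins_\CD$ inside the larger triple tensor product (followed by the flip on the last two factors) to land at $\eins_\CD\otimes(\psi,\Iv)\ue\id_\CD\otimes\eins_\CD$. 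The trick is that the chain only ever uses the single hypothesis $(\phi,\Iu)\ue\id_\CD\otimes\eins_\CD$, never that $\sigma$ itself is approximately inner.

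For \ref{prop:ssa-old-sssa:2}$\Rightarrow$\ref{prop:ssa-old-sssa:1}, your plan via \autoref{cor:special-ue-intertwining} with the backward map $a\otimes b\mapsto a\,\iota(b)$ is different from the paper (which simply cites \cite[Theorem 2.6]{Szabo18ssa}, i.e.\ verifies the one-sided criterion in the analog of \autoref{prop:asue-to-an-iso} directly). Your route can be made to work but is more delicate than you indicate: the backward map lands in $\CD_{\infty,\gamma}$, not $\CD$, and the composition on $\CD\otimes\CD$ sends $a\otimes b$ to $a\,\iota(b)\otimes\eins$; showing this is approximately inner requires intertwining the half-flip unitaries with a diagonalization of $\iota$ simultaneously, and \autoref{thm:existence} only helps once you have verified its reparametrization-invariance hypothesis. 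The one-sided approach via \autoref{prop:asue-to-an-iso} bypasses the need to produce a genuine backward morphism altogether.
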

\begin{proof}
\ref{prop:ssa-old-sssa:2}$\Rightarrow$\ref{prop:ssa-old-sssa:1}:
This follows directly from \cite[Theorem 2.6]{Szabo18ssa}.
Here note that the proof consisted in showing that the assumptions of \cite[Proposition 2.1]{Szabo18ssa} (which coincides with the analog of \autoref{prop:asue-to-an-iso} referenced in \autoref{rem:ue-limits}) hold for the equivariant embedding $\id_\CD\otimes\eins_D: (\CD,\gamma)\to (\CD\otimes\CD,\gamma\otimes\gamma)$.
Hence it is approximately unitarily equivalent to a cocycle conjugacy, which is what it means for $\gamma$ to be strongly self-absorbing.

\ref{prop:ssa-old-sssa:1}$\Rightarrow$\ref{prop:ssa-old-sssa:2}:
If $\gamma$ is strongly self-absorbing, then it follows by \autoref{prop:strong-absorption-infty} that $\gamma$ is cocycle conjugate to $\gamma^\infty: G\curvearrowright\CD^{\otimes\infty}$.
So the existence of some unital equivariant $*$-homomorphism from $(\CD,\gamma)$ to $(\CD_{\infty,\gamma}\cap\CD',\gamma_\infty)$ is trivial.
In order to prove that $\gamma$ has approximately $G$-inner half-flip, we can proceed exactly as in the original proof by Toms--Winter in \cite[Proposition 1.5]{TomsWinter07}.
Let $(\phi,\Iu): (\CD,\gamma)\to (\CD\otimes\CD,\gamma\otimes\gamma)$ be a cocycle conjugacy that is approximately unitarily equivalent to $\id_\CD\otimes\eins_\CD$.
Define the cocycle morphism $(\psi,\Iv)=(\phi,\Iu)^{-1}\circ(\eins_\CD\otimes\id_\CD)$ on $(\CD,\gamma)$.
Then
\[
\begin{array}{ccl}
\eins_\CD\otimes\id_\CD &=& (\phi,\Iu)\circ(\phi,\Iu)^{-1}\circ (\eins\otimes\id_\CD) \\
 &=& (\phi,\Iu)\circ(\psi,\Iv) \\
&\ue& (\id_\CD\otimes\eins_\CD)\circ(\psi,\Iv) \\
&=& (\psi,\Iv)\otimes\eins_\CD \\
&=& (\phi\otimes\id_\CD, \Iu\otimes\eins)^{-1}\circ(\eins\otimes\id_\CD\otimes\eins_\CD) \\
&\ue& (\phi\otimes\id_\CD, \Iu\otimes\eins)^{-1}\circ(\eins_\CD\otimes\eins_\CD\otimes(\psi,\Iv)) \\
&=& \eins_\CD\otimes(\psi,\Iv) \\
&\ue& \id_\CD\otimes\eins_\CD.
\end{array}
\]
Here we have used the equivalence between $\eins_\CD\otimes\id_\CD$ and $(\psi,\Iv)\otimes\eins_\CD$ from the fifth line onwards and applied the equivariant flip automorphism to it.
This finishes the proof.
\end{proof}

We shall also revisit the equivariant McDuff-type theorem and prove a variant of \cite[Theorem 2.2]{Szabo17ssa3} intended for use in subsequent work, which requires a stronger assumption but yields a stronger statement.

\begin{theorem} \label{thm:McDuff}
Let $G$ be a second-countable, locally compact group.
Let $\gamma: G\curvearrowright\CD$ be a strongly self-absorbing action, and $(\alpha,\Fu): G\curvearrowright A$ a gently twisted action on a separable \cstar-algebra.
Suppose that $\gamma$ is equivariantly $\CZ$-stable, i.e., $\gamma$ is cocycle conjugate to $\gamma\otimes\id_\CZ$ for the Jiang--Su algebra $\CZ$\footnote{See \cite{JiangSu99} for its introduction. In contrast to the redundance of $\CZ$-stability for strongly self-absorbing \cstar-algebras \cite{Winter11}, note that there are examples of strongly self-absorbing actions that fail to be equivariantly $\CZ$-stable; see \cite[Example 6.4]{Szabo18ssa2}.}.
Suppose that there exists a unital equivariant $*$-homomorphism from $(\CD,\gamma)$ to the central sequence algebra $(F_{\infty,\alpha}(A),\tilde{\alpha}_\infty)$.
Then the equivariant second-factor embedding
\[
\eins_\CD\otimes\id_A: (A,\alpha,\Fu)\to ( \CD\otimes A, \gamma\otimes\alpha, \eins_\CD\otimes \Fu)
\]
is properly asymptotically unitarily equivalent to a proper cocycle conjugacy.
\end{theorem}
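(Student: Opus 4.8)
The plan is to realize the statement as an instance of the one-sided asymptotic intertwining theorem \autoref{cor:red-cont-intertwining}. To that end, write $(\phi,\Iu)=(\eins_\CD\otimes\id_A,\eins)\colon (A,\alpha,\Fu)\to(\CD\otimes A,\gamma\otimes\alpha,\eins_\CD\otimes\Fu)$ for the equivariant second-factor embedding. Note that $\CD$ is nuclear (it has approximately $G$-inner half-flip by \autoref{prop:ssa-old-sssa}, hence approximately inner half-flip), so the minimal tensor product causes no ambiguity, and $(\CD\otimes A,\gamma\otimes\alpha,\eins_\CD\otimes\Fu)$ is again a gently twisted $G$-\cstar-algebra on a separable \cstar-algebra; thus the setup of \autoref{cor:red-cont-intertwining} is available. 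It therefore suffices to produce a \emph{backward} proper cocycle morphism $(\psi,\Iv)\colon(\CD\otimes A,\gamma\otimes\alpha,\eins_\CD\otimes\Fu)\to(A,\alpha,\Fu)$ such that $(\psi,\Iv)\circ(\phi,\Iu)\pasue\id_A$ and $(\phi,\Iu)\circ(\psi,\Iv)\pasue\id_{\CD\otimes A}$; then \autoref{cor:red-cont-intertwining} yields exactly that $(\eins_\CD\otimes\id_A,\eins)$ is properly asymptotically unitarily equivalent to a proper cocycle conjugacy.

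\textbf{Construction of the backward map.} By hypothesis there is a unital equivariant $*$-homomorphism $\iota\colon(\CD,\gamma)\to(F_{\infty,\alpha}(A),\tilde\alpha_\infty)$. Using that $A$ is separable and $\CD$ is nuclear, a standard lifting argument (as in the non-dynamical case and as in \cite{Szabo18ssa,Szabo17ssa3}, together with \autoref{rem:cocycle-lifting} for the cocycle part) produces a proper cocycle morphism $(\Psi_0,\eins)\colon(\CD\otimes A,\gamma\otimes\alpha,\eins_\CD\otimes\Fu)\to(A_{\infty,\alpha},\alpha_\infty,\Fu)$ whose restriction to $\eins_\CD\otimes A$ is the constant embedding $(A,\alpha,\Fu)\hookrightarrow(A_{\infty,\alpha},\alpha_\infty,\Fu)$ and which implements $\iota$ on the first tensor factor; the cocycle identity for $(\Psi_0,\eins)$ holds because $\eins_\CD\otimes\Fu_{g,h}$ is the image of $\Fu_{g,h}$ under $(\eins_\CD\otimes\id_A)^\dagger$ and $\Psi_0$ is constant on $\eins_\CD\otimes A$. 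The key point is then to verify the reindexation property \autoref{thm:existence}\ref{thm:existence:2} for $(\Psi_0,\eins)$: for every $\kappa\colon\IN\to\IN$ with $\kappa(n)\to\infty$, the maps $(\Psi_0,\eins)$ and $\kappa^*\circ(\Psi_0,\eins)=(\kappa^*\circ\Psi_0,\eins)$ are properly unitarily equivalent. Here strong self-absorption enters: since $\gamma\cc\gamma^{\otimes\infty}$ by \autoref{prop:strong-absorption-infty} and $\gamma$ has approximately $G$-inner half-flip, the two unital equivariant embeddings $\iota,\kappa^*\circ\iota\colon\CD\to F_{\infty,\alpha}(A)$ can be conjugated into one another by an $\alpha_\infty$-fixed unitary in $\CU(\eins+A_{\infty,\alpha})$, exactly by the argument that establishes the $\CD$-stability–reindexation equivalence in the non-dynamical theory. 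Granting this, \autoref{thm:existence} delivers a constant proper cocycle morphism $(\psi,\Iv)\colon(\CD\otimes A,\gamma\otimes\alpha,\eins_\CD\otimes\Fu)\to(A,\alpha,\Fu)$ that is properly unitarily equivalent to $(\Psi_0,\eins)$.

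\textbf{The two approximate-identity conditions.} Write the proper unitary equivalence $(\psi,\Iv)$ vs.\ $(\Psi_0,\eins)$ via a unitary $v\in\CU(\eins+A_{\infty,\alpha})$, represented by a sequence $v_n\in\CU(\eins+A)$, which conjugates $\Psi_0$ onto a constant map. Since $(\Psi_0,\eins)\circ(\eins_\CD\otimes\id_A,\eins)$ is literally the constant embedding of $(A,\alpha,\Fu)$, the composition $(\psi,\Iv)\circ(\eins_\CD\otimes\id_A,\eins)$ equals a pointwise limit of the inner proper cocycle conjugacies $\ad(v_n)$ of $(A,\alpha,\Fu)$ (cf.\ \autoref{ex:inner-morphisms:2}), hence is properly approximately unitarily equivalent to $\id_A$. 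The other composition $(\eins_\CD\otimes\id_A,\eins)\circ(\psi,\Iv)$, which on $\CD\otimes A$ is $x\mapsto\eins_\CD\otimes\psi(x)$, being properly approximately unitarily equivalent to $\id_{\CD\otimes A}$ is the genuinely substantive point: it asserts that the copy of $\CD$ which $\psi$ realizes ``centrally inside $A$'' can be unitarily transported back onto the external tensor factor, and one reduces this — once more — to the approximately $G$-inner half-flip of $\gamma$ together with $\gamma\cc\gamma^{\otimes\infty}$, i.e.\ to a half-flip/shift manipulation inside $\CD^{\otimes\infty}\otimes A$. At this stage one has verified the hypotheses of the ``$\pue$-version'' of \autoref{cor:red-cont-intertwining} provided by \autoref{rem:ue-limits}.

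\textbf{Passage to the asymptotic/proper statement and the main obstacle.} To obtain the genuinely \emph{asymptotic} (continuous-path) equivalences demanded by \autoref{cor:red-cont-intertwining}, one invokes equivariant Jiang–Su stability of $\gamma$: because $\gamma\cc\gamma\otimes\id_\CZ\cc\gamma\otimes\id_\CZ^{\otimes\infty}$ and $\CZ\cong\CZ^{\otimes\infty}$, all unitaries appearing in the reindexation inside $A_{\infty,\alpha}$, in the half-flip transport, and in the resulting witnessing sequences can be chosen homotopic to $\eins$ through norm-continuous paths in the relevant unitizations, via the standard Cuntz-addition/rotation homotopy trick available in $\CZ$-stable (sequence) algebras; this turns the proper approximate unitary equivalences of the previous paragraph into proper \emph{asymptotic} ones. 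Applying \autoref{cor:red-cont-intertwining} then concludes. The main difficulty I anticipate is precisely this last interplay: one cannot simply cite $\CZ$-stability as a black box at the end, but must thread it through the whole construction so that every unitary lives in a proper unitization and is connected to $\eins$ by a continuous path — in particular when pushing from the sequence algebra down to $A$ in \autoref{thm:existence} and then into the intertwining. A secondary technical point is the careful handling of the $2$-cocycle $\Fu$, i.e.\ ensuring that the set $\{\Fu_{g,h}-\eins\}_{g,h\in G}$ is accommodated in the relevant central sequence algebra $F_{\infty,\alpha}(A)$ so that all the above manipulations respect the twist.
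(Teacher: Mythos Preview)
Your overall architecture is different from the paper's, and the difference is not cosmetic. The paper does \emph{not} build a backward morphism and invoke \autoref{cor:red-cont-intertwining}; instead it verifies the one-sided path criterion of \autoref{prop:asue-to-an-iso} directly for the unitized embedding $(\eins_\CD\otimes\id_A)^\dagger$. Concretely, it combines the given copy of $\CD$ in $F_{\infty,\alpha}(A)$ with the tautological copy of $\CD$ in $\CD\otimes A$ to get a unital equivariant map $\phi\colon(\CD\otimes\CD,\gamma\otimes\gamma)\to F(\eins_\CD\otimes A,(\CD\otimes A)_{\infty,\gamma\otimes\alpha})$, then uses \cite[Proposition 3.3]{Szabo18ssa2} to find \emph{fixed-point} unitaries $u,v$ in that central sequence algebra whose commutator $z_1=uvu^*v^*$ intertwines $\phi(\,\cdot\,\otimes\eins_\CD)$ and $\phi(\eins_\CD\otimes\,\cdot\,)$. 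The role of equivariant $\CZ$-stability is then extremely specific: it produces a copy of $\CZ$ in the fixed-point algebra commuting with both $u$ and $v$, so that \cite[Corollary 2.6]{Jiang97} applies and forces the \emph{commutator} $z_1$ to be homotopic to $\eins$. Writing this homotopy as a product of exponentials and lifting gives the path needed for \autoref{prop:asue-to-an-iso}.

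Your proposal has a genuine gap precisely at the step you yourself flag as the main obstacle: the passage from $\pue$ to $\pasue$. You assert that ``all unitaries appearing \dots\ can be chosen homotopic to $\eins$ \dots\ via the standard Cuntz-addition/rotation homotopy trick available in $\CZ$-stable (sequence) algebras.'' But $\CZ$-stability does not make arbitrary unitaries null-homotopic; Jiang's result is about commutators (or more generally about elements that can be paired with a commuting $\CZ$-copy so as to be written as such). The unitaries arising from your reindexation argument in \autoref{thm:existence} and from half-flip transport are not a priori of this form, and the rotation trick only connects $u\oplus u^*$ to $\eins$, not $u$ itself. This is exactly why the paper engineers the entire argument so that the single relevant unitary is a commutator of two fixed-point unitaries with a commuting $\CZ$, rather than trying to retrofit connectedness onto a sequence of unitaries produced by an approximate intertwining. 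If you want to salvage your route, you would need an independent argument that the specific half-flip and reindexation unitaries can be taken in $\CU_0$ of the relevant fixed-point algebras --- which essentially amounts to reproving the commutator step in disguise --- or else abandon the two-sided scheme and work with \autoref{prop:asue-to-an-iso} as the paper does.
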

\begin{proof}
This is almost the same as the proof of either \cite[Theorem 3.7]{Szabo18ssa} or \cite[Theorem 2.2]{Szabo17ssa3}, but with a slight additional twist, so we shall give it for completeness.

Since we assumed $(\alpha,\Fu)$ to be a gently twisted action, i.e., $\set{\Fu_{g,h}}_{g,h\in G}\subset\CU(\eins+A)$, we have that $\gamma\otimes\alpha$ induces a genuine action on $(\CD\otimes A)_\infty\cap (\eins_\CD\otimes A)'$.
Denote by 
\[
\pi: (\CD\otimes A)_{\infty,\gamma\otimes\alpha}\cap (\eins_\CD\otimes A)' \to F(\eins_\CD\otimes A,(\CD\otimes A)_{\infty,\gamma\otimes\alpha})
\] 
the canonical (equivariant) surjection. 
We will use below in a crucial way that $\pi$ induces a surjection between the fixed point algebras with respect to the obvious action induced by $\gamma\otimes\alpha$ on both sides, which follows from the fact that the kernel is a $G$-$\sigma$-ideal; see \cite[Example 4.4 and Proposition 4.5]{Szabo18ssa2}.\footnote{To be precise, only the case of genuine actions is treated in \cite{Szabo18ssa2}, but the general case is true with the same proof verbatim.}

By assumption, we have an equivariant, unital $*$-homomorphism from $(\CD,\gamma)$ to $\big( F_{\infty,\alpha}(A), \tilde{\alpha}_\infty \big)$. Consider the canonical inclusions 
\[
F_{\infty,\alpha}(A) ,\ \CD \ \subset \ F(\eins_\CD\otimes A,(\CD\otimes A)_{\infty,\gamma\otimes\alpha}),
\]
which define commuting \cstar-subalgebras. Since these inclusions are natural, they are equivariant with respect to the induced actions of $\alpha$, $\gamma$ and $\gamma\otimes\alpha$.
By assumption, it follows that we have a unital and equivariant $*$-homomorphism
\[
\phi: (\CD\otimes\CD,\gamma\otimes\gamma)\to \big( F(\eins_\CD\otimes A,(\CD\otimes A)_{\infty,\gamma\otimes\alpha}) , (\gamma\otimes\alpha)^\sim_\infty \big)
\]
satisfying $\phi(d\otimes\eins_\CD)\cdot (\eins_\CD\otimes a)=d\otimes a$ and $\phi(\eins_\CD\otimes d)\cdot (\eins_\CD\otimes a)\in\eins_{\CD}\otimes A_\infty$ for all $a\in A$ and $d\in\CD$.

Since $\gamma$ is strongly self-absorbing, it follows from \cite[Proposition 3.3]{Szabo18ssa2} that there exist unitaries $u,v\in F(\eins_\CD\otimes A,(\CD\otimes A)_{\infty,\gamma\otimes\alpha})^{(\gamma\otimes\alpha)_\infty^\sim}$ such that $\ad(uvu^*v^*)\circ\phi\circ(\eins_\CD\otimes\id_\CD)= \phi\circ(\id_\CD\otimes\eins_\CD)$.
As $\gamma$ is equivariantly $\CZ$-stable, it is possible to find a unital copy $\CZ\subset F_{\gamma\otimes\alpha}(\eins_\CD\otimes A,(\CD\otimes A)_\infty)^{(\gamma\otimes\alpha)_\infty^\sim}$ that pointwise commutes with both $u$ and $v$.
By \cite[Corollary 2.6]{Jiang97}, it follows that the unitary
\[
z_1=uvu^*v^* \ \in \ \CU\Big( F(\eins_\CD\otimes A,(\CD\otimes A)_{\infty,\gamma\otimes\alpha})^{(\gamma\otimes\alpha)_\infty^\sim} \Big)
\]
is homotopic to the unit.
We observe
\[
z_1^*(d\otimes a)z_1 = \big( \ad(vuv^*u^*)\circ\phi\big)(d\otimes\eins_\CD)\cdot (\eins_\CD\otimes a) = \phi(\eins_\CD\otimes d)\cdot (\eins_\CD\otimes a) \in \eins_\CD\otimes A_\infty
\] 
for all $a\in A$ and $d\in\CD$.

If we write $z_1$ as a finite product of exponentials $\exp(ih_1)\cdots\exp(h_\ell)$ with self-adjoints $h_j\in F(\eins_\CD\otimes A,(\CD\otimes A)_{\infty,\gamma\otimes\alpha})^{(\gamma\otimes\alpha)_\infty^\sim}$, we can lift these elements to self-adjoints in the fixed point algebra $(\CD\otimes A)^{(\gamma\otimes\alpha)_\infty}$ with the same norm, which can in turn be represented by bounded sequences of self-adjoints $h_j^{(n)}\in \CD\otimes A$.
These elements are approximately central relative to $\eins_\CD\otimes A$ as $n\to\infty$, and satisfy $\lim_{n\to\infty} \max_{g\in K} \| h_j^{(n)}-(\gamma\otimes\alpha)_g(h_j^{(n)}) \| = 0$ for every compact set $K\subseteq G$.
We can therefore define a uniformly continuous sequence of maps $z^{(n)}: [0,1]\to\CU\big(\eins+\CD\otimes A\big)$ via $z^{(n)}_t=\exp(ith_1^{(n)})\cdots\exp(ith_\ell^{(n)})$.
Then evidently $z^{(n)}_0=\eins$, and the sequence $z^{(n)}_1$ represents $z_1$.

We may finally observe the following properties:
\begin{itemize}
\item $\dst \lim_{n\to\infty} \max_{0\leq t\leq 1} \|[z^{(n)}_t, \eins_\CD\otimes a]\|=0$ for all $a\in A$;
\item $\dst \lim_{n\to\infty} \dist( z^{(n)*}_1(d\otimes a)z^{(n)}_1, \eins_\CD\otimes A)=0$ for all $d\in \CD$ and $a\in A$;
\item $\dst \lim_{n\to\infty} \max_{g\in K} \max_{0\leq t\leq 1} \| z^{(n)}_t-(\gamma\otimes\alpha)_g(z^{(n)}_t) \| = 0$ for every compact set $K\subseteq G$.
\end{itemize}
We conclude that the unitized equivariant embedding 
\[
(\eins_\CD\otimes\id_A)^\dagger: (A^\dagger,\alpha,\Fu)\to ( (\CD\otimes A)^\dagger, \gamma\otimes\alpha, \eins_\CD\otimes \Fu)
\]
satisfies the assumptions of \autoref{prop:asue-to-an-iso} when viewed as a unital cocycle morphism.
In particular, it is asymptotically unitarily equivalent to a cocycle conjugacy.
Since we may assume that the unitary path witnessing the equivalence is in $\CU(\eins+\CD\otimes A)$ and the resulting isomorphism will respect the scalar part $\IC\cdot\eins$ on both sides, it follows that we obtain a proper cocycle conjugacy between $(A,\alpha,\Fu)$ and $(\CD\otimes A, \gamma\otimes\alpha,\eins_\CD\otimes\Fu)$, which is properly asymptotically unitarily equivalent to $\eins_\CD\otimes\id_A$.
This finishes the proof.
\end{proof}


\section{Remarks on $G$-equivariant $KK$-theory}

The aim of this section is to observe that $G$-equivariant $KK$-theory \cite{Kasparov88} is functorial on the cocycle category $\cstar_{G,\sep}$ of separable $G$-\cstar-algebras.
We note that there is little novelty behind this observation, as it is a fairly easy extension of the usual functoriality of $KK^G$ with regard to equivariant $*$-homomorphisms.
Nevertheless, this may turn out to be a much more natural viewpoint in the future to exploit $KK^G$ for classifying group actions up to cocycle conjugacy, so we include it here; cf.\ \cite{Meyer19}.

Moreover, as Thomsen's description of $KK^G$-groups utilizes cocycle representations\footnote{Throughout this section we will keep in mind \autoref{rem:general-coc-reps} and not necessarily assume that the map belonging to a cocycle representation between $G$-\cstar-algebras is extendible, unless we need to compose them, such as in \autoref{prop:KKG-comp}.} in his definition of equivariant Cuntz pairs --- see \cite[Section 3]{Thomsen98} --- we shall describe the functoriality both in the common Fredholm picture and the \emph{Cuntz--Thomsen} picture.

In this section we will assume familiarity of the reader with the standard (Fredholm) picture of $G$-equivariant $KK$-theory and the standard associated terminology; see \cite[Chapter 20]{BlaKK} for details.
The only major deviation is that we will denote $KK^G(\alpha,\beta)$ for the equivariant $KK$-group associated to two actions $\alpha: G\curvearrowright A$ and $\beta: G\curvearrowright B$ rather than $KK^G(A,B)$, in order to avoid confusion when dealing with different actions on the same \cstar-algebra.

\begin{nota} \label{rem:KKG-old-functoriality}
Let $\beta: G\curvearrowright B$ be an action on a \cstar-algebra, and suppose that $\Iu: G\to\CU(\CM(B))$ is a $\beta$-cocycle.
We will denote by $B^\Iu$ the Hilbert $(B,\beta)$-module that is equal to $B$ as an ordinary Hilbert right-$B$-module, but is equipped with the continuous linear $G$-action given by $g\cdot b := \Iu_g\beta_g(b)$ for all $g\in G$ and $b\in B$.
Note that under the canonical isomorphism of \cstar-algebras $\IB(B^\Iu)\cong\CM(B)$, the conjugation $G$-action $[g\mapsto g\circ\bullet\circ g^{-1}]$ is simply given as the canonical extension of the cocycle perturbed action $\beta^\Iu$.
Namely we can see that for all $x\in\CM(B)$, $b\in B$, and $g\in G$, one has
\[
\begin{array}{ccl}
(g\circ x\circ g^{-1})(b) &=& (g\circ x)(\Iu_{g^{-1}}\beta_{g^{-1}}(b)) \\
&=& (g\circ x)\big( \beta_g^{-1}( \Iu_g^* b ) \big) \\
&=& \Iu_g\cdot \beta_g\Big( x \beta_g^{-1} ( \Iu_g^* b) \Big) \\
&=& \beta^\Iu_g(x)\cdot b.
\end{array}
\]  
When we refer to the \emph{obvious} Hilbert $(B,\beta)$-module structure on $B$, it is meant that we consider $B^\eins$ in this way.

Let $\alpha: G\curvearrowright A$ be another action on a \cstar-algebra.
Recall that for an equivariant $*$-homomorphism $\phi: (A,\alpha)\to (B,\beta)$ between separable $G$-\cstar-algebras, its associated element $KK^G(\phi)\in KK^G(\alpha,\beta)$ is the equivalence class of the equivariant Kasparov triple $(B,\phi,0)$, where $B$ carries the obvious right Hilbert $(B,\beta)$-module structure.
\end{nota}

\begin{nota} \label{nota:kkG}
We denote by $kk^G$ the category whose objects are separable $G$-\cstar-algebras, and where morphisms from $(A,\alpha)$ to $(B,\beta)$ are elements in $KK^G(\alpha,\beta)$.
Then it is a tautological fact that the category of separable $G$-\cstar-algebras, if equipped with non-degenerate equivariant $*$-homomorphisms as the arrows, comes with an obvious natural transformation onto $kk^G$ which is the identity map on objects, and which assigns an equivariant $*$-homomorphism $\phi: (A,\alpha)\to (B,\beta)$ to its $KK$-element $KK^G(\phi)\in KK^G(\alpha,\beta)$.
\end{nota}

\begin{defi} \label{def:KKG-new-functoriality}
For a cocycle morphism $(\phi,\Iu): (A,\alpha)\to (B,\beta)$ between separable $G$-\cstar-algebras, we associate the element $KK^G(\phi,\Iu)\in KK^G(\alpha,\beta)$ represented by the Kasparov triple $(B^\Iu,\phi,0)$.\footnote{This is a Kasparov triple precisely by the equivariance condition $\ad(\Iu_g)\circ\beta_g\circ\phi=\phi\circ\alpha_g$.}
\end{defi}

\begin{rem}
Obviously the construction in \autoref{def:KKG-new-functoriality} extends the one in \autoref{rem:KKG-old-functoriality}, which describes the case $\Iu=\eins$.
\end{rem}

We may then come to our main observation:

\begin{prop} \label{prop:KKG-comp}
For any two cocycle morphisms
\[
(A,\alpha) \stackrel{(\phi,\Iu)}{\longrightarrow} (B,\beta) \stackrel{(\psi,\Iv)}{\longrightarrow} (C,\gamma)
\]
between separable $G$-\cstar-algebras, if $\psi$ is non-degenerate, one has
\[
KK^G(\phi,\Iu)\otimes KK^G(\psi,\Iv) = KK^G(\psi\circ\phi, \psi^+(\Iu)\Iv) \in KK^G(\alpha,\gamma).
\]
In particular, the assignment $(\phi,\Iu)\mapsto KK^G(\phi,\Iu)$ defines a natural transformation from the cocycle category $\cstar_{G,\sep}$ onto $kk^G$ which extends the one in \autoref{nota:kkG}.
\end{prop}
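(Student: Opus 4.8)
The plan is to compute the Kasparov product of the two model triples by hand and recognise it as the model triple attached to the composed cocycle morphism; the bulk of the work is the same bookkeeping that underlies the composition law in Proposition~\ref{prop:cocycle-composition}.

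First I would recall from Definition~\ref{def:KKG-new-functoriality} and Notation~\ref{rem:KKG-old-functoriality} that $KK^G(\phi,\Iu)$ is represented by $(B^\Iu,\phi,0)$ and $KK^G(\psi,\Iv)$ by $(C^\Iv,\psi,0)$, where in the latter $\psi$ is viewed as a map $B\to C\subseteq\CM(C)\cong\IB(C^\Iv)$. The equivariance condition for $(\psi,\Iv)$ says precisely that $\psi$ is $G$-equivariant from $(B,\beta)$ into $\IB(C^\Iv)$ with its conjugation action (this is the point of the displayed computation in Notation~\ref{rem:KKG-old-functoriality}), so the internal tensor product $B^\Iu\otimes_\psi C^\Iv$ is a well-defined equivariant Hilbert $(C,\gamma)$-module. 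Since the operators of both triples vanish, the zero operator is an admissible choice for the product operator (it is a $0$-connection, and the Kasparov-module and positivity conditions are trivially met), so by the standard connection-theoretic description of the Kasparov product, $KK^G(\phi,\Iu)\otimes KK^G(\psi,\Iv)$ is represented by $(B^\Iu\otimes_\psi C^\Iv,\,\phi\otimes 1,\,0)$.

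Next I would exhibit the equivariant isomorphism of triples $v\colon B^\Iu\otimes_\psi C^\Iv\to C^{\psi^+(\Iu)\Iv}$ determined by $b\otimes c\mapsto\psi(b)c$. Non-degeneracy of $\psi$ gives dense range, and a one-line inner-product computation shows $v$ is isometric, hence a unitary of Hilbert $(C,\gamma)$-modules; it carries $\phi\otimes 1$ to $\psi\circ\phi$ since $v\big((\phi(a)b)\otimes c\big)=\psi(\phi(a))\psi(b)c=(\psi\circ\phi)(a)\,v(b\otimes c)$, and it carries the zero operator to the zero operator (so in particular $(\phi\otimes 1)(a)$ is compact, corresponding to $(\psi\circ\phi)(a)\in C=\CK(C^{\psi^+(\Iu)\Iv})$). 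The key point is $G$-equivariance: for $g\in G$,
\[
v\big((\Iu_g\beta_g(b))\otimes(\Iv_g\gamma_g(c))\big)=\psi^+(\Iu_g)\psi(\beta_g(b))\Iv_g\gamma_g(c)=\psi^+(\Iu_g)\Iv_g\,\gamma_g(\psi(b)c),
\]
using $\psi^+(\Iu_g)\psi(\cdot)=\psi(\Iu_g\cdot)$ and then $\psi(\beta_g(b))\Iv_g=\Iv_g\gamma_g(\psi(b))$, i.e.\ the equivariance condition for $(\psi,\Iv)$; the right-hand side is exactly $g$ acting on $\psi(b)c$ in $C^{\psi^+(\Iu)\Iv}$. (That $\psi^+(\Iu)\Iv$ is a $\gamma$-cocycle, so that this module and the triple $(C^{\psi^+(\Iu)\Iv},\psi\circ\phi,0)$ make sense, is contained in Proposition~\ref{prop:cocycle-composition}.) Since this last triple represents $KK^G(\psi\circ\phi,\psi^+(\Iu)\Iv)$ by Definition~\ref{def:KKG-new-functoriality}, combining the two steps yields the displayed formula.

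For the final assertion, the formula just proved is the compatibility of $(\phi,\Iu)\mapsto KK^G(\phi,\Iu)$ with composition on the subcategory of $\cstar_{G,\sep}$ whose arrows are the non-degenerate cocycle morphisms (note that for a composite of cocycle morphisms with $\psi$ non-degenerate to again be non-degenerate one needs $\phi$ non-degenerate as well, so this is the natural ambient category), while $KK^G(\id_A,\eins)=[(A^\eins,\id_A,0)]$ is the identity element of $KK^G(\alpha,\alpha)$ by the usual properties of $KK$; hence the assignment is a functor into $kk^G$ that is the identity on objects and restricts on the case $\Iu=\eins$ to the functor of Notation~\ref{nota:kkG}. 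I expect the only genuinely delicate step to be the identification of the Kasparov product of the two zero-operator triples with the zero-operator triple on $B^\Iu\otimes_\psi C^\Iv$; this is routine connection theory but should be stated carefully, whereas everything afterwards is the algebra already done for the composition law of cocycle morphisms.
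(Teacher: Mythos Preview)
Your proposal is correct and follows essentially the same route as the paper: form the Kasparov product as the balanced tensor product $(B^\Iu\otimes_\psi C^\Iv,\phi\otimes 1,0)$, identify it with $C$ via $b\otimes c\mapsto\psi(b)c$, and verify that the resulting $G$-action is $g\cdot x=\psi^+(\Iu_g)\Iv_g\gamma_g(x)$ so that the triple is exactly $(C^{\psi^+(\Iu)\Iv},\psi\circ\phi,0)$. Your version is a bit more explicit about why the zero operator is an admissible product operator and about the unitarity of the identification map, but the argument and the key equivariance computation are the same.
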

\begin{proof}
In the special case at hand, the Kasparov triples representing our two factors are of a very special form, and hence it is easy to form their Kasparov product via considering a balanced tensor product; cf.\ \cite[Example 18.4.2]{BlaKK} or \cite[Paragraph 2.4]{Meyer00}.
The left hand side of the equation is represented by the equivariant $(A,\alpha)-(C,\gamma)$ Kasparov triple given by
\[
(E,\kappa,0) \cong ( B\otimes_\psi C, \phi\otimes\eins, 0).
\]
We may identify $E = B\otimes_\psi C \cong C$ via $b\otimes c\mapsto \psi(b)c$ as a Hilbert right-$C$-module, and under this identification one has $\kappa(a)x=\psi(\phi(a))x$ for all $a\in A$ and $x\in C$.
Furthermore the $G$-action on $E$ is given by $g\cdot (b\otimes c) = (\Iu_g\beta_g(b))\otimes (\Iv_g\gamma_g(c))$, which under this identification becomes $g\cdot (\psi(b)c) = \psi(\Iu_g\beta_g(b))\Iv_g\gamma_g(c) = \psi^+(\Iu_g)\Iv_g \gamma_g\big( \psi(b)c \big)$, leading to the formula $g\cdot x = \psi^+(\Iu_g)\Iv_g\gamma_g(x)$ for the $G$-action turning $C$ into a right Hilbert $(C,\gamma)$-module.
Evidently the right hand side of the equation in our claim is represented by the same Kasparov triple, which shows the claim.
\end{proof}

\begin{rem}
Let $\alpha: G\curvearrowright A$ be an action on a separable \cstar-algebra.
It is well-known that for any $\alpha$-cocycle $\Iu$, its cocycle perturbation $\alpha^\Iu$ is naturally $KK^G$-equivalent to $\alpha$.
Although this has been known to follow from abstract reasons due to $KK^G$ being a stable functor, a more specific description of the equivalence has been given on various occasions. 
By consulting the beginning of \cite[Section 3]{Thomsen98}, one sees that the canonical equivalence in $KK^G(\alpha^\Iu,\alpha)$\footnote{In \cite{Thomsen98} this element is denoted $\Iu_\#^{-1}$, or rather the group homomorphism $KK^G(\delta,\alpha^\Iu)\to KK^G(\delta,\alpha)$ that is induced by multiplication from the right by this element, where $\delta: G\curvearrowright D$ is any other action on a separable \cstar-algebra.} is simply given by $KK^G(\id_A,\Iu)$ in the sense of \autoref{def:KKG-new-functoriality}.
Recall from \autoref{ex:exterior-equivalence} that within our formalism, the exterior equivalence $(\id_A,\Iu): (A,\alpha^\Iu)\to (A,\alpha)$ is the canonical isomorphism in the category $\cstar_G$ induced by the cocycle $\Iu$.
\end{rem}

We shall also briefly review the Cuntz--Thomsen picture \cite{Cuntz83, Cuntz84, Higson87, Thomsen98} of $KK^G$, but we will somewhat adapt the terminology and notation for our purposes.
The reader should consult \cite[Section 3]{Thomsen98} for the theory we are about to recall.

\begin{nota}
We write $\CK$ for the \cstar-algebra of compact operators on some separable infinite-dimensional Hilbert space.
For an action $\beta: G\curvearrowright B$ on a \cstar-algebra, let us write as short-hand $B^s = B\otimes\CK$ and $\beta^s=\beta\otimes\id_\CK: G\curvearrowright B^s$.
Moreover we will abbreviate $B[0,1]=\CC[0,1]\otimes B$ and $\beta[0,1]=\id_{\CC[0,1]}\otimes\beta$.
\end{nota}

\begin{defi} \label{def:equi-Cuntz-pair}
Let $\alpha: G\curvearrowright A$ and $\beta: G\curvearrowright B$ be two actions on separable \cstar-algebras.
An $(\alpha,\beta)$-Cuntz pair $(\phi^\pm, \Iu^\pm)$ consists of two cocycle representations
\[
(\phi^i,\Iu^i): (A,\alpha) \to (\CM(B^s),\beta^s),\quad i\in\set{+,-},
\]
such that one has 
\begin{itemize}
\item $\phi^+(a)-\phi^-(a)\in B^s$ for all $a\in A$;
\item $\Iu^+_g-\Iu^-_g \in B^s$ for all $g\in G$;
\item the map $[g\mapsto \Iu^+_g-\Iu^-_g]$ is norm-continuous on $G$.
\end{itemize}
\end{defi}

First let us use this opportunity to prove that the latter assumption about the cocycles is in fact redundant:

\begin{prop} \label{prop:automatic-continuity}
Let $\beta: G\curvearrowright B$ be an action on a non-unital \cstar-algebra.
\begin{enumerate}[label=\textup{(\roman*)},leftmargin=*]
\item \label{prop:automatic-continuity:1}
If $U\in\CU(\CM(B))$ is a unitary such that the coboundary $U\beta_\bullet(U)^*$ takes values in $\CU(\eins+B)$, then $[g\mapsto U\beta_g(U)^*]$ is a norm-continuous map.
\item \label{prop:automatic-continuity:2}
Let $\Iu, \Iv: G\to\CU(\CM(B))$ be two strictly continuous $\beta$-cocycles, and suppose that $\set{\Iu_g-\Iv_g}_{g\in G}\subseteq B$.
Then the assignment $[g\mapsto\Iu_g-\Iv_g]$ defines a norm-continuous map on $G$.
\end{enumerate}
In particular, the last assumption in \autoref{def:equi-Cuntz-pair} is redundant.
\end{prop}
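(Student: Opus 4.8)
The plan is to isolate one soft topological fact and deduce both items (and the final remark) from it: \emph{if $f\colon G\to\CM(B)$ is strictly continuous and $f_g-f_h\in B$ for all $g,h\in G$, then $f$ is norm continuous}. To prove this, I would fix $h\in G$ and $\eps>0$. For every $c\in B$ with $\|c\|\le 1$, the function $g\mapsto\|f_gc-f_hc\|$ is continuous on $G$, since $g\mapsto f_gc$ is norm continuous by strict continuity of $f$. On the other hand, because $f_g-f_h$ lies in the \cstar-algebra $B$, which has an approximate unit, one has $\|f_g-f_h\|=\sup\{\,\|(f_g-f_h)c\|:c\in B,\ \|c\|\le 1\,\}$. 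Hence $g\mapsto\|f_g-f_h\|$ is a supremum of continuous functions, i.e.\ lower semicontinuous, and it vanishes at $g=h$. Therefore $\{g\in G:\|f_g-f_h\|<\eps\}$ is an open set containing $h$, hence a neighbourhood of $h$; as $h$ and $\eps$ were arbitrary, $f$ is norm continuous.

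Granting this, I would prove \ref{prop:automatic-continuity:1} by checking that $\IW\colon G\to\CU(\CM(B))$, $\IW_g:=U\beta_g(U)^*$, is strictly continuous: it is obtained from $g\mapsto\beta_g(U)$ --- which is strictly continuous since the induced action of $G$ on $\CM(B)$ is point-strictly continuous --- by applying the adjoint and then left multiplication by $U$, both of which preserve strict convergence. By hypothesis $\IW_g\in\CU(\eins+B)$, so $\IW_g-\eins\in B$, whence $\IW_g-\IW_h\in B$ for all $g,h$, and the soft fact applies to $f=\IW$. For \ref{prop:automatic-continuity:2}, the map $f_g:=\Iu_g-\Iv_g$ is strictly continuous (a difference of two strictly continuous maps), takes values in $B$ by assumption, so $f_g-f_h\in B$ for all $g,h$, and again the soft fact yields norm continuity (here it is irrelevant that the $\Iu_g-\Iv_g$ are not unitaries). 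Finally, the redundancy of the last condition in \autoref{def:equi-Cuntz-pair} is exactly \ref{prop:automatic-continuity:2} applied to the action $\beta^s$ on $B^s$ and the $\beta^s$-cocycles $\Iu^+,\Iu^-$, whose difference is assumed to lie in $B^s$.

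I do not anticipate a genuine obstacle. The one place needing a moment's care is the elementary identity $\|b\|=\sup\{\|bc\|:c\in B,\ \|c\|\le 1\}$ for $b\in B$, obtained by testing against an approximate unit of $B$; this is precisely what forces the argument to take place inside $B$ rather than $\CM(B)$, and it is the mechanism that upgrades ``strictly continuous with differences in $B$'' to ``norm continuous''. I would also point out in passing that non-unitality of $B$ is not used in the proof; it merely ensures that the hypothesis of \ref{prop:automatic-continuity:1} --- that $U\beta_\bullet(U)^*$ be valued in $\CU(\eins+B)$ --- is not vacuous.
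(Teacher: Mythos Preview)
Your ``soft topological fact'' is false, and the error in its proof is a confusion between lower and upper semicontinuity. You correctly observe that $g\mapsto\|f_g-f_h\|=\sup_{\|c\|\le 1}\|(f_g-f_h)c\|$ is a supremum of continuous functions, hence lower semicontinuous. But lower semicontinuity means that the superlevel sets $\{g:\|f_g-f_h\|>\eps\}$ are open, not the sublevel sets $\{g:\|f_g-f_h\|<\eps\}$; the latter would be \emph{upper} semicontinuity, which is exactly what you need and exactly what you do not have. Here is a concrete counterexample. Take $B=c_0(\IN)$, so $\CM(B)=\ell^\infty(\IN)$, and $G=\IR$ with trivial action. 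Choose continuous bump functions $\phi_n\colon\IR\to[0,1]$ supported in $[1/(n+1),1/n]$ with $\phi_n(t_n)=1$ for some $t_n$ in that interval, and set $f_g=(\phi_n(g))_{n\ge 1}$. Then each $f_g$ has at most two nonzero entries, so $f_g\in c_0$ and a fortiori $f_g-f_h\in c_0$ for all $g,h$. One checks easily that $g\mapsto f_g c$ is norm continuous for every $c\in c_0$ (the tail of $c$ controls the behaviour near $0$), so $f$ is strictly continuous. But $\|f_{t_n}-f_0\|=\|f_{t_n}\|\ge 1$ while $t_n\to 0$, so $f$ is not norm continuous.

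The paper's proof uses the group structure in an essential way, which your approach discards. For \ref{prop:automatic-continuity:1} the paper observes that $U-\beta_g(U)\in B$ for all $g$ means the image of $U$ in the corona $\CM(B)/B$ is a fixed point of the induced $G$-action, and then invokes a theorem of Thomsen (\cite[Theorem 2.1]{Thomsen00}) asserting that in this situation $g\mapsto\beta_g(U)$ is norm continuous; the proof of that theorem uses the cocycle-type subadditivity $\|\beta_{gh}(U)-U\|\le\|\beta_g(U)-U\|+\|\beta_h(U)-U\|$, which has no analogue for an arbitrary strictly continuous $f$. For \ref{prop:automatic-continuity:2} the paper does not argue directly but instead stabilises, builds new cocycles $\IU,\IV$ out of infinitely many copies of $\Iu,\Iv$ so that $\IV_g=U\IU_gU^*$ for a single fixed unitary $U$, and then reduces to \ref{prop:automatic-continuity:1} applied to the perturbed action $\beta^{\IU}$. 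In short, both parts genuinely rely on having a group action (or a cocycle over one), and there is no purely point-set shortcut of the kind you propose.
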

\begin{proof}
\ref{prop:automatic-continuity:1}:
As one has $\eins-U\beta_g(U)^*=(\beta_g(U)-U)\beta_g(U^*)\in B$ for all $g\in G$, the assumption translates to $\set{U-\beta_g(U)}_{g\in G}\subseteq B$.
In other words, the image of $U$ in the corona algebra $\CM(B)/B$ is a fixed point under the induced (algebraic) $G$-action.
By \cite[Theorem 2.1]{Thomsen00}, it follows that $[g\mapsto\beta_g(U)]$ is norm-continuous, which implies our claim.

\ref{prop:automatic-continuity:2}: 
We may assume without loss of generality that $(B,\beta)\cong (B^s,\beta^s)$, by replacing $\Iu_\bullet$ with $\Iu_\bullet\oplus(\eins-e_{11})$ and $\Iv_\bullet$ with $\Iv_\bullet\oplus(\eins-e_{11})$ inside $\CM(B\otimes\CK)$ if necessary.
Let us choose sequences of isometries $r_n,t_n\in\CM(\CK)\subseteq\CM(B)^\beta$ such that
\[
\eins = \sum_{n=1}^\infty t_nt_n^*+r_nr_n^*
\]
in the strict topology.
For $g\in G$, we define the strict limits
\[
\IU_g = \sum_{n=1}^\infty t_n\Iu_gt_n^* + r_n\Iv_gr_n^*,\quad \IV_g=t_1\Iv_gt_1^* + r_1\Iv_gr_1^*+\sum_{n=2}^\infty t_n\Iu_gt_n^* + r_n\Iv_gr_n^*
\]
and note that both formulas define strictly continuous $\beta$-cocycles.
If we define
\[
U = t_1r_1^* + \sum_{n=2}^\infty t_{n}t_{n-1}^* + r_{n-1}r_n^* \ \in \ \CU(\CM(B)^\beta),
\]
then we see that $\IV_g=U\IU_gU^*$ holds for all $g\in G$.

Furthermore we have
\[
\IU_g-\IV_g = t_1(\Iu_g-\Iv_g)t_1^* \quad\text{and}\quad t_1^*(\IU_g-\IV_g)t_1=\Iu_g-\Iv_g ,\quad g\in G,
\]
so the claim is equivalent to the assertion that $[g\mapsto \IU_g-\IV_g]$ is a norm-continuous map.
This map takes values in $B$.
We observe again that by $\IU_g-\IV_g=(\eins-\IV_g\IU_g^*)\IU_g$, the map $\IX_\bullet:=\IV_\bullet\IU^*_\bullet = [g\mapsto\IV_g\IU_g^*]$ takes values in $\CU(\eins+B)$. 
By strict continuity of $\IU$, it suffices to show that $\IX$ is a norm-continuous map.
Finally we observe for all $g\in G$ that
\[
\IX_g=\IV_g\IU_g^* =U\IU_gU^*\IU_g^* = U\beta^\IU_g(U)^*.
\]
So the claim follows directly from \ref{prop:automatic-continuity:1}.
\end{proof}

\begin{defi}
For two actions $\alpha: G\curvearrowright A$ and $\beta: G\curvearrowright B$ on separable \cstar-algebras,
let $\IE^G(\alpha,\beta)$ denote the set of all $(\alpha,\beta)$-Cuntz pairs, and let $\ID^G(\alpha,\beta)$ denote the subset of those pairs $(\phi^\pm,\Iu^\pm)$ with $(\phi^+,\Iu^+)=(\phi^-,\Iu^-)$, the degenerate Cuntz pairs.

Two elements $(\phi^\pm,\Iu^\pm)$ and $(\psi^\pm,\Iv^\pm)$ in $\IE^G(\alpha,\beta)$ are called homotopic, if there exists an $(\alpha,\beta[0,1])$-Cuntz pair $(\Phi^\pm,\IU^\pm)$ which restricts to $(\phi^\pm,\Iu^\pm)$ upon evaluation at $0\in [0,1]$, and restrict to $(\psi^\pm,\Iv^\pm)$ upon evaluation at $1\in [0,1]$.

For any unital inclusion $\CO_2\subseteq\CM(B^s)^{\beta^s}$ with generating isometries $s_1,s_2$, one can perform the Cuntz addition for two $(\alpha,\beta)$-Cuntz pairs as
\[
(\phi^\pm,\Iu^\pm)\oplus_{s_1,s_2} (\psi^\pm,\Iv^\pm) = \big( s_1\phi^\pm(\bullet)s_1^*+s_2\psi^\pm(\bullet)s_2^*, s_1\Iu^\pm_\bullet s_1^*+s_2\Iv^\pm_\bullet s_2^* \big),
\]
which is independent of the choice of $s_1,s_2$ up to homotopy.

Finally, one may define an equivalence relation $\sim$ on $\IE^G(\alpha,\beta)$ whereby one has $x\sim y$ if there are $d_1,d_2\in\ID^G(\alpha,\beta)$ such that $x\oplus d_1$ and $y\oplus d_2$ are homotopic.
The quotient $\IE^G(\alpha,\beta)/\sim$ then becomes an abelian monoid via Cuntz addition.
\end{defi}

\begin{defi} \label{def:kasparov-triple-construction}
Let $\alpha: G\curvearrowright A$ and $\beta: G\curvearrowright B$ be actions on separable \cstar-algebras.
For an $(\alpha,\beta)$-Cuntz pair $(\phi^\pm,\Iu^\pm)$, one may construct an $\alpha$-$\beta^s$-equivariant Kasparov triple $(E,\kappa,F)=:\Phi(\phi^\pm,\Iu^\pm)$ as follows.
One sets $E=(B^s)^{\Iu^+}\oplus (B^s)^{\Iu^-}=: B^{s+}\oplus B^{s-}$ as a right Hilbert $(B^s,\beta^s)$-module with grading $(x,y)\mapsto (x,-y)$.
One defines $\kappa: A\to\CL_{B^s}(E)$ via
\[
\kappa(a)(x,y) = (\phi^+(a)x, \phi^-(a)y)
\]
and $F\in\CL_{B^s}(E)$ via $F(x,y)=(y,x)$.
We denote by $[\Phi(\phi^\pm,\Iu^\pm)]$ its induced equivalence class in $KK^G(\alpha,\beta^s)$.
\end{defi}

\begin{nota}
Let $\alpha: G\curvearrowright A$ be an action on a separable \cstar-algebra.
As $KK^G$ is a stable functor in both variables, one has that the equivariant corner embedding $\id_A\otimes e_{11}: (A,\alpha)\to (A^s,\alpha^s)$ is always a $KK^G$-equivalence, and we will denote its induced element by $S_\alpha\in KK^G(\alpha,\alpha^s)$.
Note that this defines a natural transformation on the category $kk^G$ consisting of isomorphisms.
\end{nota}

\begin{theorem} \label{thm:Thomsen-iso}
For any two actions $\alpha: G\curvearrowright A$ and $\beta: G\curvearrowright B$ on separable \cstar-algebras, one has that $\IE^G(\alpha,\beta)/\sim$ is in fact a group, and the assignment $[\phi^\pm,\Iu^\pm] \mapsto [\Phi(\phi^\pm,\Iu^\pm)]\otimes S_\beta^{-1}$ defines a natural isomorphism onto $KK^G(\alpha,\beta)$.
\end{theorem}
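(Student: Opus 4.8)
The plan is to show that the construction $\Phi$ of \autoref{def:kasparov-triple-construction}, post-composed with the Kasparov product by $S_\beta^{-1}\in KK^G(\beta^s,\beta)$, descends to the asserted isomorphism; this follows the blueprint of \cite[Section~3]{Thomsen98}, the one point requiring attention being that the $G$-module structures must be transported throughout as cocycle data. First I would check that $\Phi$ is well defined on classes. For an $(\alpha,\beta)$-Cuntz pair $(\phi^\pm,\Iu^\pm)$ the triple $\Phi(\phi^\pm,\Iu^\pm)=(E,\kappa,F)$ is a genuine $\alpha$-$\beta^s$-equivariant Kasparov triple: one has $F=F^*$ and $F^2=\eins$ exactly, while $[F,\kappa(a)]$ and $(g\cdot F-F)\kappa(a)$ lie in $\CK(E)$ for all $a\in A$, $g\in G$, since these operators are assembled from $\phi^+(a)-\phi^-(a)\in B^s$ and $\Iu^+_g-\Iu^-_g\in B^s$, the needed norm-continuity in $g$ being automatic by \autoref{prop:automatic-continuity}. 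A degenerate Cuntz pair is sent to a degenerate Kasparov triple, a homotopy of Cuntz pairs (an $(\alpha,\beta[0,1])$-Cuntz pair) to an operator homotopy after evaluation, and Cuntz addition to a direct sum of triples. Hence $\Phi$ induces a monoid homomorphism $\IE^G(\alpha,\beta)/{\sim}\to KK^G(\alpha,\beta^s)$, and composing with $\otimes\,S_\beta^{-1}$ gives a monoid homomorphism $T\colon\IE^G(\alpha,\beta)/{\sim}\to KK^G(\alpha,\beta)$.

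For surjectivity, let $x\in KK^G(\alpha,\beta)$ and represent $x\otimes S_\beta\in KK^G(\alpha,\beta^s)$ by a countably generated equivariant Kasparov module. After adding degenerate modules and performing an operator homotopy, and invoking the equivariant Kasparov stabilization theorem, one brings it into the standard Cuntz form $(E,\kappa,F)$ with $E=(B^s)^{\Iu^+}\oplus(B^s)^{\Iu^-}$, $F$ the coordinate flip, and $\kappa=\diag(\phi^+,\phi^-)$; here $\Iu^\pm\colon G\to\CU(\CM(B^s))$ are the strictly continuous $\beta^s$-cocycles recording the $G$-action on the two copies of $B^s$, and $\phi^\pm\colon A\to\CM(B^s)$ are $*$-homomorphisms, not necessarily extendible --- which is permissible here (cf.\ \autoref{rem:general-coc-reps}). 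The Kasparov axioms for $(E,\kappa,F)$ translate precisely into the assertion that $(\phi^\pm,\Iu^\pm)$ is an $(\alpha,\beta)$-Cuntz pair, and by construction $\Phi(\phi^\pm,\Iu^\pm)$ represents $x\otimes S_\beta$; therefore $T([\phi^\pm,\Iu^\pm])=x$.

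For injectivity, suppose $T([\phi^\pm,\Iu^\pm])=0$, so that $\Phi(\phi^\pm,\Iu^\pm)$ represents $0$ in $KK^G(\alpha,\beta^s)$ and is thus homotopic, after adding degenerates, to a degenerate Kasparov module. Running the argument of the previous paragraph parametrically over $[0,1]$, i.e.\ applying equivariant stabilization to the Hilbert module underlying the homotopy, realizes this homotopy as an $(\alpha,\beta[0,1])$-Cuntz pair, which witnesses $[\phi^\pm,\Iu^\pm]=0$ in $\IE^G(\alpha,\beta)/{\sim}$. Consequently $T$ is a bijective monoid homomorphism onto the group $KK^G(\alpha,\beta)$; in particular $\IE^G(\alpha,\beta)/{\sim}$ is itself a group and $T$ is a group isomorphism, and the standard rotation homotopy identifies the inverse of $[\phi^\pm,\Iu^\pm]$ with $[\phi^\mp,\Iu^\mp]$.

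Finally, naturality of $T$ in both variables follows from the compatibility of $\Phi$ with equivariant $*$-homomorphisms together with additivity, or equivalently from the fact that Kasparov products of triples of the special shape $\Phi(\phi^\pm,\Iu^\pm)$ are computed through balanced tensor products exactly as in the proof of \autoref{prop:KKG-comp}; in particular $T$ intertwines the transformation $(\phi,\Iu)\mapsto KK^G(\phi,\Iu)$ of \autoref{def:KKG-new-functoriality} with the assignment of a cocycle morphism to its Cuntz pair after the corner embedding $B\hookrightarrow B^s$. I expect the main obstacle to be the equivariant stabilization bookkeeping in the surjectivity and injectivity steps: one must carry out the reduction to Cuntz form so that the $G$-action is faithfully encoded by the pair of cocycles $\Iu^\pm$ and so that the resulting maps $\phi^\pm$ are genuinely cocycle representations in the relaxed sense of \autoref{rem:general-coc-reps}; the only continuity subtlety that might otherwise intervene, namely that of $g\mapsto\Iu^+_g-\Iu^-_g$, is already dispatched by \autoref{prop:automatic-continuity}.
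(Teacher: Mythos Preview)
The paper does not provide its own proof of this theorem. It is stated as a recalled result from Thomsen's work \cite[Section~3]{Thomsen98}; the surrounding text explicitly says ``The reader should consult \cite[Section 3]{Thomsen98} for the theory we are about to recall'', and no proof environment follows the statement. Your proposal is therefore not comparable to anything in the paper --- there is nothing to compare it against.

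That said, your sketch follows the expected blueprint of Thomsen's argument: well-definedness of $\Phi$ on classes, surjectivity via equivariant stabilization to reduce to Cuntz form, and injectivity by running the same reduction parametrically over $[0,1]$. This is the standard route. One caution: in the injectivity step you assert that a homotopy of Kasparov triples, after stabilization, yields an $(\alpha,\beta[0,1])$-Cuntz pair. This is where the real work in Thomsen's paper lies, and it is more delicate than a one-line ``running the argument parametrically'' suggests --- one needs the homotopy to be realized with a fixed underlying module and a path of operators, and the stabilization must be performed compatibly across the interval. Your proposal acknowledges this as the ``main obstacle'' but does not actually carry it out, so as written it is a plausible outline rather than a proof.
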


\begin{prop} \label{prop:KKG-Cuntz-pair-functoriality}
For a cocycle morphism $(\phi,\Iu): (A,\alpha)\to (B,\beta)$ between separable $G$-\cstar-algebras, the element $KK^G(\phi,\Iu)\in KK^G(\alpha,\beta)$ is represented by any $(\alpha,\beta)$-Cuntz pair $(\phi^\pm,\Iu^\pm)$ of the form
\[
\phi^+=\phi\otimes e_{11},\quad \Iu^+=(\Iu\otimes e_{11})\oplus (\eins-e_{11})=:\Iu^{11},\quad \phi^-=0.
\]
So in particular one may choose $\Iu^-=\Iu^{11}$.
\end{prop}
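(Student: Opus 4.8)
The plan is to unwind \autoref{def:kasparov-triple-construction}, \autoref{def:KKG-new-functoriality} and \autoref{thm:Thomsen-iso} and reduce everything to the known behaviour of $KK^G$ under stabilization. By \autoref{thm:Thomsen-iso} it suffices to check that the prescribed data is a Cuntz pair and that $[\Phi(\phi^\pm,\Iu^\pm)]\otimes S_\beta^{-1}=KK^G(\phi,\Iu)$ in $KK^G(\alpha,\beta)$, equivalently $[\Phi(\phi^\pm,\Iu^\pm)]=KK^G(\phi,\Iu)\otimes S_\beta$ in $KK^G(\alpha,\beta^s)$, where $S_\beta$ is represented by the equivariant corner embedding $(B^s,\id_B\otimes e_{11},0)$ with $B^s$ carrying the obvious Hilbert module structure. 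First I would verify that $(\phi^+,\Iu^+)=(\phi\otimes e_{11},\Iu^{11})$ and $(\phi^-,\Iu^-)=(0,\Iu^{11})$ form an $(\alpha,\beta)$-Cuntz pair: using that $e_{11}$ is $\beta^s$-fixed one checks directly that $\Iu^{11}$ is a strictly continuous $\beta^s$-cocycle and that both pairs satisfy the equivariance condition and the cocycle identity (the latter boils down to $(\psi)^+(\eins_A)=\eins$), the identity $\ad(\Iu^{11}_g)\circ\beta^s_g\circ(\phi\otimes e_{11})=(\phi\otimes e_{11})\circ\alpha_g$ coming from equivariance of $(\phi,\Iu)$; the conditions $\phi^+(a)-\phi^-(a)=\phi(a)\otimes e_{11}\in B^s$ and $\Iu^+_g-\Iu^-_g=0\in B^s$ are immediate, and continuity of $g\mapsto\Iu^+_g-\Iu^-_g$ is vacuous here (and redundant anyway by \autoref{prop:automatic-continuity}).

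Next I would compute $\Phi(\phi^\pm,\Iu^\pm)$. Since $\Iu^+=\Iu^-=\Iu^{11}$ the underlying module is $E=M\oplus M$ with $M:=(B^s)^{\Iu^{11}}$, the left action is $\kappa(a)(x,y)=(\phi^+(a)x,0)$ because $\phi^-=0$, and $F(x,y)=(y,x)$. The module $M$ has the same underlying Hilbert $B^s$-module structure as $B^s$, so $\CK_{B^s}(M)=B^s$, and left multiplication by $\phi^+(a)=\phi(a)\otimes e_{11}\in B^s$ is compact. Consequently the scaling $F_t=tF$, $t\in[0,1]$, passes through $G$-equivariant Kasparov modules (all commutator and almost-unitality conditions are governed by compactness of $\kappa(a)$, and $F_t$ is odd and $G$-invariant), giving an operator homotopy from $\Phi(\phi^\pm,\Iu^\pm)$ at $t=1$ to $(E,\kappa,0)$ at $t=0$. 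The latter splits as $(M,\phi^+,0)\oplus(M,0,0)$ with the second summand degenerate, so
\[
[\Phi(\phi^\pm,\Iu^\pm)] = [(M,\phi^+,0)] = KK^G(\phi\otimes e_{11},\Iu^{11}),
\]
the last equality being \autoref{def:KKG-new-functoriality} for the cocycle morphism $(\phi\otimes e_{11},\Iu^{11}):(A,\alpha)\to(B^s,\beta^s)$, which equals $\iota\circ(\phi,\Iu)$ for the (extendible, equivariant) corner embedding $\iota=(\id_B\otimes e_{11},\eins)$ representing $S_\beta$.

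It then remains to identify $[(M,\phi^+,0)]$ with $KK^G(\phi,\Iu)\otimes S_\beta$. Here I would compute the Kasparov product directly via a balanced (interior) tensor product as in the proof of \autoref{prop:KKG-comp}: both factors have zero operator, so the product is represented by $(B^\Iu\otimes_{\id_B\otimes e_{11}}B^s,\phi\otimes\eins,0)$. The map $b\otimes x\mapsto(b\otimes e_{11})x$ identifies $B^\Iu\otimes_{\id_B\otimes e_{11}}B^s$ with the complemented $G$-invariant submodule $B\otimes e_{11}\CK$ of $M$ (on which the left action is $\phi^+$ and the $G$-action is $\Iu^{11}_g\beta^s_g$), whose orthogonal complement $B\otimes(\eins-e_{11})\CK$ is annihilated by $\phi^+$ and is therefore a degenerate summand. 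Discarding it gives $[(B^\Iu\otimes_{\id_B\otimes e_{11}}B^s,\phi\otimes\eins,0)]=[(M,\phi^+,0)]$, hence $KK^G(\phi,\Iu)\otimes S_\beta=[(M,\phi^+,0)]=[\Phi(\phi^\pm,\Iu^\pm)]$. Multiplying by $S_\beta^{-1}$ and invoking \autoref{thm:Thomsen-iso} finishes the proof.

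The main obstacle — essentially the only point beyond bookkeeping — is the last paragraph: making the balanced-tensor-product identification precise when the stabilization map $\id_B\otimes e_{11}$ is merely extendible and not non-degenerate, i.e.\ correctly tracking the left $A$-action and the $\Iu^{11}$-twisted $G$-action on $B\otimes e_{11}\CK\cong B^\Iu\otimes_{\id_B\otimes e_{11}}B^s$ and confirming that the discarded complement is genuinely degenerate (not merely compact). Everything else is a direct, if slightly lengthy, chase through \autoref{def:equi-Cuntz-pair}, \autoref{def:kasparov-triple-construction} and \autoref{def:KKG-new-functoriality}.
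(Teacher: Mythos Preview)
Your proof is correct and follows essentially the same route as the paper: verify the Cuntz pair conditions, observe that $\kappa$ takes values in the compacts so one may replace $F$ by $0$, discard the degenerate summand, and then match the remaining triple $(M,\phi^+,0)$ with $KK^G(\phi,\Iu)$ via a balanced tensor product computation. The one difference is in the last step: you compute $KK^G(\phi,\Iu)\otimes S_\beta$ using the corner embedding $\id_B\otimes e_{11}$ and then compare with $[(M,\phi^+,0)]$, whereas the paper instead multiplies $[(M,\phi^+,0)]$ by $S_\beta^{-1}$, using its explicit representative $(B^se_{11},\theta,0)$ with $\theta$ non-degenerate, and lands directly on $(B^\Iu,\phi,0)$. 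The paper's direction sidesteps precisely the non-degeneracy worry you flag (and which is in any case harmless here, since the interior tensor product $B^\Iu\otimes_{\id_B\otimes e_{11}}B^s\cong B\otimes e_{11}\CK$ works regardless); otherwise the two arguments are the same computation read in opposite directions.
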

\begin{proof}
By \autoref{def:kasparov-triple-construction}, the element $z\in KK^G(\alpha,\beta^s)$ described by such a Cuntz pair is represented by the equivariant Kasparov triple $(E, \kappa, F)$ given by
\[
E=B^{s+}\oplus B^{s-},\quad g\cdot (x,y)=\big( \Iu_g^{11}\beta^s_g(x), \Iu_g^-\beta^s_g(y) \big),
\]
and
\[
\kappa(a)(x,y)= \big( (\phi(a)\otimes e_{11}) x, 0 \big),\quad F(x,y)=(y,x).
\]
As $\kappa$ clearly takes values in $\CK_{B^s}(E)$, one may pass to the Kasparov triple $(E,\kappa,0)$ without changing its $KK^G$-class; cf.\ \cite[Paragraph 2.4]{Meyer00}.
Moreover we have $(E,\kappa,0)=(B^{s+},(\phi\otimes e_{11}),0)\oplus (B^{s-},0,0)$.
So $z$ is in fact represented by the triple $(B^{s+}, (\phi\otimes e_{11}), 0)$. (We see here already that the specific choice of the cocycle $\Iu^-$ does not matter.)

Next, we use (see \cite[Section 2]{Thomsen98}) that the invertible element $S_\beta^{-1}$ in $KK^G(\beta^s,\beta)$ is represented by the $\beta^s$-$\beta$-equivariant Kasparov triple of the form $(B^se_{11}, \theta, 0)$. 
Here $B^se_{11}$ is viewed as a right Hilbert $(B,\beta)$-module in the obvious way, and $\theta: B^s\to\CL_B(B^se_{11})$ is defined by $\theta(x)y=xy$.
Hence $z\otimes S_\beta^{-1}$ may be represented by the $\alpha$-$\beta$-equivariant Kasparov triple
\[
\big( B^{s+}\otimes_\theta (B^se_{11}) , (\phi\otimes e_{11})\otimes_\theta\eins, 0 \big) \cong ( E^0, \kappa^0, 0).
\]
Here one has $E^0=\ell^2(\IN)\otimes_\IC B$ as a right Hilbert $(B,\beta)$-module carrying the action 
\[
(g\cdot\xi)(n)=\begin{cases} \Iu_g\beta_g(\xi(1)) &,\quad n=1 \\ \beta_g(\xi(n)) &,\quad n\geq 2, \end{cases}
\]
and 
\[
(\kappa^0(a)\cdot\xi)(n) = \begin{cases} \phi(a)\xi(1) &,\quad n=1 \\ 0 &,\quad n\geq 2. \end{cases}
\]
Hence we obtain a direct sum decomposition
\[
( E^0, \kappa^0, 0) \cong (B^\Iu,\phi,0)\oplus (\ell^2(\IN^{\geq 2})\otimes_\IC B, 0, 0),
\] 
so the element $z\otimes S_\beta^{-1}$ is represented by $(B^\Iu,\phi,0)$, which is the same triple representing $KK^G(\phi,\Iu)$ by definition. This shows our claim.
\end{proof}


\bibliographystyle{gabor}
\bibliography{master}

\begin{thebibliography}{10}
\providecommand{\url}[1]{\texttt{#1}}
\providecommand{\urlprefix}{URL }

\bibitem{BarlakSzaboVoigt17}
S.~Barlak, G.~Szab{\'o}, C.~Voigt: The {R}okhlin property for actions of
  compact quantum groups.
\newblock J. Funct. Anal. 272 (2017), no.~6, pp. 2308--2360.

\bibitem{BlaKK}
B.~Blackadar: {$K$}-theory for {O}perator {A}lgebras.
\newblock Second edition. Cambridge University Press (1998).

\bibitem{Bratteli72}
O.~Bratteli: Inductive limits of finite dimensional \cstar-algebras.
\newblock Trans. Amer. Math. Soc. 171 (1972), pp. 195--234.

\bibitem{BratteliEvansKishimoto95}
O.~Bratteli, D.~E. Evans, A.~Kishimoto: The {R}ohlin property for quasi-free
  automorphisms of the {F}ermion algebra.
\newblock Proc. Lond. Math. Soc. 71 (1995), no.~3, pp. 675--694.

\bibitem{BratteliKishimoto00}
O.~Bratteli, A.~Kishimoto: Trace scaling automorphisms of certain stable {AF}
  algebras, {II}.
\newblock Q. J. Math. 51 (2000), pp. 131--154.

\bibitem{BratteliKishimotoRobinson07}
O.~Bratteli, A.~Kishimoto, D.~W. Robinson: Rohlin flows on the {C}untz algebra
  {$\CO_\infty$}.
\newblock J. Funct. Anal. 248 (2007), pp. 472--511.

\bibitem{BratteliKishimotoRordamStormer93}
O.~Bratteli, A.~Kishimoto, M.~R{\o}rdam, E.~St{\o}rmer: The crossed product of
  a {UHF} algebra by a shift.
\newblock Ergodic Theory Dynam. Systems 13 (1993), no.~4, pp. 615--626.

\bibitem{BusbySmith70}
R.~C. Busby, H.~A. Smith: Representations of twisted group algebas.
\newblock Trans. Amer. Math. Soc. 149 (1970), pp. 503--537.

\bibitem{BussMeyerZhu13}
A.~Buss, R.~Meyer, C.~Zhu: A higher category approach to twisted actions on
  \cstar-algebras.
\newblock Proc. Edinb. Math. Soc. 56 (2013), no.~2, pp. 387--426.

\bibitem{Cuntz83}
J.~Cuntz: Generalized homomorphisms between \cstar-algebras and {$KK$}-theory.
\newblock Lect. Notes Math. 1031 (1983), pp. 31--45.

\bibitem{Cuntz84}
J.~Cuntz: {$K$}-theory and \cstar-algebras.
\newblock Lect. Notes Math. 1046 (1984), pp. 55--79.

\bibitem{Elliott76}
G.~A. Elliott: On the classification of inductive limits of sequences of
  semi-simple finite dimensional algebras.
\newblock J. Algebra 38 (1976), pp. 29--44.

\bibitem{Elliott10}
G.~A. Elliott: Towards a theory of classification.
\newblock Adv. Math. 223 (2010), pp. 30--48.

\bibitem{ElliottEvans93}
G.~A. Elliott, D.~E. Evans: The structure of irrational rotation
  \cstar-algebras.
\newblock Ann. Math. 138 (1993), no.~3, pp. 477--501.

\bibitem{ElliottEvansKishimoto98}
G.~A. Elliott, D.~E. Evans, A.~Kishimoto: Outer conjugacy classes of trace
  scaling automorphisms of stable {UHF} algebras.
\newblock Math. Scand. 83 (1998), pp. 74--86.

\bibitem{EvansKishimoto97}
D.~E. Evans, A.~Kishimoto: Trace scaling automorphisms of certain stable {AF}
  algebras.
\newblock Hokkaido Math. J. 26 (1997), pp. 211--224.

\bibitem{Gabe20}
J.~Gabe: A new proof of {K}irchberg's {$\mathcal O_2$}-stable classification.
\newblock J. reine angew. Math. 761 (2020), pp. 247--289.

\bibitem{Gardella14_1}
E.~Gardella: Classification theorems for circle actions on {K}irchberg
  algebras, {I}  (2014).
\newblock \urlprefix\url{http://arxiv.org/abs/1405.2469}.

\bibitem{Gardella14_2}
E.~Gardella: Classification theorems for circle actions on {K}irchberg
  algebras, {II}  (2014).
\newblock \urlprefix\url{http://arxiv.org/abs/1406.1208}.

\bibitem{GardellaHirshberg18}
E.~Gardella, I.~Hirshberg: Strongly outer actions of amenable groups on
  {$\CZ$}-stable \cstar-algebras  (2018).
\newblock \urlprefix\url{https://arxiv.org/abs/1811.00447}.

\bibitem{GardellaSantiago16}
E.~Gardella, L.~Santiago: Equivariant {$*$}-homomorphisms, {R}okhlin contraints
  and equivariant {UHF}-absorption.
\newblock J. Funct. Anal. 270 (2016), no.~7, pp. 2543--2590.

\bibitem{GiolKerr10}
J.~Giol, D.~Kerr: Subshifts and perforation.
\newblock J. reine angew. Math. 639 (2010), pp. 107--119.

\bibitem{GiordanoPutnamSkau95}
T.~Giordano, I.~F. Putnam, C.~F. Skau: Topological orbit equivalence and
  \cstar-crossed products.
\newblock J. Reine angew. Math. 496 (1995), pp. 51--111.

\bibitem{GoldsteinIzumi11}
P.~Goldstein, M.~Izumi: Quasi-free actions of finite groups on the {C}untz
  algebra {$\CO_\infty$}.
\newblock Tohoku Math. J. 63 (2011), pp. 729--749.

\bibitem{HermanOcneanu84}
R.~Herman, A.~Ocneanu: Stability for integer actions on {UHF} \cstar-algebras.
\newblock J. Funct. Anal. 59 (1984), pp. 132--144.

\bibitem{HermanJones82}
R.~H. Herman, V.~F.~R. Jones: Period two automorphisms of {UHF}
  \cstar-algebras.
\newblock J. Funct. Anal. 45 (1982), no.~2, pp. 169--176.

\bibitem{HermanPutnamSkau92}
R.~H. Herman, I.~F. Putnam, C.~F. Skau: Ordered {B}ratteli diagrams, dimension
  groups and topological dynamics.
\newblock Internat. J. Math. 3 (1992), no.~6, pp. 827--864.

\bibitem{Higson87}
N.~Higson: A characterization of {$KK$}-theory.
\newblock Pacific J. Math. 126 (1987), pp. 253--276.

\bibitem{HirshbergSzaboWinterWu17}
I.~Hirshberg, G.~Szab{\'o}, W.~Winter, J.~Wu: Rokhlin dimension for flows.
\newblock Comm. Math. Phys. 353 (2017), no.~1, pp. 253--316.

\bibitem{HirshbergWinter07}
I.~Hirshberg, W.~Winter: Rokhlin actions and self-absorbing \cstar-algebras.
\newblock Pacific J. Math. 233 (2007), no.~1, pp. 125--143.

\bibitem{HirshbergWinterZacharias15}
I.~Hirshberg, W.~Winter, J.~Zacharias: Rokhlin dimension and \cstar-dynamics.
\newblock Comm. Math. Phys. 335 (2015), pp. 637--670.

\bibitem{HirshbergWu17}
I.~Hirshberg, J.~Wu: The nuclear dimension of \cstar-algebras associated to
  homeomorphisms.
\newblock Adv. Math. 304 (2017), pp. 56--89.

\bibitem{HirshbergWu18}
I.~Hirshberg, J.~Wu: The nuclear dimension of \cstar-algebras associated to
  topological flows and orientable line foliations  (2018).
\newblock \urlprefix\url{https://arxiv.org/abs/1807.02246}.

\bibitem{Izumi04}
M.~Izumi: Finite group actions on \cstar-algebras with the {R}ohlin property
  {I}.
\newblock Duke Math. J. 122 (2004), no.~2, pp. 233--280.

\bibitem{Izumi04II}
M.~Izumi: Finite group actions on \cstar-algebras with the {R}ohlin property
  {II}.
\newblock Adv. Math. 184 (2004), no.~1, pp. 119--160.

\bibitem{Izumi10}
M.~Izumi: Group {A}ctions on {O}perator {A}lgebras.
\newblock Proc. Intern. Congr. Math.  (2010), pp. 1528--1548.

\bibitem{IzumiMatui10}
M.~Izumi, H.~Matui: {$\IZ^2$}-actions on {K}irchberg algebras.
\newblock Adv. Math. 224 (2010), pp. 355--400.

\bibitem{IzumiMatui18}
M.~Izumi, H.~Matui: Poly-{$\IZ$} group actions on {K}irchberg algebras {I}.
\newblock IMRN, to appear  (2018).
\newblock \urlprefix\url{https://arxiv.org/abs/1810.05850}.

\bibitem{IzumiMatui19}
M.~Izumi, H.~Matui: Poly-{$\IZ$} group actions on {K}irchberg algebras {II}.
\newblock Invent. Math., to appear  (2019).
\newblock \urlprefix\url{https://arxiv.org/abs/1906.03818}.

\bibitem{Jiang97}
X.~Jiang: Nonstable {$K$}-theory for {$\CZ$}-stable \cstar-algebras  (1997).
\newblock \urlprefix\url{http://arxiv.org/abs/math/9707228v1}.

\bibitem{JiangSu99}
X.~Jiang, H.~Su: On a simple unital projectionless \cstar-algebra.
\newblock Amer. J. Math. 121 (1999), no.~2, pp. 359--413.

\bibitem{KaliszewskiOmlandQuigg15}
S.~Kaliszewski, T.~Omland, J.~Quigg: Three versions of categorical
  crossed-product duality  (2015).
\newblock \urlprefix\url{https://arxiv.org/abs/1503.00663}.

\bibitem{Kasparov88}
G.~G. Kasparov: Equivariant {$KK$}-theory and the {N}ovikov conjecture.
\newblock Invent. Math. 91 (1988), pp. 147--201.

\bibitem{KatsuraMatui08}
T.~Katsura, H.~Matui: Classification of uniformly outer actions of {$\IZ^2$} on
  {UHF} algebras.
\newblock Adv. Math 218 (2008), pp. 940--968.

\bibitem{Kerr19}
D.~Kerr: Dimension, comparison, and almost finiteness.
\newblock J. Eur. Math. Soc., to appear  (2017).
\newblock \urlprefix\url{https://arxiv.org/abs/1710.00393}.

\bibitem{KerrSzabo20}
D.~Kerr, G.~Szab{\'o}: Almost finiteness and the small boundary property.
\newblock Comm. Math. Phys. 374 (2020), pp. 1--31.

\bibitem{KirchbergC}
E.~Kirchberg: The {C}lassification of {P}urely {I}nfinite \cstar-{A}lgebras
  {U}sing {K}asparov's {T}heory (2003).
\newblock Preprint.

\bibitem{Kirchberg04}
E.~Kirchberg: Central sequences in \cstar-algebras and strongly purely infinite
  algebras.
\newblock Operator Algebras: The Abel Symposium 1 (2004), pp. 175--231.

\bibitem{Kishimoto95}
A.~Kishimoto: The {R}ohlin property for automorphisms of {UHF} algebras.
\newblock J. reine angew. Math. 465 (1995), pp. 183--196.

\bibitem{Kishimoto96_R}
A.~Kishimoto: A {R}ohlin property for one-parameter automorphism groups.
\newblock Comm. Math. Phys. 179 (1996), no.~3, pp. 599--622.

\bibitem{Kishimoto96}
A.~Kishimoto: The {R}ohlin property for shifts on {UHF} algebras and
  automorphisms of {C}untz algebras.
\newblock J. Funct. Anal. 140 (1996), pp. 100--123.

\bibitem{Kishimoto98}
A.~Kishimoto: Automorphisms of {AT} algebras with the {R}ohlin property.
\newblock J. Operator Theory 40 (1998), pp. 277--294.

\bibitem{Kishimoto98II}
A.~Kishimoto: Unbounded derivations in {AT} algebras.
\newblock J. Funct. Anal. 160 (1998), pp. 270--311.

\bibitem{Kishimoto00}
A.~Kishimoto: Rohlin property for shift automorphisms.
\newblock Rev. Math. Phys. 12 (2000), no.~7, pp. 965--980.

\bibitem{Kishimoto01}
A.~Kishimoto: {UHF} flows and the flip automorphism.
\newblock Rev. Math. Phys. 13 (2001), pp. 1163--1181.

\bibitem{Kishimoto02}
A.~Kishimoto: Rohlin flows on the {C}untz algebra {$\CO_2$}.
\newblock Internat. J. Math. 13 (2002), no.~10, pp. 1065--1094.

\bibitem{Kishimoto04}
A.~Kishimoto: Core symmetries of a flow.
\newblock Rev. Math. Phys. 16 (2004), no.~4, pp. 479--507.

\bibitem{Liao16}
H.-C. Liao: A {R}okhlin type theorem for simple \cstar-algebras of finite
  nuclear dimension.
\newblock J. Funct. Anal. 270 (2016), pp. 3675--3708.

\bibitem{Liao17}
H.-C. Liao: Rokhlin dimension of {$\IZ^m$}-actions on simple \cstar-algebras.
\newblock Internat. J. Math. 28 (2017), no.~7.

\bibitem{LinPhillips10}
H.~Lin, N.~C. Phillips: Crossed products by minimal homeomorphisms.
\newblock J. reine angew. Math. 641 (2010), pp. 95--122.

\bibitem{MasudaTomatsu16}
T.~Masuda, R.~Tomatsu: Rohlin flows on von {N}eumann algebras.
\newblock Mem. Amer. Math. Soc. 244 (2016), no.~2.

\bibitem{Matui08}
H.~Matui: Classification of outer actions of {$\IZ^N$} on {$\CO_2$}.
\newblock Adv. Math. 217 (2008), pp. 2872--2896.

\bibitem{Matui10}
H.~Matui: {$\IZ$}-actions on {AH} algebras and {$\IZ^2$}-actions on {AF}
  algebras.
\newblock Comm. Math. Phys. 297 (2010), pp. 529--551.

\bibitem{Matui11}
H.~Matui: {$\IZ^N$}-actions on {UHF} algebras of infinite type.
\newblock J. reine angew. Math 657 (2011), pp. 225--244.

\bibitem{MatuiSato12}
H.~Matui, Y.~Sato: {$\CZ$}-stability of crossed products by strongly outer
  actions.
\newblock Comm. Math. Phys. 314 (2012), no.~1, pp. 193--228.

\bibitem{MatuiSato14}
H.~Matui, Y.~Sato: {$\CZ$}-stability of crossed products by strongly outer
  actions {II}.
\newblock Amer. J. Math. 136 (2014), pp. 1441--1497.

\bibitem{Meyer00}
R.~Meyer: Equivariant {K}asparov theory and generalized homomorphisms.
\newblock K-Theory 21 (2000), no.~3, pp. 201--228.

\bibitem{Meyer19}
R.~Meyer: On the classification of group actions on \cstar-algebras up to
  equivariant {$KK$}-equivalence  (2019).
\newblock \urlprefix\url{https://arxiv.org/abs/1906.11163}.

\bibitem{Nakamura99}
H.~Nakamura: The {R}ohlin property for {$\IZ^2$}-actions on {UHF} algebras.
\newblock J. Math. Soc. Japan 51 (1999), no.~3, pp. 583--612.

\bibitem{Nakamura00}
H.~Nakamura: Aperiodic automorphisms of nuclear purely infinite simple
  \cstar-algebras.
\newblock Ergodic Theory Dynam. Systems 20 (2000), pp. 1749--1765.

\bibitem{Nawata16}
N.~Nawata: Finite group actions on certain stably projectionless
  \cstar-algebras with the {R}ohlin property.
\newblock Trans. Amer. Math. Soc. 368 (2016), no.~1, pp. 471--493.

\bibitem{Nawata19_2}
N.~Nawata: Rohlin actions of finite groups on the {R}azak--{J}acelon algebra.
\newblock IMRN, to appear  (2019).
\newblock \urlprefix\url{https://arxiv.org/abs/1905.09469}.

\bibitem{Nawata19}
N.~Nawata: Trace scaling automorphisms of the stabilized {R}azak--{J}acelon
  algebra.
\newblock Proc. Lond. Math. Soc. 118 (2019), no.~3, pp. 545--576.

\bibitem{PackerRaeburn89}
J.~A. Packer, I.~Raeburn: Twisted crossed products of \cstar-algebras.
\newblock Math. Proc. Cambridge Philos. Soc. 106 (1989), no.~2, pp. 293--311.

\bibitem{Phillips00}
N.~C. Phillips: A classification theorem for nuclear purely infinite simple
  \cstar-algebras.
\newblock Doc. Math. 5 (2000), pp. 49--114.

\bibitem{Putnam89}
I.~F. Putnam: The \cstar-algebras associated with minimal homeomorphisms of the
  {C}antor set.
\newblock Pacific J. Math. 136 (1989), no.~2, pp. 329--353.

\bibitem{Rordam93}
M.~R{\o}rdam: Classification of inductive limits of {C}untz algebras.
\newblock J. reine angew. Math. 440 (1993), pp. 175--200.

\bibitem{Rordam}
M.~R{{\o}}rdam: Classification of {N}uclear \cstar-{A}lgebras. {E}ncyclopaedia
  of {M}athematical {S}ciences.
\newblock Springer (2001).

\bibitem{Sato10}
Y.~Sato: The {R}ohlin property for automorphisms of the {J}iang--{S}u algebra.
\newblock J. Funct. Anal. 259 (2010), no.~2, pp. 453--476.

\bibitem{Sato16}
Y.~Sato: Actions of amenable groups and crossed products of {$\CZ$}-absorbing
  \cstar-algebras.
\newblock Advanced Studies in Pure mathematics 80 (2019), no.~5, pp. 189--210.

\bibitem{Szabo15plms}
G.~Szab{\'o}: The {R}okhlin dimension of topological {$\IZ^m$}-actions.
\newblock Proc. Lond. Math. Soc. 110 (2015), no.~3, pp. 673--694.

\bibitem{Szabo17ssa3}
G.~Szab{\'o}: Strongly self-absorbing \cstar-dynamical systems, {III}.
\newblock Adv. Math. 316 (2017), no.~20, pp. 356--380.

\bibitem{Szabo18kp}
G.~Szab{\'o}: Equivariant {K}irchberg--{P}hillips-type absorption for amenable
  group actions.
\newblock Comm. Math. Phys. 361 (2018), no.~3, pp. 1115--1154.

\bibitem{Szabo18ssa2}
G.~Szab{\'o}: Strongly self-absorbing \cstar-dynamical systems, {II}.
\newblock J. Noncomm. Geom. 12 (2018), no.~1, pp. 369--406.

\bibitem{Szabo18ssa}
G.~Szab\'{o}: Strongly self-absorbing {$\mathrm{C}^*$}-dynamical systems.
\newblock Trans. Amer. Math. Soc. 370 (2018), pp. 99--130.

\bibitem{Szabo19ssa4}
G.~Szab{\'o}: Actions of certain torsion-free elementary amenable groups on
  strongly self-absorbing \cstar-algebras.
\newblock Comm. Math. Phys. 371 (2019), no.~1, pp. 267--284.

\bibitem{Szabo19si}
G.~Szab{\'o}: Equivariant property {(SI)} revisited.
\newblock Anal. PDE, to appear  (2019).
\newblock \urlprefix\url{https://arxiv.org/abs/1904.10897}.

\bibitem{Szabo19rd}
G.~Szab{\'o}: Rokhlin dimension: absorption of model actions.
\newblock Anal. PDE 12 (2019), no.~5, pp. 1357--1396.

\bibitem{Szabo20R}
G.~Szab{\'o}: The classification of {R}okhlin flows on \cstar-algebras.
\newblock Comm. Math. Phys., to appear  (2020).
\newblock \urlprefix\url{http://dx.doi.org/10.1007/s00220-020-03812-2}.

\bibitem{Szabo18ssaCOR}
G.~Szab{\'o}: Corrigendum to "{S}trongly self-absorbing \cstar-dynamical
  systems".
\newblock Trans. Amer. Math. Soc. 373 (2020), pp. 7527--7531.

\bibitem{SzaboWuZacharias19}
G.~Szab{\'o}, J.~Wu, J.~Zacharias: Rokhlin dimension for actions of residually
  finite groups.
\newblock Ergodic Theory Dynam. Systems 39 (2019), no.~8, pp. 2248--2304.

\bibitem{Thomsen98}
K.~Thomsen: The universal property of equivariant {$KK$}-theory.
\newblock J. reine angew. Math. 504 (1998), pp. 55--71.

\bibitem{Thomsen00}
K.~Thomsen: Equivariant {$KK$}-theory and \cstar-extensions.
\newblock K-Theory 19 (2000), no.~3, pp. 219--249.

\bibitem{TomsWinter07}
A.~S. Toms, W.~Winter: Strongly self-absorbing \cstar-algebras.
\newblock Trans. Amer. Math. Soc. 359 (2007), no.~8, pp. 3999--4029.

\bibitem{TomsWinter13}
A.~S. Toms, W.~Winter: Minimal dynamics and {$K$}-theoretic rigidity:\
  {E}lliott's conjecture.
\newblock GAFA 23 (2013), pp. 467--481.

\bibitem{Winter11}
W.~Winter: Strongly self-absorbing \cstar-algebras are {$\CZ$}-stable.
\newblock J. Noncomm. Geom. 5 (2011), no.~2, pp. 253--264.

\end{thebibliography}

\end{document}